\numberwithin{equation}{section}
\newtheorem{theorem}{Theorem}[section]
\newtheorem{corollary}[theorem]{Corollary}
\newtheorem{lemma}[theorem]{Lemma}
\newtheorem{proposition}[theorem]{Proposition}
\newtheorem{prop}[theorem]{Proposition} 
\newtheorem{conjecture}[theorem]{Conjecture}
\newtheorem{remark}[theorem]{Remark}
\newtheorem{definition}[theorem]{Definition}
\theoremstyle{remark}
\def\@rst #1 #2other{#1}
\newcommand\MR[1]{\relax\ifhmode\unskip\spacefactor3000 \space\fi
  \MRhref{\expandafter\@rst #1 other}{#1}}
\newcommand{\MRhref}[2]{\href{http://www.ams.org/mathscinet-getitem?mr=#1}{MR#2}}
\def\MR#1{\href{http://www.ams.org/mathscinet-getitem?mr=#1}{MR#1}}
\newcommand{\C}{\mathbbm{C}}
\newcommand{\D}{\mathbbm{D}}
\newcommand{\E}{\mathbbm{E}}
\newcommand{\Z}{\mathbbm{Z}}
\newcommand{\R}{\mathbbm{R}}
\renewcommand{\P}{\mathbbm{P}}
\newcommand{\bbH}{\mathbbm{H}}
\newcommand{\eps}{\varepsilon}
\newcommand{\1}{\mathbf{1}}
\newcommand{\disk}{\mathrm{disk}}
\newcommand{\sm}{\mathsf{m}}
\newcommand{\cMtwo}{\mathcal{M}_{0,2}^\mathrm{disk}}
\newcommand{\LF}{\mathrm{LF}}
\newcommand{\QD}{\mathrm{QD}}
\let\Re\undefined
\DeclareMathOperator{\Re}{Re}
\let\Im\undefined
\DeclareMathOperator{\Im}{Im}
\DeclareMathOperator{\diam}{diam}
\DeclareMathOperator{\Cov}{Cov}
\DeclareMathOperator{\Var}{Var}
\DeclareMathOperator{\SLE}{SLE}
\def\cS{\mathcal{S}}
\def\cP{\mathcal{P}}
\def\cM{\mathcal{M}}
\def\cL{\mathcal{L}}
\def\cK{\mathcal{K}}
\def\cH{\mathcal{H}}
\def\cF{\mathcal{F}}
\def\cD{\mathcal{D}}
\def\cC{\mathcal{C}}
\def\cA{\mathcal{A}}
\def\alb#1\ale{\begin{align*}#1\end{align*}}
\def\allb#1\alle{\begin{align}#1\end{align}}
\newcommand{\aryb}{\begin{eqnarray*}}
\newcommand{\arye}{\end{eqnarray*}}
\def\alb#1\ale{\begin{align*}#1\end{align*}}
\newcommand{\eqb}{\begin{equation}}
\newcommand{\eqe}{\end{equation}}
\newcommand{\eqbn}{\begin{equation*}}
\newcommand{\eqen}{\end{equation*}}
\newcommand{\BB}{\mathbbm}
\newcommand{\ol}{\overline}
\newcommand{\ul}{\underline}
\newcommand{\op}{\operatorname}
\newcommand{\rta}{\rightarrow}
\newcommand{\wt}{\widetilde}
\newcommand{\wh}{\widehat}
\newcommand{\bdy}{\partial}
\let\originalleft\left
\let\originalright\right
\renewcommand{\left}{\mathopen{}\mathclose\bgroup\originalleft}
\renewcommand{\right}{\aftergroup\egroup\originalright}
\DeclareMathAlphabet{\mathpzc}{OT1}{pzc}{m}{it}
\begin{document}

\title{FZZ formula of boundary Liouville CFT via conformal welding}
\author{
\begin{tabular}{c}Morris Ang\\[-5pt]\small MIT\end{tabular}\; 
\begin{tabular}{c}Guillaume Remy\\[-5pt]\small Columbia University\end{tabular}\;
\begin{tabular}{c}Xin Sun\\[-5pt]\small University of Pennsylvania\end{tabular}
} 
\date{  }

\maketitle

\begin{abstract}
Liouville Conformal Field Theory (LCFT) on the disk describes  the conformal factor of the quantum disk, 
which is the natural  random surface in Liouville quantum gravity with disk topology. 
Fateev, Zamolodchikov and Zamolodchikov (2000) 
proposed an explicit expression, the so-called FZZ formula,  for the one-point bulk structure constant for LCFT on the disk.
In this paper we give a proof of the FZZ formula in the probabilistic framework of LCFT,
which represents the first step towards rigorously solving boundary LCFT using conformal bootstrap.
In contrast to previous works,
our proof is based on conformal welding of quantum disks and the mating-of-trees theory for Liouville quantum gravity.
As a byproduct of our proof, we also obtain the exact value of the variance for the  Brownian motion in the mating-of-trees theory. 
Our paper is an essential part of an ongoing program proving  integrability results for Schramm-Loewner evolutions, LCFT, and in the mating-of-trees theory.
\end{abstract}

\section{Introduction}
Liouville quantum gravity (LQG) first appeared in theoretical physics in A. Polyakov's seminal work \cite{Polyakov1981} where he proposed a theory of summation over the space of Riemannian metrics on a given two dimensional surface. The fundamental building block of his framework is the Liouville conformal field theory (LCFT), 
which  describes the law of the conformal factor of the metric tensor of a surface of fixed complex structure.
LCFT  was first made rigorous in probability theory  in the case of the Riemann sphere in \cite{dkrv-lqg-sphere}, and then in the case of a simply connected domain with boundary in \cite{hrv-disk}; see also \cite{LQG_tori, Remy_annulus, LQG_higher_genus} for the case of other topologies.

On surfaces without boundary, solving Liouville theory amounts to computing the three-point function on the sphere - which is given by the DOZZ formula proposed in \cite{DOZZ1,DOZZ2} - and arguing that correlation functions of higher order or in higher genus can be obtained from it using the conformal bootstrap method of \cite{BPZ1984}. 
Recently,  two major breakthroughs have been achieved, namely the rigorous proof of the DOZZ formula \cite{DOZZ_proof} and of the conformal bootstrap  on the sphere \cite{GKRV_bootstrap}. 
A similar program can be pursued for surfaces with boundary, where the most basic correlation function is the bulk one-point function on the disk with expression given by the Fateev-Zamolodchikov-Zamolodchikov (FZZ) formula proposed in \cite{FZZ}. 
In this  paper we will prove the FZZ formula, which represents the first step towards rigorously solving boundary LCFT.

Our approach is completely different from the one used in~\cite{DOZZ_proof, remy-fb-formula, RZ_boundary} 
which is based on the BPZ equations and on the operator product expansion of~\cite{BPZ1984}.
As explained in Section~\ref{subsec:LCFT}, that approach has essential obstructions to proving  the FZZ formula.
Instead, we rely on the rich interplay between LCFT and the  random geometry corresponding to LQG. 
In particular, we use the idea of the \emph{quantum zipper}, which says that the conformal welding of two LQG type random surfaces 
gives a LQG type surface decorated with a Schramm Loewner evolution (SLE). Building on the original work of~\cite{shef-zipper,wedges} 
and the recent work of the first and the third authors with Holden~\cite{ahs-disk-welding,AHS-SLE-integrability}, we prove a new quantum zipper result and use it to obtain  the FZZ formula. 
As an intermediate step in our proof, we also obtain the exact value of the variance of the Brownian motion in the mating-of-trees theory by Duplantier, Miller and Sheffield~\cite{wedges}.

Besides its intrinsic interest and its relevance to  conformal bootstrap, 
the FZZ formula  yields integrability results on Gaussian multiple chaos on the unit disk or upper half plane; see Section~\ref{subsub:GMC-disk}.  
Moreover, it is a crucial input to the paper~\cite{AS-CLE} of the first and the third authors on the integrability of conformal loop ensemble on the sphere.
We will discuss these aspects and related ongoing projects and open questions in Section~\ref{subsec:outlook}, after stating our main result in Section~\ref{subsec:LCFT} and summarizing  the proof strategy in Section~\ref{subsec:sketch}.

\subsection{Boundary Liouville  conformal field theory and the FZZ formula}\label{subsec:LCFT}
 In the physics literature LCFT is defined by a formal path integral. We work on a simply connected domain with boundary, which by conformal invariance can equivalently be the upper-half plane $\mathbb{H}$ or the unit disk $\mathbb{D}$. For almost all of this paper we will work with $\mathbb{H} = \{ z \in \mathbb{C} \vert \mathrm{Im}(z) >0 \}$ with boundary given by the real line $\mathbb{R}$. The most basic observable of Liouville theory is the correlation function of $N$ marked points in the bulk $z_i \in \mathbb{H}$ with associated weights $\alpha_i \in \mathbb{R}$ and $M$ marked points on the boundary $s_j \in \mathbb{R}$ with associated weights $\beta_j \in \mathbb{R}$. The physics path integral definition of this correlation function is then
 \begin{align}\label{eq:path_integral}
 \left \langle \prod_{i=1}^N e^{\alpha_i \phi(z_i)} \prod_{j=1}^M e^{\frac{\beta_j}{2} \phi(s_j)} \right \rangle = \int_{X: \mathbb{H} \rightarrow \mathbb{R}} DX e^{-S_{L}(X)} \left(  \prod_{i=1}^N e^{\alpha_i X(z_i)} \prod_{j=1}^M e^{\frac{\beta_j}{2} X(s_j)}  \right),
 \end{align}
 where $DX$ is a formal uniform measure over the space of all maps $X$ from the domain $\mathbb{H}$ to $\mathbb{R}$ and $S_L(X)$ is the 
 so-called  Liouville action which has expression given by:
 \begin{align}\label{liouville_action}
 S_L(X) &= \frac{1}{4 \pi} \int_{\mathbb{H}} \left( \vert \partial^g X \vert^2 + Q R_g X + 4 \pi \mu e^{\gamma X} \right) g(x) d^2 x + \frac{1}{2 \pi} \int_{ \mathbb{R}}  \left( Q K_g X + \mu_B  e^{\frac{\gamma}{2} X} \right) g(x)^{1/2} dx.
 \end{align}
Here the background metric is $g = g(x)dx^2 $, $R_g$ and $K_g$ are respectively the Ricci and geodesic curvatures on $\mathbb{H}$ and $\mathbb{R}$,
$\gamma\in (0,2)$ is the coupling parameter for LCFT, $Q = \frac{\gamma}{2} + \frac{2}{\gamma}$ is called the  \emph{background charge},
and $\mu , \mu_B > 0 $ are called \emph{cosmological constant}s. They tune respectively the interaction strength of the Liouville potentials $e^{\gamma X}$ and $e^{\frac{\gamma}{2} X}$.
Although the definition of $ S_L(X)$ depends on a choice of background metric  $g$, the correlation functions depend trivially on this choice thanks to the Weyl anomaly proved in \cite{hrv-disk}.

As a conformal field theory, it is well known that the (bulk) one-point correlation function $ \left \langle  e^{\alpha \phi(z)}  \right \rangle $ of LCFT 
must have the following form
 \begin{equation}
 \left \langle  e^{\alpha \phi(z)}  \right \rangle = \frac{U(\alpha)}{\left|\Im z\right|^{2 \Delta_{\alpha}}} \quad \textrm{ for }z\in \bbH,
 \end{equation}
 where $U(\alpha)$ is called the \emph{structure constant} and $\Delta_{\alpha} = \frac{\alpha}{2}(Q - \frac{\alpha}{2})$ is called the \emph{scaling dimension}. 
In~\cite{FZZ}, the following exact formula for $U(\alpha)$  was proposed
 \begin{equation}
 U_{\mathrm{FZZ}}(\alpha) :=  \frac{4}{\gamma}     2^{-\frac{\alpha^2}2} \left( \frac{ \pi \mu}{ 2^{\gamma \alpha }}  \frac{\Gamma(\frac{\gamma^2}{4})}{\Gamma(1 -\frac{\gamma^2}{4})} \right)^{\frac{Q - \alpha}{\gamma} } \Gamma( \frac{\gamma \alpha}{2} - \frac{\gamma^2}{4} )  \Gamma( \frac{2\alpha}{\gamma} - \frac{4}{\gamma^2} -1)  \cos ((\alpha - Q) \pi s),
 \end{equation}
 where the parameter $s$ is related to the ratio of cosmological constants $\frac{\mu_B}{\sqrt{\mu}} $ through the relation:
 \begin{equation}\label{eq:def_s}
 \cos \frac{\pi \gamma s}{2} = \frac{\mu_B}{\sqrt{\mu}} \sqrt{ \sin \frac{\pi \gamma^2}{4}}, \quad \text{with} \quad \begin{cases} s \in[0, \frac{1}{\gamma}), \quad \text{when} \quad  \frac{\mu^2_B}{\mu} \sin \frac{\pi \gamma^2}{4} \leq 1,\\  s \in i [0, +\infty), \quad \text{when} \quad \frac{\mu^2_B}{\mu} \sin \frac{\pi \gamma^2}{4} \geq 1. \end{cases}
 \end{equation}
 Notice $U_{\mathrm{FZZ}}(\alpha)$ depends non-trivially on $\mu, \mu_B$ only through the ratio  $\frac{\mu_B}{\sqrt{\mu}}$, 
 the dependence being encoded in the intricate relation \eqref{eq:def_s} defining the parameter $s$. 
 
The main result of our paper is a proof that $U(\alpha)=U_{\mathrm{FZZ}}(\alpha)$ where $U(\alpha)$ is defined in the rigorous probabilistic framework.
 Let us now outline the procedure of \cite{dkrv-lqg-sphere} adapted to the case of $\mathbb{H}$ in \cite{hrv-disk} 
 that allows to give a rigorous meaning to \eqref{eq:path_integral} and thus  for $U(\alpha)$.
 All definitions will be precisely restated in Section \ref{subsec:pre-LCFT}. The first step is to interpret the $DX$ of \eqref{eq:path_integral} combined with the gradient squared term $ \vert \partial^g X \vert^2 $ of $S_L(X)$ as giving the law of the Gaussian Free Field (GFF). Concretely let $P_\bbH$ be the probability measure corresponding to the free-boundary GFF on $\bbH$ normalized to have average zero on the upper half unit circle $\partial \D \cap \bbH$. Define now the infinite measure $\LF_\bbH(d\phi)$ obtained by sampling $(h, \mathbf c)$ 
 according to $P_\bbH \times [e^{-Qc} \, dc]$ and setting $\phi(z) = h(z) - 2Q \log |z|_+ + \mathbf c$, where $|z|_+ := \max(|z|,1)$.
  The $ \mathbf c$ is known as the zero mode in physics, which comes from the fact that the gradient $ \vert \partial^g X \vert^2 $ only defines the field up to a global constant, and one must integrate over this degree of freedom. 
 This construction corresponds to choosing  $g(x) = \max(1, |x|)^{-4} $  as the background metric in the Liouville action $S_L(X)$ in~\eqref{eq:path_integral}.
 The term $2Q \log |z|_+$ comes form the curvature terms of $S_L(X)$. As explained in  \cite{hrv-disk} and \cite{RZ_boundary}, the choice of the background metric only affects the law of field by an explicit multiplicative constant given by the Weyl anomaly.
 
 To make sense of the effect of $e^{\alpha \phi(z)}$, we let  $\LF_\bbH^{(\alpha, z)} = \lim_{\eps \to 0} \eps^{\alpha^2/2} e^{\alpha \phi_\eps(z)} \LF_\bbH(d\phi)$, 
 where $\phi_\eps$ is a suitable regularization at scale $\epsilon$ of $\phi$.  By virtue of the Girsanov theorem, $\LF_\bbH^{(\alpha, z)}$ can be realized as a sample from $\LF_\bbH$ plus an $\alpha$-log singularity at $z$. Lastly to handle the Liouville potentials $e^{\gamma X}$ and $e^{\frac{\gamma}{2} X}$ present in $S_L(X)$, 
 define the bulk and boundary Gaussian multiplicative chaos (GMC) measures  of $\phi$ as the limits (see e.g.~\cite{Ber_GMC, rhodes-vargas-review}):
 \begin{equation}\label{def:GMC_epsilon}
 \mu_\phi(\bbH) =  \lim_{\eps \to 0} \epsilon^{\frac{\gamma^2}{2}} \int_{\bbH} e^{\gamma \phi_{\epsilon}(z)} d^2z, \quad \text{and} \quad \nu_\phi(\R) = \lim_{\eps \to 0} \epsilon^{\frac{\gamma^2}{4}} \int_{\R} e^{\frac{\gamma}{2} \phi_{\epsilon}(z)} dz.
 \end{equation}
Now for $\gamma \in (0,2)$, $\mu, \mu_B >0$, set 
\begin{equation}\label{eq:1pt}
 \left \langle  e^{\alpha \phi(z)}  \right \rangle:= \LF_\bbH^{(\alpha, z)}[e^{-\mu \mu_\phi(\bbH) - \mu_B \nu_\phi(\R)} -1], \quad \textrm{for }z\in \bbH.
\end{equation}
We will explain in Section~\ref{subsec:pre-LCFT} that $ \left| \left \langle  e^{\alpha \phi(z)}  \right \rangle \right|<\infty$  when $\alpha \in (\frac{2}{\gamma}, Q)$, 
thanks to the $-1$ in~\eqref{eq:1pt}.
Moreover, for $\Delta_\alpha=\frac{\alpha}{2}(Q-\frac{\alpha}{2})$, the quantity $\left|\Im  z\right|^{2\Delta_\alpha} \left \langle  e^{\alpha \phi(z)}  \right \rangle$ 
does not depend on $z\in \bbH$. For concreteness, we take $z=i$ and set 
 \begin{equation}\label{eq:def_U_alpha}
U(\alpha) := \left \langle  e^{\alpha \phi(i)}  \right \rangle.
 \end{equation}
Now we are ready to state our main result.

 \begin{theorem}\label{thm:FZZ-physics}
 	For $\gamma \in (0,2)$, $\alpha \in (\frac{2}{\gamma}, Q)$, $\mu, \mu_B >0$, we have 	$U(\alpha) = U_{\mathrm{FZZ}}(\alpha)$.
 \end{theorem}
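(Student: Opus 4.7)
The strategy is to exploit the interplay between boundary LCFT and LQG random geometry via a conformal welding identity, together with the mating-of-trees theory. The starting observation is that $U(\alpha)$ as defined in \eqref{eq:def_U_alpha} should coincide, up to an explicit multiplicative constant encoding the background metric and the Cameron--Martin shift, with the total mass of a suitable quantum disk measure carrying a bulk $\alpha$-insertion, weighted by $e^{-\mu A - \mu_B L}$ in the quantum area $A$ and quantum boundary length $L$. I would begin by rewriting \eqref{eq:1pt}--\eqref{eq:def_U_alpha} via a Girsanov shift on $\LF_\bbH^{(\alpha,i)}$ so that $U(\alpha)$ becomes a Liouville field integral over a quantum disk with one marked bulk point, parameterised by $(\mu,\mu_B)$.

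Next, following the program of \cite{shef-zipper,wedges,ahs-disk-welding,AHS-SLE-integrability}, I would set up a new quantum zipper: glue two quantum disks (or a thin quantum wedge to the $\alpha$-insertion disk) along a boundary arc to produce a single quantum disk decorated by an independent chordal $\SLE_\kappa$, $\kappa=\gamma^2$, between two marked boundary points. The welding is length-preserving, so the law of the joint boundary-length profile factorises as a product of two disk laws, and the $(A,L)$-marginals on each side can be read off. Taking a Laplace transform in $(\mu,\mu_B)$ yields a schematic identity
\begin{equation*}
U(\alpha)\cdot\mathcal{H}(\mu,\mu_B)\;=\;\mathcal{W}_\alpha(\mu,\mu_B),
\end{equation*}
where $\mathcal{H}$ is the Laplace transform of the joint boundary-length law generated by the welding, and $\mathcal{W}_\alpha$ is a welded partition function that can be evaluated in closed form from known boundary structure constants, for instance the two-point boundary constant of \cite{remy-fb-formula,RZ_boundary}.

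To evaluate $\mathcal{H}$, I would invoke the mating-of-trees theory of \cite{wedges}: the pair of boundary-length processes along the $\SLE_\kappa$ curve is driven by a correlated two-dimensional Brownian motion of some (initially undetermined) variance $\sigma^2>0$, and the required Laplace transform reduces to an explicit first-passage/cone computation for that Brownian motion. The cone-angle calculation is precisely what produces the factor $\cos((\alpha-Q)\pi s)$ appearing in $U_{\mathrm{FZZ}}(\alpha)$, and the relation \eqref{eq:def_s} between $s$ and $\mu_B/\sqrt{\mu}$ is forced by the change of variables from Liouville parameters $(\mu,\mu_B)$ to the drift/angle parameters of the Brownian cone. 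Combining with the explicit $\mathcal{W}_\alpha$ should produce the gamma-function prefactors of $U_{\mathrm{FZZ}}$.

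The main obstacle is twofold. First, one must make the welding rigorous in the regime $\alpha\in(2/\gamma,Q)$, where the relevant doubly-marked disk has infinite measure; this requires carefully tracking how the $-1$ subtraction in \eqref{eq:1pt} renders the Laplace transform finite, and controlling all normalising constants through the welding. Second, the welding identity alone leaves one scalar undetermined, namely the mating-of-trees variance $\sigma^2$. I would pin down $\sigma^2$ by matching a tractable limiting or degenerate case of the identity with an independently known evaluation, such as a $\mu_B\to 0$ GMC moment from \cite{remy-fb-formula,RZ_boundary} or a specific reflection coefficient. Once $\sigma^2$ is fixed, the remaining identification of the closed-form expression with $U_{\mathrm{FZZ}}(\alpha)$ is a direct verification using standard $\Gamma$-function identities.
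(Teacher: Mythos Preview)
Your proposal assembles the right high-level ingredients---conformal welding, the mating-of-trees Brownian motion, a cone computation, and pinning down the variance from an independently known moment---but the specific welding you describe and the mechanism by which $U(\alpha)$ is extracted both differ from the paper's in a way that leaves a genuine gap.

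The paper does not weld along a chordal $\SLE_\kappa$ between two boundary points. It uses a \emph{bubble zipper}: a simple $\SLE_\kappa$-type loop $\eta$ rooted at a single boundary point $0$ and surrounding the bulk insertion at $i$. When $\phi$ is sampled from $\LF_\bbH^{(\alpha,i)}$ conditioned on $\nu_\phi(\R)=1$, the region inside $\eta$ is again (after rescaling by the loop length $\cL$) a quantum disk with a bulk $\alpha$-insertion and the \emph{same law} as the full surface, while the exterior is a weight-$\gamma^2/2$ two-pointed disk whose area/length law is encoded by Brownian motion in a cone (Proposition~\ref{lem-mot}). This self-similarity yields a distributional fixed-point equation for the quantum area $X$ of $\cM^\disk_{1,0}(\alpha;1)^\#$:
\[
X \stackrel d= A + \frac{L^2}{u^2}\, X,\qquad X \text{ independent of }(A,L),
\]
with $(A,L)$ coming from the cone decomposition of $\sm_{\cC_\phi}(u,0)^\#$ for $\phi=\frac{\gamma\pi}{2(Q-\alpha)}$. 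The cone argument (Lemma~\ref{lem-recursion-invgamma} and Proposition~\ref{prop-characterization}) shows the unique solution is inverse gamma with shape $\frac2\gamma(Q-\alpha)$ and scale $\frac1{4\sin(\pi\gamma^2/4)}$, the latter constant being exactly where the mating-of-trees variance (computed separately in Section~\ref{sec:variance}) enters.

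Your multiplicative identity $U(\alpha)\cdot\mathcal H=\mathcal W_\alpha$ does not arise from this picture, and $\mathcal W_\alpha$ is not available: any welding that retains the bulk $\alpha$-insertion in one piece still carries an unknown bulk structure constant for $\mu>0$; the boundary formulas of \cite{remy-fb-formula,RZ_boundary} you cite are for $\mu=0$ only. The paper circumvents this by extracting a \emph{recursion in law for the area alone}, conditioned on unit boundary length, and solving it. The factor $\cos((\alpha-Q)\pi s)$ and the relation~\eqref{eq:def_s} do not come directly from a cone first-passage Laplace transform; they appear in Section~\ref{subsec:final}, when the inverse-gamma area law is integrated against $e^{-\mu\ell^2 A-\mu_B\ell}$ and resummed via a Chebyshev-type identity (Lemmas~\ref{lem:special1}--\ref{lem:special2}). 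Note also that the recursion only directly treats $\alpha\in(\frac2\gamma,Q-\frac\gamma4)$ (the cone-angle constraint $\phi<2\pi$); the full range is then obtained by the analyticity argument of Section~\ref{subsec:analytic}.
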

 The condition $\alpha \in (\frac{2}{\gamma}, Q)$ is required for \eqref{eq:1pt} to be finite, but one can extend the probabilistic definition of $U(\alpha)$ and the result to $\alpha \in (\frac{\gamma}{2}, Q)$; see  Theorem~\ref{thm:inverse-gamma} and Corollary \ref{cor_truncation}. So far in the probability literature the exact formulas on LCFT have all been derived by implementing the BPZ equations and the operator product expansion of~\cite{BPZ1984},
 as first performed in \cite{DOZZ_proof} proving the DOZZ formula. In the setup of a domain with boundary the same technique has been applied in the works \cite{remy-fb-formula, RZ_interval, RZ_boundary}, which all compute different cases of boundary Liouville correlations with $\mu =0, \mu_B>0$. This method has a major obstruction to prove Theorem \ref{thm:FZZ-physics}. 
Indeed, in order to define an observable satisfying the BPZ equation, the range of $\alpha$ needs to contain  an interval of length strictly greater than $\frac{2}{\gamma}$. The best range of $\alpha$ for a GMC definition of $ \left \langle  e^{\alpha \phi(z)}  \right \rangle$ is $(\frac{\gamma}{2}, Q)$ - see Corollary \ref{cor_truncation} -  which has length exactly $\frac{2}{\gamma}$ and thus is not sufficient. Another less fundamental but technically challenging issue is to reveal  the intricate  dependence on $\mu, \mu_B$ in $ U_{\mathrm{FZZ}}(\alpha)$. 
In the next subsection, we explain our strategy based on the conformal welding of quantum surfaces that  circumvents  these difficulties.

\subsection{A proof  strategy based on the conformal welding of quantum surfaces}\label{subsec:sketch}
By definition, the FZZ formula describes the joint law of  $\nu_\phi(\R)$ and $\mu_\phi(\bbH)$ in~\eqref{def:GMC_epsilon}  where $\phi$ is a sample from $\LF_\bbH^{(\alpha,i)}$. 
Here although $\LF_\bbH^{(\alpha,i)}$ is an infinite measure we adopt the probability terminology such as ``sample'' and ``law''.  
The law  of $\nu_\phi(\R)$ is encoded in the limiting case of the FZZ formula where $\mu=0$ and $\mu_B>0$, which has been obtained in~\cite{remy-fb-formula}.
Given this result, it turns out that the FZZ formula is equivalent to the statement that conditioning on $\nu_\phi(\R)=1$, the conditional law of  $\mu_\phi(\bbH)$ is the inverse gamma distribution with certain parameters. Here the inverse gamma distribution with shape parameter $a$ and scale parameter $b$ has the following density:
\(1_{x>0} \frac{b^a}{\Gamma(a)} \frac{1}{x^{a+1}} e^{-b/x}\). 
The crux of this paper is to derive the desired inverse gamma distribution using conformal welding of quantum surfaces. 
In this section we sketch this  strategy.

Quantum surfaces are the generalization of 2D Riemannian manifolds in the LQG random geometry.
For a fixed $\gamma\in (0,2)$, consider triples  $(D,h,z)$ where $D$ is a domain, $h$ is a variant of Gaussian free field on $D$, and $z\in D$.
We say that $(D,h,z)$ is equivalent to $(\wt D,\wt h,\wt z)$ if there exists a conformal map 
$\psi: \wt D \to D$ such that  \(\wt h = h \circ \psi + Q \log |\psi'|\) and $\psi(z)=\wt z$. Under this equivalence relation, the intrinsic geometric quantities in $\gamma$-LQG such 
as the quantum area and length measures transform covariantly under conformal maps. Here the quantum area and length are defined by Gaussian multiplicative chaos as in~\eqref{def:GMC_epsilon}. 
A quantum surface with one interior marked point is an equivalence class under this equivalence relation. We can similarly define quantum surfaces with more marked points or decorated with other natural structures such as  curves. 

For $\alpha\in (\frac{\gamma}{2},Q)$, sample $\phi$ from $\LF_\bbH^{(\alpha,i)}$ and condition on $\nu_\phi(\R)=\ell>0$. 
(This conditioning makes sense; see Lemma~\ref{lem:disint-alpha}.)
We write the conditional law of the quantum surface corresponding to $(\bbH, \phi,i)$ as $\cM_{1,0}^\disk(\alpha; \ell)^\#$. 
With this notion, the FZZ formula  can be reduced to the following.
\begin{theorem}\label{thm:inverse-gamma}
	For $\alpha\in (\frac{\gamma}{2},Q)$  
	the law of the quantum area of a sample from $\cM_{1,0}^\disk(\alpha; 1)^\#$ is the inverse gamma distribution with shape $\frac2\gamma(Q-\alpha)$ and scale $\frac1{4\sin \frac{\pi \gamma^2}4}$.
\end{theorem}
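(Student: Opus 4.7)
The plan is to prove Theorem~\ref{thm:inverse-gamma} by representing $\cM_{1,0}^\disk(\alpha;\ell)^\#$ as a conformal welding of two canonical quantum surfaces, and then computing the joint area--boundary-length distribution via mating-of-trees. This sidesteps the BPZ/OPE obstruction noted above, since conformal welding works uniformly in $\alpha \in (\gamma/2, Q)$ and encodes the $\mu,\mu_B$-dependence through an explicit Brownian functional rather than through OPE shifts of $\alpha$.

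First, I would establish a new quantum zipper identity: a sample from $\cM_{1,0}^\disk(\alpha;\ell)^\#$ decorated with an independent SLE$_{\gamma^2}$-type curve from a uniformly chosen boundary point to the bulk marked point should agree in law, as a decorated quantum surface, with the conformal welding (along quantum length) of two simpler quantum surfaces whose marginals are explicit --- for instance, a suitable quantum wedge and a two-pointed quantum disk. The weights would be chosen so that the bulk log-singularity in the welded surface has coefficient exactly $\alpha$ and the boundary quantum lengths glue to $\ell$. This extends the framework of \cite{shef-zipper,wedges,ahs-disk-welding,AHS-SLE-integrability} to produce an interior insertion for the full range $\alpha \in (\gamma/2, Q)$.

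Next, I would use mating-of-trees to extract the joint law of quantum areas and lengths on the two sides of the SLE curve. In that framework, the left and right boundary-length processes are correlated Brownian motions with an explicit but a priori unknown variance $\sigma^2$, and the total quantum area is the natural area functional of these processes. Combining this with the marginals of the two welded components gives the joint law of $(\nu_\phi(\R),\mu_\phi(\bbH))$ as a functional of Bessel/Brownian processes, which can be disintegrated on $\{\nu_\phi(\R)=1\}$ to produce a distribution whose Laplace transform is computable in closed form.

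To match the answer to inverse gamma with shape $\tfrac{2}{\gamma}(Q-\alpha)$ and scale $\tfrac{1}{4\sin(\pi\gamma^2/4)}$, the mating-of-trees variance $\sigma^2$ must be pinned down. The natural way is to project onto the $\mu=0$ slice: the marginal law of $\nu_\phi(\R)$ is then determined by Remy's boundary GMC formula \cite{remy-fb-formula}, and matching the welding-based expression to that known marginal fixes $\sigma^2$ exactly, giving the byproduct on the mating-of-trees variance mentioned in the introduction. Substituting $\sigma^2$ back into the joint law should then collapse the formula for the conditional area distribution into the claimed inverse gamma shape and scale.

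The main obstacle is proving the new quantum zipper identity with fully identified marginals in the presence of an interior $\alpha$-insertion across the entire range $\alpha \in (\gamma/2, Q)$; once this is in place, the mating-of-trees translation and the identification of $\sigma^2$ against \cite{remy-fb-formula} become the principal remaining computations. A secondary subtlety is to control the Brownian area functional well enough to recognize the resulting distribution as inverse gamma rather than merely accessing it through its Laplace transform.
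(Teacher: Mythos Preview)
Your high-level strategy --- conformal welding plus mating-of-trees, with the variance pinned down against Remy's boundary formula --- is exactly the paper's approach. But several of your concrete choices do not work, and one key mechanism is missing.

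\textbf{The curve and the pieces.} A simple $\SLE_{\gamma^2}$-type curve from a boundary point to the bulk marked point does not disconnect the disk: the complement remains simply connected, so there are no ``two sides'' to weld. The paper instead uses a \emph{bubble}: a simple loop $\eta$ rooted at a boundary point and surrounding the bulk insertion $i$. This loop cuts $\bbH$ into an inner disk $D_\eta(0)$ containing $i$ and an outer two-pointed disk $D_\eta(\infty)$. Correspondingly, the two welded pieces are not a quantum wedge and a two-pointed quantum disk (a wedge has infinite area and cannot appear in a finite-area statement), but rather $\cM^\disk_{1,1}(\alpha;\cdot)$ on the inside and $\cMtwo(\frac{\gamma^2}2;\cdot,\cdot)$ on the outside. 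The $\alpha$-dependence enters via a Girsanov reweighting of the $\alpha=\gamma$ case (Theorem~\ref{thm-bubble-zipper}), which also tilts the loop measure by $|\psi_\eta'(i)|^{2\Delta_\alpha-2}$.

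\textbf{How the inverse gamma appears.} You propose to compute the Laplace transform of the area directly from the Brownian encoding. The paper does not do this; the mating-of-trees description of $\cMtwo(\frac{\gamma^2}2)$ only gives the joint law of $(\cA,\cL)$ for the \emph{outer} piece, not the full area. The crucial step is the self-similarity of the welding: since the inner piece, rescaled to unit boundary length, is again distributed as $\cM^\disk_{1,0}(\alpha;1)^\#$, the total area $\cA_0$ satisfies the distributional recursion $\cA_0 \stackrel{d}{=} \cA + \cL^2 \cA_1$ with $\cA_1$ an independent copy of $\cA_0$. Translating $(\cA,\cL)$ into the duration and hitting point of a Brownian motion in a cone $\cC_\phi$ (with $\phi=\frac{\gamma\pi}{2(Q-\alpha)}$), one shows that the unique solution to this recursion is the inverse gamma law, identified as the duration of $\sm_{\cC_\phi}(u,0)^\#$. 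This fixed-point argument is the heart of the proof; it is not a ``secondary subtlety.''

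\textbf{The range.} The cone construction requires $\phi<2\pi$, i.e.\ $\alpha<Q-\frac{\gamma}{4}$, so the welding argument does \emph{not} cover $(\frac{\gamma}{2},Q)$ uniformly. The paper closes the gap by proving analyticity of $U(\alpha)$ in $\alpha$ on a complex neighborhood of $(\frac{2}{\gamma},Q)$ (Proposition~\ref{prop:analytic}), matching $U_{\mathrm{FZZ}}$ on the subinterval, and then reading the inverse gamma law back from the FZZ formula for $\alpha\in[Q-\frac{\gamma}{4},Q)$.
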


When $\alpha=\gamma$, by~\cite{cercle-quantum-disk} and~\cite{AHS-SLE-integrability},  $\cM_{1,0}^\disk(\alpha; 1)^\#$ describes the law of the so-called \emph{quantum disk} with unit boundary length and one interior marked point. In this case, based on the mating-of-trees theory of Duplantier, Miller, and Sheffield~\cite{wedges}, 
Gwynne and the first author of this paper~\cite{ag-disk} proved	that the law of the quantum area 
is the inverse gamma distribution with shape $\frac2\gamma(Q-\gamma)$ and scale $\frac{1}{2 \BB a^2 \sin^2(\pi\gamma^2/4) }$, where $\BB a^2$
is the unknown variance in the mating-of-trees theory, which first appeared in~\cite[Theorem 8.1]{wedges}.

Let us review the mating-of-trees theory.
In our proofs  we will only use some of its consequences  that can be stated without explicit reference to  it. 
Hence we will keep our discussion brief and  refer to the survey~\cite{ghs-mating-survey} for more background, especially on its fundamental role in the recent development on the scaling limit of random planar maps.
Recall that the Schramm-Loewner evolution ($\SLE_\kappa$) with a parameter $\kappa>0$ is a canonical family of conformal invariant random planar curves 
discovered by Schramm~\cite{schramm0}. In a nutshell, mating-of-trees theory says that if we run a space-filling variant of an $\SLE_{16/\gamma^2}$
curve on top of an independent $\gamma$-LQG surface, then this curve-decorated quantum surface can be encoded by a  two-dimensional correlated Brownian motion 
$(L_t, R_t)$ such that 
\begin{equation}\label{eq:MOT}
\Var[L_1]=\Var[R_1]=\BB a^2 \textrm{ and } \Cov(L_1 R_1)=  -\BB a^2 \cos (\frac{\gamma^2\pi}{4}).
\end{equation}

Here the \emph{mating-of-trees variance} $\BB a^2$ is an unknown function of the parameter $\gamma$. As a first step towards proving Theorem~\ref{thm:inverse-gamma}, we identify  the value of $\BB a^2$.

\begin{theorem}\label{thm:MOT-var}
	For $\gamma\in (0,2)$, the mating-of-trees variance $\mathbbm a^2 = \mathbbm a^2(\gamma)$ is given by 
	\[\mathbbm a^2 = \frac2{\sin (\frac{\pi \gamma^2}4)}.\]
\end{theorem}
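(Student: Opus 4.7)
The plan is to match two independent exact computations of the quantum area distribution in the special case $\alpha=\gamma$ of Theorem~\ref{thm:inverse-gamma}, one of which expresses the scale parameter in terms of $\mathbbm{a}^2$ and one of which does not.

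First, on the mating-of-trees side. Using the identification of $\cM_{1,0}^\disk(\gamma;1)^\#$ with the unit-boundary-length quantum disk carrying an interior marked point sampled from the normalized quantum area measure (via \cite{cercle-quantum-disk} and \cite{AHS-SLE-integrability}), the result of Ang and Gwynne~\cite{ag-disk} applies directly. That result, based on the excursion decomposition of the peanosphere Brownian motion in~\eqref{eq:MOT}, gives the total quantum area as an inverse gamma distribution with shape $\tfrac{2}{\gamma}(Q-\gamma) = \tfrac{4}{\gamma^2}-1$ and scale $\tfrac{1}{2\mathbbm{a}^2 \sin^2(\pi\gamma^2/4)}$. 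Thus $\mathbbm{a}^2$ appears only through the scale.

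Second, on the conformal welding side, I would derive the same area law via the new quantum zipper theorem of this paper, applied to an $\alpha=\gamma$ bulk insertion and glued to an independent quantum surface along an $\SLE_{\gamma^2}$ interface. Combined with Remy's Fyodorov--Bouchaud formula on $\bbH$~\cite{remy-fb-formula} for the total boundary GMC mass and a Girsanov shift for the log singularity at $i$, this should yield the area of the $\alpha=\gamma$ disk as an inverse gamma law with the same shape $\tfrac{4}{\gamma^2}-1$ but with the explicit scale $\tfrac{1}{4\sin(\pi\gamma^2/4)}$, expressed entirely in terms of explicit $\Gamma$ and trigonometric factors and without any reference to the peanosphere Brownian motion.

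Matching the two scales gives $2\mathbbm{a}^2\sin^2(\pi\gamma^2/4) = 4\sin(\pi\gamma^2/4)$, hence $\mathbbm{a}^2 = \tfrac{2}{\sin(\pi\gamma^2/4)}$. The main difficulty is the second step: setting up a welding whose output area--length distribution is fully controlled by Remy's formula, and then carrying out the disintegration against boundary length so that $\mathbbm{a}^2$ does not implicitly reappear through the mating-of-trees machinery on the $\SLE$ side of the welding identity.
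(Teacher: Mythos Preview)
Your strategy has a genuine circularity in the second step that you correctly flag but cannot circumvent. The bubble zipper (Theorem~\ref{thm-bubble-zipper}/\ref{thm-FZZ-weld}) glues $\cM_{1,1}^\disk(\alpha)$ to a weight-$\tfrac{\gamma^2}{2}$ quantum disk, and the only available description of the joint area--length law of $\cMtwo(\tfrac{\gamma^2}{2};\ell,r)$ is Proposition~\ref{lem-mot}, which expresses everything through Brownian motion in the cone $\cC_\theta$ with the scaling constant $u=1/(\mathbbm a\sin\theta)$. The resulting recursion $X\stackrel{d}{=}A+\tfrac{L^2}{u^2}X$ then pins the scale of the inverse gamma to $u^2/2 = 1/(2\mathbbm a^2\sin^2\theta)$ --- exactly the expression from your first step. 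Remy's formula~\cite{remy-fb-formula} governs only the \emph{boundary} GMC $\nu_\phi(\R)$, not the bulk area $\mu_\phi(\bbH)$, so it supplies no independent handle on the scale; your second computation collapses onto the first and yields no information about $\mathbbm a^2$. Indeed, in the paper the welding-based identification of the inverse gamma (Lemma~\ref{lem:AL}, Proposition~\ref{prop:inverse-a}) explicitly \emph{uses} Theorem~\ref{thm:MOT-var} as input.

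The paper's route avoids welding entirely for this theorem. From Theorem~\ref{thm: intro area disk} it extracts only the \emph{mean} area $\QD(1)^\#[A]=b/(a-1)$ with $b=1/(2\mathbbm a^2\sin^2\theta)$, and via Definition~\ref{def-QD} this mean becomes the ratio $|\QD_{0,2}(\ell)|/|\QD_{1,0}(\ell)|$ of two \emph{boundary-length densities} (Lemma~\ref{lem:QD-ratio}). Each of these is a pure boundary-GMC quantity: $|\QD_{0,2}(\ell)|$ is the Remy--Zhu reflection coefficient $\ol R(\gamma;1,1)$ (Lemma~\ref{lem-disk-perim-law}), and $|\QD_{1,0}(\ell)|$ follows from the LCFT identification $\LF_\bbH^{(\gamma,i)}\leftrightarrow\QD_{1,0}$ of Theorem~\ref{thm:1disk-const} (proved via the Haar-measure uniform embedding of Proposition~\ref{prop-unmarked-disk-embed}) together with Remy's $\ol U_0(\gamma)$. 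No area information beyond the first moment is needed, and no cone-Brownian-motion input enters, so $\mathbbm a^2$ is genuinely isolated.
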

We will prove Theorem~\ref{thm:MOT-var} in Section~\ref{sec:variance}. 
Our proof has two ingredients: a systematic understanding of the relation between canonical quantum surfaces  and LCFT 
developed by the first and the third authors with Holden in~\cite{AHS-SLE-integrability}; the explicit boundary LCFT correlation functions computed by the second author in~\cite{remy-fb-formula} and in his joint work ~\cite{RZ_boundary} with Zhu. 

To prove Theorem~\ref{thm:inverse-gamma}, we use the idea of conformal welding which we recall now.
For $\gamma\in (0,2)$ and $\kappa=\gamma^2\in (0,4)$, 
if we run an independent $\SLE_\kappa$ on top of a certain type of 
$\gamma$-LQG quantum surface, the two quantum surfaces on the two sides of the SLE curve are independent quantum surfaces. 
Quantum boundary lengths from the two sides agree on the curve, defining an unambiguous  notion of quantum length on the SLE curve. 
Moreover, the original curve-decorated quantum surface can be recovered by gluing the two smaller quantum surfaces according to the quantum boundary lengths.
This recovering procedure is called \emph{conformal welding}.
Such results were first established by Sheffield~\cite{shef-zipper} and later extended in~\cite{wedges,ahs-disk-welding}.

In this paper we prove a new conformal welding result that enables us to derive the inverse gamma distribution in  Theorem~\ref{thm:inverse-gamma}. 
It asserts  the existence of an  $\SLE_\kappa$ type curve $\eta$ with the following properties.  See Figure~\ref{fig-bubble} for an illustration. 
\begin{itemize}
	\item $\eta$ is a simple closed random curve surrounding $i$ that visits 0 and otherwise is in $\bbH$. 
	\item Suppose $\phi$ is independent of $\eta$ and  is sampled from the conditional law of $\LF^{(\alpha,i)}_\bbH$ conditioning on $\nu_\phi(\R)=1$. 
	Let $D_\eta(0)$  and  $D_\eta(\infty)$ be the bounded and unbounded component of $\bbH\setminus \eta$, respectively. 
	Then as quantum surfaces with marked points, 
	$(D_\eta(0),\phi,i,0)$  and  $(D_\eta(\infty),\phi,0^-, 0^+)$ are conditionally independent given the quantum length of $\eta$.
	\item  The conditional law  of $(D_\eta(0),\phi,i)$ conditioning  on  the quantum length $\ell$ of $\eta$ is $\cM_{1,0}^\disk(\alpha; \ell)^\#$.  
	\item The law of the quantum area and quantum lengths of the two boundary arcs of  $(D_\eta(\infty),\phi,0^-, 0^+)$ can be 
	explicitly described in terms of  the mating-of-trees Brownian motion $(L_t, R_t)$.
\end{itemize}
\begin{figure}[ht!]
	\begin{center}
		\includegraphics[scale=0.65]{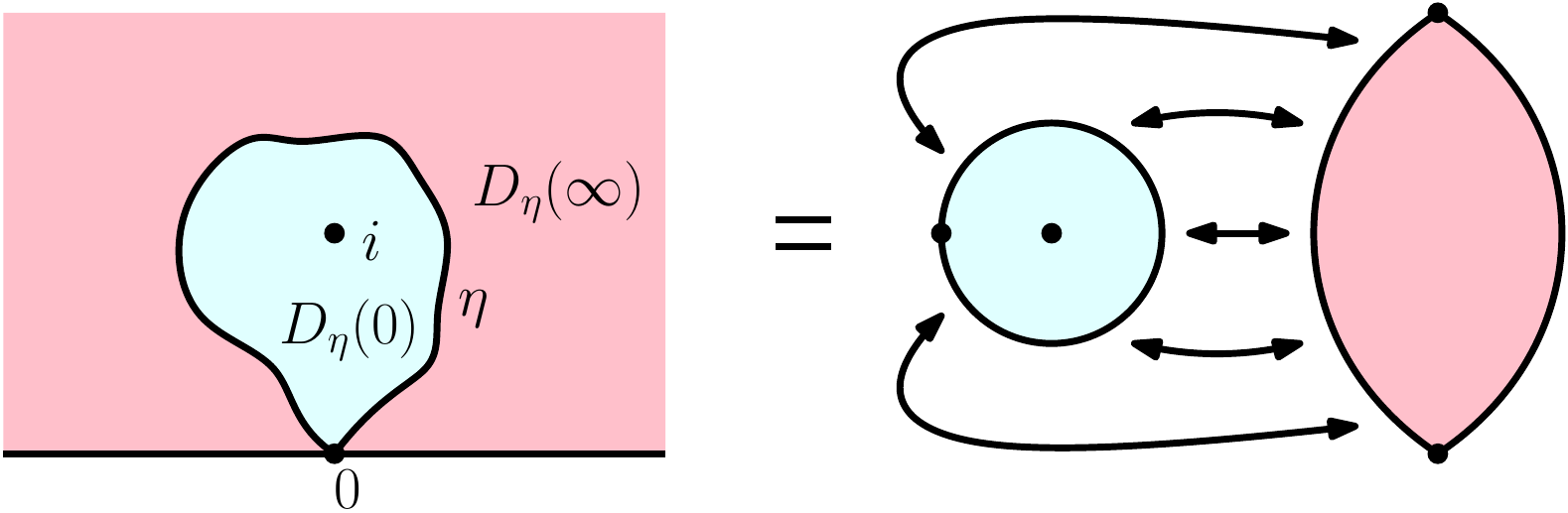}%
	\end{center}
	\caption{\label{fig-bubble} \textbf{Left:} A  curve $\eta$  independent of a sample $\phi$ from $\LF_\bbH^{(\alpha ,i)}$. \textbf{Right: }The quantum surfaces $(D_\eta(0), \phi, i, 0)$ and $(D_\eta(\infty), \phi, 0^-, 0^+)$ are conditionally independent given the quantum length of $\eta$.
	Moreover,  $(\phi, \eta)$ can be recovered from the two quantum surfaces by conformal welding.}
\end{figure}
This result is stated as Theorem~\ref{thm-FZZ-weld} and proved in Sections~\ref{subsec:bubble2} and~\ref{sec:bubble}. 
It relies on the conformal welding results for finite area quantum surfaces proved in~\cite{ahs-disk-welding}.
The law of $(D_\eta(\infty),\phi,0^-, 0^+)$  is given by a variant of the so-called \emph{two-pointed quantum disk with weight $\gamma^2/2$}, whose quantum length and area distribution is obtained in~\cite{ahs-disk-welding}  in terms of the mating-of-trees Brownian motion; see Section~\ref{subsec:special disks}.

Using the above conformal welding result, we can obtain a recursive relation on the law of the quantum area of a sample from $\cM_{1,0}^\disk(\alpha; 1)^\#$. Using a path decomposition for  Brownian motion in cones, we prove that the only solution to this recursion is the inverse gamma distribution in~Theorem~\ref{thm:inverse-gamma}, which is identified as the law of the duration of a certain  Brownian motion in cones. 
For technical reasons, we carry out  this argument for $\alpha\in (\frac{\gamma}{2},Q-\frac{\gamma}{4})$ first. This gives Theorem~\ref{thm:FZZ-physics} for 
$\alpha\in (\frac{2}{\gamma},Q-\frac{\gamma}{4})$. Since $\frac{\gamma}{2}<\frac2{\gamma}<Q-\frac{\gamma}{4}$,
by the analyticity of $U(\alpha)$ in $\alpha\in (\frac{2}{\gamma},Q)$, 
we obtain Theorem~\ref{thm:FZZ-physics} in its full range $(\frac2{\gamma},Q)$, 
which in turn gives Theorem~\ref{thm:inverse-gamma} in its full range $(\frac{\gamma}{2},Q)$.
It also allows us to extend Theorem~\ref{thm:FZZ-physics} to $\alpha\in (\frac{\gamma}{2},Q)$ for a suitably defined $U(\alpha)$; see Corollary \ref{cor_truncation}.
See  Section~\ref{sec:FZZ} for the detailed argument.

\subsection{Exact solvability for moments of a  Gaussian multiplicative chaos}\label{subsub:GMC-disk}
It is natural to look at the limits $\mu \rightarrow 0$ or $\mu_B \rightarrow 0$ in the exact formula of Theorem \ref{thm:FZZ-physics}. These limits have the effect of deleting one of the two Liouville potentials in \eqref{liouville_action} and the probabilistic expression for $U(\alpha)$ then reduces up to an explicit prefactor to a moment of GMC either on the bulk or on the boundary of the domain.  In the case of $\mu \rightarrow 0$, see equation \eqref{eq:U0} for this reduction. Our main result then reduces in this case to Proposition \ref{prop-remy-U} giving an exact formula for the moment of GMC on $\mathbb{R}$.  This formula was derived in \cite{remy-fb-formula} and is actually used in our proof of Theorem \ref{thm:FZZ-physics}. On the other hand the limit $\mu_B \rightarrow 0$ provides a novel result on the moments of GMC on $\mathbb{H}$.

\begin{proposition} Let $\gamma \in (0,2)$, $\alpha \in (\frac{\gamma}{2}, Q)$, and $\wt \phi = h - 2Q \log |z|_+ + \alpha G_{\mathbb{H}}(z,i)$,  
	where $h $ is a sample from 
	$P_{\mathbb{H}}$.  Then
	\begin{equation}
	\E \left[ \mu_{\wt \phi}(\mathbb{H})^{\frac1 \gamma(Q-\alpha)} \right] =  \frac{ 2}{\sqrt{\pi}}  \left( \frac{ \pi}{ 2^{\gamma \alpha +2 }}  \frac{\Gamma(\frac{\gamma^2}{4})}{\Gamma(1 -\frac{\gamma^2}{4})} \right)^{\frac{Q - \alpha}{\gamma} } \Gamma( \frac{\gamma \alpha}{2} - \frac{\gamma^2}{4} ) \Gamma(\frac{\alpha}{\gamma} - \frac{2}{\gamma^2}) \cos \left(\pi \frac{\alpha -Q}{\gamma} \right).
	\end{equation}
\end{proposition}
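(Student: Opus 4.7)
The plan is to derive the formula by taking $\mu_B\to 0$ in Theorem~\ref{thm:FZZ-physics} (in its extension to $\alpha\in(\gamma/2,Q)$ provided by Corollary~\ref{cor_truncation}) and unfolding the probabilistic definition of $U(\alpha)$ via a Girsanov shift by the log-singularity at $i$. I will first establish the identity on the subrange $(\max(\gamma/2,Q-\gamma),Q)$, where the relevant integrals converge absolutely, and then extend to $(\gamma/2,Q)$ by analyticity.

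On the probabilistic side, monotone convergence gives $U(\alpha)|_{\mu_B=0}=\LF_\bbH^{(\alpha,i)}[e^{-\mu\mu_\phi(\bbH)}-1]$. On the FZZ side, the relation \eqref{eq:def_s} forces $s\to 1/\gamma$, so $\cos((\alpha-Q)\pi s)\to\cos(\pi(\alpha-Q)/\gamma)$. By Cameron--Martin, sampling $\phi$ from $\LF_\bbH^{(\alpha,i)}$ amounts to writing $\phi=\wt\phi+c$ with $c$ integrated against $e^{(\alpha-Q)c}\,dc$ and an overall Girsanov constant $2^{-\alpha^2/2}$, the latter coming from the renormalized value $G_\bbH(i,i)^{\mathrm{ren}}=-\log 2$ under the chosen normalization. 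Using $\mu_{\wt\phi+c}(\bbH)=e^{\gamma c}\mu_{\wt\phi}(\bbH)$, the substitution $u=\mu e^{\gamma c}\mu_{\wt\phi}(\bbH)$ together with the identity $\int_0^\infty u^{s-1}(e^{-u}-1)\,du=\Gamma(s)$ for $s\in(-1,0)$ yields
\[U(\alpha)\big|_{\mu_B=0}=\frac{2^{-\alpha^2/2}}{\gamma}\,\mu^{(Q-\alpha)/\gamma}\,\Gamma\!\left(\tfrac{\alpha-Q}{\gamma}\right)\E\!\left[\mu_{\wt\phi}(\bbH)^{(Q-\alpha)/\gamma}\right].\]

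Equating this with $U_{\mathrm{FZZ}}(\alpha)|_{\mu_B=0}$ and canceling the common factors $\mu^{(Q-\alpha)/\gamma}$ and $2^{-\alpha^2/2}$, the only nontrivial simplification is the ratio $\Gamma(2\alpha/\gamma-4/\gamma^2-1)/\Gamma((\alpha-Q)/\gamma)$. Since $2\alpha/\gamma-4/\gamma^2-1=2(\alpha-Q)/\gamma$, the Legendre duplication formula $\Gamma(2z)=2^{2z-1}\pi^{-1/2}\Gamma(z)\Gamma(z+1/2)$ applied with $z=(\alpha-Q)/\gamma$ (noting $z+1/2=\alpha/\gamma-2/\gamma^2$) rewrites this ratio as $2^{2(\alpha-Q)/\gamma-1}\pi^{-1/2}\Gamma(\alpha/\gamma-2/\gamma^2)$. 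The accompanying $4^{-(Q-\alpha)/\gamma}$ then combines with $(\pi/2^{\gamma\alpha})^{(Q-\alpha)/\gamma}$ to produce the advertised $(\pi/2^{\gamma\alpha+2})^{(Q-\alpha)/\gamma}$, while $2\pi^{-1/2}$ yields the $2/\sqrt{\pi}$ prefactor in the statement.

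The main technical point is verifying the Girsanov constant $2^{-\alpha^2/2}$ under the specific normalization of $P_\bbH$ (zero average on $\partial\D\cap\bbH$); this follows from a short computation with the explicit Green's function $G_\bbH(z,w)=-\log|z-w|-\log|z-\bar w|+2\log|z|_+ + 2\log|w|_+$ evaluated on the diagonal at $i$. The extension to $\alpha\in(\gamma/2,Q-\gamma]$ (nonempty only when $\gamma<\sqrt{2}$) is by analytic continuation: the left-hand side is a GMC moment of order $(Q-\alpha)/\gamma$, which stays finite and analytic on this strip, and the right-hand side is manifestly analytic, so both sides agree throughout $(\gamma/2,Q)$.
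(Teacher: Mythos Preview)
Your proof is correct and follows exactly the route the paper indicates in Section~1.3: take the $\mu_B\to 0$ limit of the FZZ formula, unwind the Liouville field via the Girsanov description (Lemma~\ref{lem:Girsanov}), and compute the resulting $c$-integral. One small remark: your initial range $(\max(\gamma/2,Q-\gamma),Q)$ dips below $2/\gamma$, where Theorem~\ref{thm:FZZ-physics} is not stated directly and you would need the truncated definition of Corollary~\ref{cor_truncation} (harmless here since at $\mu_B=0$ the odd-order coefficients $c_1,c_3,\dots$ vanish); it is slightly cleaner to start on $(2/\gamma,Q)$, where the $c$-integral already converges because $2/\gamma>Q-\gamma$, and then extend by analyticity in $\alpha$ of the GMC moment (provable by the same method as Proposition~\ref{prop:analytic}).
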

A further corollary can be derived if one assumes the moment $\frac{1}{\gamma}(Q -\alpha)$ of the above GMC to be an integer $n \in \mathbb{N}$. It is a well-known simple Gaussian computation that a positive integer moment of GMC reduces to a Selberg type integral, see \cite{Selberg_int} for a review on these integrals.
\begin{corollary}
	Let $\gamma \in (0,2) $ and $ n \in \mathbb{N}$ such that $0 < n < \frac{2}{\gamma^2}$. Then the following holds:
	\begin{align}\label{DF_integral_H}
	& \int_{\mathbb{H}^n} dz_1 \cdots dz_n \prod_{i<j} \left( \frac{1}{|z_i -z_j|^{\gamma^2} |z_i -\overline{z}_j|^{\gamma^2} } \right)   \prod_{i=1}^n  \left( \frac{ 1 }{| z_i - \overline{z}_i |^{\frac{\gamma^2}{2}}} \frac{1}{|z_i^2  + 1|^{\gamma Q - n\gamma^2} } \right)
	\\ \nonumber
	& \quad = \frac{ 2}{\sqrt{\pi}}  \left( \frac{- \pi}{ 2^{ 4 + \frac{\gamma^2}{2}(1-2n)  }}  \frac{\Gamma(\frac{\gamma^2}{4})}{\Gamma(1 -\frac{\gamma^2}{4})} \right)^{n } \Gamma( 1 - \frac{\gamma^2}{2} n ) \Gamma(\frac{1}{2} - n).
	\end{align}
\end{corollary}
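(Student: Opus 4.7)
The plan is to compute $\mathbb{E}[\mu_{\wt\phi}(\mathbb{H})^n]$ directly by Wick's formula for Gaussians when $n$ is a positive integer, identify the result with the Selberg-type integral on the LHS of~\eqref{DF_integral_H}, and compare with the value furnished by the Proposition evaluated at $\alpha := Q-n\gamma$. Note that the hypothesis $0<n<2/\gamma^2$ is equivalent to $\alpha \in (\gamma/2, Q)$, so the Proposition is applicable.

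First, by Fubini and the standard GMC renormalization procedure,
\begin{equation*}
\mathbb{E}[\mu_{\wt\phi}(\mathbb{H})^n] \;=\; \int_{\mathbb{H}^n}\prod_{i=1}^n\bigl(|z_i|_+^{-2Q\gamma}\,e^{\alpha\gamma\,G_{\mathbb{H}}(z_i,i)}\bigr)\cdot K(z_1,\dots,z_n)\,dz_1\cdots dz_n,
\end{equation*}
where $K(z_1,\dots,z_n):=\lim_{\epsilon\to 0}\epsilon^{n\gamma^2/2}\,\mathbb{E}\bigl[\prod_{i}e^{\gamma h_\epsilon(z_i)}\bigr]$. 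Using the covariance $\mathbb{E}[h(z)h(w)] = -\log|z-w|-\log|z-\bar w|+2\log|z|_++2\log|w|_+$ of the free-boundary GFF normalized to zero average on $\partial\mathbb{D}\cap\mathbb{H}$, Wick's formula gives
\begin{equation*}
K(z_1,\dots,z_n) = \prod_{i<j}|z_i-z_j|^{-\gamma^2}|z_i-\bar z_j|^{-\gamma^2}\,\cdot\,\prod_i|z_i-\bar z_i|^{-\gamma^2/2}\,\cdot\,\prod_i|z_i|_+^{2\gamma^2 n},
\end{equation*}
where the final factor combines a contribution $2\gamma^2(n-1)$ from the $n-1$ cross-term pairs involving $z_i$ with $2\gamma^2$ from the self-interaction at $z_i$.

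The decisive step is that the total exponent of $|z_i|_+$ in the full integrand is
\begin{equation*}
-2Q\gamma + 2\alpha\gamma + 2\gamma^2 n = 2\gamma(\alpha - Q + n\gamma) = 0
\end{equation*}
by the defining relation $n\gamma = Q-\alpha$, so all $|z_i|_+$ factors cancel. Expanding $G_{\mathbb{H}}(z,i) = -\log|z-i|-\log|z+i|+2\log|z|_+$, using $|z_i-i|\,|z_i+i|=|z_i^2+1|$, and noting $\alpha\gamma = \gamma Q - n\gamma^2$, the surviving integrand coincides exactly with that on the LHS of~\eqref{DF_integral_H}.

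Finally, substituting $\alpha = Q - n\gamma$ into the Proposition's RHS, one checks the algebraic identifications $(Q-\alpha)/\gamma = n$, $\gamma\alpha + 2 = 4 + \tfrac{\gamma^2}{2}(1-2n)$, $\tfrac{\gamma\alpha}{2} - \tfrac{\gamma^2}{4} = 1 - \tfrac{n\gamma^2}{2}$, $\tfrac{\alpha}{\gamma} - \tfrac{2}{\gamma^2} = \tfrac12 - n$, and $\cos(\pi(\alpha-Q)/\gamma) = (-1)^n$; the $(-1)^n$ is absorbed into $\pi^n$ to produce the $(-\pi)^n$ appearing in~\eqref{DF_integral_H}, completing the identification. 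The only potential obstacle is careful bookkeeping of the normalization constants in the covariance of $h$ (in particular the $2\log|z|_+$ corrections associated with the zero-average normalization); beyond that, the argument reduces to a textbook Gaussian calculation whose essential mechanism is the $|z_i|_+$ cancellation forced by the moment condition $n\gamma = Q-\alpha$.
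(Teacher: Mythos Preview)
Your proof is correct and follows precisely the approach the paper sketches: the paper states without detailed proof that ``a positive integer moment of GMC reduces to a Selberg type integral'' via ``a well-known simple Gaussian computation,'' and you have supplied exactly that Gaussian computation, verified the cancellation of the $|z_i|_+$ factors from the Seiberg-type condition $n\gamma = Q-\alpha$, and checked the algebraic substitutions matching the Proposition's right-hand side to~\eqref{DF_integral_H}.
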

The integral \eqref{DF_integral_H} ressembles the Dotsenko-Fateev integral of \cite{DF1984} appearing in the context of CFT on the Riemann sphere, with $\mathbb{C}^n$ being replaced by $\mathbb{H}^n$. To the best of our knowledge the above evaluation of \eqref{DF_integral_H} was not known. 
Notice that for $\gamma \geq \sqrt{2}$, there are no valid $n$ as the first moment of the GMC on $\mathbb{H}$ is not finite. For any $\gamma \in (0,\sqrt{2})$, there are finitely many $n$ that satisfy $0 < n < \frac{2}{\gamma^2}$. Lastly let us note by conformal invariance it is possible to write both of these results on the unit disk $\mathbb{D}$, see for instance \cite[Section 5.3]{RZ_boundary} for how to link moments of GMC on $\mathbb{D}$ and $\mathbb{H}$.

\subsection{Perspectives and outlook}\label{subsec:outlook}
In this section we describe several perspectives, ongoing projects,  and future directions related to the FZZ formula. 

\subsubsection{Integrability and the conformal bootstrap for boundary LCFT}\label{subsub:BLCFT}

In order to carry out the conformal bootstrap for LCFT on Riemann surfaces with boundary, along with the bulk one-point function that we obtained in Thoerem~\ref{thm:FZZ-physics}, one needs to compute three other correlation functions for LCFT on $\bbH$
\begin{equation}\label{eq:GRH}
\left \langle e^{\alpha \phi(z)} e^{\frac{\beta}{2} \phi(s)} \right \rangle, \quad \left \langle e^{\frac{\beta}{2} \phi(s_1)} e^{\frac{\beta}{2} \phi(s_2)} \right \rangle, \quad \left \langle e^{\frac{\beta_1}{2} \phi(s_1)} e^{\frac{\beta_2}{2} \phi(s_2)} e^{\frac{\beta_3}{2} \phi(s_3)} \right \rangle,
\end{equation}
where $z \in \mathbb{H}$, $s, s_1, s_2, s_3 \in \mathbb{R}$.
For the boundary two-point and three-point functions one also has the freedom to choose $\mu_B$ as a function defined on the boundary 
which is constant on each arc in between boundary insertions. See \cite[Figure 1]{RZ_boundary} for a summary. 

Along with $\left \langle e^{\alpha \phi(z)} \right \rangle$, these three correlation functions are the ``basic" correlations as thanks to conformal invariance they depend trivially on $z,s_1,s_2,s_3$. On the other hand their dependence on $\gamma,\mu,\mu_B, \alpha, \beta, \beta_i$ is non-trivial. Explicit formulas for these functions have been proposed in the physics papers  \cite{FZZ, Hosomichi_2001, PT_boundary_3pt}.
In the limiting case where $\mu =0$, they reduce to moments of the boundary GMC measure and they have been explicitly computed in the work \cite{RZ_boundary}.  	
In a work in progress with Zhu, we plan to verify the proposed formulas for all three correlations for general $\mu>0$.
The techniques will be a combination of the tools of the present paper and of \cite{RZ_boundary}.
Indeed, the approach in  \cite{RZ_boundary} is still applicable to the second and third correlation functions in~\eqref{eq:GRH} as there is no bulk insertion.  
By a conformal welding statement  similar to the one in this paper, we will get the first one from the second one and the FZZ formula.

Once the above correlations have been evaluated, the next natural step is to compute a correlation function with more marked points or on a non simply connected surface with boundary. For this purpose one needs to implement the conformal bootstrap procedure first proposed in physics in \cite{BPZ1984}. At the level of mathematics this has been achieved to compute an $N$-point function on the sphere in the recent breakthrough \cite{GKRV_bootstrap}. One can expect to adapt the methods of \cite{GKRV_bootstrap} to the case of a surface with boundary, where the FZZ formula along with the correlations \eqref{eq:GRH} are a crucial input. More concretely in \cite{Martinec_2003} a bootstrap equation involving the FZZ formula is proposed to compute the partition function of LCFT on an annulus. We state it here as a conjecture.
\begin{conjecture}
	Consider an annulus represented as a cylinder of length $\pi \tau$ and of radius $1$. Let $ q = e^{- 2 \pi \tau} $ and $\mathcal{Z}_{\mathrm{Annulus}}$ be the partition function (no insertion points) of LCFT on this annulus defined probabilistically in an analogous way to \eqref{eq:1pt}, see also \cite{Remy_annulus}. Then the following equality should hold: 
	\begin{equation}\label{bootstrap_annulus}
	\mathcal{Z}_{\mathrm{Annulus}} = \int_{\mathcal{C}} U_{\mathrm{FZZ}}(Q + i P) U_{\mathrm{FZZ}}(Q - i P) q^{P^2 - \frac{1}{24}} \prod_{n \geq 1} (1 - q^n) dP.
	\end{equation}
	One has the degree of freedom to choose different values of $\mu_B$ for each of the two boundaries of the annulus, in which case the two $U_{\mathrm{FZZ}}$ functions in the right hand side of \eqref{bootstrap_annulus} must be computed respectively with those two values. The $\mu$ parameter is the same for $\mathcal{Z}_{\mathrm{Annulus}} $ and both $U_{\mathrm{FZZ}}$ functions. Lastly the contour of integration $\mathcal{C}$ is a suitable deformation of $\mathbb{R}$ avoiding the pole at $P =0$.
\end{conjecture}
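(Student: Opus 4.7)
The plan is to adapt the spectral / conformal bootstrap framework of Guillarmou--Kupiainen--Rhodes--Vargas~\cite{GKRV_bootstrap} to the boundary setting. By conformal invariance, view the annulus as a finite cylinder $\mathrm{Cyl}_\tau = [0,\pi\tau]\times \mathbb{S}^1$ of radius one, and write $\mathcal Z_{\mathrm{Annulus}}$ probabilistically as a Liouville field expectation of $\exp(-\mu\,\mu_\phi(\mathrm{Cyl}_\tau) - \mu_B^{(1)}\nu_\phi(\partial_0) - \mu_B^{(2)}\nu_\phi(\partial_1))$, with $\partial_0,\partial_1$ the two boundary circles, following~\cite{Remy_annulus}. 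The cylinder shape invites a Hamiltonian viewpoint: modulo treatment of the zero mode and the Weyl anomaly, $\mathcal Z_{\mathrm{Annulus}}$ should be the trace of a Liouville semigroup $e^{-\pi\tau H_{\mu_B^{(1)},\mu_B^{(2)}}}$ on a Hilbert space of functionals of the field on $\mathbb{S}^1$, and the conjectured bootstrap formula amounts to evaluating this trace by spectral decomposition.

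Concretely I would (i) cut $\mathrm{Cyl}_\tau$ along a mid-circle and use the spatial Markov property of the free-boundary GFF to split the field into a harmonic extension of its trace on the cut plus two independent GFFs on the two halves; (ii) decompose the trace data into its zero mode $c \in \R$ and its nonzero Fourier modes --- the nonzero-mode contribution is a Gaussian Mehler-kernel computation that assembles to the character factor $q^{-1/24}\prod_{n\ge 1}(1-q^n)$; (iii) conformally map each half-cylinder onto the unit disk, sending the cut circle to the origin, so that, conditionally on $c$, the half-cylinder amplitude becomes a Liouville disk partition function with a bulk logarithmic singularity of weight $Q+iP$ at the center, where $P$ is Fourier dual to $c$; (iv) identify each such amplitude with the analytic continuation of the FZZ formula (Theorem~\ref{thm:FZZ-physics}) to $\alpha = Q + iP$, with the two different boundary cosmological constants reflecting the two endpoints of the cylinder.

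Assembling the three pieces --- two analytically continued FZZ one-point functions, one per boundary, and the character factor --- should yield the right-hand side of the conjecture, with the contour $\mathcal{C}$ emerging from the change of variables $c\mapsto P$ together with the direction of steepest descent that avoids the pole at $P=0$ inherited from the free zero-mode measure $dc$.

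The main obstacle is step (iv): proving that the probabilistic half-cylinder amplitude admits a generalized eigenfunction expansion whose modes, labelled by $P\in\R$, have overlap with the Liouville vacuum exactly equal to $U_{\mathrm{FZZ}}(Q+iP)$. In the closed-sphere setting this is the core of~\cite{GKRV_bootstrap} and hinges on the Liouville reflection relation; the boundary version additionally requires extending FZZ past the Seiberg bound and simultaneously handling two distinct boundary cosmological constants, which couple to the Hamiltonian through Robin-type terms. A secondary difficulty is the rigorous justification of the contour $\mathcal{C}$ and the exchange of spectral integration with the $c$-integral, which should follow from decay estimates on $U_{\mathrm{FZZ}}(Q+iP)$ as $|P|\to\infty$ combined with its pole structure inherited from the gamma factors.
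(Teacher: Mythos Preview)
The statement you are proposing to prove is stated in the paper as a \emph{conjecture}, not a theorem: the paper does not provide a proof, and in fact presents it explicitly as an open problem in the outlook section, motivated by the physics proposal of Martinec. There is therefore no ``paper's own proof'' to compare against.

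Your outline is the natural strategy --- adapt the spectral bootstrap of \cite{GKRV_bootstrap} to the boundary setting by cutting the cylinder, diagonalizing the Liouville Hamiltonian, and identifying the boundary state overlaps with the analytically continued FZZ function --- and you correctly identify the main obstacle: establishing a rigorous generalized eigenfunction expansion in the boundary case and proving that the vacuum overlaps are exactly $U_{\mathrm{FZZ}}(Q\pm iP)$. This is not a gap in your reasoning so much as the actual content of the conjecture; what you have written is a plausible roadmap, but none of the hard steps (construction of the boundary Liouville Hamiltonian with Robin-type potential, its spectral resolution, analytic continuation of $U_{\mathrm{FZZ}}$ to the spectral line $\alpha=Q+iP$, and justification of the contour $\mathcal C$) are carried out, and each is a substantial open problem. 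In short, your proposal is a reasonable research plan but not a proof, and the paper makes no stronger claim than that.
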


\subsubsection{Interplay between three types of integrability in conformal probability}\label{subsub:interplay}
Our paper is part of an ongoing program of the first and the third authors 
to prove integrable results for SLE, LCFT, and mating-of-trees via two connections between these subjects:
1.\ equivalent but complementary descriptions of canonical random surfaces  in the path integral (e.g. \cite{dkrv-lqg-sphere}) and mating-of-trees (e.g. \cite{wedges}) perspectives; 
2.\ conformal welding of these surfaces with SLE curves as the interface. 
We now describe other aspects in this program that are closely related to this paper. 
\begin{itemize}
\item  Our paper relies on the recent work of the first and the third authors with Holden~\cite{ahs-disk-welding,AHS-SLE-integrability}. 
In particular, our conformal welding result is built on the one proved in~\cite{ahs-disk-welding}  for two-pointed quantum disks. 
Our evaluation of the mating-of-trees variance uses the technique from~\cite{AHS-SLE-integrability}  on the LCFT description of quantum surfaces. 

One of the main results in~\cite{AHS-SLE-integrability} is an exact formula for a variant of SLE curves called the chordal $\SLE_\kappa(\rho_-;\rho_+)$ (see Section~\ref{subsec:welding}).
Its proof shares the same starting point with our proof of the FZZ formula: a conformal welding result for quantum disks. 

\cite{AHS-SLE-integrability} demonstrated how to get integrability results for SLE using LCFT, mating-of-trees and conformal welding.  
 Using  the same methodology  and taking the FZZ formula as a crucial input,  the first and the third authors proved two integrable results 
 on the conformal loop ensemble in~\cite{AS-CLE}.
 The first relates the three-point correlation function of the conformal loop ensemble on the sphere ~\cite{JLS-CLE} with the DOZZ formula.
 The other addresses a conjecture of Kenyon and Wilson (recorded in~\cite[Section 4]{ssw-radii}) on its electrical thickness.

\item Our proof of the FZZ formula does not rely on conformal field theoretical techniques such as the BPZ equations and operator product expansion,
except for  two exact formulas from~\cite{remy-fb-formula} and~\cite{RZ_boundary}; see~\eqref{eq:U0-explicit} and~\eqref{eq-R}. 
In an ongoing project with Da Wu, the first and the third authors aim at using the conformal welding and stochastic calculus methods in SLE  to recover these two formulas.
Combined with our paper, this will provide a proof of the FZZ formula which is fully based on SLE, mating-of-trees, and conformal welding. 
It is an interesting open question  whether such a proof can be given for the DOZZ formula.

\item The first and the third authors are working on verifying our belief 
that in a very general sense the conformal welding of two quantum surfaces defined by LCFT can be described by LCFT, where
the conformal welding result (Theorem~\ref{thm-bubble-zipper})  in this paper is a special case. 
For another instance of this statement,
consider a pair of independent Liouville fields on a disk with three boundary insertions. If we conformally weld them along the three boundary arcs,
then the resulting surface should be given by a Liouville field on the sphere with three bulk insertions.
Moreover, the magnitude of an  insertion on the sphere is determined by  the local rule for conformal welding described in~\cite{wedges}.
More precisely, in the terminology of~\cite{wedges}, if we zoom in around an insertion on the sphere the picture looks like the  conformal welding of two \emph{quantum wedge}s into a \emph{quantum cone}, which is understood in~\cite{wedges}. In our present case,  if we zoom in around $0$ in Figure~\ref{fig-bubble}, the picture will converge to
 the  conformal welding of three quantum wedges into a single quantum wedge as established in~\cite{wedges}. 
\end{itemize}
 
\subsubsection{FZZT branes and related models}
We review here several models related to the FZZ formula in the theoretical physics literature. Boundary Liouville theory admits two kinds of boundary states, the one studied in the present paper being the so-called Fateev-Zamolodchikov-Zamolodchikov-Teschner (FZZT) brane \cite{FZZ}, \cite{Teschner_brane}. The other is the Zamolodchikov-Zamolodchikov (ZZ) brane which corresponds to LCFT on a disk with the hyperbolic metric as background metric; see \cite{ZZ_brane}. In this ZZ brane setup there is also a formula for the bulk one-point function, see \cite[Equation (2.16)]{ZZ_brane}. Both the FZZT and ZZ branes play a role in the relation between matrix models, integrable heirarchies, and Liouville quantum gravity; see for instance \cite{AleshkinMLG} and references therein. Lastly one can consider LCFT on a non-orientable surface, the simplest model being the projective plane, see \cite[Equation (3.10)]{HikidaLCFT} for the analogue of $U(\alpha)$ in that case. 
We hope to adapt our methods to study these directions in the future, see also the review \cite{Nakayama_review} for more details.

\bigskip

\noindent\textbf{Organization of the paper.} After providing background in Section~\ref{sec:pre},  we obtain the mating of trees variance in Section~\ref{sec:variance}.
Then in Section~\ref{sec:FZZ}, we carry out the strategy outlined in Section~\ref{subsec:sketch} modulo the proof of the conformal welding result, whose proof is supplied in Section~\ref{sec:bubble}.

\bigskip

\noindent\textbf{Acknowledgements.} 
We thank Nina Holden, R\'emi Rhodes, Vincent Vargas, and Tunan Zhu for early discussions on alternative approaches towards proving the FZZ formula. 
 We also thank Scott Sheffield for suggesting the mating-of-trees variance problem and Ewain Gwynne for helpful discussions on this. We thank Konstantin Aleshkin for many explanations on the theoretical physics background behind the FZZ formula.
We are grateful to Wendelin Werner for enlightening discussions. 
We thank an anonymous referee for helpful comments on an earlier version of this paper.
This project was initiated when G.R. visited MIT in the Spring of 2018. He would like to very warmly thank Scott Sheffield for his hospitality.
M.A. was partially supported by NSF grant DMS-1712862. 
G.R. was supported by an NSF mathematical sciences postdoctoral research fellowship, NSF Grant DMS-1902804.
X.S.\ was supported by  the Simons Foundation as a Junior Fellow at the Simons Society of Fellows,  and by the  NSF grant DMS-2027986 and the Career award 2046514.

\section{Preliminaries}\label{sec:pre}
In this section we provide the necessary background for our proofs. We start by fixing some global notations and conventions in Section~\ref{subsec:notation}.
We then review   Liouville CFT and quantum disks in Section~\ref{subsec:pre-LCFT} and~\ref{subsec:QD}, respectively. 
We will only need these two sections for the evaluation of the mating of trees variance in Section~\ref{sec:variance}. We recall the conformal welding of quantum disks 
in Section~\ref{subsec:welding} and some properties on two special quantum disks in Section~\ref{subsec:special disks}. 
\subsection{Notations and conventions}\label{subsec:notation}
Throughout the paper we assume $\gamma\in (0,2)$ is the LQG coupling constant. Moreover, 
\begin{equation}\label{key}
Q=\frac\gamma2+\frac2\gamma \quad \textrm{and} \quad \kappa=\gamma^2.
\end{equation}

We will work with planar domains including the upper half plane $\bbH=\{ z \in \C: \Im (z)>0 \}$, the unit disk $\D=\{z\in \C: |z|<1 \}$,
and the horizontal strip $\cS=\R\times (0,\pi)$. For a domain $D\subset \C$, we write $\bdy D$ of $D$ as the set of prime ends of $D$ 
and call it the \emph{boundary} of $D$. For example, $\bdy\bbH=\R\cup\{\infty \}$
and $\bdy \cS= \{ z\in \C: \Im z=0 \textrm{ or }1 \} \cup \{\pm \infty \}$.

We write  $\Gamma$ as the unique meromorphic function such that $\Gamma (\alpha) =\int_{0}^{\infty} x^{\alpha-1}e^{-x} dx$ for $\alpha>0$. 
For $a>0$ and $b>0$,  the inverse gamma distribution with shape parameter $a$ and scale parameter $b$ has the following density:
\begin{equation}\label{eq:inverse-gamma-parameter}
1_{x>0} \frac{b^a}{\Gamma(a)} \frac{1}{x^{a+1}} e^{-b/x}.
\end{equation}

We will frequently consider non-probability measures and extend the terminology of probability theory to this setting. 
In particular, suppose $M$ is a measure on a measurable space $(\Omega, \cF)$ such that $M(\Omega)$ is not necessarily $1$, and $X$ is a $\cF$-measurable function. 
Then we say that $(\Omega, \cF)$ is a sample space, $X$ is a random variable.  We call  the pushforward measure $M_X = X_*M$  the \emph{law} of $X$.
We say that $X$ is \emph{sampled} from $M_X$. We also write $\int f(x) M_X(dx)$ as $M_X[f]$ for simplicity.
For a finite positive measure $M$, we denote as its total mass by $|M|$
and write $M^{\#}=|M|^{-1}M$ as the corresponding probability measure.

	Let $g$ be a smooth metric on $\bbH$ such that 
	the metric completion of $(\bbH, g)$ is a compact Riemannian manifold.
	Let $H^1(\bbH, g)$ be the Sobolev  space whose norm is the sum of the Dirichlet energy and  the $L^2$-norm with respect to $(\bbH, g)$.
	Let $H^{-1}(\bbH)$ be the dual space of $H^1(\bbH, g)$. 
	Then the function space $H^{-1}(\bbH)$ and its topology do not depend on the choice of $g$, and it is a Polish  (i.e.\ complete separable metric) space. All random functions on $\bbH$ considered in this paper will belong to $H^{-1}(\bbH)$.

\subsection{Liouville conformal field theory on the upper half plane}\label{subsec:pre-LCFT}

For convenience we present LCFT on
 domains conformally equivalent to a unit disk,   with the upper half plane $\bbH$ as our base domain. 
Let $h$ be the centered Gaussian process on $\bbH$ with covariance kernel given by
\begin{equation}\label{covariance}
\E[h(x)h(y)]= G_\bbH(x,y) :=\log \frac{1}{|x-y||x-\bar{y}|} + 2 \log |x|_+ + 2 \log |y|_+,
\end{equation}
where  $|x|_+ := \max(|x|,1)$ and in the sense that  $\E[(h,f) (h,g)]= \iint f(x)\E[h(x)h(y)]g(y) dxdy$, for smooth test functions $f,g$.
Let $P_\bbH$ be the law of $h$. Using the arguments in  \cite{shef-gff,dubedat-coupling} it can be shown that $P_\bbH$ is a probability measure on  the space $H^{-1}(\bbH)$ defined in Section~\ref{subsec:notation}.
For smooth test functions $g$ and $f$ such that $\int_{\bbH} f(z)\,d^2 z=\int_{\bbH} g(z)\, d^2z=0$, we have $\E[(h,f) (h,g)]=(2\pi)^{-1}\int_{\bbH} \nabla g \cdot \nabla f\, d^2z$. This is the characterizing property of the free boundary Gaussian free field, which is only uniquely defined modulo  additive constant. 
The field $h$ is the  particular variant where the additive constant is fixed by requiring the average around the upper half unit circle to be 0.

Given a function $f\in H^{-1}(\bbH)$ and $z\in \bbH\cup  \R$, let $f_\eps(z)$ be the average of $f$ over $\partial B_\eps(z)\cap \bbH$.  
For $h\sim P_\bbH$, define the random measures $\mu_h=\lim_{\eps\to 0}\eps^{\gamma^2/2} e^{\gamma h_\eps} d^2z$ and $\nu_h=\lim_{\eps\to 0}\eps^{\gamma^2/4} e^{\gamma h_\eps/2}dz$, where the convergence holds in probability in weak topology.  See \cite{Ber_GMC, rhodes-vargas-review} and references therein for more details on this construction.
We call $\mu_h$ and $\nu_h$ the quantum area and quantum boundary length measures, respectively, corresponding to $\phi$. 
For functions which can be written as a sum of the GFF and a continuous function, quantum length and area can be defined similarly.

\begin{definition}[Liouville field]\label{def_GFF} 
	Sample  $(h, \mathbf c)$ from the measure $P_\bbH \times [e^{-Qc} \, dc]$ 
	on $H^{-1}(\bbH)\times \R$, and let $\phi(z) = h(z) - 2Q \log |z|_+ + \mathbf c$ be a random field on $\bbH$.
	Let $\LF_\bbH$  be the measure on $H^{-1}(\bbH)$ which describes the law of $\phi$. We call a sample from $\LF_\bbH$ 
	a Liouville field on $\bbH$.
\end{definition}

	The following lemma defines Liouville fields with insertions by making sense of $e^{\alpha \phi(z_0)} \LF_\bbH(d\phi)$.
\begin{lemma}\label{lem:Girsanov}
	For $\alpha \in \R$ and $z_0\in\bbH$, the limit $\LF_\bbH^{(\alpha, z_0)} := \lim_{\eps \to 0} \eps^{\alpha^2/2} e^{\alpha \phi_\eps(z_0)} \LF_\bbH(d\phi)$ exists in the vague topology. Moreover, sample $(h, \mathbf c) $ from $(2 \Im z_0)^{-\alpha^2/2} |z_0|_+^{-2\alpha(Q-\alpha)}P_\bbH \times [e^{(\alpha - Q)c} dc]$ and let 
	\[\phi(z) = h(z) - 2Q \log |z|_+ + \alpha G_\bbH(z,z_0) +\mathbf c \quad \textrm{for }z\in \bbH.\]
	Then  the law of $\phi$ is given by  $\LF_\bbH^{(\alpha,z_0)}$. 
\end{lemma}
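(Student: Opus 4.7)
My plan is to unpack both sides carefully and then reduce the statement to a Girsanov (Cameron--Martin) computation for the GFF, together with a direct substitution in the zero-mode variable $\mathbf c$.

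First I would write $\phi_\eps(z_0) = h_\eps(z_0) + f_\eps(z_0) + \mathbf c$ where $f(z) := -2Q\log|z|_+$ is continuous on $\bbH$ and $f_\eps(z_0)$ denotes its circle average on $\partial B_\eps(z_0)\cap \bbH$. Since $f$ is continuous, $f_\eps(z_0) \to f(z_0) = -2Q\log|z_0|_+$ as $\eps\to 0$; picking $\eps$ small enough that $B_\eps(z_0)\subset \bbH$ this is an ordinary average over a full circle. Hence
\[
\eps^{\alpha^2/2}\, e^{\alpha\phi_\eps(z_0)}
\;=\; e^{-2\alpha Q \log|z_0|_+ + o(1)}\cdot e^{\alpha \mathbf c}\cdot \eps^{\alpha^2/2} e^{\alpha h_\eps(z_0)}.
\]
The factor $e^{\alpha\mathbf c}$ multiplied against $e^{-Q\mathbf c}d\mathbf c$ immediately produces the desired zero-mode density $e^{(\alpha-Q)\mathbf c}d\mathbf c$, so the heart of the matter is the field piece on the right.

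Next I would apply Cameron--Martin to $P_\bbH$ with the bounded linear functional $h\mapsto \alpha h_\eps(z_0)$: the tilted measure $e^{\alpha h_\eps(z_0)} P_\bbH(dh)$ equals $e^{\tfrac12 \alpha^2 \Var(h_\eps(z_0))}$ times the law of $h + \alpha C_\eps(\cdot)$ under $P_\bbH$, where $C_\eps(z):=\E[h(z) h_\eps(z_0)]$. Using the covariance formula \eqref{covariance} and the standard circle-average identity $\int \log|x-y|\,\tfrac{|dy|}{2\pi\eps} = \log\max(|x-z_0|,\eps)$, a direct computation gives, as $\eps\to 0$,
\[
\Var(h_\eps(z_0)) \;=\; -\log\eps \;-\; \log(2\Im z_0) \;+\; 4\log|z_0|_+ \;+\; o(1),
\qquad
C_\eps(z) \to G_\bbH(z,z_0) \text{ pointwise and in } H^{-1}.
\]
Multiplying by $\eps^{\alpha^2/2}$ makes the $-\log\eps$ term disappear and produces the prefactor $(2\Im z_0)^{-\alpha^2/2}|z_0|_+^{2\alpha^2}$. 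Combining with the deterministic factor $e^{-2\alpha Q\log|z_0|_+} = |z_0|_+^{-2\alpha Q}$ gives exactly $(2\Im z_0)^{-\alpha^2/2}|z_0|_+^{-2\alpha(Q-\alpha)}$, which is the stated normalization. The shift $h \mapsto h + \alpha C_\eps$ converts the random field $\phi = h - 2Q\log|z|_+ + \mathbf c$ into $h - 2Q\log|z|_+ + \alpha C_\eps + \mathbf c$, which converges (in $H^{-1}(\bbH)$) to $h - 2Q\log|z|_+ + \alpha G_\bbH(\cdot,z_0) + \mathbf c$, yielding the field described in the lemma.

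To finish, I would promote pointwise convergence of Radon--Nikodym densities to vague convergence of measures on $H^{-1}(\bbH)$. Given a bounded continuous functional $F:H^{-1}(\bbH)\to\R$, I would test $\int F\, d\bigl(\eps^{\alpha^2/2} e^{\alpha\phi_\eps(z_0)}\LF_\bbH\bigr)$ by implementing the Girsanov change of variables above, so that the expression becomes $(2\Im z_0)^{-\alpha^2/2}|z_0|_+^{-2\alpha(Q-\alpha)}\,\E\otimes\!\!\int F(\phi^{(\eps)}) e^{(\alpha-Q)\mathbf c}d\mathbf c$ with $\phi^{(\eps)}\to \phi$ in $H^{-1}$. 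Dominated convergence in $\mathbf c$ (the exponential weight $e^{(\alpha-Q)\mathbf c}d\mathbf c$ is a fixed $\sigma$-finite measure and $F$ is bounded, although to make this work vaguely one tests against $F$ compactly supported in $\mathbf c$, as is standard for $\sigma$-finite limits) gives the desired identification $\LF_\bbH^{(\alpha,z_0)}$ as claimed. The main technical nuisance -- and the only place I expect real care -- is this last step: ensuring the vague limit holds for $\sigma$-finite measures on $H^{-1}(\bbH)$, which requires the test functional to localize the zero mode $\mathbf c$ so that the exponentially-weighted $d\mathbf c$ integral is integrable; otherwise everything is a direct Girsanov/substitution calculation.
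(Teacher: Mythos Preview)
Your proof is correct and follows essentially the same route as the paper: both apply Girsanov to the tilt $e^{\alpha h_\eps(z_0)}P_\bbH$, use the variance computation $\Var h_\eps(z_0) = -\log\eps - \log(2\Im z_0) + 4\log|z_0|_+ + o(1)$, and test against compactly supported continuous functionals to handle the $\sigma$-finite zero-mode integral. The paper's version is just more compressed (it fixes $c$ first and integrates at the end), while you separate out the $\mathbf c$-substitution and the field shift more explicitly.
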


\begin{proof}
	Sample $h$ from $P_\bbH$, fix $c \in \R$ and set $\wt \phi(z) = h(z) - 2Q \log |z|_+ + c$. For a compactly supported continuous function $F$ on $H^{-1}(\bbH) \times \mathbb{R}$, Girsanov's theorem and $\Var h_\eps(z_0) = -\log \eps - \log (2\Im z_0) + 4 \log |z_0|_+ + o_\eps(1)$ give
	\begin{align*}
	\lim_{\eps \to 0} \E \left[ \eps^{\alpha^2/2} e^{\alpha \wt \phi_{\eps}(z_0)} F(h,  c)  \right] &= (2 \Im z_0)^{-\alpha^2/2} |z_0|_+^{-2\alpha(Q-\alpha)} \lim_{\eps \to 0} \E \left[ e^{\alpha  c} e^{\alpha h_{\eps}(z_0) - \frac{\alpha^2}{2} \E[h_{\eps}(z_0)^2] } F(h,  c)  \right]\\
	&= (2 \Im z_0)^{-\alpha^2/2} |z_0|_+^{-2\alpha(Q-\alpha)} \lim_{\eps \to 0} \E \left[  e^{\alpha  c} F\left(h(\cdot) + \alpha \E[h(\cdot) h_{\eps}(i) ],  c \right)  \right] \\
	&= (2 \Im z_0)^{-\alpha^2/2} |z_0|_+^{-2\alpha(Q-\alpha)}  \E \left[ e^{\alpha  c}  F\left(h(\cdot) + \alpha G_{\bbH}(\cdot,i),  c \right)  \right].
	\end{align*} 
Integrating over $[e^{-Qc}dc]$ yields the  lemma.
\end{proof}

\begin{definition}\label{def:LFalpha}
		
	We call a sample from $\LF_\bbH^{(\alpha, z)} $ 
	a Liouville field on $\bbH$  with an $\alpha$-insertion at $z$.
\end{definition}

The following lemma gives the change of coordinate for the Liouville field under a conformal map.
\begin{lemma}\label{lem-change-coord}
	For $(\alpha, z_0) \in \R\times \bbH$, let $\psi:\bbH\to\bbH$ be a conformal map such that $\psi(z_0)=i$. Sample $\phi$ from $\LF_\bbH^{(\alpha, z_0)}$, then the law of $\phi \circ \psi^{-1} + Q \log |(\psi^{-1})'|$ is $\left|\Im z_0\right|^{-2\Delta_\alpha} \LF_\bbH^{(\alpha,i)}$ where $\Delta_\alpha=\frac{\alpha}{2}(Q-\frac{\alpha}{2})$. 
\end{lemma}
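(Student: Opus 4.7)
The plan is to realize $\phi \sim \LF_\bbH^{(\alpha, z_0)}$ via Lemma~\ref{lem:Girsanov} and compute the pushforward measure directly. Writing $\phi = h - 2Q\log|\cdot|_+ + \alpha G_\bbH(\cdot, z_0) + \mathbf c$ with $(h,\mathbf c)\sim(2\Im z_0)^{-\alpha^2/2}|z_0|_+^{-2\alpha(Q-\alpha)}P_\bbH \times e^{(\alpha-Q)c}dc$, the analogous description of $\LF_\bbH^{(\alpha,i)}$ carries prefactor $2^{-\alpha^2/2}$ (as $|i|_+ = \Im i = 1$). The claim thus reduces to showing that the pushforward under $\phi \mapsto \tilde\phi := \phi\circ\psi^{-1}+Q\log|(\psi^{-1})'|$ yields an overall Radon--Nikodym factor $|\Im z_0|^{-2\Delta_\alpha}$ relative to $\LF_\bbH^{(\alpha,i)}$.

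The central input is the conformal covariance of the Green's function under Möbius $\psi:\bbH\to\bbH$. Using $|\psi'| = \Im\psi/\Im$ on $\bbH$, one derives $|\psi(z)-\psi(w)| = |z-w||\psi'(z)\psi'(w)|^{1/2}$ and $|\psi(z)-\overline{\psi(w)}| = |z-\bar w||\psi'(z)\psi'(w)|^{1/2}$, which after substitution into the definition of $G_\bbH$ yields
\[ G_\bbH(\psi(z), \psi(w)) - G_\bbH(z, w) = \eta(z) + \eta(w),\qquad \eta(z):=2\log|\psi(z)|_+ - 2\log|z|_+ - \log|\psi'(z)|. \]
Applied with $\psi(z_0) = i$, this gives $\alpha G_\bbH(\psi^{-1}(w), z_0) = \alpha G_\bbH(w, i) - \alpha\eta(z_0) - \alpha\eta(\psi^{-1}(w))$, and together with the identity $\eta(\psi^{-1}(w)) = 2\log|w|_+ - 2\log|\psi^{-1}(w)|_+ + \log|(\psi^{-1})'(w)|$, the $w$-dependent logarithms in $\tilde\phi$ rearrange into $-2Q\log|w|_+ + \alpha G_\bbH(w,i) + (Q-\alpha)\eta(\psi^{-1}(w))$, giving
\[ \tilde\phi(w) = h(\psi^{-1}(w)) - 2Q\log|w|_+ + \alpha G_\bbH(w,i) + (Q-\alpha)\eta(\psi^{-1}(w)) - \alpha\eta(z_0) + \mathbf c. \]

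Finally I would identify $\tilde\phi$ with a scaled sample from $\LF_\bbH^{(\alpha, i)}$. Since $h \circ \psi^{-1}$ is centered Gaussian with covariance $G_\bbH - \eta\circ\psi^{-1}\otimes 1 - 1\otimes\eta\circ\psi^{-1}$, conformal invariance of the free-boundary GFF modulo constant gives $\hat h := h\circ\psi^{-1} - A \sim P_\bbH$, where $A$ is the $\partial\D\cap\bbH$ circle-average (a Gaussian random variable jointly distributed with $\hat h$). The residual $w$-dependent drift $(Q-\alpha)\eta\circ\psi^{-1}$ — which encodes the Weyl-anomaly discrepancy between the background metric $g(z) = \max(1,|z|)^{-4}$ implicit in $\LF_\bbH$ and its pullback under $\psi^{-1}$ — is absorbed through a Cameron--Martin shift of the GFF that reinterprets $\hat h + (Q-\alpha)\eta\circ\psi^{-1}$ as a $P_\bbH$-sample under a tilted measure. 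The resulting Gaussian Radon--Nikodym factor, combined with the input prefactor $(2\Im z_0)^{-\alpha^2/2}|z_0|_+^{-2\alpha(Q-\alpha)}$, the Jacobian of the zero-mode shift absorbing $A - \alpha\eta(z_0)$, and the identity $\eta(z_0) = -2\log|z_0|_+ + \log\Im z_0$ (from $|\psi'(z_0)| = 1/\Im z_0$), collapses into precisely $2^{-\alpha^2/2}|\Im z_0|^{-2\Delta_\alpha}$ with $\Delta_\alpha = \frac{\alpha}{2}(Q-\frac{\alpha}{2})$. The main obstacle is this bookkeeping — the Cameron--Martin step is delicate because the cutoff $|\cdot|_+$ is not smooth, so $\eta\circ\psi^{-1}$ is only piecewise harmonic. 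A clean way to verify the final exponent is to reduce by composition to the single-generator case $\psi(z) = z/\Im z_0$ with $z_0 = i\Im z_0$, where a direct scaling argument confirms the target exponent $-2\Delta_\alpha$.
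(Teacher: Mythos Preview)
The paper does not give a self-contained argument: it cites \cite[Theorem~3.5]{hrv-disk} (adapted from \cite[Theorem~3.5]{dkrv-lqg-sphere}) and notes that the same proof works on $\bbH$, or alternatively transfers the disk result via \cite[Section~5.3]{RZ_boundary}. Your direct computation from Lemma~\ref{lem:Girsanov} is essentially the content of those references, so you are supplying what the paper omits rather than taking a genuinely different route.

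Your Green's function identity and the resulting expression for $\tilde\phi$ are correct, as is the value $\eta(z_0) = -2\log|z_0|_+ + \log\Im z_0$. The one step that is not correctly articulated is the handling of the drift $(Q-\alpha)\mu$ with $\mu := \eta\circ\psi^{-1}$. Phrasing this as a Cameron--Martin shift whose ``Gaussian Radon--Nikodym factor collapses to a constant'' is misleading: a Cameron--Martin tilt by a non-constant $\mu$ produces a \emph{random} RN derivative, which cannot combine with deterministic prefactors to give a constant. The actual mechanism is that the random constant $A$ is \emph{measurable} with respect to $\hat h$ --- concretely $A = -(\hat h,\sigma')$ where $\sigma'$ is the uniform probability measure on $\psi(\partial\D\cap\bbH)$. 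Absorbing $A$ into the zero mode therefore introduces the factor $e^{(\alpha-Q)(\hat h,\sigma')}$ into the $P_\bbH$-integral, and Girsanov converts this into a deterministic shift of $\hat h$ by $(\alpha-Q)G_\bbH(\cdot,\sigma')$. Comparing the two expressions for $\Cov(h\circ\psi^{-1}(z),h\circ\psi^{-1}(w))$ shows $\mu - G_\bbH(\cdot,\sigma')$ is constant, so this shift exactly cancels the $(Q-\alpha)\mu$ drift up to a constant absorbed by one more zero-mode translation; the Girsanov normalization then cancels against that last translation, and only the deterministic factor $e^{\alpha(\alpha-Q)\eta(z_0)}$ times the input prefactor survives, giving $2^{-\alpha^2/2}|\Im z_0|^{-2\Delta_\alpha}$ as claimed. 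Once this clarification is inserted your argument goes through; the non-smoothness of $|\cdot|_+$ is a red herring.
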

\begin{proof} 
	 \cite[Theorem 3.5]{hrv-disk} gives this result when we parametrize the Liouville field in the disk $\D$, and is adapted from \cite[Theorem 3.5]{dkrv-lqg-sphere}. The same argument as in \cite[Theorem 3.5]{hrv-disk} applies to the upper half plane case. Alternatively, the result for $\D$ can be transferred to $\bbH$ via the coordinate change explained in \cite[Section 5.3]{RZ_boundary}. We omit the details. 
\end{proof}

Suppose $f$ is a measurable function on $H^{-1}(\bbH)$. We recall the   convention $M_X[f]$ of Section~\ref{subsec:notation} and write
$\LF_\bbH^{(\alpha, i)}[f]= \int f (\phi) \LF_\bbH^{(\alpha, i)} (d\phi )$.
\begin{definition}\label{def:U}
	For $\alpha\in (\frac{2}{\gamma},Q)$, $\mu\ge 0$ and  $\mu_B \in \mathbb{C}$ with $\mathrm{Re}(\mu_B)>0$, let 
	\[
	\left \langle  e^{\alpha \phi(z)}  \right \rangle= \left \langle  e^{\alpha \phi(z)}  \right \rangle_{\gamma, \mu,\mu_B}:= \LF_\bbH^{(\alpha, z)}[e^{-\mu \mu_\phi(\bbH) - \mu_B \nu_\phi(\R)} -1] \quad \textrm{for }z\in \bbH.
	\]
\end{definition}
We include in the definition the case of complex $\mu_B$ which will be used in Section \ref{sec:FZZ}.

\begin{lemma}\label{lem:U-bound}
	Suppose $\alpha\in (\frac{2}{\gamma},Q)$,  $\mu\ge 0$ and  $\mu_B \in \mathbb{C}$, $\mathrm{Re}(\mu_B)>0$. Then $|\left \langle  e^{\alpha \phi(i)}  \right \rangle|<\infty$. Moreover, the value of 
		$\left|\Im z\right|^{2\Delta_\alpha}\left \langle  e^{\alpha \phi(z)}  \right \rangle$ does not depend on $z\in \bbH$. 
	\end{lemma}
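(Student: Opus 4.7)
The plan is to establish both claims from the explicit description of $\LF_\bbH^{(\alpha,i)}$ supplied by Lemma~\ref{lem:Girsanov}. Writing $\wt\phi := h - 2Q\log|\cdot|_+ + \alpha G_\bbH(\cdot,i)$ with $h\sim P_\bbH$, and using the scaling $\mu_{\wt\phi+c}=e^{\gamma c}\mu_{\wt\phi}$ and $\nu_{\wt\phi+c}=e^{\gamma c/2}\nu_{\wt\phi}$, I would unfold the definition into
\[
\langle e^{\alpha\phi(i)}\rangle \;=\; C_\alpha\!\int_\R e^{(\alpha-Q)c}\,\E\!\left[e^{-\mu e^{\gamma c}\mu_{\wt\phi}(\bbH)-\mu_B e^{\gamma c/2}\nu_{\wt\phi}(\R)}-1\right]dc
\]
for an explicit positive constant $C_\alpha$; the first claim then reduces to bounding this $c$-integral.

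For finiteness, I would use the elementary inequality $|e^{-Z}-1|\le \min(2,|Z|)$, valid whenever $\Re(Z)\ge 0$, applied with $Z=\mu e^{\gamma c}\mu_{\wt\phi}(\bbH)+\mu_B e^{\gamma c/2}\nu_{\wt\phi}(\R)$ (the condition $\Re(\mu_B)>0$ ensures $\Re(Z)\ge 0$ a.s.). On $c\ge 0$ the bound by $2$ combined with $\alpha<Q$ gives $\int_0^\infty e^{(\alpha-Q)c}\,dc<\infty$. On $c<0$, using $(a+b)^p\le a^p+b^p$ and $\min(1,x)\le x^p$ for $p\in(0,1)$, I would dominate the expectation by
\[
2\mu^p e^{p\gamma c}\,\E[\mu_{\wt\phi}(\bbH)^p]+2|\mu_B|^p e^{p\gamma c/2}\,\E[\nu_{\wt\phi}(\R)^p].
\]
The resulting $c$-integrals converge as soon as $p>2(Q-\alpha)/\gamma$, and since $\alpha>\tfrac{2}{\gamma}$ yields $Q-\alpha<\tfrac{\gamma}{2}$, such $p<1$ exists. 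Finiteness of the GMC moments $\E[\mu_{\wt\phi}(\bbH)^p]$ and $\E[\nu_{\wt\phi}(\R)^p]$ for this $p$ follows from standard Kahane-type moment estimates (see e.g.\ \cite{rhodes-vargas-review, hrv-disk}), after checking that the $|\cdot|_+^{-2Q}$ growth at infinity and the $|z-i|^{-\gamma\alpha}$ singularity from the $\alpha$-insertion at $i$ contribute integrable factors.

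For the $z$-independence, I would invoke Lemma~\ref{lem-change-coord}: for a conformal automorphism $\psi:\bbH\to\bbH$ with $\psi(z_0)=i$, the map $\phi\mapsto\wt\phi:=\phi\circ\psi^{-1}+Q\log|(\psi^{-1})'|$ pushes $\LF_\bbH^{(\alpha,z_0)}$ forward to $|\Im z_0|^{-2\Delta_\alpha}\LF_\bbH^{(\alpha,i)}$. By the standard conformal covariance of Gaussian multiplicative chaos one has $\mu_{\wt\phi}(\bbH)=\mu_\phi(\psi^{-1}(\bbH))=\mu_\phi(\bbH)$ and $\nu_{\wt\phi}(\R)=\nu_\phi(\R)$, so $F(\phi):=e^{-\mu\mu_\phi(\bbH)-\mu_B\nu_\phi(\R)}-1$ satisfies $F(\wt\phi)=F(\phi)$. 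Therefore
\[
\langle e^{\alpha\phi(z_0)}\rangle = \LF_\bbH^{(\alpha,z_0)}[F(\wt\phi)]= |\Im z_0|^{-2\Delta_\alpha}\LF_\bbH^{(\alpha,i)}[F]=|\Im z_0|^{-2\Delta_\alpha}\langle e^{\alpha\phi(i)}\rangle,
\]
giving the announced $z$-independence after multiplying by $|\Im z_0|^{2\Delta_\alpha}$.

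The main obstacle I anticipate is the careful verification of the GMC moment bounds for $p$ slightly less than $1$ but above $2(Q-\alpha)/\gamma$: one must track how the $e^{\alpha\gamma G_\bbH(\cdot,i)}$ singularity at the insertion interacts with the large-scale $|\cdot|_+^{-2Q\gamma}$ decay, combining the Seiberg bound $\alpha<Q$ with the GMC moment range $p<4/\gamma^2$. Everything else is routine once Lemmas~\ref{lem:Girsanov} and~\ref{lem-change-coord} are available.
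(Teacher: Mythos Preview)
Your $z$-independence argument via Lemma~\ref{lem-change-coord} is correct and is exactly what the paper does.

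For the finiteness claim your route differs from the paper's and, as written, has a genuine gap. The paper integrates by parts in $c$: this converts $\int_\R e^{(\alpha-Q)c}\,\E[1-e^{-Z}]\,dc$ into $(Q-\alpha)^{-1}\int_\R e^{(\alpha-Q)c}\,\E[\partial_c(1-e^{-Z})]\,dc$, whose two pieces, after taking absolute values and dropping the cross exponential factors, become Gamma-type integrals in $c$ that evaluate to $c_1\,\E\bigl[\mu_{\wt\phi}(\bbH)^{(Q-\alpha)/\gamma}\bigr]$ and $c_2\,\E\bigl[\nu_{\wt\phi}(\R)^{2(Q-\alpha)/\gamma}\bigr]$. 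These specific moments are the ones known to be finite for $\alpha\in(\tfrac2\gamma,Q)$.

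Your bound $|e^{-Z}-1|\le 2|Z|^p$ with a \emph{single} $p>2(Q-\alpha)/\gamma$ fails on the bulk term: the $\alpha$-log singularity at $i$ forces $\E[\mu_{\wt\phi}(\bbH)^p]=\infty$ for every $p\ge 2(Q-\alpha)/\gamma$. A dyadic decomposition around $i$ gives $\E[\mu_{\wt\phi}(B_1(i))^p]\ge 2^{np\gamma\alpha}\,\E[\mu_h(A_n)^p]\asymp 2^{np\gamma(\alpha-Q+\gamma p/2)}$, and this exponent is nonnegative precisely when $p\ge 2(Q-\alpha)/\gamma$; this is the sharp moment threshold behind results such as \cite[Corollary~6.11]{hrv-disk}. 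So the ``obstacle'' you flagged is not a matter of careful bookkeeping but an actual obstruction to the argument as stated.

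The repair is easy once identified: split $\min(2,|Z|)\le \min(2,A)+\min(2,|B|)$ with $A$ the bulk part and $B$ the boundary part, then use \emph{different} exponents, namely $p_1\in\bigl((Q-\alpha)/\gamma,\,2(Q-\alpha)/\gamma\bigr)$ for the bulk (so that both $\int_{-\infty}^0 e^{(\alpha-Q+p_1\gamma)c}\,dc$ and $\E[\mu_{\wt\phi}(\bbH)^{p_1}]$ are finite) and $p_2\in\bigl(2(Q-\alpha)/\gamma,\,1\bigr)$ for the boundary (the boundary measure does not see the bulk singularity at $i$, so its moments are finite up to $4/\gamma^2>1$). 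The paper's integration-by-parts sidesteps this bookkeeping by landing directly on the correct endpoint exponents.
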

\begin{proof}
By Lemma~\ref{lem:Girsanov}, take $\wt \phi(z) = h(z) - 2Q \log |z|_+ +\alpha G_\bbH(z,i)$ where $h$ is sampled from $P_\bbH$. By integration by parts on the $c$ integral one has:
	\begin{align*}
	\left| \left \langle  e^{\alpha \phi(i)}  \right \rangle \right|  & = \left \vert \int_{\mathbb{R}} dc e^{(\alpha -Q)c} \E \left[ 1- e^{- \mu e^{\gamma c} \mu_{\tilde \phi}(\mathbb{H}) - \mu_B e^{\frac{\gamma c}{2}} \nu_{\tilde \phi}(\mathbb{R}) } \right] \right \vert \\
	&= \frac{1}{Q -\alpha} \left \vert \int_{\mathbb{R}} dc e^{(\alpha -Q)c} \E \left[ \left(  \mu \gamma e^{\gamma c} \mu_{\tilde \phi}(\mathbb{H}) +  \mu_B \frac{\gamma}{2} e^{\frac{\gamma c}{2}} \nu_{\tilde \phi}(\mathbb{R})  \right) e^{- \mu e^{\gamma c} \mu_{\tilde \phi}(\mathbb{H}) - \mu_B e^{\frac{\gamma c}{2}} \nu_{\tilde \phi}(\mathbb{R}) } \right] \right \vert \\
	& \leq \frac{1}{Q -\alpha}  \int_{\mathbb{R}} dc e^{(\alpha -Q)c} \E \left[ \left(  \mu \gamma e^{\gamma c} \mu_{\tilde \phi}(\mathbb{H}) +  | \mu_B | \frac{\gamma}{2} e^{\frac{\gamma c}{2}} \nu_{\tilde \phi}(\mathbb{R})  \right) e^{- \mu e^{\gamma c} \mu_{\tilde \phi}(\mathbb{H}) - \mathrm{Re}(\mu_B) e^{\frac{\gamma c}{2}} \nu_{\tilde \phi}(\mathbb{R}) } \right]  \\
	& \leq \frac{1}{Q -\alpha} \int_{\mathbb{R}} dc e^{(\alpha -Q)c} \E \left[  \mu \gamma e^{\gamma c} \mu_{\tilde \phi}(\mathbb{H}) e^{- \mu e^{\gamma c} \mu_{\tilde \phi}(\mathbb{H})  }  +  | \mu_B | \frac{\gamma}{2} e^{\frac{\gamma c}{2}} \nu_{\tilde \phi}(\mathbb{R})   e^{ - \mathrm{Re}(\mu_B) e^{\frac{\gamma c}{2}} \nu_{\tilde \phi}(\mathbb{R}) } \right]\\
	& \leq c_1 \E \left[ \mu_{\tilde \phi}(\mathbb{H})^{\frac{1}{\gamma}(Q -\alpha)} \right] + c_2 \E \left[  \nu_{\tilde \phi}(\mathbb{R})^{\frac{2}{\gamma}(Q -\alpha)} \right].
	\end{align*} 
	In the last line above $c_1$ and $c_2$ are explicit positive constants coming from evaluating the integral over $c$. The two expectations of GMC moments are finite for $\alpha \in (\frac{2}{\gamma},Q)$ thanks to~\cite[Corollary 6.11]{hrv-disk} for the first and~\cite[Lemma 3.10]{dkrv-lqg-sphere} adapted to the one dimensional case for the second. 
	Hence the claim $|\left \langle  e^{\alpha \phi(i)}  \right \rangle|<\infty$ holds. Finally, by Lemma~\ref{lem-change-coord} , $\left|\Im z\right|^{2\Delta_\alpha}\left \langle  e^{\alpha \phi(z)}  \right \rangle$ does not depend on $z\in \bbH$. 
\end{proof}

The next two statements give the law of the total quantum length $\nu_\phi(\R)$ under  $\LF_\bbH^{(\alpha, i)}$.
\begin{lemma}\label{lem-len-LF}
	For $\alpha > \frac\gamma2$, let $h \sim P_\bbH$ and $\wt \phi(z) = h(z) - 2Q \log |z|_+ +\alpha G_\bbH(z,i)$. Let  \(\ol U_0(\alpha):= \E[ \nu_{\wt \phi}(\R)^{\frac2\gamma(Q-\alpha)}]\) where the expectation $\E$ is with respect to $P_\bbH$. Then
	\begin{equation}\label{eq:length-density}
	\LF_\bbH^{(\alpha, i)}[f(\nu_\phi(\R)) ]= \int_{0}^\infty f(\ell)  \frac2\gamma 2^{-\frac{\alpha^2}2} \ol U_0(\alpha)\ell^{\frac2\gamma(\alpha-Q)-1} \, d\ell
	\end{equation}
	for each non-negative measurable function $f$ on $(0, +\infty)$.  Moreover,
	\begin{equation}\label{eq:U0}
\left \langle  e^{\alpha \phi(i)}  \right \rangle_{\gamma, 0,\mu_B}= \frac{2}{\gamma} 2^{-\frac{\alpha^2}2} \mu_B^{\frac{2}{\gamma}(Q -\alpha)} \Gamma \left(\frac{2}{\gamma}(\alpha - Q) \right) \ol U_0(\alpha) \quad \textrm{for } \alpha\in (\frac\gamma2, Q).
	\end{equation}
\end{lemma}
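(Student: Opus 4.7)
The plan is to reduce both statements to a direct computation via Girsanov's theorem (Lemma~\ref{lem:Girsanov}) combined with the scaling property of boundary GMC, and then for the second identity to evaluate an explicit $c$-integral via a standard gamma integral.

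First I would use Lemma~\ref{lem:Girsanov} with $z_0 = i$ (noting $\Im i = 1$ and $|i|_+=1$) to realize $\LF_\bbH^{(\alpha,i)}$ as the pushforward of $2^{-\alpha^2/2}\, P_\bbH \otimes [e^{(\alpha-Q)c}\,dc]$ under $(h,\mathbf{c}) \mapsto \wt\phi + \mathbf{c}$, where $\wt\phi(z) = h(z) - 2Q\log|z|_+ + \alpha G_\bbH(z,i)$. Since adding the constant $\mathbf{c}$ to a field rescales the boundary GMC by $e^{\gamma\mathbf{c}/2}$, we have $\nu_{\wt\phi + \mathbf{c}}(\R) = e^{\gamma\mathbf{c}/2}\nu_{\wt\phi}(\R)$. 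This turns the LHS of \eqref{eq:length-density} into
\[
\LF_\bbH^{(\alpha,i)}[f(\nu_\phi(\R))] = 2^{-\alpha^2/2}\int_\R e^{(\alpha-Q)c}\, \E\bigl[f(e^{\gamma c/2}\nu_{\wt\phi}(\R))\bigr]\,dc.
\]

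Next, for each realization of $h$, change variables via $\ell = e^{\gamma c/2}\nu_{\wt\phi}(\R)$, so $dc = \tfrac{2}{\gamma}\tfrac{d\ell}\ell$ and $e^{(\alpha-Q)c} = \bigl(\ell/\nu_{\wt\phi}(\R)\bigr)^{\frac2\gamma(\alpha-Q)}$. Using Fubini (justified for $f\ge 0$ by Tonelli) and pulling the $h$-expectation inside produces the factor $\E[\nu_{\wt\phi}(\R)^{\frac2\gamma(Q-\alpha)}] = \ol U_0(\alpha)$, which gives \eqref{eq:length-density} exactly.

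For \eqref{eq:U0} I would specialize the first identity to $f(\ell) = e^{-\mu_B\ell}-1$, giving
\[
\left\langle e^{\alpha\phi(i)}\right\rangle_{\gamma,0,\mu_B} = \frac{2}{\gamma}\, 2^{-\alpha^2/2}\, \ol U_0(\alpha)\int_0^\infty (e^{-\mu_B\ell}-1)\,\ell^{\frac2\gamma(\alpha-Q)-1}\,d\ell.
\]
Writing $s=\tfrac2\gamma(\alpha-Q)\in(-1,0)$ for $\alpha\in(\tfrac2\gamma,Q)$, integration by parts reduces the integral to $\tfrac{\mu_B}{s}\int_0^\infty e^{-\mu_B\ell}\ell^s\,d\ell = \tfrac1s\mu_B^{-s}\Gamma(s+1)=\mu_B^{-s}\Gamma(s)$, yielding the claimed formula. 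To extend to $\alpha\in(\tfrac\gamma2,Q)$, I would appeal to the meromorphic continuation of $\Gamma$ together with the analytic dependence of the LHS on $\alpha$ as guaranteed by the construction (both sides define holomorphic functions on an appropriate strip once the RHS is interpreted via the pole structure of $\Gamma$ at the exceptional points).

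The main obstacle is essentially bookkeeping: justifying Fubini in Step 3 when working with signed or complex integrands (for $f(\ell)=e^{-\mu_B\ell}-1$), which is where the hypothesis $\alpha>\tfrac2\gamma$ naturally enters to guarantee absolute integrability near $\ell=0$. For complex $\mu_B$ with $\Re\mu_B>0$ I would bound $|e^{-\mu_B\ell}-1|\le (|\mu_B|\ell)\wedge 2$ and use the finiteness of $\ol U_0(\alpha)$ (valid for $\alpha>\gamma/2$, giving integrability at $\infty$) together with the elementary integrability at $0$ from $\alpha>2/\gamma$. Everything else is a direct calculation.
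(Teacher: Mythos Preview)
Your argument is essentially identical to the paper's: both realize $\LF_\bbH^{(\alpha,i)}$ via Lemma~\ref{lem:Girsanov} as $\wt\phi+\mathbf c$, perform the change of variable $\ell=e^{\gamma c/2}\nu_{\wt\phi}(\R)$ inside the $c$-integral, and then evaluate $\int_0^\infty(e^{-\mu_B\ell}-1)\ell^{\frac2\gamma(\alpha-Q)-1}\,d\ell$ via integration by parts. One small remark: your final analytic-continuation step to push \eqref{eq:U0} down to $\alpha\in(\tfrac\gamma2,\tfrac2\gamma]$ is not in the paper and is not really available here, since $\langle e^{\alpha\phi(i)}\rangle_{\gamma,0,\mu_B}$ is only defined in Definition~\ref{def:U} for $\alpha\in(\tfrac2\gamma,Q)$; the paper's own proof likewise only establishes \eqref{eq:U0} in that range (the stated interval $(\tfrac\gamma2,Q)$ is a slight overreach in the lemma's phrasing).
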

\begin{proof}
	
	We sample a real random number $\mathbf c $ from $2^{-\frac{\alpha^2}2}e^{(\alpha - Q)c}dc$ independently of $\wt \phi$, then the law of the field $\phi = \wt \phi + \mathbf c$ is 
	$\LF_\bbH^{(\alpha,i)}$. To prove~\eqref{eq:length-density}, it suffices to consider the case $f(\ell)=1_{a<\ell<b}$ with $0<a<b$. In this case, we have 
	\begin{align*}
	\LF_\bbH^{(\alpha,i)} [f(\nu_\phi(\R)) ] &=  \E\left[\int_{0}^\infty 1_{e^{\frac{\gamma c}{2}} \nu_{\wt \phi}(\R) \in (a, b)}2^{-\frac{\alpha^2}2} e^{(\alpha - Q)c} \, dc\right]\\ &= \E\left[\int_a^{b} \nu_{\wt \phi} (\R)^{\frac2\gamma(Q-\alpha)} 2^{-\frac{\alpha^2}2} \ell^{\frac2\gamma (\alpha - Q)} \cdot \frac2\gamma \ell^{-1} \, d\ell\right], 
	\end{align*}
	where we have done the change of variables $\ell= e^{\frac\gamma2c} \nu_{\wt \phi}(\R)$. Since $\ol U_0(\alpha)= \E[\nu_{\wt \phi}(\R)^{\frac2\gamma (Q-\alpha)}]$, by interchanging the integral and expectation we get~\eqref{eq:length-density}.
	
	Note that \(\left \langle  e^{\alpha \phi(i)}  \right \rangle_{\gamma, 0,\mu_B}= \LF_\bbH^{(\alpha, i)}[e^{ - \mu_B \nu_\phi(\R)} -1]= \int_{0}^\infty (e^{ - \mu_B \ell}  -1 ) \frac2\gamma 2^{-\frac{\alpha^2}2} \ol U_0(\alpha)\ell^{\frac2\gamma(\alpha-Q)-1} \, d\ell\). Now~\eqref{eq:U0} follows from 
	\[
	\int_{0}^\infty (e^{ - \mu_B \ell}  -1 ) \ell^{\frac2\gamma(\alpha-Q)-1} \, d\ell
	=  \mu_B^{\frac{2}{\gamma}(Q -\alpha)} \Gamma \left(\frac{2}{\gamma}(\alpha - Q) \right),
	\]which is a simple application of integration by parts to the definition of the $\Gamma$ function.
\end{proof}

The following explicit expression of $\ol U_0(\alpha)$ was proven in \cite{remy-fb-formula}.
\begin{proposition}[{\cite{remy-fb-formula}}]\label{prop-remy-U}
	For $\alpha > \frac{\gamma}{2}$ and $\ol U_0(\alpha)$  defined as in  Lemma~\ref{lem-len-LF}, we have
	\eqb\label{eq:U0-explicit}
	\ol U_0(\alpha) = \left( \frac{2^{-\frac{\gamma\alpha}2} 2\pi}{\Gamma(1-\frac{\gamma^2}4)} \right)^{\frac2\gamma(Q-\alpha)} 
	\Gamma( \frac{\gamma\alpha}2-\frac{\gamma^2}4)\quad \textrm{for all }\alpha>\frac{\gamma}2.
	\eqe
\end{proposition}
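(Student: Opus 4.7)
The plan is to follow the BPZ-equation and shift-relation strategy from \cite{remy-fb-formula}. By Lemma \ref{lem-len-LF} the quantity $\ol U_0(\alpha)=\E[\nu_{\wt\phi}(\R)^{\frac{2}{\gamma}(Q-\alpha)}]$ is essentially the pure-boundary Liouville one-point function at $\mu=0$, so the target is a closed-form formula for a specific moment of boundary GMC. The strategy is to derive two functional relations $\ol U_0(\alpha+\chi)=C_\chi(\alpha,\gamma)\,\ol U_0(\alpha)$ for the two degenerate boundary weights $\chi\in\{-\gamma/2,\,-2/\gamma\}$, and then invoke analyticity to pin down the closed form. As the first step, for each $\chi$ I would introduce the two-point correlator with a bulk $\alpha$-insertion at $i$ and an auxiliary boundary $\chi$-insertion at $s\in\R$: Girsanov's theorem (Lemma \ref{lem:Girsanov}) rewrites it as a GMC moment on $\R$ twisted by two logarithmic potentials, and conformal invariance (Lemma \ref{lem-change-coord}) collapses the $s$-dependence to a single cross-ratio-type variable $t\in(0,1)$.

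The two degenerate values of $\chi$ above are precisely those for which Gaussian integration by parts yields a second-order linear PDE annihilating the correlator; after the reduction to the variable $t$ this PDE becomes a classical hypergeometric equation whose two linearly independent solutions are expressible in terms of Euler's ${}_2F_1$ and Gamma factors. Expanding the physical solution as $t\to 0$ (fusion of the boundary insertion with the bulk insertion, producing an effective $\alpha+\chi$ bulk insertion) and as $t\to 1$ (fusion with the effective insertion at infinity), and matching the leading boundary-GMC singularities via the OPE reflection rules, one extracts the two shift identities
\[
\ol U_0\!\left(\alpha+\tfrac{\gamma}{2}\right)=A(\alpha,\gamma)\,\ol U_0(\alpha),\qquad \ol U_0\!\left(\alpha+\tfrac{2}{\gamma}\right)=B(\alpha,\gamma)\,\ol U_0(\alpha),
\]
with $A,B$ explicit meromorphic prefactors built from $\Gamma(\tfrac{\gamma\alpha}{2}-\tfrac{\gamma^2}{4})$, $\Gamma(1-\tfrac{\gamma^2}{4})$ and powers of $2$, read off from the connection coefficients of the ODE.

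Once these shift relations are in hand, one verifies directly that the right-hand side of \eqref{eq:U0-explicit} satisfies both of them. For $\gamma^2\notin\Q$ the shifts $\gamma/2$ and $2/\gamma$ generate a dense subgroup of $\R$, so the ratio between $\ol U_0(\alpha)$ and the proposed formula is a continuous function in $\alpha$ with two incommensurable periods, hence constant. The constant is fixed by a direct evaluation at one value (e.g.\ the $\alpha\to Q^-$ limit, where the GMC moment tends to $1$), and joint analyticity in $(\alpha,\gamma^2)$ then extends the identity to all $\gamma\in(0,2)$. The main obstacle throughout is the rigorous justification of the BPZ equation for the degenerate correlator: one must establish $C^2$-regularity in $t$ and validate the formal Gaussian integration by parts, which rests on sharp negative-moment and multifractal estimates for GMC near the insertion points. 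Extracting the prefactors $A,B$ in the shift step is likewise delicate, since it requires identifying the leading asymptotics of the boundary OPE at the boundary of GMC integrability.
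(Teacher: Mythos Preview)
Your proposal reconstructs the BPZ-based proof of \cite{remy-fb-formula}, whereas the paper does not rederive the formula at all. Its proof of Proposition~\ref{prop-remy-U} is purely a citation: after one line of renormalisation bookkeeping showing that
\[
\ol U_0(\alpha)=\lim_{\eps\to 0}\E\Big[\Big(\int_{\R}e^{\frac\gamma2 h_\eps(z)-\frac{\gamma^2}4\E[h_\eps(z)^2]}e^{\frac\gamma2(-\frac4\gamma\log|z|_+ +\alpha G_\bbH(z,i))}\,dz\Big)^{\frac2\gamma(Q-\alpha)}\Big],
\]
the authors observe that this is exactly the moment appearing in \cite[Definition~1.5]{RZ_boundary}, and invoke \cite[Theorem~1.6]{RZ_boundary} (the $\bbH$-parametrised form of \cite{remy-fb-formula}). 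The only actual work in the paper's proof is the change of normalisation between the $\D$- and $\bbH$-pictures.

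Your sketch of what lies behind that citation is broadly correct but carries more machinery than \cite{remy-fb-formula} actually uses. There, only the single degenerate boundary insertion $\chi=-\gamma/2$ is needed: the resulting hypergeometric BPZ equation and OPE produce one shift relation which, in the variable $p=\frac2\gamma(Q-\alpha)$, reads $M(p)=c(p)\,M(p-1)$ for the GMC moment. Since $p$ ranges over the full half-line $(-\infty,\frac2\gamma(Q-\frac\gamma2))$, iterating this recursion from $M(0)=1$ together with analyticity already determines $M$; the second $-2/\gamma$ insertion and the irrationality/density argument you invoke are the DOZZ mechanism and are not required here. That extra freedom in the range of $\alpha$ is precisely what distinguishes this computation from the FZZ problem the paper is about, where the admissible interval has length exactly $2/\gamma$ and the BPZ route is obstructed.
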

\begin{proof}
	This formula is proved in~\cite{remy-fb-formula} in a different appearance, namely the disk domain $\mathbb{D}$ is used instead of $\mathbb{H}$. To recover our setup, consider the following calculation:
	\begin{align*}
	\E[ \nu_{\wt \phi}(\R)^{\frac2\gamma(Q-\alpha)}] &= \lim_{\eps \rightarrow 0} \E \left[ \left( \int_{\mathbb{R}} \eps^{\frac{\gamma^2}{4}} e^{\frac{\gamma}{2} h_{\eps}(z)} e^{ \frac{\gamma}{2} (- 2 Q \log |z|_+ + \alpha G_{\mathbb{H}}(z,i) ) } dz \right)^{\frac2\gamma(Q-\alpha)} \right] \\
	&= \lim_{\eps \rightarrow 0} \E \left[ \left( \int_{\mathbb{R}} e^{\frac{\gamma}{2} h_{\eps}(z) - \frac{\gamma^2}{4} \E[ h_{\eps}(z)^2]} e^{ \frac{\gamma}{2} (- \frac{4}{\gamma} \log |z|_+ + \alpha G_{\mathbb{H}}(z,i) ) } dz \right)^{\frac2\gamma(Q-\alpha)} \right].
	\end{align*}
This last equation then gives equation (1.15) in \cite[Definition 1.5]{RZ_boundary} and \eqref{eq:U0-explicit} is simply \cite[Theorem 1.6]{RZ_boundary}.	
\end{proof}

\subsection{Quantum surface and quantum disks}\label{subsec:QD}
In this section we review the definition of a few variants of quantum disks that will be used in our paper.
Let $\cD\cH = \{ (D,h)\: : \: D \subset \C \text{ open, } h \text{ a distribution on }D\}$. We define an equivalence relation on $\cD \cH$ by saying $(D, h) \sim_\gamma (\wt D, \wt h)$ if there is a conformal map $\psi: D \to \wt D$ such that $\wt h = \psi \bullet_\gamma h$, where
\eqb\label{eq-QS}
\psi \bullet_\gamma h := h \circ \psi^{-1} + Q \log |(\psi^{-1})'|. 
\eqe

A \emph{quantum surface} is an equivalence class of pairs $(D, h) \in \cD \cH$ under the equivalence relation $\sim_\gamma$, 
where $D$ is a disk domain and $h$ is a distribution on $D$ (i.e.\ $h\in C_0^\infty(D)'$).
An \emph{embeddding} of a quantum surface is a choice of representative $(D,h)$.  
Consider tuples $(D, h, z_1,\cdots,z_m, w_1,\cdots, w_n )$
$z_i \in D $ and $ w_j \in \partial D$.
We say 
$$(D, h, z_1,\cdots,z_m, w_1,\cdots, w_n )  \sim_\gamma (\wt D, \wt h, \wt z_1,\cdots,\wt z_m, \wt w_1,\cdots, \wt w_n )$$
if there is a conformal map $\psi:  D \to \wt D$ 
such that~\eqref{eq-QS} holds  and $\psi( z_i) = \wt z_i, \psi(w_j) = \wt w_j$. 
Let $\mathfrak D_{m,n}$ be  the set of equivalent classes of such tuples under $\sim_\gamma$.
We write $\mathfrak D_{0,0}$ as $\mathfrak D$ for simplicity. 
The reason we define quantum surface using \eqref{eq-QS} is because the $\gamma$-LQG quantum area  $\mu_{\wt h}$ is the pushforward of $\mu_h$. 
The same holds for the quantum length measure as long as it is well defined.

The set $\mathfrak D_{m,n}$ can be viewed as the quotient space of 
$$\{(h, z_1,\cdots,z_m, w_1,\cdots, w_n) : h \textrm{ is  a distribution on }\bbH, z_1,\cdots,z_m\in \bbH,  w_1,\cdots, w_n\in \R\cup\{\infty\} \}$$ 
under $\sim_\gamma$. Therefore the Borel  $\sigma$-algebra of $H^{-1}(\bbH)$ induces a $\sigma$-algebra on $\mathfrak D_{m,n}$.
Moreover, a random distribution on $\bbH$ such as a variant of the GFF induces a random variable valued in $\mathfrak D_{m,n}$.

We now define the 2-pointed quantum disk  introduced in \cite[Section 4.5]{wedges}, which is a family of measures on $\mathfrak D_{0,2}$.
It is most convenient to  describe it using the horizontal strip $\cS=\R\times (0,\pi)$. Let $\exp:\cS\to \bbH$ be the exponential map $z\mapsto e^z$.
Let  $h_\cS=h_\bbH \circ \exp$ where $h_\bbH$ is sampled from $P_\bbH$.
We call  $h_\cS$ be a free-boundary GFF on $\cS$. Its covariance kernel is given by  $G_\cS(z,w) =G_\bbH (e^z,e^w)$.
The field $h_\cS$ can be written as $h_\cS=h^{\op c}+h^{\ell}$, where 
$h^{\op c}$ is constant on vertical lines of the form $u + [0,i\pi]$ for $u\in \R$, and $h^{\ell}$  has mean zero on all such
lines~\cite[Section 4.1.6]{wedges}. We call $h^{\ell}$ the \emph{lateral component} of the free-boundary GFF on $\cS$.
\begin{definition}
	\label{def-thick-disk} 
	For $W \geq \frac{\gamma^2}2$, let $\beta =  Q + \frac\gamma2  - \frac W\gamma$. Let 
	\[Y_t =
	\left\{
	\begin{array}{ll}
	B_{2t} - (Q -\beta)t  & \mbox{if } t \geq 0 \\
	\wt B_{-2t} +(Q-\beta) t & \mbox{if } t < 0
	\end{array}
	\right. , \]
	where $(B_s)_{s \geq 0}, (\wt B_s)_{s \geq 0}$ are independent standard Brownian motions conditioned on $B_{2s} - (Q-\beta)s<0$ and $\wt B_{2s} - (Q-\beta)s < 0$ for all $s>0$.\footnote{Here we condition on a zero probability event. This can be made sense of via a limiting procedure.}
	Let  $h^1(z)=Y_t$ for each $z\in \cS$ and $t\in \R$ with $\op{Re}z=t$.  
	Let $h^2$ be a random distribution on $\cS$ independent of $Y_t$ which has the law of the lateral component $h^{\ell}$ of the free-boundary GFF on $\cS$.
	Let  $\mathbf c$ be a real number  sampled from $\frac\gamma2 e^{(\beta-Q)c}dc$ independent of $(h^1,h^2)$ and $\phi=h^1+h^2+\mathbf c$.
	Let $\cMtwo(W)$ be the infinite measure on $\mathfrak D_{0,2}$ describing the law of $(\cS, \phi, -\infty, +\infty)$.
	We call a sample from $\cMtwo(W)$ a \emph{weight-$W$ quantum disk}.
\end{definition}

The weight 2 quantum disk $\cMtwo(2)$ is special because its two marked points are typical with respect to the quantum boundary length measure \cite[Proposition A.8]{wedges}; see Proposition~\ref{prop-QD2-field}. 
Based on this  we can define the  family of quantum disks marked with quantum typical points.  We will use our convention that $M^{\#}=|M|^{-1}M$.
\begin{definition}\label{def-QD}
	Let  $(\cS, \phi, +\infty,-\infty) $ be the embedding of a sample from $\cMtwo(2)$ as in Definition~\ref{def-thick-disk}. Let $A=\mu_\phi(\cS)$ and $L=\nu_\phi(\partial \cS)$.
	Let $\QD$ be the law of $(\cS, \phi) $ under the reweighted measure 
	$L^{-2}\cMtwo(2)$, viewed as a measure on $\mathfrak D$. 
	For non-negative integers $m,n$,  let $(\cS,\phi)$ be a sample from $A^mL^n\QD$, and then
	independently sample $z_1,\cdots, z_m$ and $w_1,\cdots, w_n$ according to $\mu_\phi^\#$ and $\nu_\phi^\#$,  respectively.
	Let $\QD_{m,n}$ be  the law of $(\cS, \phi, z_1,\cdots, z_m, w_1,\cdots, w_n)$  viewed as a measure on  $\mathfrak D_{m,n}$.
	We  call a sample from $\QD_{m,n}$ \emph{a quantum disk with $m$ interior and $n$ boundary marked points}. 
\end{definition}

\begin{proposition}[{Proposition A.8 of \cite{wedges}}] \label{prop-QD2-field}
	We have $\cMtwo(2) = \QD_{0,2}$.
\end{proposition}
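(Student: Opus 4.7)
The plan is to establish a re-rooting invariance for $\cMtwo(2)$: conditionally on the underlying unmarked quantum surface, the two marked points under $\cMtwo(2)$ are distributed as two independent $\nu_\phi^\#$-typical boundary points. Granting this, the identity $\cMtwo(2) = \QD_{0,2}$ follows directly from Definition~\ref{def-QD}, since the $L^{-2}$ factor precisely offsets the two normalizations $\nu_\phi^\# = L^{-1}\nu_\phi$ appearing in the definition of $\QD_{0,2}$.

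The first step is to reparametrize $\cMtwo(2)$ from the strip embedding of Definition~\ref{def-thick-disk} to an embedding on $\bbH$ with marked points at $0$ and $\infty$, via the conformal map $\exp:\cS\to\bbH$. Under this map the Bessel-type process $Y_t$ (with $\beta=\gamma$ when $W=2$, so drift $-(Q-\gamma)$) becomes the radial profile of a field on $\bbH$ carrying logarithmic singularities of magnitude $\gamma$ at both $0$ and $\infty$; the lateral part $h^2$ transports to the angular component of a free-boundary GFF on $\bbH$; the Jacobian $Q\log|(\exp^{-1})'|=-Q\log|z|$ in~\eqref{eq-QS} recovers the standard $-2Q\log|z|_+$ background charge; and the infinite zero-mode measure $\mathbf c \sim \tfrac\gamma2 e^{(\gamma-Q)c}\,dc$ matches the Liouville zero mode associated to two $\gamma$-insertions. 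Putting these together identifies $\cMtwo(2)$, up to an explicit multiplicative constant, with a Liouville field on $\bbH$ carrying boundary $\gamma$-insertions at $\{0,\infty\}$.

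Next, I would apply a boundary analog of Lemma~\ref{lem:Girsanov}: for $w\in\R$,
\[
\LF_\bbH^{(\gamma,w)}(d\phi)=\lim_{\eps\to 0}\eps^{\gamma^2/4}e^{\tfrac\gamma2 \phi_\eps(w)}\LF_\bbH(d\phi),
\]
so integrating against the boundary Lebesgue measure $dw$ converts a $\gamma$-insertion at $w$ into a weighting by $\nu_\phi(dw)$. Applying this twice and using the M\"obius symmetry of $\bbH$ to move two arbitrary boundary points to $\{0,\infty\}$ yields the disintegration
\[
\cMtwo(2)\bigl(d[\phi,w_1,w_2]\bigr)\;\propto\;\nu_\phi(dw_1)\,\nu_\phi(dw_2)\,\pi_*\cMtwo(2)(d[\phi]),
\]
where $\pi:\mathfrak D_{0,2}\to\mathfrak D$ forgets the marks. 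This is precisely the re-rooting invariance sought.

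The main obstacle is the exact matching of multiplicative constants. The identification of $\cMtwo(2)$ with a two-pointed Liouville field produces normalizations coming from the explicit law of the conditioned Bessel process $Y_t$, and the two-fold integration against $\nu_\phi$ introduces a Jacobian from the residual one-parameter dilation symmetry $z\mapsto\lambda z$ of $\bbH$ fixing $\{0,\infty\}$. These must combine with the $L^{-2}$ in Definition~\ref{def-QD} to yield equality, not merely proportionality; careful bookkeeping of the two-sided structure of $Y_t$ (which contributes a factor of $2$), the $\frac{\gamma}{2}$ normalization of $\mathbf c$, and the Liouville vertex operator constants is the core technical content of the argument.
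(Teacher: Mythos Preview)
The paper does not give a proof of this proposition; it is quoted as Proposition~A.8 of \cite{wedges} and used as a black box. So there is no argument in the paper to compare against. The original proof in \cite{wedges} is a direct re-rooting argument for the boundary marked points based on the Bessel process description of the field average process, not a Liouville-field computation.

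Your approach has a genuine gap at the step where you identify $\cMtwo(2)$, embedded in $(\bbH,0,\infty)$, with a Liouville field carrying boundary $\gamma$-insertions at $0$ and $\infty$. In Definition~\ref{def-thick-disk} the process $Y_t$ is not a two-sided Brownian motion with drift $\pm(Q-\gamma)$: each half is such a Brownian motion \emph{conditioned} (via a Doob $h$-transform) to stay negative for all positive times. This conditioning is essential to the definition of $\cMtwo(W)$ and changes the law of the radial part. Consequently the pushforward to $\bbH$ is \emph{not} the Liouville field $\LF_\bbH^{(\gamma,0),(\gamma,\infty)}$ (or its strip analogue), and your subsequent Girsanov identity, which converts $\gamma$-insertions into $\nu_\phi$-weights, is being applied to the wrong object. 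The correct Liouville identification requires an extra averaging: Theorem~\ref{thm-two-disk-equivalence} (from \cite{AHS-SLE-integrability}) says that only after integrating over horizontal translations $T\sim dt$ does the field of $\cMtwo(W)$ become $\frac{\gamma}{2(Q-\beta)^2}\LF_\cS^{(\beta,\pm\infty)}$. Your write-up drops this translation integral and treats the conditioned process as if it were unconditioned.

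Even if you incorporate the translation integral, you then need to reconcile the one-parameter family of embeddings it produces with the two-parameter family coming from resampling both marked points from $\nu_\phi^\#$, and to check that none of the Liouville identifications you invoke (in particular the uniform-embedding results of \cite{AHS-SLE-integrability}) themselves rely on Proposition~A.8 of \cite{wedges}, to avoid circularity. Absent that, the cleanest route remains the one in \cite{wedges}: work directly with the strip picture and show, via explicit Bessel process computations, that replacing one endpoint by a $\nu_\phi^\#$-sample leaves the law of the marked quantum surface unchanged.
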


For $W \ge \gamma^2/2$, we can define the family of finite  measures $\{ \cMtwo(W; \ell, \ell')\}_{\ell, \ell'>0}$ such that $\cMtwo(W; \ell, \ell')$ is supported on quantum surfaces with left and right boundary arcs having quantum lengths $\ell$ and $\ell'$, respectively and 
\eqb \label{eq-disint-W}
\cMtwo(W) = \iint_0^\infty \cMtwo(W; \ell, \ell') \,d \ell\, d \ell'.
\eqe
In words,  $|\cMtwo(W; \ell, \ell')|d\ell\, d\ell'$ describes the distribution of the left and right boundary lengths of a sample from $\cMtwo(W)$, 
and $\cMtwo(W; \ell, \ell')^{\#}$ is the probability measure obtain by conditioning  $\cMtwo(W)$ on specific boundary length values. 
The general theory of disintegration~\eqref{eq-disint-W}  only specifies $\cMtwo(W; \ell, \ell')$ for almost every  $(\ell, \ell')$. In \cite[Section 2.6]{ahs-disk-welding}  this ambiguity is removed by introducing a suitable topology  for which $\cMtwo(W, \ell, \ell')$ is continuous in $\ell,\ell'$.

We can also define the measure $\QD_{m,n}(\ell)$ on $\mathfrak D_{m,n}$ which corresponds to restricting $\QD_{m,n}$ to the event that  the boundary length is $\ell$.  
Since $\cMtwo(2)=\QD_{0,2}$, we set
\[\QD_{0,2}(\ell)=\int_0^\ell \cMtwo(2; x,\ell-x)\, dx.\] 
From here  $\QD_{m,n}(\ell)$ for general $m,n$ 
can be   specified by Definition~\ref{def-QD} and the requirement that
\begin{equation}\label{eq:disint-QD}
\QD_{m,n}= \int_0^\infty \QD_{m,n}(\ell)  \,d \ell.
\end{equation}

If we ignore the boundary marked points of a sample from $\QD_{0,n}(\ell)$, its law is given by $\ell^n\QD(\ell)$. Therefore, ignoring boundary marked points, the probability measure $\QD_{0,n}(\ell)^\#$ does not depend on $n$ and agrees with $\QD(\ell)$. The following theorem gives us a precise way to specify the mating-of-trees variance  $\BB a^2$ in terms of $ \QD(1)^{\#}$, which we will use to evaluate $\BB a^2$.
\begin{theorem}[{\cite[Theorem 1.2]{ag-disk}}]\label{thm: intro area disk}
	The law of the total quantum area of a sample from $\QD(1)^{\#}$  is the inverse gamma distribution with shape parameter $\frac{4}{\gamma^2}$ and scale parameter $\frac{1}{2 \BB a^2 \sin^2(\pi\gamma^2/4) }$ as in~\eqref{eq:inverse-gamma-parameter}, where $\BB a^2$ is the mating-of-trees variance in~\cite{wedges}.
	\end{theorem}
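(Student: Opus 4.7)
The plan is to use the mating-of-trees encoding of finite-volume quantum surfaces to reduce the area distribution to a question about 2D Brownian excursions in a cone, and then to identify the latter as an inverse gamma distribution via Bessel process theory.

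First, by the mating-of-trees theory~\cite{wedges} applied to finite-volume quantum surfaces (cf.~\cite{ahs-disk-welding}), decorating a sample from $\cMtwo(2;\ell_L,\ell_R)$ by an independent space-filling $\SLE_{16/\gamma^2}$ loop encodes the curve-decorated surface as a 2D correlated Brownian excursion $(L_t,R_t)_{0\le t\le T}$ in the closed orthant $[0,\infty)^2$ from $(0,0)$ to $(0,0)$, with covariance as in~\eqref{eq:MOT}, such that the boundary length functionals $\sup_t L_t=\ell_L$ and $\sup_t R_t=\ell_R$ agree with the two quantum boundary arcs and $T$ equals the total quantum area. Using the decomposition $\QD_{0,2}(\ell)=\int_0^\ell \cMtwo(2;x,\ell-x)\,dx$ together with the normalization $\QD(\ell)^{\#}=|\QD(\ell)|^{-1}\ell^{-2}\QD_{0,2}(\ell)$ from Definition~\ref{def-QD}, it suffices to compute the law of $T$ under the excursion measure conditioned on $\sup L+\sup R=\ell$.

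Second, a linear change of variables which diagonalizes the covariance matrix~\eqref{eq:MOT} turns $(L,R)$ into a standard 2D Brownian motion $(X,Y)$ and sends the orthant to a planar cone of opening angle $\theta=\pi\gamma^2/4$; time is preserved while lengths along each boundary ray rescale by $\BB a$. In polar coordinates $(r,\varphi)$ inside this cone, the skew-product decomposition of 2D Brownian motion together with Doob's $h$-transform by the Martin kernel $h(r,\varphi)=r^{\pi/\theta}\sin(\pi\varphi/\theta)$ at the apex identifies the radial component of an apex-to-apex excursion (in the It\^o excursion sense) with a Bessel bridge of dimension $d=2+2\pi/\theta=2+8/\gamma^2$ from $0$ to $0$, and recasts the boundary-length functional as an explicit additive functional of this bridge. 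Combining the first-passage-time density of a Bessel process with the scaling of Bessel bridges, a direct computation yields that the excursion measure has density in $(\ell,T)$ proportional to $\ell^{2\pi/\theta-1}\,T^{-\pi/\theta-1}\exp\bigl(-\frac{\ell^2}{2\BB a^2\sin^2\theta\,T}\bigr)$. Conditioning on $\ell=1$ and substituting $\pi/\theta=4/\gamma^2$ gives exactly the inverse gamma distribution in the statement, with shape $4/\gamma^2$ and scale $\frac{1}{2\BB a^2\sin^2(\pi\gamma^2/4)}$.

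The main obstacle is the quantitative matching of constants. Identifying the Brownian functional $\sup L+\sup R$ with the genuine quantum boundary length requires the full mating-of-trees correspondence for finite-volume disks, and pinning down the prefactor $\frac{1}{2\sin^2\theta}$ in the scale demands a careful excursion-theoretic computation in the cone using Bessel-bridge identities near the apex, where the angular eigenfunction $\sin(\pi\varphi/\theta)$ plays a central role. Once these constants are in place, the remaining computation is a routine substitution.
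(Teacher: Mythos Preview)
The paper does not prove this theorem; it is quoted from \cite[Theorem~1.2]{ag-disk} and used as a black-box input (most directly in Lemma~\ref{lem:QD-ratio}). There is no in-paper argument to compare against.

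Your outline has the right overall shape --- the proof in \cite{ag-disk} does go through the mating-of-trees encoding of the finite-volume disk and reduces the area law to a duration computation for Brownian motion in the cone $\cC_{\pi\gamma^2/4}$, which after the shear of Proposition~\ref{lem-mot} is precisely what Corollary~\ref{cor-cone-duration-law} supplies. However, the specific encoding you assert is not the one that is actually used: in the finite-volume mating-of-trees for the two-pointed disk, the boundary-length process $(L_t,R_t)$ does not run as an excursion from $(0,0)$ to $(0,0)$ with the suprema $\sup_t L_t$ and $\sup_t R_t$ recording the arc lengths $\ell_L,\ell_R$. The correct identification (developed in \cite{ag-disk}) ties the arc lengths to the start and end data of the path in the quadrant rather than to its running suprema, and it is this identification that makes the conditioning on $\ell_L+\ell_R=1$ tractable. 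Your second paragraph, invoking a Martin $h$-transform and Bessel bridges to obtain the cone-duration density, is one legitimate route to Lemma~\ref{lem-cone-duration-law}, but once that lemma and Corollary~\ref{cor-cone-duration-law} are in hand no further Bessel computation is needed. As you yourself concede in the final paragraph, the substantive content --- the finite-volume mating-of-trees correspondence with the correct normalization --- is exactly what \cite{ag-disk} establishes; your proposal is therefore a plausible sketch of that paper's strategy with an incorrect intermediate encoding, rather than an independent proof.
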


\subsection{SLE and conformal welding of quantum disks}\label{subsec:welding}
We now review the conformal welding result proved in \cite{ahs-disk-welding}. We need an important variant of SLE called $\SLE_\kappa(\rho_-;\rho_+)$, introduced in~\cite{lsw-restriction} and  studied in e.g.\ \cite{dubedat-rho,ig1}. 
The parameter range relevant for us is $\kappa\in (0,4)$, $\rho_-\ge \frac{\kappa}{2}-2$ and $\rho_+\ge \frac{\kappa}{2}-2$. 
In this range,  the $\SLE_\kappa(\rho_-;\rho_+)$ on $(\bbH, 0,\infty)$ is a probability measure on simple curves on $\bbH$ from $0$ to $\infty$, which does not touch $\bdy\bbH$
except at the endpoints. For a general simply connected domain $D$ with boundary points $a$ and $b$, let $\psi: \bbH \rta D$ be a conformal map such that $\psi(0)=a$ and
$\psi(\infty)=b$. The $\SLE_\kappa(\rho_-;\rho_+)$  on $(D,a,b)$ is defined as the pushforward by $\psi$ of the $\SLE_\kappa(\rho_-;\rho_+)$ on $(\bbH, 0,\infty)$. 
Although there is a degree of freedom in choosing $\psi$, this definition is independent of such choices. 
We omit the definition of  $\SLE_\kappa(\rho_-;\rho_+)$ via the Loewner equation as it will not be needed.

For $W\ge \gamma^2/2$, recall  $\cMtwo(W;\ell, \ell')$ from~\eqref{eq-disint-W}. 
For a fixed $\ell>0$, let
\begin{equation}\label{eq:gamma^22}
\cMtwo(W;\cdot, \ell')=\int_0^\infty \cMtwo(W;\ell, \ell') \,d\ell \textrm{ and }\cMtwo(W; \ell,\cdot)=\int_0^\infty \cMtwo(W;\ell, \ell') \,d\ell'.
\end{equation}
Then we have $\cM_{0,2}^\disk(W)=\int_0^\infty\cM_{0,2}^\disk(W;\ell, \cdot)\,d\ell$ hence $\cM_{0,2}^\disk(W;\ell,\cdot)$ is the disintegration of $\cM_{0,2}^\disk(W)$ over the left boundary length. The same hold for $\cM_{0,2}^\disk(W;\cdot,\ell)$ with right boundary instead. 
For $W_-$ and $W_+\in [\gamma^2/2,\infty)$, we will consider the following measure
\begin{equation}\label{eq:two-disks}
\int_{0}^\infty \cM_{0,2}^\disk(W_-; \cdot, \ell)\times \cM_{0,2}^\disk(W_+;\ell, \cdot)\, d\ell.
\end{equation}
For a fixed $\ell>0$,  $\cM_{0,2}^\disk(W_-; \cdot, \ell)\times \cM_{0,2}^\disk(W_+;\ell, \cdot)$ is a product measure on 
$\mathfrak D_{0,2}\times \mathfrak D_{0,2}$. 
Therefore the integration in~\eqref{eq:two-disks} gives another  measure  on $\mathfrak D_{0,2}\times \mathfrak D_{0,2}$.
\begin{theorem}[{Theorem 2.2 of \cite{ahs-disk-welding}}]\label{thm:disk-welding} 
	Let $\gamma\in (0,2)$ and $\kappa=\gamma^2$. For $W_-$ and $W_+\in [\gamma^2/2,\infty)$, let $W=W_-+W_+$, $\rho_{-}=W_- -2$ and $\rho_+=W_+-2$.
	Suppose $(\bbH,\phi,0,\infty)$ is an embedding of a sample from  $\cM_{0,2}^\disk(W)$ given by Definition~\ref{def-thick-disk}.
	Let $\eta$ be a $\SLE_\kappa(\rho_-;\rho_+)$ curve on $(\bbH, 0,+\infty)$  independent of $\phi$. 
	Let $\bbH_\eta^-$ and  $\bbH_\eta^+$ be the connected components of $\bbH\setminus \eta$ on the left and right side of $\eta$ respectively.
	There exists $C \in (0,\infty)$ such that the law of $(\bbH_\eta^-,\phi,0,\infty)$  and  $(\bbH_\eta^+,\phi,0, \infty)$ 
	viewed as a pair of elements in $\mathfrak D_{0,2}$ equals
	\begin{equation}\label{eq:cut-disk}
	C\int_{0}^\infty \cM_{0,2}^\disk(W_-; \cdot, \ell)\times \cM_{0,2}^\disk(W_+;\ell, \cdot)\, d\ell.
	\end{equation}
\end{theorem}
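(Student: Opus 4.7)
The plan is to transfer the conformal welding of \emph{quantum wedges} proved in \cite{wedges} to the setting of finite-volume quantum disks. Recall that for $W_-,W_+\ge\gamma^2/2$ with $\rho_\pm=W_\pm-2$, the quantum zipper of Duplantier--Miller--Sheffield says that a weight-$(W_-+W_+)$ quantum wedge decorated by an independent $\SLE_\kappa(\rho_-;\rho_+)$ splits along the curve into a pair of independent quantum wedges of weights $W_-$ and $W_+$, whose quantum boundary lengths along the interface agree. The task is to show that the analogous statement holds with each wedge replaced by the corresponding $\cM_{0,2}^\disk(W_\pm)$ and with a common length $\ell$ integrated against Lebesgue measure as in~\eqref{eq:cut-disk}.

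First, I would set up the correspondence between wedges and disks. A weight-$W$ quantum wedge can be viewed as the ``infinite-volume'' analog of $\cM_{0,2}^\disk(W)$: if one embeds a wedge in the strip $\cS$ so that the marked point at $-\infty$ is distinguished and applies the radial decomposition of Definition~\ref{def-thick-disk} with the BES-type random walk \emph{not} conditioned to stay negative, the resulting field near $+\infty$ grows without the negative-drift cutoff. One therefore has an identity of the form: the wedge measure equals $\int_0^\infty f(\ell)\cM_{0,2}^\disk(W;\cdot,\ell)\,d\ell$ (up to composing with a random translation / scaling), for an explicit weight $f$ recording the marked-point normalization. The analogous statement holds for the right boundary length on the other side.

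Next, I would decorate the weight-$(W_-+W_+)$ wedge by $\SLE_\kappa(\rho_-;\rho_+)$ and apply the wedge welding theorem from \cite{wedges}, yielding two independent wedges of weights $W_\pm$ with a common interface length. I would then disintegrate both sides simultaneously over the shared quantum length $\ell$ of the interface. Because the interface length is a measurable function of each side and the two sides are independent given it, disintegration produces exactly a product $\cM_{0,2}^\disk(W_-;\cdot,\ell)\times \cM_{0,2}^\disk(W_+;\ell,\cdot)$ on the fibers, integrated against some reference measure in $\ell$. A scaling argument applied to the LQG coordinate change $\phi\mapsto \phi+\frac{2}{\gamma}\log t$ (which rescales quantum length by $t$ and rescales the infinite wedge measure by an explicit power of $t$) then forces this reference measure to be Lebesgue, up to a single multiplicative constant $C$ that absorbs all the Jacobian factors coming from the wedge/disk normalization and the choice of embedding.

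The main obstacle is making the wedge-to-disk disintegration rigorous and showing that the $\ell$-disintegration of a Radon measure on $\mathfrak D_{0,2}\times\mathfrak D_{0,2}$ produced by the wedge welding genuinely equals a product measure fiberwise, rather than merely almost everywhere in $\ell$. This is exactly the regularity issue that \cite[Section 2.6]{ahs-disk-welding} addresses by equipping each $\cM_{0,2}^\disk(W;\ell,\ell')$ with a topology that makes it continuous in the boundary lengths; once that continuity is in hand the identification holds for every $\ell$. The constant $C$ then emerges as a single global normalization depending only on $\gamma,W_-,W_+$, and could in principle be computed by comparing both sides on a convenient test event (e.g., by checking the total mass of left boundary length $\ell\in[1,2]$), but for the theorem as stated only its existence and finiteness is needed.
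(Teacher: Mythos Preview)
The paper does not prove this statement; it is quoted verbatim as Theorem~2.2 of \cite{ahs-disk-welding} and used as a black box. So there is no proof in the present paper to compare your proposal against.

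That said, your high-level strategy---starting from the quantum wedge welding of \cite{wedges} and transferring it to the finite-volume disk setting via disintegration over boundary lengths, with the continuity result of \cite[Section 2.6]{ahs-disk-welding} upgrading an almost-everywhere identification to an everywhere one---is indeed the route taken in \cite{ahs-disk-welding}.

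One point in your sketch is not right as written. A weight-$W$ quantum wedge has \emph{infinite} quantum length on each boundary arc, so it cannot be expressed as an integral $\int_0^\infty f(\ell)\,\cM_{0,2}^\disk(W;\cdot,\ell)\,d\ell$ with $\ell$ a finite total boundary length. The actual passage from wedge to disk in \cite{ahs-disk-welding} proceeds at the level of the field process on the strip $\cS$: the disk field in Definition~\ref{def-thick-disk} differs from the wedge field only in that the drifted Brownian motion $Y_t$ is conditioned to stay below zero on both sides, and the argument relates the conditioned and unconditioned processes rather than writing the wedge as a mixture of disks. Your scaling argument forcing the reference measure in $\ell$ to be Lebesgue is correct in spirit, but the disintegration it feeds into is over the interface length (which is finite for both wedges and disks), not over a total boundary length of the ambient surface.
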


If we are only given the information  of $(\bbH_\eta^-,\phi,0,\infty)$  and  $(\bbH_\eta^+,\phi,0, \infty)$ as marked quantum surfaces
we can identify the right boundary of the former to the latter to get a curve-decorated topological disk, where on the complement of the curve there is a conformal structure. 
From the classical work of Sheffield~\cite{shef-zipper},outside of a measure-zero event, this uniquely determines a conformal structure on the entire disk,  under which we get $(\bbH,\phi,  \eta, 0,\infty)$ modulo the equivalence relation $\sim_\gamma$. This recovering procedure is called \emph{conformal welding}. 
The reason for this uniqueness is a property satisfied by  $\SLE_\kappa(\rho_-;\rho_+)$ almost surely called the conformal removability.
For more details on Sheffield's work and conformal welding, see~\cite[Section 6]{berestycki-lqg-notes}.

In words, Theorem~\ref{thm:disk-welding} says that 
modulo a multiplicative constant, conformally welding $\cM_{0,2}^\disk(W_-)$  and $\cM_{0,2}^\disk(W_-)$,
we get $\cM_{0,2}^\disk(W)$  decorated with an independent $\SLE_\kappa(\rho_-;\rho_+)$. 

\subsection{Quantum disks of weight $2$ and $\gamma^2/2$}\label{subsec:special disks}
Our proof of the FZZ formula relies on the conformal welding of quantum disks with weight $W\in \{2,  \gamma^2/2 \}$.
For these two weights the mating-of-trees theory  for  quantum disks \cite{sphere-constructions,ag-disk,wedges}
 allow us to  describe the quantum area and quantum  length distributions of  $\cMtwo(W)$  in terms of Brownian motion. 
The case $W=2$ is essentially Theorem~\ref{thm: intro area disk}. The case $W=\gamma^2/2$ is proved in~\cite{ahs-disk-welding}, which we now review. 

 For $\theta \in (0,2\pi)$, let $\cC_\theta := \{z \: : \: \arg(z) \in (0, \theta)\}$ be the cone with angle $\theta$. 
For $z \in \cC_\theta$, let $\sm_{\cC_\theta}(z)$ denote the probability measure corresponding to Brownian motion started at $z$ and killed when it exits $\cC_\theta$. For $y >0$, let $E_{y, \eps}$ be the event that BM exits $\cC_\theta$ on the boundary interval $(ye^{i\theta}, (y+\eps)e^{i\theta})$, and let $\sm_{\cC_\theta}(z,ye^{i\theta}) = \lim_{\eps \to 0} \eps^{-1} \sm_{\cC_\theta}(z)|_{E_{y, \eps}}$. For $x > 0$ define $\sm_{\cC_\theta}(x,ye^{i\theta}) = \lim_{\eps \to 0} \eps^{-1}\sm_{\cC_\theta}(x + \eps i, ye^{i\theta})$. For more details on these limits see Appendix~\ref{sec:BM-cone}.
The following result  from~\cite{ahs-disk-welding} describes the joint law of the quantum boundary lengths and quantum area of the weight $\frac{\gamma^2}2$ quantum disk, in terms of the measure $\sm_{\cC_\theta}(x, ye^{i\theta})$ with $\theta = \frac{\pi \gamma^2}4$.

\begin{proposition}[{\cite{ahs-disk-welding}}]\label{lem-mot}
Let $\theta = \frac{\pi \gamma^2}4$ and $u = \frac1{\mathbbm a \sin \theta}$. There is a constant $C \in (0, \infty)$ such that for all $\ell, r>0$ we have $\left|\cMtwo(\frac{\gamma^2}2; \ell, r)\right|= C \left| \sm_{\cC_\theta}(\frac\ell u, \frac ru e^{i\theta})\right|$.   Moreover, the quantum area of a sample from $\cMtwo(\frac{\gamma^2}2; \ell, r)^\#$  agrees in law with the duration of a path sampled from $\sm_{\cC_\theta}(\frac\ell u, \frac ru e^{i\theta})^\#$.
\end{proposition}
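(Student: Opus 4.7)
The plan is to apply the mating-of-trees encoding of the weight-$\gamma^2/2$ quantum disk from~\cite{wedges,ag-disk}, and then match it with Brownian motion in the cone $\cC_\theta$ via an explicit linear change of coordinates. Under this encoding, the two boundary marked points of the disk correspond to entry and exit points on the two edges of the cone, the two boundary arc lengths $\ell, r$ determine these entry/exit positions, and the total quantum area equals the BM duration.

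First, I would decorate a sample from $\cMtwo(\gamma^2/2)$ with an independent space-filling $\SLE_{16/\gamma^2}$ curve between the two marked boundary points, and encode the sweep of this curve via a 2D correlated Brownian motion $(L_t, R_t)$ as in~\cite{wedges}. By the mating-of-trees correspondence the covariance structure is given by~\eqref{eq:MOT}: $\Var(L_1) = \Var(R_1) = \mathbbm a^2$ and $\Cov(L_1, R_1) = -\mathbbm a^2 \cos\theta$. At the threshold weight $W = \gamma^2/2$, the BM path is finite, its duration equals the total quantum area of the disk, and the initial/final configurations encode the quantum lengths $\ell, r$ of the two boundary arcs.

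Second, I would introduce a linear change of coordinates $\Phi: (L, R) \mapsto \lambda_1 L + \lambda_2 R$ for suitable complex constants $\lambda_1, \lambda_2$, chosen so that $\Phi$ sends $(L_t, R_t)$ to a standard planar Brownian motion $Z_t$ valued in the cone $\cC_\theta$. Matching variances and covariances determines $\lambda_1, \lambda_2$ in terms of $\mathbbm a$ and $\theta$, with normalization captured by $u = 1/(\mathbbm a \sin\theta)$: the two edges of the cone (the real axis and the angle-$\theta$ ray) correspond to the two boundary arcs of the disk, and a configuration with left length $\ell$ and right length $r$ translates to a BM path entering at $\ell/u \in \R_{>0}$ and exiting at $(r/u)e^{i\theta}$. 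Finally, disintegrating both measures over the pair $(\ell, r)$, the mating-of-trees bijection yields $|\cMtwo(\gamma^2/2;\ell,r)| = C|\sm_{\cC_\theta}(\ell/u, (r/u)e^{i\theta})|$ with an absolute constant $C$ incorporating normalizations, and conditioning on $(\ell, r)$ transfers this to the claimed equality of probability measures at the level of the quantum area / BM duration.

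The main obstacle I foresee is step 1: identifying precisely how the \emph{weight exactly $\gamma^2/2$} quantum disk fits into the mating-of-trees framework. For $W > \gamma^2/2$ one expects a different, typically infinite-time encoding, so at the threshold $W = \gamma^2/2$ the correct encoding as a finite boundary-to-boundary BM path in a cone must be carefully extracted. One route is to use the conformal welding theorem (Theorem~\ref{thm:disk-welding}) to express a larger-weight disk as a welding involving a weight-$\gamma^2/2$ disk, and thereby transfer known encodings to pin down the weight-$\gamma^2/2$ case. As a consistency check, one could cross-verify the resulting area law at the marginal level against Theorem~\ref{thm: intro area disk} (after integrating out the boundary length).
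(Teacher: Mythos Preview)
Your approach is correct in spirit and essentially matches the paper's, but you are doing far more work than the proof in the paper actually does. The proposition is attributed to \cite{ahs-disk-welding}, and the paper's proof is only a translation: it cites \cite[Proposition~7.7]{ahs-disk-welding}, which already establishes the result in the quadrant $\R_+^2$ with the \emph{correlated} mating-of-trees Brownian motion (your step~1 as a black box), and then applies the explicit shear
\[
\Lambda = \mathbbm a \begin{pmatrix} \sin\theta & -\cos\theta \\ 0 & 1 \end{pmatrix}
\]
to pass between standard BM in $\cC_\theta$ and correlated BM in $\R_+^2$, checking that $\Lambda$ sends $\ell/u \mapsto \ell$ and $(r/u)e^{i\theta} \mapsto ri$. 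That is exactly your step~2. The obstacle you flag --- pinning down the mating-of-trees encoding at the threshold weight $\gamma^2/2$ --- is precisely the content of the cited \cite[Proposition~7.7]{ahs-disk-welding}, so it is not something you need to re-derive here.
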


\begin{proof}
 	Consider the shear transformation $\Lambda = \mathbbm{a} \begin{pmatrix} \sin \theta & - \cos \theta \\ 0 & 1 \end{pmatrix}$ that maps $\cC_\theta$ to $\R_+^2$. It sends a standard 2D Brownian motion to a Brownian motion with covariance $\mathbbm a^2 \begin{pmatrix} 1 & -\cos \theta \\ -\cos \theta & 1 \end{pmatrix}$, and maps $\frac{\ell}u \mapsto \ell$ and $\frac ru e^{i\theta} \mapsto ri$. Let $\mu^{\gamma}_{\R_+^2}(\ell, ri)$ be the law of $\wt Z_t  := \Lambda Z_t$ where $t\mapsto Z_t$ 
 	is sampled from the  path measure $\sm_{\cC_\theta}(\frac\ell u, \frac ru e^{i\theta})$. It is proved in 
\cite[Proposition 7.7]{ahs-disk-welding} that for some constant $C$ and all $\ell, r > 0$ we have $\left|\cMtwo(\frac{\gamma^2}2; \ell, r) \right| = C \left| \mu_{\R_+^2}^\gamma (\ell, r i) \right|$, and that the quantum area of a sample from $ \cMtwo(\frac{\gamma^2}2; \ell, r)^\#$ agrees in law with the duration of a sample from $\mu_{\R_+^2}^\gamma (\ell, r i)^\#$. Transforming back by $\Lambda^{-1}$, we conclude the proof.
\end{proof}

We will also need the length distributions of the weight $\frac{\gamma^2}2$ and weight 2 quantum disk. 
\begin{lemma}[{\cite[Propositions 7.7 and 7.8]{ahs-disk-welding}}]\label{lem:lengh}
	There are constants $C_1,C_2 \in (0,\infty)$ such that
	\eqb\label{eq-MOT-length-original}
	|\cMtwo(\frac{\gamma^2}2;\ell, r)| = C_1 \frac{(\ell r)^{4/\gamma^2-1}}{(\ell^{4/\gamma^2} + r^{4/\gamma^2})^2}, \quad \textrm{and}\quad  |\cMtwo(2;\ell, r)| = C_2(\ell + r)^{-4/\gamma^2-1}.
	\eqe
\end{lemma}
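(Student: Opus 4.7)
My plan is to treat the two identities separately, since they arise from rather different inputs: the weight $\frac{\gamma^2}{2}$ formula comes from the mating-of-trees description of Proposition~\ref{lem-mot} combined with an explicit Poisson-kernel computation in a cone, while the weight $2$ formula can be extracted from the scaling of the additive constant $\mathbf c$ together with Proposition~\ref{prop-QD2-field}.

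\textbf{Weight $\gamma^2/2$.} By Proposition~\ref{lem-mot} there is a constant $C \in (0,\infty)$ such that
\[
\left|\cMtwo(\tfrac{\gamma^2}{2};\ell,r)\right|=C\bigl|\sm_{\cC_\theta}(\tfrac{\ell}{u},\tfrac{r}{u}e^{i\theta})\bigr|,
\qquad \theta=\tfrac{\pi\gamma^2}{4},\ u=\tfrac{1}{\mathbbm a\sin\theta},
\]
so the task reduces to computing the boundary-to-boundary Poisson kernel $H_{\cC_\theta}(x,ye^{i\theta})$ for standard planar Brownian motion in the cone. I would do this by conformal invariance: the map $\psi(z)=z^{\pi/\theta}$ sends $\cC_\theta$ to $\bbH$ with $\psi(x)=x^{\alpha}$ on one boundary and $\psi(ye^{i\theta})=-y^{\alpha}$ on the other, where $\alpha=\pi/\theta=4/\gamma^2$. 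Using the transformation rule $H_{\cC_\theta}(x,ye^{i\theta})=H_{\bbH}(\psi(x),\psi(ye^{i\theta}))\,|\psi'(x)||\psi'(ye^{i\theta})|$ together with the explicit half-plane kernel $H_{\bbH}(a,b)=\frac{1}{\pi(a-b)^2}$ yields
\[
H_{\cC_\theta}(x,ye^{i\theta})=\frac{\alpha^{2}(xy)^{\alpha-1}}{\pi(x^{\alpha}+y^{\alpha})^{2}}.
\]
Substituting $x=\ell/u$, $y=r/u$ and absorbing the powers of $u$ into the overall constant produces the claimed $C_{1}(\ell r)^{4/\gamma^{2}-1}/(\ell^{4/\gamma^{2}}+r^{4/\gamma^{2}})^{2}$.

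\textbf{Weight $2$.} Here Proposition~\ref{prop-QD2-field} identifies $\cMtwo(2)=\QD_{0,2}$, whose two marked points are sampled independently from the normalized boundary length measure $\nu_\phi^{\#}$. Hence, conditionally on the total length $L$, the split $(\ell,r)$ with $\ell+r=L$ is uniform on the simplex, so $|\cMtwo(2;\ell,r)|$ depends only on $\ell+r$; write it as $f(\ell+r)$. To pin down $f$, I would exploit the $\mathbf c$-integral in Definition~\ref{def-thick-disk}: for $W=2$ one has $\beta=\gamma$, and the additive constant $\mathbf c$ is sampled from $\frac{\gamma}{2}e^{(\beta-Q)c}dc=\frac{\gamma}{2}e^{-(2/\gamma-\gamma/2)c}dc$. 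Writing the total boundary length of an embedding of $\cMtwo(2)$ as $L=e^{\gamma\mathbf c/2}L_{0}$ and changing variables from $\mathbf c$ to $L$ exactly as in the proof of Lemma~\ref{lem-len-LF} shows that the marginal law of $L$ under $\cMtwo(2)$ has the pure power density $\mathrm{const}\cdot L^{2(\beta-Q)/\gamma-1}dL=\mathrm{const}\cdot L^{-4/\gamma^{2}}dL$. On the other hand $\int_{0}^{L} f(\ell+(L-\ell))\,d\ell=L\,f(L)$, forcing $f(L)=C_{2}L^{-4/\gamma^{2}-1}$ as desired.

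\textbf{Expected difficulties.} The main delicate point is a rigorous handling of the limits defining $\sm_{\cC_\theta}(x,ye^{i\theta})$ and the application of the conformal map $\psi(z)=z^{\pi/\theta}$ near the corner $0$ (in particular when $\theta\ge \pi/2$, equivalently $\gamma^{2}\ge 2$, where $\psi'$ is unbounded at the apex); this is where I expect most of the technical work, and I would deal with it by approximating the cone from inside and using the estimates of Appendix~\ref{sec:BM-cone}. Everything else reduces to a change of variables or to Proposition~\ref{prop-QD2-field}, and the explicit values of the constants $C_{1},C_{2}$ are not needed for the arguments in which this lemma is invoked.
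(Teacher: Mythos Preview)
Your proposal is correct, and in fact you have supplied more than the paper does: in the paper this lemma is simply quoted from \cite[Propositions~7.7 and~7.8]{ahs-disk-welding} with no proof given. Your derivation for the weight $\gamma^2/2$ case is exactly the route the paper implicitly relies on elsewhere --- Proposition~\ref{lem-mot} reduces the problem to the cone Poisson kernel, and your conformal-map computation of that kernel is precisely Lemma~\ref{lem-poisson-kernel} in the appendix (there attributed to \cite[Lemma~C.2]{ahs-disk-welding}). Your weight $2$ argument via Proposition~\ref{prop-QD2-field} and the $\mathbf c$-scaling is also sound; the paper uses the same change of variables (compare the proof of Lemma~\ref{lem-disk-perim-law}, which obtains $|\QD_{0,2}(\ell)|\propto\ell^{-4/\gamma^2}$ and then divides by $\ell$).

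One minor correction: the ``expected difficulty'' you flag about $\psi(z)=z^{\pi/\theta}$ near the apex is not actually an issue. The Poisson kernel $H_{\cC_\theta}(x,ye^{i\theta})$ is evaluated at strictly positive $x$ and $y$, where $\psi$ is smooth with nonvanishing derivative; the apex never enters. The limits defining $\sm_{\cC_\theta}(x,ye^{i\theta})$ are just the standard interior-to-boundary and then boundary-to-boundary Poisson kernel limits in a smooth domain near smooth boundary points, so no approximation from inside or corner analysis is needed. The only place a careful limit at $0$ is required is in the separate construction of $\sm_{\cC_\phi}(w,0)$, which is not used for this lemma.
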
 

\section{Embeddings of quantum disks and the  mating-of-trees variance}\label{sec:variance}

In this section we prove Theorem~\ref{thm:MOT-var}, which says $\BB a^2= \frac2{\sin (\frac{\pi \gamma^2}4)}$. Our starting point is the following observation, which expresses $\BB a^2$ in terms of the quantum  length distribution of $\QD_{1,0}$ and $\QD_{0,2}$.

\begin{lemma}\label{lem:QD-ratio}
	Recall Definition~\ref{def-QD} and~\eqref{eq:disint-QD}. We have
	\begin{equation}\label{eq:exp-area}
		\BB a^2=\frac{|\QD_{0,2}(\ell)|  }{(\frac{4}{\gamma^2}-1) 2 \sin^2(\pi\gamma^2/4) |\QD_{1,0}(\ell)|} \quad \textrm{for each }\ell>0.
	\end{equation} 
\end{lemma}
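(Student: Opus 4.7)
The plan is to unwind Definition~\ref{def-QD} to express both $|\QD_{0,2}(\ell)|$ and $|\QD_{1,0}(\ell)|$ directly in terms of the joint law of the quantum area $A$ and boundary length $L$ under $\QD$, and then to reduce the ratio to the mean of the quantum area under $\QD(1)^{\#}$, which is exactly what Theorem~\ref{thm: intro area disk} computes.

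\textbf{Step 1: Rewrite both masses via $\QD$.}  By Definition~\ref{def-QD} the measure $\QD_{0,n}$ is obtained by first reweighting $\QD$ by $L^n$ and then sampling $n$ boundary points independently from the probability measure $\nu_\phi^\#$; similarly $\QD_{m,0}$ comes from reweighting by $A^m$ and sampling $m$ interior points from $\mu_\phi^\#$. Since the insertions are drawn from probability measures, they contribute no extra mass after disintegrating over the boundary length $L = \ell$. Writing $|\QD(\ell)|$ for the density of $L$ at $\ell$ under $\QD$, this gives
\[
|\QD_{0,2}(\ell)| = \ell^{2}\,|\QD(\ell)|, \qquad |\QD_{1,0}(\ell)| = \E_{\QD(\ell)^{\#}}[A]\,|\QD(\ell)|.
\]
Hence the ratio in~\eqref{eq:exp-area} equals $\ell^{2}/\E_{\QD(\ell)^{\#}}[A]$.

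\textbf{Step 2: Use scale invariance to reduce to $\QD(1)^{\#}$.}  Adding a constant $c$ to the field of a quantum disk multiplies boundary lengths by $e^{\gamma c/2}$ and quantum areas by $e^{\gamma c}$. Because $\QD$ is defined from the measure $\cMtwo(2)$, which contains a uniform $c$-integration in the embedding of Definition~\ref{def-thick-disk}, this shift sends $\QD(1)^{\#}$ to $\QD(\ell)^{\#}$ when $e^{\gamma c/2} = \ell$. Consequently $\E_{\QD(\ell)^{\#}}[A] = \ell^{2}\,\E_{\QD(1)^{\#}}[A]$, and the $\ell^{2}$ factors cancel to give
\[
\frac{|\QD_{0,2}(\ell)|}{|\QD_{1,0}(\ell)|} = \frac{1}{\E_{\QD(1)^{\#}}[A]}.
\]
In particular the ratio is independent of $\ell$, which is a useful sanity check.

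\textbf{Step 3: Invoke Theorem~\ref{thm: intro area disk} and solve for $\BB a^2$.}  By that theorem, $A$ under $\QD(1)^{\#}$ is inverse gamma with shape $a = 4/\gamma^{2}$ and scale $b = 1/(2\BB a^{2}\sin^{2}(\pi\gamma^{2}/4))$. The mean of such a distribution is $b/(a-1)$ (finite since $\gamma \in (0,2)$ gives $4/\gamma^{2} > 1$), so
\[
\E_{\QD(1)^{\#}}[A] = \frac{1}{(4/\gamma^{2}-1)\,2\BB a^{2}\sin^{2}(\pi\gamma^{2}/4)}.
\]
Combining with Step~2 and solving for $\BB a^{2}$ yields exactly~\eqref{eq:exp-area}.

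The only subtle point is justifying the identity $|\QD_{0,n}(\ell)| = \ell^{n}|\QD(\ell)|$ rigorously via the disintegration~\eqref{eq:disint-QD}, and symmetrically for $|\QD_{1,0}(\ell)|$; this is essentially bookkeeping with the definition of $\QD_{m,n}$, but it is the step where one must be careful that the boundary marked points, drawn from the probability measure $\nu_\phi^{\#}$, do not alter the disintegration of mass over $L=\ell$.
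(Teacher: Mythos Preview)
Your proof is correct and follows essentially the same approach as the paper: both reduce the ratio $|\QD_{0,2}(\ell)|/|\QD_{1,0}(\ell)|$ to $1/\E_{\QD(1)^{\#}}[A]$ via the area/length reweighting in Definition~\ref{def-QD} and scaling, then invoke Theorem~\ref{thm: intro area disk} for the inverse-gamma mean. The only cosmetic difference is that the paper routes the computation through $\QD_{1,1}$ and $\QD_{0,1}$ (writing $|\QD_{1,1}(\ell)| = \QD_{0,1}(\ell)[A]$ and then using $|\QD_{1,1}(\ell)| = \ell\,|\QD_{1,0}(\ell)|$, $|\QD_{0,2}(\ell)| = \ell\,|\QD_{0,1}(\ell)|$), whereas you work directly with $\QD(\ell)$; these are equivalent bookkeeping choices.
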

\begin{proof}
	By Definition~\ref{def-QD}, for a non-negative measurable  function $f$ on $\R$ we have
	\begin{equation}\label{eq:11-length}
		\QD_{1,1}[f(L)]= \QD_{0,1}[f(L)A] =\int_0^\infty f(\ell)  \QD_{0,1}(\ell)[A]d\ell.
	\end{equation}
	By~\eqref{eq:disint-QD} we have $\QD_{1,1}[f(L)]=\int_0^\infty f(\ell) |\QD_{1,1}(\ell)| d\ell$. Therefore $|\QD_{1,1}(\ell)|=  \QD_{0,1}(\ell)[A]$.

	Note that $ \QD_{0,1}(\ell)[A]= |\QD_{0,1}(\ell)|\QD_{0,1}(\ell)^{\#}[A]$. By the scaling property of the quantum disk, we have $\QD_{0,1}(\ell)^{\#}[A]=\ell^2\QD_{0,1}(1)^{\#}[A]$, which equals $\ell^2\QD(1)^{\#}[A]$ since $\QD_{0,1}(1)^{\#}=\QD(1)^{\#}$. 
	By Theorem~\ref{thm: intro area disk} and~\eqref{eq:inverse-gamma-parameter}, we have  $\QD(1)^{\#}[A]=\frac{b}{a-1}$ where $a = \frac4{\gamma^2}$ and $b = (2\mathbbm a^2 \sin^2 (\pi \gamma^2/4))^{-1}$. Therefore 
	\begin{equation}\label{eq:01-length}
		|\QD_{1,1}(\ell)|=\QD_{0,1}(\ell)[A]=|\QD_{0,1}(\ell)| \ell^2  \frac{1}{2 \BB a^2 \sin^2(\pi\gamma^2/4)}. \frac{1}{\frac{4}{\gamma^2}-1}.
	\end{equation}
	On the other hand,  $|\QD_{1,1}(\ell)|= |\QD_{1,0}(\ell)|\ell$ and $|\QD_{0,2}(\ell)| = |\QD_{0,1}|\ell$. By re-arranging~\eqref{eq:01-length} we conclude the proof.
\end{proof}

The next lemma gives the explicit evaluation of $|\QD_{0,2}(\ell)| $.
\begin{lemma}\label{lem-disk-perim-law}
For $\ell>0$, we have 	\( |\QD_{0,2}(\ell)|=  \ol R(\gamma;1,1) \ell^{-\frac4{\gamma^2} }  \), where 
\begin{align}
\label{eq-R}
\ol R(\gamma; 1,1) =\frac{(2\pi)^{\frac4{\gamma^2}-1}}{(1-\frac{\gamma^2}4)\Gamma(1-\frac{\gamma^2}4)^{\frac4{\gamma^2}}}.
\end{align}
\end{lemma}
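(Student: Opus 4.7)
My plan has two parts: first establish the $\ell^{-4/\gamma^2}$ scaling, which is essentially immediate, and then identify the constant via an LCFT description of the weight-$2$ quantum disk.

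\medskip

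\noindent\textbf{Scaling.} By Proposition~\ref{prop-QD2-field} we have $\QD_{0,2} = \cMtwo(2)$, so $|\QD_{0,2}(\ell)|$ is the mass of $\cMtwo(2)$ restricted to total boundary length $\ell$. Disintegrating over the left and right arc lengths and using the second identity of Lemma~\ref{lem:lengh},
\[
|\QD_{0,2}(\ell)| \;=\; \int_0^\ell |\cMtwo(2;x,\ell-x)|\,dx \;=\; C_2\int_0^\ell \ell^{-4/\gamma^2-1}\,dx \;=\; C_2\,\ell^{-4/\gamma^2},
\]
where $C_2$ is the (unknown) constant from Lemma~\ref{lem:lengh}. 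Thus the statement is reduced to the identification $C_2 = \ol R(\gamma;1,1)$.

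\medskip

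\noindent\textbf{Identifying the constant via LCFT.} To compute $C_2$ I would use the approach of \cite{AHS-SLE-integrability}: embed $\cMtwo(2)$ in $\bbH$ via the exponential map $\exp\colon \cS\to\bbH$, placing the two marked points at $0$ and $\infty$. Since $W=2$ yields $\beta = Q+\gamma/2-W/\gamma = \gamma$ in Definition~\ref{def-thick-disk}, a Girsanov-type computation (an obvious boundary analog of Lemma~\ref{lem:Girsanov}, applied at the two boundary points $0,\infty$) identifies this embedded field with $C'\,\LF_\bbH$ decorated with boundary $\gamma$-insertions at $0$ and $\infty$, for an explicit $C'$ arising from the $\tfrac{\gamma}{2}e^{(\beta-Q)c}dc$ prefactor in the $\mathbf c$-law, together with the Weyl factor $Q\log|(e^{-\,\cdot})'|$ incurred by $\exp$.

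\medskip

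\noindent\textbf{From LCFT to the explicit formula.} Granted this identification, the mass of $\cMtwo(2)$ on $\{\nu_\phi(\R)=\ell\}$ equals $C'$ times the corresponding mass for the Liouville measure with boundary $\gamma$-insertions. The latter is, up to Gamma-function factors (obtained by disintegrating the $\mathbf c$-integral as in the proof of Lemma~\ref{lem-len-LF}), precisely a moment of boundary GMC with two $\gamma$-insertions, i.e.\ the $\mu=0$ boundary two-point LCFT correlation function. An explicit closed form for this correlation was established in \cite{RZ_boundary}; inserting that formula and matching powers of $\ell$ gives both the $\ell^{-4/\gamma^2}$ scaling (consistent with the above) and the explicit constant~\eqref{eq-R}.

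\medskip

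\noindent\textbf{Main obstacle.} The conceptual steps are straightforward; the work lies in tracking constants across three bookkeeping tasks: the Jacobian of $\exp\colon\cS\to\bbH$, the boundary Girsanov shifts at both $0$ and $\infty$ (the latter handled via Lemma~\ref{lem-change-coord}, since an insertion at $\infty$ must be treated through a conformal automorphism fixing scale), and the precise translation between the Liouville boundary two-point mass and the Remy--Zhu formula. Careful execution of this accounting should yield $C_2 = \ol R(\gamma;1,1)$ with the value in~\eqref{eq-R}.
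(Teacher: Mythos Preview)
Your proposal is correct and is essentially the same approach as the paper's; the only difference is packaging. The paper's proof is a one-line citation of \cite[Lemma~3.3]{AHS-SLE-integrability}, which states that the law of $\mu_1 L_1+\mu_2 L_2$ under $\cMtwo(2)$ is $\ol R(\gamma;\mu_1,\mu_2)\,\ell^{-4/\gamma^2}\,d\ell$ with the constant $\ol R$ computed in \cite{RZ_boundary}, and then sets $\mu_1=\mu_2=1$. Your outline---embed $\cMtwo(2)$ via $\exp:\cS\to\bbH$, identify the resulting field with Liouville theory carrying boundary $\gamma$-insertions, disintegrate over $\mathbf c$, and read off the constant from the Remy--Zhu boundary two-point formula---is precisely a sketch of the proof of that cited lemma, so there is no genuine methodological difference.
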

\begin{proof}

Recall that $\QD_{0,2}=\cM^{\disk}_{0,2}(W)$ with $W=2$ by Definition~\ref{def-QD}.
Let $L_1$ and $L_2$ be the left and right boundary lengths of a sample from 
$\cM_{0,2}^\disk(2)$. By~\cite[Lemma 3.3]{AHS-SLE-integrability}, for $\mu_1, \mu_2 > 0$, the law of $\mu_1 L_1 + \mu_2 L_2 $ is 
 $1_{\ell>0}\ol R(\gamma; \mu_1, \mu_2) \ell^{-\frac4{\gamma^2}} d\ell$ where $\ol R(\gamma; \mu_1, \mu_2)$ is explicitly computed by the second  author of this paper  and Zhu in~\cite{RZ_boundary}. 	
 Setting now $\mu_1 = \mu_2 = 1$, since the density of $L_1+L_2$ can also be written as $ 1_{\ell>0}|\QD_{0,2}(\ell)| d\ell $ one obtains $ |\QD_{0,2}(\ell)|=  \ol R(\gamma;1,1) \ell^{-\frac4{\gamma^2} }$.
\end{proof}
\begin{remark}\label{rmk:refl}
The function $\ol R(\gamma;1,1)$ in Lemma~\ref{lem-disk-perim-law} is a  boundary variant of the reflection coefficient $\ol R(\gamma)$ considered  in~\cite{rv-tail, DOZZ_proof}.
\end{remark}
To get $\BB a^2$, it remains to  evaluate $|\QD_{1,0}|$. It  follows from the LCFT representation of $\QD_{1,0}$.
\begin{theorem}\label{thm:1disk-const}
	Let $\phi$ be a sample of $\LF_{\bbH}^{(\gamma,i)}$. Then the law of $(\bbH, \phi, i)$ viewed as  a marked quantum surface is $\frac{2\pi (Q-\gamma)^2}\gamma \QD_{1,0}$.
\end{theorem}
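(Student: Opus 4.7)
The plan is to compare both sides as measures on the quotient space $\mathfrak{D}_{1,0}$ by producing a common embedding on $\bbH$ with the interior marked point at $i$. Since $\LF_\bbH^{(\gamma,i)}$ is a measure on fields rather than equivalence classes, I would push it forward to $\mathfrak{D}_{1,0}$, accounting for the one-dimensional rotational stabilizer of $i$ in the conformal automorphism group of $\bbH$; this residual $S^1$-symmetry is the source of the $2\pi$ factor in the final constant.

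Using Lemma~\ref{lem:Girsanov}, I would first write $\LF_\bbH^{(\gamma,i)}$ as the law of $h - 2Q\log|\cdot|_+ + \gamma G_\bbH(\cdot,i) + \mathbf c$, where $h \sim P_\bbH$ and $\mathbf c$ has reference measure $2^{-\gamma^2/2}e^{(\gamma-Q)c}\,dc$. On the other side, by Definition~\ref{def-QD}, $\QD_{1,0}$ is obtained by integrating $\mu_{\tilde\phi}$ (without normalization) against a sample $(\cS,\tilde\phi)$ from $\QD$, so that the total area weighting $A$ is absorbed into the integration over the bulk point. Using $\cM^{\disk}_{0,2}(2)=\QD_{0,2}$ and Definition~\ref{def-thick-disk} with $\beta=\gamma$, I would then describe $\QD$ on $\cS$ explicitly as $Y_t + h^{\ell} + \mathbf c$, with $Y_t$ the two-sided drifted BM with drift $Q-\gamma$ conditioned to stay negative, $h^\ell$ an independent lateral GFF, and $\mathbf c$ from $\tfrac\gamma2 e^{(\gamma-Q)c}\,dc$ (up to an $L^{-2}$ reweighting from the passage $\cMtwo(2)\to\QD$).

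The key step is a Girsanov-type ``rooting'' argument: sampling $\tilde z$ against $\mu_{\tilde\phi}$ is equivalent to sampling $\tilde z$ against Lebesgue measure on $\cS$ and then tilting the field law by $\epsilon^{\gamma^2/2}e^{\gamma\tilde\phi_\epsilon(\tilde z)}$, which by Girsanov inserts a $\gamma$-log singularity at $\tilde z$. I would then apply a conformal map $\psi:\cS\to\bbH$ sending $\tilde z$ to $i$, transform the field via $\tilde\phi\mapsto \tilde\phi\circ\psi^{-1}+Q\log|(\psi^{-1})'|$, and change variables so that the Lebesgue integral over $\tilde z\in\cS$ becomes an integration over a $\PSL(2,\R)$-orbit of embeddings of the marked quantum surface on $\bbH$ with $\tilde z\mapsto i$. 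After canceling the Jacobian $|(\psi^{-1})'|^2$ against the conformal transformation rule, the resulting field law matches $\LF_\bbH^{(\gamma,i)}$ up to an explicit multiplicative constant.

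The main obstacle will be tracking this constant and isolating the factor $\tfrac{2\pi(Q-\gamma)^2}{\gamma}$. The $(Q-\gamma)^2$ comes from the total mass of the two one-sided Brownian motions with drift $Q-\gamma$ conditioned to stay negative that appear in Definition~\ref{def-thick-disk} (each side contributing $2(Q-\gamma)$ via the Bessel-type density, combined with the specific normalization of the conditioning). The $\tfrac{\gamma}{2}$ in the denominator arises from reconciling the zero-mode reference measure $\tfrac{\gamma}{2}e^{(\gamma-Q)c}\,dc$ of Definition~\ref{def-thick-disk} with the $2^{-\gamma^2/2}e^{(\gamma-Q)c}\,dc$ of Lemma~\ref{lem:Girsanov}. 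The $2\pi$ comes from integrating out the rotational ambiguity around $i$ when passing from $\mathfrak D_{1,0}$ to a fixed embedding. Carefully executing the Girsanov change of measure at the distributional level and reconciling all these constants---together with the $L^{-2}$ from Definition~\ref{def-QD} and constants absorbed into the embedding of $\QD_{0,2}$---is the delicate accounting step that this proof rests on.
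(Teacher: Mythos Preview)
Your overall strategy---convert area-sampling into a $\gamma$-log insertion via Girsanov, then integrate over embeddings---is the same idea the paper uses. But the paper packages it differently, and your constant accounting contains a genuine error.

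The paper does not work from the strip description of $\cMtwo(2)$. It invokes Proposition~\ref{prop-unmarked-disk-embed} (the uniform embedding of $\QD$ via the Haar measure $\mathbf m_\bbH$ on $\mathrm{conf}(\bbH)$), which already delivers the identity ``$\QD$ uniformly embedded $=\frac{\gamma}{2(Q-\gamma)^2}\LF_\bbH$'' with the constant built in. The remaining work is: (i) the Girsanov identity $\LF_\bbH(d\phi)\mu_\phi(d^2z)=\LF_\bbH^{(\gamma,z)}(d\phi)\,d^2z$, which is your rooting step; and (ii) an Iwasawa-decomposition computation (Lemmas~\ref{lem:ank} and~\ref{lem-haar-density-i}) showing that under $\mathbf m_\bbH$ the density of $\mathfrak g(i)$ is $\frac{\pi}{(\Im z)^2}\,d^2z$. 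Matching the two descriptions of the pair (quantum surface, $\mathfrak g(i)$) finishes the proof. The factor $\pi$ enters through this Haar density (indeed $|K|=\pi$ in the Iwasawa decomposition, which is the stabilizer contribution), and the remaining $2$ comes from the prefactor in Proposition~\ref{prop-unmarked-disk-embed}.

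Your plan to derive everything directly from the conditioned Brownian motions on $\cS$ would in effect re-prove Proposition~\ref{prop-unmarked-disk-embed} (or equivalently Theorem~\ref{thm-two-disk-equivalence}), and your sketch of the constants is wrong as written. The two one-sided Brownian motions in Definition~\ref{def-thick-disk} are \emph{probability} measures (the conditioning is defined by a limit), so their total mass is $1$, not $2(Q-\gamma)$; they cannot by themselves supply a factor of $(Q-\gamma)^2$. That factor actually emerges when one integrates out a Lebesgue horizontal translation on $\cS$ (unpinning the location of the maximum of $Y_t$), which is the content of Theorem~\ref{thm-two-disk-equivalence}; your outline does not identify or carry out this step. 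You also leave the $L^{-2}$ reweighting from $\cMtwo(2)$ to $\QD$ unresolved---in the paper this is already absorbed into Proposition~\ref{prop-unmarked-disk-embed}, which is stated for $\QD$ rather than $\cMtwo(2)$. Without these two pieces your route does not close.
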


Cercle~\cite{cercle-quantum-disk} proved that $\QD_{0,3}(1)^{\#}$ admits a simple LCFT description, which extends the result in~\cite{ahs-sphere} for  the  quantum sphere. In~\cite[Section 2]{AHS-SLE-integrability}, a family of such results was proved in a systematic way. We will prove Theorem~\ref{thm:1disk-const} based on ideas and results from~\cite[Section~2]{AHS-SLE-integrability}. We postpone this proof to Section~\ref{subsec:embedding} and proceed to wrap up the evaluation of $\BB a^2$.

\begin{proof}[Proof of Theorem~\ref{thm:MOT-var} given Theorem~\ref{thm:1disk-const}] By~\eqref{eq:length-density} with $\alpha=\gamma$ and Theorem~\ref{thm:1disk-const}, we have
	\[|\QD_{1,0} (\ell)|=
	\frac\gamma{2\pi(Q-\gamma)^2}\times  \frac2\gamma 2^{-\frac{\gamma^2}2}\ol U_0(\gamma)\ell^{\frac2\gamma(\gamma-Q)-1} = 
	\frac{\ol U_0(\gamma) 
		\ell^{- \frac{4}{\gamma^2}}}  {2^{\gamma^2/2} \pi (Q-\gamma)^2}.
		\]
	Now by Lemmas~\ref{lem:QD-ratio} and~\ref{lem-disk-perim-law}, and plugging in~\eqref{eq:U0-explicit} and~\eqref{eq-R} and simplifying, we have
	\[
	\BB a^2= \frac{\ol R(\gamma;1,1)}{\ol U_0(\gamma)} \times \frac{2^{\frac{\gamma^2}2-1} \pi (Q-\gamma)^2}{(\frac4{\gamma^2}-1)  \sin^2(\pi\gamma^2/4)} = \frac{2\pi}{\Gamma(\frac{\gamma^2}4)\Gamma(1-\frac{\gamma^2}4)\sin^2(\frac{\pi \gamma^2}4)}.
	\]
	Using the identity $\Gamma(z) \Gamma(1-z) = \frac{\pi}{\sin(\pi z)}$ for $z \not \in \Z$ gives the result. 
\end{proof}

In the rest of this section we first prove Theorem~\ref{thm:1disk-const} in Section~\ref{subsec:embedding} and then prove related results in Section~\ref{subsec:third-point} that will be used in Section~\ref{sec:bubble}. 

\subsection{LCFT descriptions of quantum disks}\label{subsec:embedding}
In this section we prove Theorem~\ref{thm:1disk-const}. It uses the so-called uniform embedding of quantum disks via Haar measures introduced in~\cite{AHS-SLE-integrability}. Before recalling this result, we first give a concrete realization of a Haar measure on the group of conformal automorphisms of $\bbH$. 
Sample $(\mathbf p,\mathbf q,\mathbf r)$ from the measure $|(p-q)(q-r)(r-p)|^{-1} \, dp \, dq \, dr$ restricted to triples $(p,q,r)$ which are oriented counterclockwise on $ \R = \partial \bbH$. Let $\mathfrak g$ be the conformal map with $\mathfrak g(0) = \mathbf p, \mathfrak g(1) = \mathbf q, \mathfrak g(-1) = \mathbf r$, 
and let $\mathbf m_\bbH$ be the law of $\mathfrak g$.  We recall the notation $\mathfrak g \bullet_\gamma \phi = \phi \circ\mathfrak  g^{-1} + Q \log \left|(\mathfrak g^{-1})'\right|$ from~\eqref{eq-QS}. 
As explained in~\cite{AHS-SLE-integrability}, $\mathbf m_\bbH$ is a (left and right invariant) Haar measure on the conformal automorphism group $\mathrm{conf}(\bbH)$ of $\bbH$.
\begin{proposition}[Proposition 2.39 of \cite{AHS-SLE-integrability}]\label{prop-unmarked-disk-embed}
	Let $M$ be a measure on fields $\phi$ such that the law of $(\bbH, \phi)$ is $\QD$. If we sample $(\phi, \mathfrak g)$ from $M \times \mathbf{m}_\bbH$,  then the law of the field $\mathfrak g \bullet_\gamma \phi$ is 
	\[\frac\gamma{2(Q-\gamma)^2} \LF_\bbH.\]
\end{proposition}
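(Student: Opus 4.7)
The plan is to first establish that the law of $\mathfrak g\bullet_\gamma\phi$ does not depend on the choice of embedding $M$ of $\QD$, and then to evaluate it by choosing a convenient embedding adapted to the strip description of $\cMtwo(2)=\QD_{0,2}$.

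\textbf{Independence of $M$.} Since $\QD$ is a measure on $\sim_\gamma$-equivalence classes, any two embeddings $M,M'$ into fields on $\bbH$ differ by a measurable random conformal automorphism $\psi_\phi\in\mathrm{conf}(\bbH)$ satisfying $\psi_\phi\bullet_\gamma\phi\sim M'$ when $\phi\sim M$. Then $\mathfrak g\bullet_\gamma(\psi_\phi\bullet_\gamma\phi)=(\mathfrak g\psi_\phi)\bullet_\gamma\phi$, and by the right-invariance of the Haar measure $\mathbf m_\bbH$, $\mathfrak g\psi_\phi$ has the same law as $\mathfrak g$ for each fixed $\phi$. Hence the law of $\mathfrak g\bullet_\gamma\phi$ coincides under $M$ and $M'$, which also justifies why the proposition is stated for arbitrary $M$ without fixing a canonical embedding.

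\textbf{Computation via a convenient embedding.} By Definition~\ref{def-QD}, $\QD$ is obtained from $\cMtwo(2)=\QD_{0,2}$ via $L^{-2}$-reweighting after forgetting the two marked points. For $W=2$ one has $\beta=\gamma$, so Definition~\ref{def-thick-disk} gives an explicit strip embedding of $\cMtwo(2)$ with marked points at $\pm\infty$ and zero-mode density $\frac\gamma2 e^{(\gamma-Q)c}\,dc$; pushing forward by $\exp:\cS\to\bbH$ (applying $\bullet_\gamma$) and reweighting by $L^{-2}$ yields a convenient $M$ with implicit marked points at $0,\infty$. I would parametrize $\mathfrak g\sim\mathbf m_\bbH$ by the triple $(\mathbf a,\mathbf b,\lambda)=(\mathfrak g(0),\mathfrak g(\infty),\lambda)$, where $\lambda>0$ parametrizes the residual scaling once $0,\infty$ are sent to $\mathbf a,\mathbf b$, and compute the Haar density in these coordinates from the $(\mathbf p,\mathbf q,\mathbf r)$-parametrization by a Jacobian change of variables. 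Applying $\bullet_\gamma$: the log-correlated plus lateral GFF components of $\phi_\cS$ become, under $\exp\circ\mathfrak g^{-1}$, a free-boundary GFF on $\bbH$ plus an explicit harmonic shift that aligns the normalization with the upper-semicircle average used in $P_\bbH$; the $Q\log|(\mathfrak g^{-1})'|$ correction together with the logarithmic term from the exponential map produces the $-2Q\log|z|_+$ of $\LF_\bbH$; and the zero mode $\mathbf c$ combines with $\log\lambda$ so that their joint distribution, after absorbing $\log\lambda$ into $\mathbf c$, becomes proportional to $e^{-Qc}\,dc$. The overall constant $\frac{\gamma}{2(Q-\gamma)^2}$ is assembled from the $\frac\gamma2$ in the zero-mode density of Definition~\ref{def-thick-disk} combined with the $(Q-\gamma)^{-2}$ produced by the $L^{-2}$-reweighting integrated against the Haar scaling direction (a standard Gamma-type identity, using that $L$ depends on $\mathbf c+\log\lambda$ exponentially with rate $\gamma/2$).

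\textbf{Main obstacle.} The hard part is the simultaneous bookkeeping of Weyl-anomaly-type shifts and Jacobians: matching the free-boundary GFF on $\cS$ against the $P_\bbH$-normalization on $\partial\D\cap\bbH$ requires tracking an explicit deterministic harmonic correction, and computing the Haar density in $(\mathbf a,\mathbf b,\lambda)$-coordinates requires a careful change of variables from the defining $|(\mathbf p-\mathbf q)(\mathbf q-\mathbf r)(\mathbf r-\mathbf p)|^{-1}\,d\mathbf p\,d\mathbf q\,d\mathbf r$ density. The key identity producing the finite constant is that the $L^{-2}$-reweighting, with $L$ depending on $\mathbf c+\log\lambda$ exponentially, reduces the double integral over $(\mathbf c,\lambda)$ to a single $e^{-Qc}\,dc$ integral multiplied by $(Q-\gamma)^{-2}$, which when paired with the $\frac\gamma2$ from the zero-mode density yields the stated prefactor.
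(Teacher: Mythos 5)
The paper does not actually prove this statement: it is quoted verbatim as Proposition~2.39 of \cite{AHS-SLE-integrability}, and the only in-paper commentary is the remark, following the statement, that by invariance of the Haar measure the law of $\mathfrak g\bullet_\gamma\phi$ does not depend on the choice of $M$. Your first step reproduces exactly that remark (and is correct, modulo the measurable-selection issue in producing $\psi_\phi$, which is standard), so up to that point you are aligned with what the paper says.

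The second step, however, is a plan rather than a proof, and the one place where you commit to a concrete mechanism is the place where you go wrong. The genuine difficulty in this computation is the radial part: in Definition~\ref{def-thick-disk} with $W=2$ (so $\beta=\gamma$), the field's radial component is a two-sided Brownian motion with drift $-(Q-\gamma)|t|$ \emph{conditioned to stay negative}, whereas the Liouville field $\LF_\bbH$, pulled back to the strip, has radial part an \emph{unconditioned} Brownian motion with a different drift (coming from $-2Q\log|z|_+$ together with the $Q\log|(\exp^{-1})'|$ term). Converting the former into the latter by averaging over the horizontal translation of the strip (equivalently, the scaling direction of $\mathbf m_\bbH$) is precisely the content of Theorem~\ref{thm-two-disk-equivalence} (Theorem~2.22 of \cite{AHS-SLE-integrability}), and it is there that the prefactor $\frac{\gamma}{2(Q-\beta)^2}$ is generated — note that this constant already appears in the two-pointed statement, \emph{before} any $L^{-2}$-reweighting. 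Your sketch never mentions the conditioning of $Y_t$, and you instead attribute the $(Q-\gamma)^{-2}$ to ``the $L^{-2}$-reweighting integrated against the Haar scaling direction'' via a Gamma-type identity. That accounting is misplaced: once one has the two-pointed identity, passing to the unpointed $\QD$ is a Girsanov bookkeeping step — $\QD_{0,2}=\cMtwo(2)$ together with $\LF_\bbH(d\phi)\,\nu_\phi(ds_1)\,\nu_\phi(ds_2)=\LF_\bbH^{(\gamma,s_1),(\gamma,s_2)}(d\phi)\,ds_1\,ds_2$, exactly as in Proposition~\ref{prop-embed-QD11} — which removes the two quantum-typical $\gamma$-boundary insertions without changing the multiplicative constant. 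So the proposal, as written, has a gap at its computational core (the conditioned-versus-unconditioned radial process) and a wrong explanation for the origin of the constant; it would need to be completed by essentially reproving Theorem~\ref{thm-two-disk-equivalence} before the remaining Jacobian and zero-mode manipulations could be carried out.
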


Due to the invariance of the Haar measure, the law of $\mathfrak g \bullet_\gamma \phi$ does not depend on the choice of $M$. This is called a uniform embedding of $\QD$ via $\mathbf m_\bbH$ in~\cite[Theorem 1.2]{AHS-SLE-integrability}.

We will need to following basic fact on $\mathbf m_\bbH$ in the proof of Theorem~\ref{thm:1disk-const}.
\begin{lemma}\label{lem-haar-density-i}
	For $\mathfrak g$ sampled from $\mathbf m_\bbH$, the law of $\mathfrak g(i) \in \bbH$ is $\frac{\pi}{\left|\Im z\right|^2} d^2z$. 
\end{lemma}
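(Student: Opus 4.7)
The plan is to verify this by a direct change-of-variables computation using the Iwasawa parametrization of $\mathrm{conf}(\bbH) \cong \mathrm{PSL}(2,\R)$. I will parametrize each $\mathfrak g \in \mathrm{conf}(\bbH)$ by $(x,y,\theta) \in \R \times (0,\infty) \times [0,\pi)$, where $x+iy = \mathfrak g(i)$ and $\theta$ tracks the rotational stabilizer of $i$. A short matrix calculation then yields the explicit formulas
\[
\mathfrak g(0) = x + y\tan\theta, \qquad \mathfrak g(1) = x + y\tan\!\bigl(\theta + \tfrac{\pi}{4}\bigr), \qquad \mathfrak g(-1) = x + y\tan\!\bigl(\theta - \tfrac{\pi}{4}\bigr).
\]
This parametrization is a.e.\ a bijection onto the counterclockwise triples $(\mathbf p, \mathbf q, \mathbf r)$ on $\R$: every $\mathfrak g \in \mathrm{conf}(\bbH)$ preserves the orientation of $\partial \bbH$ and $(0,1,-1)$ is counterclockwise, so $(\mathfrak g(0), \mathfrak g(1), \mathfrak g(-1))$ is automatically counterclockwise, and conversely any counterclockwise triple determines $\mathfrak g$ uniquely.

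Setting $\alpha = \tan\theta$, $\beta = \tan(\theta+\pi/4)$, $\gamma = \tan(\theta-\pi/4)$ and using $\sec^2 = 1+\tan^2$ together with the cubic identity
\[
(\beta-\alpha)\gamma^2 + (\alpha-\gamma)\beta^2 + (\gamma-\beta)\alpha^2 = (\alpha-\beta)(\beta-\gamma)(\gamma-\alpha),
\]
the $3\times 3$ Jacobian of $(x,y,\theta) \mapsto (\mathbf p, \mathbf q, \mathbf r)$ simplifies to $y(\alpha-\beta)(\beta-\gamma)(\gamma-\alpha)$. Since $\mathbf p - \mathbf q = y(\alpha-\beta)$ and similarly for the other differences, this equals $(\mathbf p-\mathbf q)(\mathbf q-\mathbf r)(\mathbf r-\mathbf p)/y^2$, and hence
\[
\frac{d\mathbf p\,d\mathbf q\,d\mathbf r}{|(\mathbf p-\mathbf q)(\mathbf q-\mathbf r)(\mathbf r-\mathbf p)|} = \frac{dx\,dy\,d\theta}{y^2}.
\]
Integrating out $\theta \in [0,\pi)$ identifies the law of $\mathfrak g(i) = x+iy$ as $\tfrac{\pi}{y^2}\,dx\,dy = \tfrac{\pi}{|\Im z|^2}\,d^2z$, as desired.

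There is no serious obstacle; the only delicate point is using $[0,\pi)$ (rather than $[0,2\pi)$) as the range of $\theta$, which reflects $\mathrm{conf}(\bbH) = \mathrm{PSL}(2,\R)$ rather than $\mathrm{SL}(2,\R)$ and is exactly what produces the factor of $\pi$. As a cross-check, one may first invoke the left-invariance of the Haar measure $\mathbf m_\bbH$ to conclude that its pushforward under $\mathfrak g \mapsto \mathfrak g(i)$ is a Möbius-invariant measure on $\bbH$, hence a scalar multiple of $|\Im z|^{-2}d^2z$, with the constant then pinned down by the above Jacobian computation at any single base point.
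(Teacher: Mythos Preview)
Your proof is correct. Both your argument and the paper's rest on the Iwasawa parametrization $(x,y,\theta)$ of $\mathrm{conf}(\bbH)$, but the execution differs. The paper first proves an auxiliary lemma (Lemma~\ref{lem:ank}) identifying $\mathbf m_\bbH$ with the product measure $A\times N\times K$ in these coordinates: it cites the general fact that $ANK$ yields \emph{a} Haar measure, invokes uniqueness, and then pins down the normalizing constant $C=1$ by computing the mass of a carefully chosen $\eps$-event. Only then does it observe $k(i)=i$ and do the easy change of variables for $a\circ n(i)$.

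You instead compute the $3\times3$ Jacobian of $(x,y,\theta)\mapsto(\mathbf p,\mathbf q,\mathbf r)$ directly, and the Vandermonde-type identity collapses it exactly to $(\mathbf p-\mathbf q)(\mathbf q-\mathbf r)(\mathbf r-\mathbf p)/y^2$, so the defining density of $\mathbf m_\bbH$ becomes $y^{-2}\,dx\,dy\,d\theta$ with no undetermined constant. This is more self-contained: you get the form and the normalization in one stroke, with no appeal to uniqueness of Haar measure and no $\eps$-limit bookkeeping. The paper's route has the mild advantage of recording the $ANK$ decomposition of $\mathbf m_\bbH$ explicitly, but since that lemma is not used elsewhere, your direct Jacobian computation is the cleaner path to this particular result.
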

We first give a representation of the Haar measure $m_\bbH$ where the density of $\mathfrak g(i)$ is transparent. 
\begin{lemma}\label{lem:ank}
Define three measures $A, N, K$ on the conformal automorphism group $\mathrm{conf}(\bbH)$ on $\bbH$  as follows. Sample $\mathbf  t$ from $\1_{t >0} \frac1t \, dt$ and set $a: z \mapsto \mathbf t z$, sample $\mathbf s$ from the Lebesgue measure on $\R$ and set $n : z \mapsto z + \mathbf s$, and sample $\mathbf  u$ from $\1_{-\frac\pi2 < u < \frac\pi2} du$ and set $k: z \mapsto \frac{z \cos \mathbf  u - \sin\mathbf   u}{z \sin \mathbf  u + \cos\mathbf   u}$. Let $A,N,K$ be the laws of $a,n,k$ respectively. 
Then the law of $ a \circ n \circ k$  under  $A\times N\times K$  equals $\mathbf m_\bbH$.
\end{lemma}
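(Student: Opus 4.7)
Since $\mathbf{m}_\bbH$ is already known (from the preceding discussion) to be a Haar measure on $\mathrm{conf}(\bbH) \cong \mathrm{PSL}_2(\R)$, and Haar measure on this group is unique up to a multiplicative scalar, it suffices to show that $(a \circ n \circ k)_\ast(A \times N \times K)$ is also a Haar measure on $\mathrm{conf}(\bbH)$ and then to pin down the constant at a single point.

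The first step is to invoke the Iwasawa decomposition of $\mathrm{PSL}_2(\R)$: the map $A \times N \times K \to \mathrm{conf}(\bbH)$, $(a, n, k) \mapsto a \circ n \circ k$, is a real-analytic diffeomorphism, and under this parametrization Haar measure pulls back to a constant multiple of $\tfrac{dt}{t} \otimes ds \otimes du$, which is exactly the product measure in the statement. A self-contained verification proceeds by checking invariance under each of the generating one-parameter subgroups $A, N, K$: left-invariance under $A$ is immediate because $A$ is abelian with Haar measure $\tfrac{dt}{t}$; left-invariance under $N$ follows from the identity $n_0 \circ a = a \circ (a^{-1} \circ n_0 \circ a)$, where $a^{-1} \circ n_0 \circ a$ is translation by $s_0/t$, combined with translation invariance of $ds$; right-$K$-invariance is immediate since $k \circ k_0$ has Haar law on $K$; and these combined with unimodularity of $\mathrm{PSL}_2(\R)$ and the fact that $ANK$ generates $\mathrm{conf}(\bbH)$ yield full left-invariance.

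The second step fixes the multiplicative constant by comparing densities at the identity $(t, s, u) = (1, 0, 0)$, which corresponds to $\mathfrak g = \mathrm{id}$ and $(\mathbf p, \mathbf q, \mathbf r) = (0, 1, -1)$. A direct computation from the definitions gives
\begin{equation*}
p = \mathfrak g(0) = t(s - \tan u), \quad q = \mathfrak g(1) = t \cdot \frac{\cos u\,(1+s) + \sin u\,(s-1)}{\cos u + \sin u}, \quad r = \mathfrak g(-1) = t \cdot \frac{\cos u\,(s-1) - \sin u\,(s+1)}{\cos u - \sin u},
\end{equation*}
and differentiating at $(1, 0, 0)$ yields
\begin{equation*}
\left. \frac{\partial(p, q, r)}{\partial(t, s, u)} \right|_{\mathrm{id}} = \begin{pmatrix} 0 & 1 & -1 \\ 1 & 1 & -2 \\ -1 & 1 & -2 \end{pmatrix},
\end{equation*}
with determinant $2$. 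The pushforward density at $(0, 1, -1)$ is therefore $\tfrac{1}{t} \cdot |\det J|^{-1} = \tfrac{1}{2}$, matching $|(0-1)(1-(-1))(-1-0)|^{-1} = \tfrac{1}{2}$ for $\mathbf{m}_\bbH$. By uniqueness of Haar measure, the two measures coincide globally.

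The main obstacle is the left-$K$-invariance of the pushforward, which requires re-decomposing $k_0 \circ a \circ n \circ k$ back into $A \circ N \circ K$ form while tracking how the parameters $(t, s, u)$ transform; this is cleanest via the classical Iwasawa theorem, after which the only substantive computation is the Jacobian at the identity above.
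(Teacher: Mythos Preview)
Your proof is correct and follows the same overall architecture as the paper: both invoke the fact that the $ANK$ product measure is a Haar measure on $\mathrm{PSL}_2(\R)$ (the paper by citing \cite[Theorem 11.1.3]{deitmar-echterhoff-harmonic-analysis}, you by pointing to the classical Iwasawa theorem after sketching the easy invariances), then use uniqueness of Haar measure to reduce to a single constant, and finally pin down that constant.

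The difference lies in how the constant is determined. The paper computes the measure of a carefully chosen event $E_\eps$ in both the $(p,q,r)$ description and the $(t,s,u)$ description, sandwiching the latter between two explicit boxes and comparing leading-order asymptotics as $\eps \to 0$. You instead compute the Jacobian of $(t,s,u) \mapsto (p,q,r)$ at the identity and match densities directly, which is shorter and avoids the $\eps$-asymptotics bookkeeping. Your Jacobian matrix and its determinant $2$ check out, as does the density match $\tfrac{1}{t}\cdot |2|^{-1} = \tfrac12 = |(0-1)(1+1)(-1-0)|^{-1}$ at $(0,1,-1)$. One minor remark: your sketch of the invariance argument correctly flags left-$K$-invariance as the nontrivial step and defers it to the Iwasawa theorem; the partial verification via left-$A$, left-$N$, right-$K$ invariance plus unimodularity alone does not quite close the loop without that input, so the appeal to the classical theorem is doing real work there, just as the paper's citation does.
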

\begin{proof}
	Set $\mathfrak g=a \circ n \circ k$.
	Then the law of $ \mathfrak g$  under  $A\times N\times K$   is a Haar measure on $\mathrm{conf}(\bbH)$; see e.g.\ \cite[Theorem 11.1.3]{deitmar-echterhoff-harmonic-analysis}.
	By  the uniqueness of the Haar measure (see e.g.\ \cite[Theorem 5.1.1]{faraut-lie-groups}), the law of $\mathfrak g$ is $C \mathbf m_\bbH$ for some $C\in (0,\infty)$.
	We now check that $C=1$.

	Define  $E_\eps := \{\mathfrak g(0) \in (0,1), \mathfrak g(-1) < \mathfrak g(0) < \mathfrak g(1), \frac{\mathfrak g(1)-\mathfrak g(0)}{\mathfrak g(0)-\mathfrak g(-1)} \in (1-\eps, 1+\eps), \mathfrak g(0)-\mathfrak g(-1) \in (1, 2) \}$, then by the definition of $\mathbf m_\bbH$ we get
	\[A\times N\times K [E_\eps] =  C\int_0^1 \int_{p-2}^{p-1} \int_{p+(p-r)(1-\eps)}^{p+(p-r)(1+\eps)} \frac{1}{(p-r)(q-p)(q-r)} \, dq \, dr \, dp= C(1+o_\eps(1))\frac\eps2. \]
	
	Let 
	$\ul{E}_{\eps, \delta} = \{\mathbf  u \in (-(1-\delta)\frac\eps2, (1-\delta)\frac\eps2), \mathbf  t \in (1+\delta,2-\delta), \mathbf s \in (0, \frac1{\mathbf  t})\}$ and $\ol{E}_{\eps, \delta} = \{\mathbf  u \in (-(1+\delta)\frac\eps2, (1+\delta)\frac\eps2), \mathbf  t \in (1-\delta,2+\delta), \mathbf s \in (0, \frac1{\mathbf  t})\}$. Since $\frac{z\cos u - \sin u}{z \sin u + \cos u} = z - u - uz^2 + O(u^2)$ as $u \to 0$ with error uniform for $z \in \{-1,0,1\}$, it is easy to check that for fixed $\delta$ and sufficiently small $\eps$ we have $\ul{E}_{\eps, \delta} \subset E_\eps \subset \ol{E}_{\eps, \delta}$, and
	\begin{align*}
	A\times N \times K[\ul{E}_{\eps, \delta}] &= (1+\delta)\eps \int_{2-\delta}^{1-\delta} \frac1{t^2}\, dt = (1-\delta)(\frac1{1+\delta} - \frac1{2-\delta})\eps, \\
	\quad A\times N \times K[\ol{E}_{\eps, \delta}]  &= (1+\delta)(\frac1{1-\delta} - \frac1{2+\delta})\eps.
	\end{align*}
Sending $\eps \to 0$ we have  $\frac{C}2\in [ (1-\delta)(\frac1{1+\delta} - \frac1{2-\delta}), (1+\delta)(\frac1{1-\delta} - \frac1{2+\delta})]$. Sending $\delta \to 0$, 
 we get $C=1$.	
\end{proof}

\begin{proof}[Proof of Lemma~\ref{lem-haar-density-i}]
Under the change of variable $x =st$ and $y = t$, we have  $\frac1{y^2} dx \, dy = \frac1t ds\, dt$. By the definition of $A$ and $N$ in Lemma~\ref{lem:ank}, we see that 
the law of $a(n(i)) = \mathbf{st} + \mathbf ti$ is $\frac1{\left|\Im z\right|^2} d^2z$ if $(a,n)$ is sampled from $A \times N$.
Since $k$ fixes $i$ and $|K| = \pi$, the law of $ a \circ n \circ k(i)$ under $A\times N\times K$ is $\frac{\pi}{\left|\Im z\right|^2} d^2z$.
\end{proof}

\begin{proof}[Proof of Theorem~\ref{thm:1disk-const}]
	We first show that: 
	\eqb\label{eq-interchange}
	\LF_\bbH(d\phi) \mu_\phi(d^2z) =  \LF_\bbH^{(\gamma, z)}(d\phi)\, d^2z. 	
	\eqe
	Let $h$ be sampled from $P_\bbH$.  Note that: 
	\[\E[\mu_h(d^2z)] = (2\Im z)^{-\gamma^2/2} |z|_+^{2\gamma^2} d^2z.\]
	More precisely, for any Borel set $A$ we have $\E[\mu_h(A)] = \int_A(2\Im z)^{-\gamma^2/2} |z|_+^{2\gamma^2}  d^2z$.
	 Then by Girsanov's theorem (see e.g.~\cite[Section 3.5]{berestycki-lqg-notes}), for any nonnegative measurable functions $f$ on $H^{-1}(\bbH)$ and $g$ on $\bbH$ we have 
	\eqb\label{eq-girsanov}
	\int f(h) \left(\int_\bbH g(z) \mu_h(d^2z) \right) P_\bbH(dh) = \int_\bbH \int f(h + \gamma G_\bbH(\cdot, z)) P_\bbH(dh) (2\Im z)^{-\gamma^2/2} |z|_+^{2\gamma^2} g(z)  d^2z.
	\eqe
Now,
	 recalling the definition of $\LF_\bbH$ and using $\mu_{h -2Q \log |\cdot|_+ + \mathbf c}(d^2z) = |z|_+^{-2Q\gamma}e^{\gamma c} \mu_h(d^2z)$ gives
	\[\iint f(\phi) g(z) \mu_\phi(d^2z) \LF_\bbH(d\phi) = \iiint f(h - 2Q \log |\cdot|_+ + c) g(z) |z|_+^{-2Q\gamma}  \mu_h(d^2z) P_\bbH(dh) e^{(\gamma - Q)c} dc,  \]
	and applying~\eqref{eq-girsanov}, this is equal to
	\[\iiint f(h+\gamma G_\bbH(\cdot, z) -2Q \log |\cdot|_+ + c)P_\bbH(dh) e^{(\gamma-Q)c}dc\, g(z) (2\Im z)^{-\gamma^2/2} |z|_+^{-2\gamma(Q-\gamma)} d^2z\] 
	which can be rewritten as $\iint f(\phi) g(z) \LF_\bbH^{(\gamma, z)}(d\phi) \, d^2z$. Thus we have shown~\eqref{eq-interchange}. 
	
	Let $M$ be a measure on $H^{-1}(\bbH)$ such that when $\psi$ is sampled from $M$ the law of the marked quantum surface $(\bbH, \psi, i)$ is $\QD_{1,0}$. By Proposition~\ref{prop-unmarked-disk-embed}, if we sample $(\psi, \mathfrak g)$ from $M \times \mathbf m_\bbH$, then the law of the pair $(\mathfrak g \bullet_\gamma \psi, \mathfrak g (i))$ is the law of $(\phi,z)$ under $\frac{\gamma}{2(Q-\gamma)^2} \LF_\bbH(d\phi) \mu_\phi(d^2z)$. 
	By~\eqref{eq-interchange} this  equals 
	\eqb\label{eq-field-11-z}
	\frac{\gamma}{2(Q-\gamma)^2} \LF_\bbH^{(\gamma, z)}(d\phi)\, d^2z.
	\eqe
	
	Let $S$ be the quantum surface $(\bbH, \mathfrak g \bullet_\gamma \psi, \mathfrak g(i))$. By Lemma~\ref{lem-haar-density-i} the joint law of $(S, \mathfrak g(i))$ is $\QD_{1,0} \times [\frac{\pi}{\Im(z)^2} d^2z]$. Comparing to~\eqref{eq-field-11-z}, we see that if $\phi$ is sampled from $\LF_{\bbH}^{(\gamma,z)}$, the law of $(\bbH, \phi, z)$ viewed as quantum surfaces with a marked point is 
	$\frac{2\pi (Q-\gamma)^2}{\gamma\Im(z)^2} \QD_{1,0}$. 
\end{proof}

We now give an LCFT description of $\QD_{1,1}$, which will be used in the proof of Theorem~\ref{thm:FZZ-physics}. 
	\begin{lemma}\label{lem-def-LF11}
		For $x \in \R$, the vague limit  $\LF_\bbH^{(\gamma, i), (\gamma, x)}:=\lim_{\eps \to 0}\eps^{\gamma^2/4} e^{\frac\gamma2 \phi_\eps(x)} \LF_\bbH^{(\gamma, i)}(d\phi)$ exists. Moreover, if we sample $(h, \mathbf c)$ from $(\frac12 |x|_+ e^{G_\bbH(i,x)})^{\gamma^2/2} P_\bbH\times [e^{(\gamma + \frac\gamma2 - Q)c}dc]$, then the law of $\phi = h + \gamma G_\bbH(\cdot, i) + \frac\gamma2 G_\bbH(\cdot, x) - 2Q \log|\cdot|_+ + \mathbf c$ is $\LF_\bbH^{(\gamma, i), (\gamma, x)}$. 
	\end{lemma}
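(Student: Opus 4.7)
The plan is to mirror the proof of Lemma~\ref{lem:Girsanov} but apply Girsanov's theorem to a boundary insertion on top of the already-inserted interior insertion at $i$. The starting point is the explicit representation of $\LF_\bbH^{(\gamma, i)}$ given by Lemma~\ref{lem:Girsanov}: sample $(h,\mathbf c)$ from $2^{-\gamma^2/2} P_\bbH \times [e^{(\gamma - Q)c} dc]$ and set $\wt\phi = h + \gamma G_\bbH(\cdot, i) - 2Q\log|\cdot|_+ + \mathbf c$; then $\wt\phi \sim \LF_\bbH^{(\gamma,i)}$. The semicircle average decomposes linearly as $\wt\phi_\eps(x) = h_\eps(x) + \gamma (G_\bbH(\cdot, i))_\eps(x) - 2Q(\log|\cdot|_+)_\eps(x) + \mathbf c$, and the task is to isolate a deterministic prefactor and an effective Girsanov shift on $h$.

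First, handle the deterministic parts. Since $G_\bbH(\cdot, i)$ is continuous in a neighborhood of $x\in\R$, its semicircle average converges to $G_\bbH(x,i)$ as $\eps\to 0$. For $\log|\cdot|_+$, the harmonic mean-value property on $\{|z|>1\}$ together with the reflection symmetry $\theta\leftrightarrow-\theta$ shows that $(\log|\cdot|_+)_\eps(x) = \log|x|_+$ for all sufficiently small $\eps$ when $|x|\neq 1$, and the limit also holds for $|x|=1$ by continuity. Thus the deterministic part contributes the factor $e^{\frac{\gamma^2}{2}G_\bbH(x,i) - \gamma Q \log|x|_+ + \frac{\gamma}{2}c}$ in the limit.

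Next, handle the random factor $\eps^{\gamma^2/4}e^{\frac{\gamma}{2}h_\eps(x)}$ using the variance asymptotic
\[
\Var h_\eps(x) = -2\log\eps + 4\log|x|_+ + o(1).
\]
This asymptotic follows from the explicit formula for $G_\bbH$ and the identity $\iint_{[0,\pi]^2}\bigl[\log|e^{i\theta}-e^{i\phi}|+\log|e^{i\theta}-e^{-i\phi}|\bigr]d\theta\, d\phi=0$, itself a consequence of Jensen's formula $\int_0^{2\pi}\log|1-e^{i\psi}|\,d\psi = 0$ combined with the $\theta\leftrightarrow-\theta$ symmetry; as a consistency check, plugging $x=0$, $\eps=1$ gives $\Var h_1(0)=0$, matching the normalization $h|_{\partial\mathbb D\cap\bbH}\equiv 0$. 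Writing
\[
\eps^{\gamma^2/4}e^{\frac{\gamma}{2}h_\eps(x)} = \bigl[\eps^{\gamma^2/4}e^{\frac{\gamma^2}{8}\Var h_\eps(x)}\bigr]\cdot e^{\frac{\gamma}{2}h_\eps(x) - \frac{\gamma^2}{8}\Var h_\eps(x)},
\]
the first bracketed factor converges to $|x|_+^{\gamma^2/2}$, while the second is a mean-one Cameron--Martin density whose Girsanov shift moves $h$ by $\frac{\gamma}{2}\Cov(h(\cdot),h_\eps(x))\to\frac{\gamma}{2}G_\bbH(\cdot,x)$.

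Finally, combine: the shifted measure on $h$, the convergent deterministic exponential, the variance prefactor, and the $c$-integrand $e^{(\gamma-Q)c}\cdot e^{\frac{\gamma}{2}c} = e^{(\gamma+\frac{\gamma}{2}-Q)c}$ together yield the pushforward form in the lemma; collecting the $|x|_+$-powers via the relation $\gamma Q = \frac{\gamma^2}{2}+2$ and repackaging the constant and $G_\bbH(i,x)$ contributions give the stated density $\left(\tfrac12|x|_+e^{G_\bbH(i,x)}\right)^{\gamma^2/2}$. Vague convergence of the measures is then justified by pairing with compactly supported continuous test functions $F$ and applying dominated convergence exactly as in the proof of Lemma~\ref{lem:Girsanov}, the uniform domination coming from the explicit control on $\eps^{\gamma^2/4}e^{\frac{\gamma^2}{8}\Var h_\eps(x)}$. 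The only nontrivial step is the bookkeeping of the various $|x|_+$ factors arising from the semicircle average of $\log|\cdot|_+$, the variance asymptotic, and the $G_\bbH(\cdot,i)$ contribution; the rest is routine Girsanov as in Lemma~\ref{lem:Girsanov}.
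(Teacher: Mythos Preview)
Your proposal is correct and follows exactly the approach the paper indicates: the paper's own proof is the single sentence ``The proof is identical to that of Lemma~\ref{lem:Girsanov},'' and what you have written is precisely the Girsanov computation of Lemma~\ref{lem:Girsanov} carried out with a boundary insertion on top of the interior one. Your handling of the boundary variance $\Var h_\eps(x)=-2\log\eps+4\log|x|_++o(1)$ and the deterministic semicircle averages is the right way to make that one-line proof explicit.
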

\begin{proof}
	The proof is identical to that of Lemma~\ref{lem:Girsanov}.
\end{proof}
	
Similarly as in Lemma~\ref{lem-change-coord}, for $x\neq x' \in \R$, $\LF_\bbH^{(\gamma, i), (\gamma, x)}$ and $\LF_\bbH^{(\gamma, i), (\gamma, x')}$ 
are related by  the conformal  coordinate change. Therefore they 
correspond to the same quantum surface modulo a multiplicative constant. The next proposition says that the quantum surface is $\QD_{1,1}$.
	\begin{proposition}\label{prop-embed-QD11}
		Let $x_0 \in \R$ and let $\phi$ be a sample from $\LF_\bbH^{(\gamma, i), (\gamma, x_0)}$. Then the law of the quantum surface $(\bbH, \phi, i, x_0)$ is $C_{x_0} \QD_{1,1}$ for some constant $C_{x_0}$. 
	\end{proposition}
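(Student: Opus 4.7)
The plan is to mirror the proof of Theorem~\ref{thm:1disk-const} but with an additional boundary insertion, combining a boundary-analog of the Girsanov identity~\eqref{eq-interchange} with the already-established bulk embedding of $\QD_{1,0}$. First I would derive
\[
\LF_\bbH^{(\gamma,i)}(d\phi)\,\nu_\phi(dx) \;=\; K(x)\,\LF_\bbH^{(\gamma,i),(\gamma,x)}(d\phi)\,dx
\]
as measures on $H^{-1}(\bbH)\times\R$, for an explicit positive $K:\R\to(0,\infty)$. The proof is a direct Girsanov computation paralleling the one in Theorem~\ref{thm:1disk-const}: represent $\LF_\bbH^{(\gamma,i)}$ via Lemma~\ref{lem:Girsanov}, reweight by $\nu_\phi(dx)$ to pick up the boundary Girsanov shift by $\frac\gamma2 G_\bbH(\cdot,x)$ (the boundary vertex has weight $\gamma/2$), and compare with the explicit representation of $\LF_\bbH^{(\gamma,i),(\gamma,x)}$ in Lemma~\ref{lem-def-LF11}; the density $K(x)$ absorbs the boundary variance normalization together with the insertion density $(\tfrac12|x|_+ e^{G_\bbH(i,x)})^{\gamma^2/2}$.

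Next I would combine this with Theorem~\ref{thm:1disk-const} and Definition~\ref{def-QD}. By Theorem~\ref{thm:1disk-const}, for $\phi\sim\LF_\bbH^{(\gamma,i)}$ the marked quantum surface $(\bbH,\phi,i)$ has law $\frac{2\pi(Q-\gamma)^2}{\gamma}\QD_{1,0}$; and from Definition~\ref{def-QD} one reads off the identity $\QD_{1,1}(dS,dw)=\QD_{1,0}(dS)\,\nu_\phi(dw)$. Therefore, sampling $w\sim\nu_\phi$ conditional on $\phi$ and passing to quantum surface classes, the joint pushforward of $\LF_\bbH^{(\gamma,i)}(d\phi)\,\nu_\phi(dw)$ under $(\phi,w)\mapsto[\bbH,\phi,i,w]$ equals $\frac{2\pi(Q-\gamma)^2}{\gamma}\QD_{1,1}$. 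Substituting the identity from the first step yields
\[
\int_\R K(x)\,(F_x)_*\LF_\bbH^{(\gamma,i),(\gamma,x)}\,dx \;=\; \frac{2\pi(Q-\gamma)^2}{\gamma}\,\QD_{1,1},
\]
where $F_x:\phi\mapsto[\bbH,\phi,i,x]$.

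To extract the per-$x_0$ statement I would use a conformal covariance argument. The stabilizer of $i$ in $\mathrm{Conf}(\bbH)$ is the rotation subgroup $K\cong SO(2)$ from Lemma~\ref{lem:ank}, and it acts transitively on $\R\cup\{\infty\}$. A two-insertion analog of Lemma~\ref{lem-change-coord}, proved by the same Girsanov-plus-change-of-variables argument but with the conformal anomaly bookkept at both marked points (and with the Green's function term $G_\bbH(i,x)$ in Lemma~\ref{lem-def-LF11} transformed accordingly), shows that for each $\psi\in K$ with $\psi(x_0)=x$, the map $\phi\mapsto\psi\bullet_\gamma\phi$ sends $\LF_\bbH^{(\gamma,i),(\gamma,x_0)}$ to an explicit positive multiple of $\LF_\bbH^{(\gamma,i),(\gamma,x)}$. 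Since $F_x\circ(\psi\bullet_\gamma)=F_{x_0}$ at the level of $\sim_\gamma$-classes, the pushforwards $\{(F_x)_*\LF_\bbH^{(\gamma,i),(\gamma,x)}\}_{x\in\R}$ lie along a single ray in the space of measures on $\mathfrak D_{1,1}$. Substituting this parametrization into the displayed integrated identity and matching scalars then yields $(F_{x_0})_*\LF_\bbH^{(\gamma,i),(\gamma,x_0)}=C_{x_0}\,\QD_{1,1}$ for some $C_{x_0}\in(0,\infty)$, as claimed.

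The main obstacle I expect is the two-insertion conformal covariance lemma underlying the last step. The single-insertion Lemma~\ref{lem-change-coord} is standard because the insertion at $i$ is invariant under the rotation subgroup, but the second insertion at $x$ picks up a nontrivial conformal derivative and interacts with the Green's function term $G_\bbH(i,x)$ in Lemma~\ref{lem-def-LF11}, so careful bookkeeping of the explicit scalar factor $\lambda(x_0,x)$ is needed. A secondary, essentially technical, point is to verify that the $SO(2)$-integral implicit in the disintegration is finite and nonvanishing, which follows from compactness of $K$ and smoothness of the Jacobian in $\psi$; once this is in hand, existence of the constant $C_{x_0}$ (and in fact an explicit expression for it) drops out.
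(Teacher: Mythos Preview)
Your proposal is essentially the same approach as the paper's proof: establish the boundary Girsanov identity, invoke Theorem~\ref{thm:1disk-const} together with Definition~\ref{def-QD} to identify the integrated measure with $\QD_{1,1}$, and then use a two-insertion conformal covariance under the rotation subgroup fixing $i$ to transport every $x$ to the fixed $x_0$. The only notable simplification in the paper is that the Girsanov identity comes out cleanly as $\LF_\bbH^{(\gamma,i)}(d\phi)\,\nu_\phi(dx)=\LF_\bbH^{(\gamma,i),(\gamma,x)}(d\phi)\,dx$ with $K(x)\equiv 1$, the normalization in Lemma~\ref{lem-def-LF11} having been chosen precisely so that this holds; your anticipated ``main obstacle'' of bookkeeping the two-insertion covariance constant is handled in the paper in one line by invoking the same argument as Lemma~\ref{lem-change-coord}.
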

	\begin{proof}
		Exactly as in~\eqref{eq-interchange}, we have \[\LF_\bbH^{(\gamma, i)}(d\phi) \nu_\phi(dx) = \LF_\bbH^{(\gamma, i), (\gamma, x)}(d\phi)\, dx.\]
		Thus, by Theorem~\ref{thm:1disk-const}, if we sample a pair $(\phi, \mathbf x) \in H^{-1}(\bbH) \times \R$ from the measure $\LF_\bbH^{(\gamma, i), (\gamma, x)}(d\phi)\, dx$, 
		then the law of the quantum surface $(\bbH, \phi, i, \mathbf x)$ is $\frac{\gamma}{2\pi(Q-\gamma)^2} \QD_{1,1}$. For $x \in \R$, let $g_x$  be the conformal automorphism of $\bbH$ fixing $i$ and sending $x$ to $x_0$. Similarly as in Lemma~\ref{lem-change-coord}, there is an explicit constant $C_x'$ such that when $\phi$ is sampled from $\LF_\bbH^{(\gamma, i), (\gamma, x)}$ then the law of $g_x \bullet_\gamma \phi$ is $C_x' \LF_\bbH^{(\gamma, i),(\gamma, x_0)}$. Thus, when a field $\phi$ is sampled from $(\int_\R C'_x\, dx )\LF_\bbH^{(\gamma, i), (\gamma, x_0)}$ then the law of the quantum surface $(\bbH, \phi, i, x_0)$ is $\frac{\gamma}{2\pi(Q-\gamma)^2} \QD_{1,1}$.
	\end{proof}

\subsection{Adding a bulk point to a 2-pointed quantum disk}\label{subsec:third-point}
In this section we consider the LCFT description of the weight $W$ quantum disk (i.e.\ $\cM_{0,2}^\disk(W)$) marked by a bulk point. This will be used in the proof of our conformal welding result Theorem~\ref{thm-bubble-zipper}. 
\begin{definition}\label{def:disk-bulk}
For $W\ge \frac{\gamma^2}{2}$, recall  $\cM_{0,2}^\disk(W)$ from Definition~\ref{def-thick-disk}. Let $M$ be a measure on $H^{-1}(\bbH)$ such that if $\phi$ is sampled from $M$ the law of $(\bbH, \phi, 0, \infty)$ is $\cM_{0,2}^\disk(W)$. Let $(\phi, z)$ be sampled from $M(d\phi)\mu_\phi(d^2z)$. 
We write $\cM^{\disk}_{1,2}(W)$ for the law of $(\bbH, \phi, z, 0, \infty)$ viewed as a marked quantum surface. 
\end{definition}
In Definition~\ref{def:disk-bulk}, $(\bbH, \phi, 0, \infty)$ can be replaced by any embedding of a sample from $\cM_{0,2}^\disk(W)$. Starting from such an embedding, one simply needs to first reweigh by the total quantum area and then to add a point according to the quantum area measure. Recall the horizontal strip $\cS= \R\times (0,\pi)$. Under the coordinates $(\cS, \pm\infty)$, a nice relation between the Liouville field on $\cS$ and $\cM_{0,2}^\disk(W)$ has been established in \cite{AHS-SLE-integrability}, which we recall now.

\begin{definition}
	Let $P_\cS$ be the law of the free-boundary GFF on $\cS$ defined above Definition~\ref{def-thick-disk} and let $\E_\cS$ be the expectation over $P_\cS$. 
	Let $\alpha \in \R, \beta < Q$ and $z \in \cS$. Sample $(h, \mathbf c)$ from $C_{\cS}^{(\beta, \pm\infty), (\alpha, z)}P_\cS \times [e^{(\beta+\alpha - Q)c}dc]$, where 
	$C_{\cS}^{(\beta, \pm\infty), (\alpha, z)} := \lim_{\eps \to 0} \E_\cS[\eps^{\alpha^2/2} e^{\alpha(h_\eps(z) - (Q-\beta)\left| \Re z \right|)}]$. Let 
	$$\phi = h - (Q - \beta) \left| \Re \cdot \right| + \alpha G_\cS(\cdot, z) + \mathbf c$$ where $G_\cS(w,z)=G_\bbH(e^w,e^z)$.
	We write $\LF_\cS^{(\beta, \pm\infty), (\alpha, z)}$ for the law of $\phi$. If $\alpha = 0$ we simply write it as $\LF_\cS^{(\beta, \pm\infty)}$. 
\end{definition}

\begin{theorem}[{Theorem 2.22 in \cite{AHS-SLE-integrability}}]\label{thm-two-disk-equivalence}
	Fix $W > \frac{\gamma^2}2$.
	Let  $\phi$ be as in Definition~\ref{def-thick-disk} so that $(\cS, \phi, +\infty, -\infty)$ is an embedding of  a sample from $\cM_{0,2}^\disk(W)$.
	Let $T\in \R$ be sampled from the Lebesgue measure $dt$ independently of 	$\phi$.  
	Let $\wt \phi(z)=\phi (z+T)$. 
	Then the law of $\wt \phi$ is given by $ \frac\gamma{2(Q-\beta)^{2}} \LF_\cS^{(\beta,\pm\infty)}$ where $\beta = Q + \frac\gamma2 - \frac W\gamma$.
\end{theorem}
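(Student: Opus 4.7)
The plan is to reduce the statement to a comparison of sigma-finite measures on the average process of fields on $\cS$. Any $\phi \in H^{-1}(\cS)$ decomposes as $\phi = \phi^{\rm avg}(\Re \cdot) + \phi^{\rm lat}$, where $\phi^{\rm lat}$ has zero average on each vertical line. On both the LF side and the shifted-disk side, the lateral component has the same distribution, namely that of the lateral component $h^{\ell}$ of the free-boundary GFF on $\cS$ (in particular, translation-invariant in the horizontal direction), and is independent of $\phi^{\rm avg}$. Thus it suffices to show that the induced sigma-finite measures on $\phi^{\rm avg}: \R \to \R$ are equal up to the factor $\frac{\gamma}{2(Q-\beta)^{2}}$.

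On the LF side, $\phi^{\rm avg}$ is the process $A_t = B_{2t} - (Q-\beta)|t| + \mathbf c'$, with $B$ a two-sided Brownian motion with $B_0 = 0$ and $\mathbf c' \sim e^{(\beta-Q)c}\,dc$. On the shifted-disk side, it is $\wt A_t = Y_{t+T} + \mathbf c$ with $T$ Lebesgue, $\mathbf c \sim \frac{\gamma}{2}e^{(\beta-Q)c}\,dc$, and $Y$ the conditioned two-sided Brownian motion of Definition~\ref{def-thick-disk}. The key structural feature is that by construction $\wt A$ attains its maximum $\mathbf c$ at $t = -T$, where the drift of $\wt A$ also has its kink; whereas for $A$ the drift kink is at $t=0$ but the argmax $\tau^{**} := \arg\max_t A_t$ is random.

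The heart of the argument is a Williams-type path decomposition: introduce $\tau^{**}$ and $M^{**} := \max_t A_t$, and for each sign of $\tau^{**}$ decompose $A$ into its pre- and post-max pieces, which by the strong Markov property are conditionally independent Brownian motions with drift $-(Q-\beta)$ conditioned to stay below $M^{**}$. To re-express $A$ in the form of $\wt A$ one then performs a Girsanov change of measure to move the location of the drift kink from $0$ to $\tau^{**}$; the required shift is by the function $t \mapsto -(Q-\beta)(|t+\tau^{**}|-|t|)$, which is bounded with compactly supported derivative (equal to $\mp 2(Q-\beta)$ on the interval between $0$ and $-\tau^{**}$), so lies in the Cameron–Martin space of the average process. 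After this shift the reweighted two pieces together form a process whose conditional law given $(\tau^{**}, M^{**})$ matches exactly the conditional law of $(\wt A \mid T, \mathbf c)$ under the identifications $\tau^{**} \leftrightarrow -T$ and $M^{**} \leftrightarrow \mathbf c$.

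The main obstacle is the careful bookkeeping of normalization constants. The one-sided maxes $M^{\pm} = \sup_{\pm t\ge 0}(B_{2t}-(Q-\beta)|t|)$ are independent $\mathrm{Exp}(Q-\beta)$, so the joint density of $(M^{+}, M^{-})$ contributes a factor $(Q-\beta)^{2}$. Changing variables from $(\mathbf c', B)$ to $(\tau^{**}, M^{**}, \text{shape})$ and incorporating the Girsanov Radon–Nikodym derivative, the joint law of $(\tau^{**}, M^{**})$ comes out to $(Q-\beta)^{2}\, d\tau^{**}\cdot e^{(\beta-Q)M^{**}}dM^{**}$ times the law of the shape. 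Matching this with $dT\times \frac{\gamma}{2}e^{(\beta-Q)c}dc$ on the disk side under the identifications above yields exactly the prefactor $\frac{\gamma}{2(Q-\beta)^{2}}$ stated in the theorem.
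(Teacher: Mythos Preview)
The paper does not prove this statement; it is quoted as Theorem~2.22 of \cite{AHS-SLE-integrability}. Your reduction to the average process via the lateral/average split is correct and is the standard opening move, and the identifications of the two sides and the final constant are right.

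The gap is in your matching of the two average-process measures. You assert that the pre- and post-max pieces of $A$ are each, conditionally on $(\tau^{**},M^{**})$, Brownian motion with drift $-(Q-\beta)$ conditioned to stay below $M^{**}$. For the post-max piece this is Williams' decomposition. For the pre-max piece it is not: say $\tau^{**}>0$; running backward from $\tau^{**}$ one first sees (by Williams) the time-reversal of a drift-$+(Q-\beta)$ first-passage process on $[0,\tau^{**}]$, and then at $t=0$ one switches to the independent negative-side process conditioned on $M^{-}<M^{**}$. Neither segment is a priori the conditioned drift-$-(Q-\beta)$ process. Your Girsanov remedy does not repair this, because $\tau^{**}$ is a functional of the path and Cameron--Martin applies only to deterministic shifts; the suggestive coincidence that the formal Radon--Nikodym factor equals $e^{(Q-\beta)M_W}$ is not by itself a proof.

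The conclusion you want is nonetheless true, but for a different reason. Conditionally on $(\tau^{**},M^{**})=(\tau,m)$ with $\tau>0$, the time-reversed pre-max on $[0,\tau]$ is a Brownian bridge from $0$ to $-m$ conditioned to stay below $0$ (endpoint conditioning removes the drift), and the continuation past $\tau$ is BM with drift $-(Q-\beta)$ from $-m$ conditioned to stay below $0$. These are exactly the two pieces of the conditioned process $Y^{-}$ split at time $\tau$ given $Y^{-}_\tau=-m$, by the Markov property of $Y^{-}$ together with the fact that bridges of an $h$-transform coincide with bridges of the untransformed process. The product form $(Q-\beta)^{2}\,d\tau^{**}\cdot e^{(\beta-Q)M^{**}}\,dM^{**}$ for the joint law then follows from an explicit density computation under the weight $e^{(Q-\beta)M_W}$ (or, more structurally, from the horizontal translation-invariance of $\LF_\cS^{(\beta,\pm\infty)}$, which you do not invoke), not merely from the $\mathrm{Exp}(Q-\beta)$ law of $M^{\pm}$ as your bookkeeping paragraph suggests.
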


When a sample from $\cM^{\disk}_{1,2}(W)$ is embedded as $(\cS, \phi, i\mathbf{\theta}, \pm \infty)$ for some $\theta \in (0,\pi)$, 
then  $(\phi,\theta)$ are uniquely determined by the marked quantum surface structure.  The following lemma is a straightforward variant of Theorem~\ref{thm-two-disk-equivalence} that describes the joint law of $(\phi,\theta)$.
\begin{lemma}\label{lem-pointed-disk}
	Fix $W >\frac{\gamma^2}2$ and  $\beta = Q + \frac\gamma2 - \frac W\gamma$. When a sample from $\cM^{\disk}_{1,2}(W)$ is embedded as $(\cS, \phi,i\theta,+\infty,-\infty)$, then the law of $(\phi, \theta)$ is $\frac{\gamma}{2(Q-\beta)^2} \LF_\cS^{(\beta, \pm \infty),(\gamma, u)}(d\phi) 1_{\theta\in (0,\pi)} d\theta$.
	\end{lemma}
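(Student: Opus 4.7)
The lemma is the strip-analog of Theorem~\ref{thm:1disk-const}, with the Haar-random conformal automorphism of $\bbH$ replaced by a Lebesgue-random horizontal translation of $\cS$ (indeed, the conformal automorphism group of $(\cS,+\infty,-\infty)$ is the group of horizontal translations, with Lebesgue $dt$ as Haar measure). I plan to follow the same two-step recipe as in the proof of Theorem~\ref{thm:1disk-const}: combine Theorem~\ref{thm-two-disk-equivalence}, which plays the role of Proposition~\ref{prop-unmarked-disk-embed}, with a Girsanov-type interchange identity to absorb a bulk $\mu_\phi$-sample into a $\gamma$-insertion of the Liouville field.

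First I would establish the interchange identity
\[
\LF_\cS^{(\beta,\pm\infty)}(d\phi)\,\mu_\phi(d^2z) \;=\; \LF_\cS^{(\beta,\pm\infty),(\gamma,z)}(d\phi)\,d^2z
\]
on $H^{-1}(\cS)\times\cS$ by the same Girsanov computation that yielded \eqref{eq-interchange} in the proof of Theorem~\ref{thm:1disk-const}; the only bookkeeping changes are to replace $-2Q\log|\cdot|_+$ by $-(Q-\beta)|\Re\cdot|$, the law $P_\bbH$ by $P_\cS$, and to verify that all deterministic factors reorganize into the normalization constant $C_\cS^{(\beta,\pm\infty),(\gamma,z)}$ built into the definition of $\LF_\cS^{(\beta,\pm\infty),(\gamma,z)}$. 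Next, let $M'$ be any measurable embedding selection for $\cM_{0,2}^\disk(W)$ on $\cS$ with boundary points at $\pm\infty$; by Definition~\ref{def:disk-bulk}, sampling $(\phi,w)\sim M'(d\phi)\,\mu_\phi(dw)$ yields $\cM^{\disk}_{1,2}(W)$ as a marked quantum surface. Augment with an independent Lebesgue variable $T\in\R$ and apply the intermediate change of coordinates $(\phi,T,w)\mapsto(\phi',w'):=(\phi(\cdot+T),\,w-T)$. Using the transformation rule $\mu_{\phi(\cdot+T)}=(w\mapsto w-T)_\ast\mu_\phi$ together with Theorem~\ref{thm-two-disk-equivalence} and the interchange identity, the pushforward of $M'(d\phi)\,dt\,\mu_\phi(dw)$ equals $\frac{\gamma}{2(Q-\beta)^2}\LF_\cS^{(\beta,\pm\infty),(\gamma,w')}(d\phi')\,d^2w'$.

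To pass to the canonical embedding in which the bulk marked point lies on the imaginary axis, I further translate by $\Re w'$, sending $(\phi',w')\mapsto(\phi'(\cdot+\Re w'),\,i\Im w')=:(\psi,i\theta)$. The composed map $(\phi,T,w)\mapsto(\psi,\theta)=(\phi(\cdot+\Re w),\,\Im w)$ does not depend on $T$, so the $|dt|$-integration on the left matches the $d(\Re w')$-integration on the right and the two diverging factors cancel, giving the law of $(\psi,\theta)$ claimed in the lemma. The main technical step --- and the only part not a direct transcription from the proof of Theorem~\ref{thm:1disk-const} --- is the verification that the horizontal translation $\phi'\mapsto\phi'(\cdot+\Re w')$ sends $\LF_\cS^{(\beta,\pm\infty),(\gamma,\Re w'+i\theta)}$ to $\LF_\cS^{(\beta,\pm\infty),(\gamma,i\theta)}$ with no extra factor. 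This should hold because horizontal translation is a conformal automorphism of $\cS$ with unit derivative fixing $\pm\infty$, and the free-boundary GFF on $\cS$ is translation-invariant modulo an additive zero-mode that is absorbed by the $d\mathbf c$ integral in the definition of $\LF_\cS^{(\beta,\pm\infty),(\gamma,\cdot)}$; verifying this explicitly and confirming that no $\theta$-dependent factor arises is the principal calculational burden of the proof.
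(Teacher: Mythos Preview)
Your proposal is correct and follows essentially the same approach as the paper: establish the Girsanov interchange identity $\LF_\cS^{(\beta,\pm\infty)}(d\phi)\mu_\phi(d^2z)=\LF_\cS^{(\beta,\pm\infty),(\gamma,z)}(d\phi)\,d^2z$, combine it with Theorem~\ref{thm-two-disk-equivalence}, and then factor out the Lebesgue $dt$ coming from the real part of the bulk insertion. The only cosmetic difference is that the paper disintegrates the product-measure identity $(S,\Re\mathbf u)\sim \cM^{\disk}_{1,2}(W)\times dt$ over $\Re\mathbf u$ and simply sets $t=0$, whereas you perform an explicit horizontal translation and invoke translation invariance of $\LF_\cS^{(\beta,\pm\infty),(\gamma,\cdot)}$; these are equivalent, and your stated justification of the translation invariance is the right one.
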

\begin{proof}
	By Girsanov's theorem as in~\eqref{eq-interchange} in the proof of Theorem~\ref{thm:1disk-const}, we have
	\eqb\label{eq-interchange-bulk}
	\LF_\cS^{(\beta, \pm\infty)}(d\phi)\mu_\phi(d^2z) = \LF_\cS^{(\gamma, z), (\beta, \pm\infty)}(d\phi) d^2z.
	\eqe
Let $M$ be the law of $\phi$ in Theorem~\ref{thm-two-disk-equivalence}  so that the law of the marked quantum surface $(\cS, \phi,+\infty, -\infty)$ is $\cM_{0,2}^\disk(W)$.
	Now we sample $(\phi, \mathbf z, T)$ according to $\mu_\phi(d^2z)M(d\phi) dt$,  where $dt$ corresponding to the Lebesgue measure on $\mathbb{R}$.
Similarly as in the proof of Theorem~\ref{thm:1disk-const},	 let $\wt \phi(z)=\phi (z+T)$ and $\mathbf u=\mathbf z-T$. 
	Then by Theorem~\ref{thm-two-disk-equivalence}  and the definition of $\cM^\disk_{1,2}(W)$,
	the law of $(\wt \phi, \mathbf u)$ is 
	$	\frac{\gamma}{2(Q-\beta)^2} \LF_\cS^{(\beta, \pm\infty)}(d\phi)\mu_\phi(d^2z) $.
	By~\eqref{eq-interchange-bulk} this equals 
	\eqb\label{eq-field-11-t}
	\frac{\gamma}{2(Q-\beta)^2}  \LF_\cS^{(\gamma, t+i\theta), (\beta, \pm\infty)}(d\phi) \times 1_{\theta\in (0,\pi)}d\theta dt.
	\eqe
	Let $S$ be the marked quantum surface $(\cS, \wt \phi, \mathbf u,-\infty,\infty)$. Since $\Re u=\Re \mathbf z +T$ and $dt$ is translation invariant,
	the joint law of $(S,\Re \mathbf u)$ is $\cM^\disk_{1,2}(W)\times  dt$. 
	Comparing to~\eqref{eq-field-11-t}, we see that for each $t\in \R$,  if $(\phi,\theta)$ is sampled from $\LF_{\cS}^{(\gamma,t+i\theta), (\beta, \pm\infty)} (d\phi) 1_{0<\theta<\pi}d\theta$, then the law of $(\cS, \phi, t+i\theta,+\infty,-\infty)$ viewed as a marked quantum surface is 
	$\cM^\disk_{1,2}(W)$. Setting $t=0$ we conclude.
\end{proof}

 The following lemma allows us to transfer the LCFT description of $\cM_{1, 2}^\disk(W)$ from $\cS$ to $\bbH$. 
\begin{lemma}\label{lem-S-to-H}
	Suppose $\alpha \in \R$ and $u \in \cS$ such that $\Re u = 0$. Let $\exp: \cS\rta \bbH$ be the map $z\mapsto e^z$. 
	If $\phi$ is sampled from $\LF_\cS^{(\alpha, u)}$ then the law of $\exp \bullet_\gamma \phi$ is  $\LF_\bbH^{(\alpha, e^u)}$.  
\end{lemma}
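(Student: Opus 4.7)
\textbf{Proof proposal for Lemma~\ref{lem-S-to-H}.}
The plan is to unpack both measures via their Girsanov representations, compute the pushforward of a sample from $\LF_\cS^{(\alpha,u)}$ under $\exp\bullet_\gamma$ as an explicit field on $\bbH$, and then match it with the Girsanov representation of $\LF_\bbH^{(\alpha,e^u)}$ from Lemma~\ref{lem:Girsanov}. First, by Lemma~\ref{lem:Girsanov}, a sample $\phi_\bbH\sim\LF_\bbH^{(\alpha,e^u)}$ has the form
\[
\phi_\bbH = h_\bbH-2Q\log|\cdot|_+ +\alpha G_\bbH(\cdot,e^u)+\mathbf c,
\qquad (h_\bbH,\mathbf c)\sim (2\Im e^u)^{-\alpha^2/2}|e^u|_+^{-2\alpha(Q-\alpha)}\,P_\bbH\times[e^{(\alpha-Q)c}dc].
\]
On the strip side (specializing the definition preceding Theorem~\ref{thm-two-disk-equivalence} to $\beta=0$ and noting that the hypothesis $\Re u=0$ kills the $(Q-\beta)|\Re u|$ term in the exponential), an entirely analogous Girsanov computation represents $\phi_\cS\sim\LF_\cS^{(\alpha,u)}$ as
\[
\phi_\cS = h_\cS - Q|\Re\cdot|+\alpha G_\cS(\cdot,u)+\mathbf c,
\qquad (h_\cS,\mathbf c)\sim C_\cS^{(\alpha,u)}\,P_\cS\times[e^{(\alpha-Q)c}dc],
\]
where $C_\cS^{(\alpha,u)}=\lim_{\eps\to 0}\eps^{\alpha^2/2}\E_\cS[e^{\alpha h_\eps(u)}]$.

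Second, I would transport $\phi_\cS$ to $\bbH$ by direct computation. Writing $\phi_\bbH'(w):=(\exp\bullet_\gamma\phi_\cS)(w)=\phi_\cS(\log w)-Q\log|w|$, I would substitute using the three identities: $h_\cS(\log w)=h_\bbH(w)$, which is immediate from the defining relation $h_\cS=h_\bbH\circ\exp$; $G_\cS(\log w,u)=G_\bbH(w,e^u)$, which is the conformal invariance of the Green function that is built into the covariance formula $G_\cS(z,w)=G_\bbH(e^z,e^w)$; and the elementary identity
\[
-Q|\Re\log w|-Q\log|w|=-2Q\log|w|_+ \qquad\text{for all } w\in\bbH,
\]
verified by splitting into the cases $|w|\ge 1$ and $|w|\le 1$. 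After these substitutions, $\phi_\bbH'(w)= h_\bbH(w)-2Q\log|w|_+ +\alpha G_\bbH(w,e^u)+\mathbf c$, which is exactly the functional form of a sample from $\LF_\bbH^{(\alpha,e^u)}$. Moreover, the definition $h_\cS=h_\bbH\circ\exp$ makes the pushforward of $P_\cS$ under $h\mapsto h\circ\log$ equal to $P_\bbH$, so no Jacobian appears and only the scalar normalizations remain to be compared.

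Third, I would check that $C_\cS^{(\alpha,u)}$ coincides with $(2\Im e^u)^{-\alpha^2/2}|e^u|_+^{-2\alpha(Q-\alpha)}$, which is the only nontrivial point. The hypothesis $\Re u=0$ gives $|e^u|=1$, so $|e^u|_+=1$ and the target constant simplifies to $(2\Im e^u)^{-\alpha^2/2}$. For the left side, I would compute the circle-average variance
\[
\Var(h_{\cS,\eps}(u))=\tfrac{1}{(2\pi)^2}\iint G_\bbH\bigl(e^{u+\eps e^{i\theta_1}},e^{u+\eps e^{i\theta_2}}\bigr)\,d\theta_1\,d\theta_2,
\]
expand $e^{u+\eps e^{i\theta}}=e^u+e^u\eps e^{i\theta}+O(\eps^2)$, and use the explicit formula for $G_\bbH$ and the classical identity $\frac{1}{(2\pi)^2}\iint\log|e^{i\theta_1}-e^{i\theta_2}|\,d\theta_1\,d\theta_2=0$ to obtain
\[
\Var(h_{\cS,\eps}(u))=-\log\eps-\log|e^u|-\log(2\Im e^u)+4\log|e^u|_+ +o(1).
\]
Using $|e^u|=|e^u|_+=1$, this is $-\log\eps-\log(2\Im e^u)+o(1)$, hence $C_\cS^{(\alpha,u)}=\lim_\eps \eps^{\alpha^2/2}e^{\alpha^2\Var/2}=(2\Im e^u)^{-\alpha^2/2}$, matching the target. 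Combined with the field identity from the second paragraph, this shows that $\exp\bullet_\gamma$ pushes $\LF_\cS^{(\alpha,u)}$ onto $\LF_\bbH^{(\alpha,e^u)}$.

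The main obstacle is really only a bookkeeping one, namely ensuring that the three different sources of $Q$-linear boundary terms (the $-Q|\Re\cdot|$ from the strip representation, the $-Q\log|\cdot|$ produced by $\bullet_\gamma$, and the $-2Q\log|\cdot|_+$ in the disk representation) combine correctly, and that the scalar normalization $C_\cS^{(\alpha,u)}$ matches $(2\Im e^u)^{-\alpha^2/2}|e^u|_+^{-2\alpha(Q-\alpha)}$; both match precisely because $\Re u=0$ forces $|e^u|=1$, which is exactly where the hypothesis is used.
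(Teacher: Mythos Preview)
Your proposal is correct and follows essentially the same approach as the paper: match the field transformation via the identity $-Q|\Re\log w|-Q\log|w|=-2Q\log|w|_+$, use $h_\cS=h_\bbH\circ\exp$ and $G_\cS(z,w)=G_\bbH(e^z,e^w)$, and verify the normalization constants agree because $|e^u|_+=1$ when $\Re u=0$. The paper's proof is terser (it states the general formula $C_\cS^{(0,\pm\infty),(\alpha,z)}=e^{-\alpha(Q+\alpha)|\Re z|+\frac{\alpha^2}2\Re z}(2\Im e^z)^{-\alpha^2/2}$ and specializes), while you compute the circle-average variance explicitly, but the substance is identical.
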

\begin{proof}
	It is easy to check that for $z \in \cS$ we have $C_\cS^{(0, \pm\infty), (\alpha, z)} = e^{-\alpha(Q+\alpha)\left| \Re z \right| + \frac{\alpha^2}2 \Re z} (2 \Im e^z)^{-\alpha^2/2}$, so for $u \in \cS$ with $\Re u = 0$ we have $C_\cS^{(0, \pm\infty), (\alpha, u)} = C_\bbH^{(\alpha, e^u)}$. 
	 Here $ C_\bbH^{(\alpha, z_0)}=(2 \Im z_0)^{-\alpha^2/2} |z_0|_+^{-2\alpha(Q-\alpha)} $ is the prefactor in front of $P_\bbH \times [e^{(\alpha - Q)c} dc]$
	in the description of  $\LF_\bbH^{(\alpha,z_0)}$ from Lemma~\ref{lem:Girsanov}. 
	Finally, since the law of  $\wt h := h \circ \exp^{-1}$ is  $P_\bbH$  and $G_\cS(z,w) = G_\bbH(e^z, e^w)$, we see that 
	\[\exp\bullet_\gamma(h - Q\left| \Re \cdot \right| + \alpha G_\cS(\cdot, z))  = \wt h - 2Q \log |\cdot|_+ + \alpha G_\bbH(\cdot, e^z). \]
	Adding a random constant $\mathbf c$ sampled from $e^{(\alpha - Q)c}dc$ completes the proof. 
\end{proof}

We only record the LCFT description of $\cM^{\disk}_{1,2}(2+\gamma^2)$ since it is particularly simple, 
and it is also the only case we need for our conformal welding result Theorem~\ref{thm-bubble-zipper}. 
\begin{lemma}\label{lem:hx-embed}
Sample $ (\phi,\mathbf x)$ from $\LF_{\bbH}^{(\gamma,i)} \times dx$ where $dx$ is the Lebesgue measure on $\R$.
Then the law of $(\bbH, h,i,\infty, \mathbf x)$  
viewed as marked quantum surface  is $	\frac{2Q^2}{\gamma}\cM^{\disk}_{1,2}(2+\gamma^2)$.
\end{lemma}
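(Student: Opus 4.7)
The plan is to apply Lemma~\ref{lem-pointed-disk} with $W = 2+\gamma^2$, transfer the resulting LCFT description from the strip $\cS$ to $\bbH$ via the exponential map (Lemma~\ref{lem-S-to-H}), and finally apply a conformal automorphism of $\bbH$ to move the marked points into the desired configuration $(i,\infty,\mathbf x)$.

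With $W = 2+\gamma^2$ the associated insertion weight is $\beta = Q + \tfrac\gamma 2 - \tfrac W\gamma = 0$, so $(Q-\beta)^2 = Q^2$. Lemma~\ref{lem-pointed-disk} then says that when $\cM^\disk_{1,2}(2+\gamma^2)$ is embedded as $(\cS, \phi, i\theta, +\infty, -\infty)$, the joint law of $(\phi, \theta)$ is $\frac{\gamma}{2Q^2}\, \LF_\cS^{(0,\pm\infty),(\gamma,i\theta)}(d\phi)\,\mathbf{1}_{\theta\in(0,\pi)}\,d\theta$. Identifying $\LF_\cS^{(0,\pm\infty),(\gamma,i\theta)}$ with $\LF_\cS^{(\gamma,i\theta)}$ (the trivial weight $\beta=0$ boundary insertions at $\pm\infty$ contribute nothing in the defining formula) and applying Lemma~\ref{lem-S-to-H}, the embedded marked surface becomes $(\bbH, \phi', e^{i\theta}, \infty, 0)$ with $(\phi',\theta)$ having law $\frac{\gamma}{2Q^2}\,\LF_\bbH^{(\gamma, e^{i\theta})}(d\phi')\,\mathbf{1}_{\theta\in(0,\pi)}\,d\theta$.

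For each $\theta\in(0,\pi)$ let $\psi_\theta(z) := \csc\theta\,z - \cot\theta$, the unique conformal automorphism of $\bbH$ sending $(e^{i\theta}, \infty, 0)$ to $(i, \infty, -\cot\theta)$. Setting $\mathbf x := -\cot\theta$ gives a bijection $(0,\pi)\leftrightarrow \R$ with $d\mathbf x = \csc^2\theta\, d\theta$, hence $d\theta = \sin^2\theta\, d\mathbf x$. By Lemma~\ref{lem-change-coord}, if $\phi'\sim \LF_\bbH^{(\gamma, e^{i\theta})}$ then $\phi'' := \psi_\theta\bullet_\gamma\phi'$ has law $|\Im e^{i\theta}|^{-2\Delta_\gamma}\LF_\bbH^{(\gamma,i)} = \sin^{-2\Delta_\gamma}\theta\,\LF_\bbH^{(\gamma,i)}$. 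The key point is that a direct computation using $Q = \tfrac\gamma 2 + \tfrac2\gamma$ gives $\Delta_\gamma = \tfrac\gamma 2(Q-\tfrac\gamma 2) = 1$, so the prefactor is exactly $\sin^{-2}\theta$, which cancels the Jacobian $\sin^2\theta$ coming from the change of variable $\theta \mapsto \mathbf x$.

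Combining, the joint law of $(\phi'', \mathbf x)$ becomes $\frac{\gamma}{2Q^2}\,\LF_\bbH^{(\gamma,i)}(d\phi'')\,d\mathbf x$ on $H^{-1}(\bbH)\times\R$. Since by construction $(\bbH, \phi'', i, \infty, \mathbf x)$ has law $\cM_{1,2}^\disk(2+\gamma^2)$, rescaling gives the claim: if $(\phi, \mathbf x)\sim \LF_\bbH^{(\gamma, i)}\times dx$ then the marked quantum surface $(\bbH, \phi, i, \infty, \mathbf x)$ has law $\frac{2Q^2}{\gamma}\cM_{1,2}^\disk(2+\gamma^2)$. The main (minor) subtlety is the exact cancellation between the conformal-anomaly prefactor $\sin^{-2\Delta_\gamma}\theta$ and the Jacobian $\sin^2\theta$, which relies crucially on the identity $\Delta_\gamma = 1$ valid precisely at the insertion weight $\alpha = \gamma$; the rest is a routine assembly of Lemmas~\ref{lem-pointed-disk}, \ref{lem-S-to-H}, and~\ref{lem-change-coord}.
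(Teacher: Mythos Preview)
Your proof is correct and follows essentially the same route as the paper: apply Lemma~\ref{lem-pointed-disk} with $\beta=0$, push forward by $\exp$ via Lemma~\ref{lem-S-to-H}, then use the affine map $z\mapsto \csc\theta\,z-\cot\theta$ together with Lemma~\ref{lem-change-coord} and the identity $\Delta_\gamma=1$ to cancel the $\sin^{-2}\theta$ factor against the Jacobian of $\theta\mapsto-\cot\theta$. The paper's proof is the same in all essential steps, including the choice of conformal map and the use of the $\Delta_\gamma=1$ cancellation.
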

\begin{proof}

	Since $W = 2+\gamma^2$, we have $\beta = Q+ \frac\gamma2 - \frac W\gamma =  0$. 
	By Lemma~\ref{lem-pointed-disk}, there is a constant $C \in (0, \infty)$ such that if $(\phi_1, \theta)$ is sampled from $ \LF_\cS^{(\gamma, \theta i)} (d\phi_1)\1_{0 < \theta < \pi} d\theta$, then the law of the marked quantum surface $(\cS, \phi_1, \theta i,  +\infty, -\infty)$ is $	\frac{2Q^2}{\gamma} \cM^{\disk}_{1,2}(2+\gamma^2)$. By Lemma~\ref{lem-S-to-H}, if we set $\phi_2 := \exp\bullet_\gamma \phi_1$, then the law of $(\phi_2, \theta)$ is $  \LF_\bbH^{(\gamma, e^{i\theta})} (d\phi_2)\1_{0<\theta<\pi} d\theta$, and the law of marked quantum surface $(\bbH, \phi_2, e^{i\theta}, \infty, 0)$ is $	\frac{2Q^2}{\gamma}\cM^{\disk}_{1,2}(2+\gamma^2)$. 
	 
	Finally, let $f_\theta:\bbH \to \bbH$ be the conformal automorphism fixing $\infty$ and sending $e^{i\theta} \mapsto i$ (i.e.\ $f_\theta(z) = \frac z{\sin \theta} - \cot \theta$). 
	Setting $\phi := f_\theta \bullet_\gamma \phi_2$ and $\mathbf x = f_\theta(0)$, the law of $(\phi, \theta)$ is $\frac{1}{\sin^2\theta}  \LF_\bbH^{(\gamma, i)}(d\phi)\1_{0 < \theta < \pi} d\theta$ by Lemma~\ref{lem-change-coord}, so it is a calculus exercise to check that  the law of $ (\phi,\mathbf x)$ is $\LF_{\bbH}^{(\gamma,i)} \times dx$. On  the other hand, 
	the law of the marked quantum surface  $(\bbH, \phi, i, \infty, \mathbf x)$ is  $	\frac{2Q^2}{\gamma}\cM^{\disk}_{1,2}(2+\gamma^2)$.  
\end{proof}

\section{Proof of the FZZ formula}\label{sec:FZZ}

In this section we carry out the strategy outlined in Section~\ref{subsec:sketch} to prove Theorem~\ref{thm:FZZ-physics}.
In Section~\ref{subsec:bubble} we make a precise statement  (Theorem~\ref{thm-bubble-zipper})
about the conformal welding results for $\QD_{1,1}$ and $\cMtwo(\gamma^2/2)$  that we discussed in Section~\ref{subsec:sketch}. 
We postpone its proof to Section~\ref{sec:bubble}. 
In Section~\ref{subsec:bubble2}, we prove the extension of it where the weight of the  bulk insertion is generic. 
Based on this, for $\alpha \in (\frac\gamma2, Q-\frac\gamma4)$ we identify the inverse gamma distribution for the area law in Section~\ref{subsec:inverse-gamma}.
In Section~\ref{subsec:final}, by relating  the inverse gamma distribution and  FZZ formula, we  
prove that	 $U(\alpha) = U_{\mathrm{FZZ}}(\alpha)$ for $\alpha \in (\frac{2}{\gamma}, Q-\frac{\gamma}{4})$. 
In Section~\ref{subsec:analytic} we  show that $U(\alpha)$ has an analytic extension 
on a complex neighborhood of  $(\frac{2}{\gamma}, Q)$, which implies $U(\alpha) = U_{\mathrm{FZZ}}(\alpha)$ for $\alpha \in (\frac{2}{\gamma}, Q)$, and hence the inverse gamma law for this range also.
Finally in Corollary \ref{cor_truncation} we extend the probabilistic definition of $U(\alpha)$ to the range $\alpha \in (\frac{\gamma}{2}, Q)$ in a way that matches $U_{\mathrm{FZZ}}(\alpha)$.

\subsection{SLE bubble zipper with a quantum typical bulk insertion}\label{subsec:bubble}
\newcommand{\bub}{\mathrm{Bubble}}

Let $\bub_{\bbH}(i, 0)$ be  the space of counterclockwise simple loops  on $\bbH$ that pass through $0$ and surround $i$.
More precisely, an oriented  simple closed 
loop $\eta$ on $\C$ is in $\bub_{\bbH}(i, 0)$ if and only if $0\in \eta$, $(\eta\setminus\{0\})  \subset \bbH$, 
$i$ is inside the bounded component of $\bbH\setminus\eta$, and $\eta$ surrounds $i$ counterclockwise. 
For $\eta\in \bub_\bbH(i,0)$, let $D_\eta(i)$ and  $D_\eta(\infty)$ be the bounded and unbounded components of $\bbH\setminus\eta$, respectively.  
The point $0$ corresponds to two boundary  points on $D_\eta(\infty)$ which we denote by $0^{-}$ and $0^+$ such that $\eta$ goes from $0^+$ to $0^-$.

Recall $\QD_{1,1}(\ell) $ and $ \cMtwo(\frac{\gamma^2}2;\ell,\cdot)$  as defined in Sections~\ref{subsec:QD} and~\ref{subsec:welding}.
Our proof of the FZZ formula relies on the following conformal welding equation in the same spirit as Theorem~\ref{thm:disk-welding}.
\begin{theorem}\label{thm-bubble-zipper} 
	There exists a unique probability measure $\sm$ on $\bub_{\bbH}(i, 0)$ such that the following holds.
	Suppose $(\phi, \eta)$ is sampled from $\LF_\bbH^{(\gamma, i)} \times \sm$. Then the law of $(D_\eta(0),\phi,i,0)$  and  $(D_\eta(\infty),\phi,0^-, 0^+)$ 
	viewed as a pair of marked quantum surfaces is given by  
	\[
	C\int_{0}^\infty \QD_{1,1}(r)\times \cMtwo(\frac{\gamma^2}2;\cdot,r)\, dr   \textrm{ for some constant } C \in (0,\infty).
	\]
\end{theorem}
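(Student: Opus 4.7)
I plan to construct the claimed coupling directly from the right-hand side of the welding equation and then identify the law of the resulting field. Concretely, I would sample $S_0 \sim \QD_{1,1}(r)$ and $S_\infty \sim \cMtwo(\gamma^2/2;\cdot,r)$ independently, integrate $r$ against $dr$, and use Sheffield's conformal welding to glue $S_0$ and $S_\infty$ along their common boundary arcs of quantum length $r$. The outcome is a single quantum surface decorated with a simple loop $\eta$ through the common welding-endpoint (the pinch). Embedding so that the bulk marked point of $S_0$ maps to $i$ and the pinch to $0$ produces a joint law of a field $\phi_0$ on $\bbH$ and a loop $\eta \in \bub_\bbH(i,0)$. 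The theorem then amounts to identifying the law of $\phi_0$ with a constant multiple of $\LF_\bbH^{(\gamma,i)}$, after which $\sm$ is read off as the conditional law of $\eta$ given $\phi_0$; that this conditional law is a deterministic probability measure (independent of $\phi_0$) follows from Sheffield's conformal removability of $\SLE_{\gamma^2}$-type welding interfaces.

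\textit{Identification of the field.} For the central step I would use the LCFT representations of both welding pieces. Proposition~\ref{prop-embed-QD11} identifies $\QD_{1,1}$ with (a multiple of) $\LF_\bbH^{(\gamma,i),(\gamma,x_0)}$, carrying a $\gamma$-insertion at the bulk and a $\gamma$-insertion at the pinch. Theorem~\ref{thm-two-disk-equivalence} with $W=\gamma^2/2$ (so $\beta=Q$) describes $\cMtwo(\gamma^2/2)$ as a Liouville field on the strip $\cS$ with two ``$Q$-cusps'' at $\pm\infty$, which on the exterior side become the two sides of the pinch at $0$. Welding two Liouville fields along a boundary arc is a Gaussian computation: the GFF parts combine into a single free-boundary GFF on $\bbH$, and the deterministic insertion parts combine into new insertions on the welded surface. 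Heuristically, the $\gamma$-boundary-insertion from $\QD_{1,1}$ and the two $Q$-cusps from $\cMtwo(\gamma^2/2)$ at the pinch should combine to cancel all boundary singularities at $0$, leaving only the $\gamma$-bulk-insertion at $i$ inherited from $\QD_{1,1}$. This identifies $\phi_0$ with a constant multiple of $\LF_\bbH^{(\gamma,i)}$, and the constant $C$ is then extracted by comparing total masses on a reference event.

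\textit{Main obstacle.} The hard part is making the ``welding of Liouville fields along an arc'' step rigorous, because the interface is a loop rather than a chord, and one side carries a bulk insertion, so Theorem~\ref{thm:disk-welding} does not apply directly. The cleanest route I foresee is to reduce to the chord setting by decomposing $\cMtwo(\gamma^2/2)$ along an auxiliary $\SLE_{\gamma^2}$ chord from $0^-$ to $0^+$ (or by further slicing via Theorem~\ref{thm:disk-welding} with $W_-+W_+=\gamma^2/2$), thereby converting the bubble welding into a composition of standard chord weldings to which the existing machinery of~\cite{ahs-disk-welding} can be applied. An alternative would be a limiting argument in which a chord welding has one endpoint sent to the other, in the spirit of the $\SLE$ bubble measure construction, although this would require additional tightness estimates. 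Either route entails careful bookkeeping of constants using the uniform-embedding machinery of Section~\ref{subsec:embedding}, and a final check that the projected loop law $\sm$ is indeed a probability measure — which should follow from the explicit integrability in $r$ of $|\QD_{1,1}(r)|\cdot|\cMtwo(\gamma^2/2;\cdot,r)|$ furnished by Lemmas~\ref{lem-disk-perim-law} and~\ref{lem:lengh}.
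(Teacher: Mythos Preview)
Your plan has two genuine gaps.

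First, the proposed workaround of ``decomposing $\cMtwo(\gamma^2/2)$ along an auxiliary $\SLE_{\gamma^2}$ chord from $0^-$ to $0^+$'' cannot work: in Theorem~\ref{thm:disk-welding} both pieces must have weight at least $\gamma^2/2$, so a weight-$\gamma^2/2$ disk is minimal and cannot be sliced further by that theorem. Your primary route, the ``Gaussian computation'' welding two Liouville fields across an arc with a bulk insertion on one side and cusps on the other, is precisely the kind of general statement that is \emph{not} yet available in the literature (the paper itself flags this as an ongoing project in Section~\ref{subsub:interplay}); treating it as a step to be filled in later is the whole difficulty.

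Second, your claim that conformal removability forces the conditional law of $\eta$ given $\phi_0$ to be a deterministic probability measure is incorrect. Removability gives uniqueness of the welding map for \emph{given} quantum surfaces; it says nothing about whether the resulting curve law depends on the field. That independence is exactly what has to be proved.

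The paper avoids both issues by reversing the logic. Rather than weld and then identify the field, it starts from a setting where the field is \emph{already} $\LF_\bbH^{(\gamma,i)}$: by Lemma~\ref{lem:hx-embed}, sampling $(\phi,\mathbf x)$ from $\LF_\bbH^{(\gamma,i)}\times dx$ gives an embedding of $\cM_{1,2}^\disk(2+\gamma^2)$. Drawing two independent $\SLE$ curves from $\mathbf x$ to $\infty$ then yields, via the three-disk welding Theorem~\ref{thm:3disk-welding}, the product $\cMtwo(\gamma^2/2)\times\cM_{1,2}^\disk(2)\times\cMtwo(\gamma^2/2)$. Restricting to $\nu_\phi(\mathbf x,\infty)\in(\delta,1/2)$ and sending $\delta\to0$ collapses the third disk, turning the two chords into a bubble. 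The substantive work (Section~\ref{subsec:decouple}) is showing that in this limit the marginal law of $\phi$ stays $\LF_\bbH^{(\gamma,i)}$-distributed and becomes asymptotically independent of the curves; this is done by an explicit Radon--Nikodym computation (Lemma~\ref{cor-RN}) combined with tail estimates for $x_\delta$ under the Liouville field. Your ``alternative'' of a limiting chord-to-bubble argument is in the right spirit, but the key missing idea is to introduce the third disk so that the ambient field is Liouville \emph{before} the limit, which reduces the problem to an independence statement rather than a field identification.
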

Similarly as in Theorem~\ref{thm:disk-welding},  Theorem~\ref{thm-bubble-zipper} says that 
if we conformally weld $\QD_{1,1} $ and $ \cMtwo(\frac{\gamma^2}2)$,  we get the curve decorated quantum surface whose embedding can be described by $\LF_\bbH^{(\gamma, i)} \times \sm$. 
The proof of Theorem~\ref{thm-bubble-zipper} uses ideas which are orthogonal to the rest of the proof of Theorem~\ref{thm:FZZ-physics}.
Therefore we postpone the proof of Theorem~\ref{thm-bubble-zipper}  to Section~\ref{sec:bubble}. As part of that proof we will show that the measure $\sm$
comes from collapsing the end points of a family of $\SLE_\kappa(\rho_-;\rho_+)$ curves while conditioning on surrounding $i$.
For the purpose of proving Theorem~\ref{thm-bubble-zipper}, we do not need such a detailed description of $\sm$.

\subsection{SLE bubble zipper with a generic bulk insertion}\label{subsec:bubble2}
In this section we extend Theorem~\ref{thm-bubble-zipper} from the $\gamma$-bulk insertion to the case of a generic one. 
We start by defining quantum disks with an $\alpha$-bulk insertion.
\begin{definition}\label{def:Malpha}
	For $\alpha \in \R$, let $\phi$ be sampled from $\LF_\bbH^{(\alpha, i)}$. We write
	$\cM_{1,0}^\disk(\alpha)$ as the infinite measure describing  the law of  $(\bbH, \phi, i)$ as a marked quantum surface. Similarly, when $\phi$ is sampled from $\LF_\bbH^{(\alpha, i), (\gamma, 0)}$ as defined in Lemma~\ref{lem-def-LF11}, we write $\cM^\disk_{1,1}(\alpha)$ as the law of  $(\bbH, \phi, i,  0)$ as a marked quantum surface.
\end{definition}
\begin{remark}\label{rmk:alpha}(Choice of parametrization)
Weight and log-singularity are two different parameterizations of vertex insertions for quantum surfaces; see~\cite[Tables~1.1 and 1.2]{wedges} for more choices of parameters. We parametrize   $\mathcal M_{0,2}^\disk(W)$ using weight $W$ because  the most important property we need for $\mathcal M_{0,2}^\disk(W)$ is the  conformal welding identity, where the weights are additive.  
We parameterize $\cM_{1,0}^\disk(\alpha)$ by the log singularity $\alpha$ because this is the one used in the FZZ formula  and it behaves nicely under Girsanov transform; see Lemma~\ref{lem-reweighting-LF}.
\end{remark}

By Proposition~\ref{prop-embed-QD11},
$\cM_{1,1}^\disk(\gamma)= C \QD_{1,1}$ for some constant $C$.
Therefore, Theorem~\ref{thm-bubble-zipper} says that 
modulo a multiplicative constant, conformally welding samples from  $\cM_{1,1}^\disk(\gamma)$  and $ \cMtwo(\frac{\gamma^2}2)$,
we get a sample from $\LF_\bbH^{(\gamma,i)}$ 
decorated with an independent SLE curve. 
In this section we show that the same holds with $\gamma$ replaced by $\alpha$.

We first give an explicit description of the disintegration of $\cM_{1,0}^\disk(\alpha)$ over the boundary length.
\begin{lemma}\label{lem:disint-alpha}
For $\alpha \in \R$ and $h $ sampled from $P_\bbH$, let  $\wt h(z) = h(z) -2Q \log |z|_+ + \alpha G_\bbH(z,i)$ and $L = \nu_{\wt h}(\R)$. 
For $\ell > 0$, let $\LF_\bbH^{(\alpha, i)}(\ell)$ be  the law of  $\wt h + \frac2\gamma \log \frac\ell L$ under the reweighted measure $2^{-\alpha^2/2} \frac2\gamma \frac{\ell^{\frac2\gamma(\alpha - Q) -1}}{L^{\frac2\gamma(\alpha-Q)}} P_\bbH$, 
and let $\cM_{1,0}^\disk(\alpha; \ell)$ be the measure on quantum surfaces $(\bbH, \phi, i)$ with $\phi$ sampled from $\LF_\bbH^{(\alpha, i)}(\ell)$. Then $\cM_{1,0}^\disk(\alpha; \ell)$ is a measure on quantum surfaces with boundary length $\ell$, and 
\begin{equation}\label{eq:disint-alpha}
	\LF_\bbH^{(\alpha, i)} = \int_0^\infty \LF_\bbH^{(\alpha, i)}(\ell) \, d\ell, \quad 
	\cM_{1,0}^\disk(\alpha) = \int_0^\infty \cM_{1,0}^\disk(\alpha; \ell) \, d\ell.
\end{equation}
\end{lemma}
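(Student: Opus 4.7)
The plan is to directly compute the disintegration of $\LF_\bbH^{(\alpha,i)}$ over the boundary length by performing a change of variable in the zero-mode $\mathbf c$. Then the statement for $\cM_{1,0}^\disk(\alpha)$ will follow immediately from Definition~\ref{def:Malpha}.

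First, I would apply Lemma~\ref{lem:Girsanov} at $z_0=i$ (so $2\Im z_0=2$ and $|z_0|_+=1$), which represents $\LF_\bbH^{(\alpha,i)}$ as the law of $\phi(z)=\wt h(z)+\mathbf c$ where $(h,\mathbf c)$ is sampled from $2^{-\alpha^2/2}P_\bbH\times[e^{(\alpha-Q)c}\,dc]$ and $\wt h(z)=h(z)-2Q\log|z|_++\alpha G_\bbH(z,i)$. The key observation is the additive-constant scaling of boundary GMC: $\nu_{\wt h+\mathbf c}(\R)=e^{\gamma\mathbf c/2}L$ where $L=\nu_{\wt h}(\R)$. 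Thus on the event $\{L<\infty\}$ (which has full $P_\bbH$-measure when $\alpha>\frac{\gamma}{2}$; if one works in general $\alpha\in\R$ the same manipulation is formal but valid as an identity of $\sigma$-finite measures) the map $(h,\mathbf c)\mapsto(h,\ell)$ with $\ell=e^{\gamma\mathbf c/2}L$ is a diffeomorphism in the second coordinate.

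Next I would carry out this change of variable. Since $\mathbf c=\frac{2}{\gamma}\log(\ell/L)$, one has $d\mathbf c=\frac{2}{\gamma\ell}d\ell$ and $e^{(\alpha-Q)\mathbf c}=(\ell/L)^{\frac{2}{\gamma}(\alpha-Q)}$, so the reference measure becomes
\[
2^{-\alpha^2/2}P_\bbH(dh)\,e^{(\alpha-Q)\mathbf c}d\mathbf c
=\frac{2}{\gamma}\,2^{-\alpha^2/2}\,\frac{\ell^{\frac{2}{\gamma}(\alpha-Q)-1}}{L^{\frac{2}{\gamma}(\alpha-Q)}}\,P_\bbH(dh)\,d\ell.
\]
Under this change of variable the field $\phi$ is exactly $\wt h+\frac{2}{\gamma}\log(\ell/L)$, which by the definition in the statement is a sample from $\LF_\bbH^{(\alpha,i)}(\ell)$. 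Together with Fubini this yields the first identity in~\eqref{eq:disint-alpha}.

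To finish, I would verify the two remaining assertions: (i) that $\LF_\bbH^{(\alpha,i)}(\ell)$ is supported on fields of boundary length exactly $\ell$, which is the identity $\nu_{\wt h+\frac{2}{\gamma}\log(\ell/L)}(\R)=e^{\gamma\cdot\frac{1}{\gamma}\log(\ell/L)}L=\ell$ by the scaling of $\nu_\cdot(\R)$ under additive constants; and (ii) that $\cM_{1,0}^\disk(\alpha)=\int_0^\infty\cM_{1,0}^\disk(\alpha;\ell)\,d\ell$, which is immediate upon pushing forward the first identity in~\eqref{eq:disint-alpha} under the map $\phi\mapsto(\bbH,\phi,i)/\!\sim_\gamma$ of Section~\ref{subsec:QD}. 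There is no real obstacle here; the only point needing minor care is that the disintegration is a genuine identity of $\sigma$-finite measures (not merely specified up to $d\ell$-null sets) because the conditional measures $\LF_\bbH^{(\alpha,i)}(\ell)$ have been defined pointwise in $\ell$ by an explicit formula.
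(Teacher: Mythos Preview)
Your proposal is correct and is essentially identical to the paper's proof: both apply Lemma~\ref{lem:Girsanov} at $z_0=i$, perform the change of variable $c=\frac2\gamma\log(\ell/L)$ together with Fubini, and then read off the second identity from Definition~\ref{def:Malpha}. Your write-up is slightly more explicit about the scaling of $\nu$ under additive constants and the pointwise (in $\ell$) nature of the disintegration, but there is no substantive difference.
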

\begin{proof}
	It is clear that $\LF_\bbH^{(\alpha, i)}(\ell)$-a.e.\ we have $\nu_\phi(\R) = \ell$. 
To see that $\int_0^\infty \LF_\bbH^{(\alpha, i)}(\ell) \, d\ell = \LF_\bbH^{(\alpha, i)}$, we note that for any nonnegative measurable function $F$ on $H^{-1}(\bbH)$ we have
\[\int_0^\infty \int F(\wt h + \frac2\gamma \log \frac\ell L) 2^{-\alpha^2/2} \frac2\gamma \frac{\ell^{\frac2\gamma(\alpha - Q) -1}}{L^{\frac2\gamma(\alpha-Q)}} P_\bbH(dh) \, d\ell = \int  \int_\R F(\wt h + c) 2^{-\alpha^2/2} e^{(\alpha-Q)c}\,dc\, P_\bbH(dh) \] 
using Fubini's theorem and the change of variable $c = \frac2\gamma \log \frac\ell L$. 
This yields $\int_0^\infty \LF_\bbH^{(\alpha, i)}(\ell) \, d\ell = \LF_\bbH^{(\alpha, i)}$, since the left hand side of the above  equation characterizes $\int_0^\infty \LF_\bbH^{(\alpha, i)}(\ell) \, d\ell$ and the right hand side gives $\LF_\bbH^{(\alpha, i)}$ thanks to Lemma \ref{lem:Girsanov}. The second identity in~\eqref{eq:disint-alpha} then  follows from definition. 
\end{proof}

We state without proof the variant of this lemma for $\cM_{1,1}^\disk(\alpha)$.
\begin{lemma}\label{lem:disint-alpha-11}
	For $\alpha \in \R$ and $h $ sampled from $P_\bbH$, let  $\wt h(z) = h(z) -2Q \log |z|_+ + \alpha G_\bbH(z,i) + \frac\gamma2G_\bbH(z,0)$ and $L = \nu_{\wt h}(\R)$. 
		For $\ell > 0$, let $\LF_\bbH^{(\alpha, i)}(\ell)$ be  the law of  $\wt h + \frac2\gamma \log \frac\ell L$ under the reweighted measure $2^{-\alpha^2/2} \frac2\gamma \frac{\ell^{\frac2\gamma(\alpha - Q)}}{L^{\frac2\gamma(\alpha-Q)+1}} P_\bbH$,
	and let $\cM_{1,1}^\disk(\alpha; \ell)$ be the measure on quantum surfaces $(\bbH, \phi, i)$ with $\phi$ sampled from $\LF_\bbH^{(\alpha, i)}(\ell)$. Then $\cM_{1,1}^\disk(\alpha; \ell)$ is a measure on quantum surfaces with boundary length $\ell$, and 
	\begin{equation}\label{eq:disint-alpha-11}
		\LF_\bbH^{(\alpha, i), (\gamma, 0)} = \int_0^\infty \LF_\bbH^{(\alpha, i), (\gamma, 0)}(\ell) \, d\ell, \quad 
		\cM_{1,1}^\disk(\alpha) = \int_0^\infty \cM_{1,1}^\disk(\alpha; \ell) \, d\ell.
	\end{equation}
\end{lemma}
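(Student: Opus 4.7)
The plan is to follow the proof of Lemma~\ref{lem:disint-alpha} step by step, with the only substantive change being the presence of a boundary $\frac{\gamma}{2}$-insertion at $0$. First I would extend Lemma~\ref{lem-def-LF11} from $\alpha = \gamma$ to general $\alpha \in \R$: applying Girsanov to $\LF_{\bbH}^{(\alpha,i)}$ in order to absorb the regularizing factor $\eps^{\gamma^2/4} e^{\frac{\gamma}{2}\phi_{\eps}(0)}$, one realizes $\LF_{\bbH}^{(\alpha,i),(\gamma,0)}$ by sampling $(h,\mathbf{c})$ from $K_\alpha\, P_{\bbH} \times [e^{(\alpha+\gamma/2-Q)c}dc]$ and setting $\phi = \wt h + \mathbf{c}$ with $\wt h$ as in the statement. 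The claim is $K_\alpha = 2^{-\alpha^2/2}$: the factor $2^{-\alpha^2/2}$ is inherited from Lemma~\ref{lem:Girsanov} applied to the bulk insertion, while the contribution from the boundary Girsanov collapses to $1$ because $|0|_+ = 1$, $G_{\bbH}(0,i) = 0$, and the boundary-point variance computation satisfies $\lim_{\eps\to 0} \eps^{\gamma^2/4} e^{\frac{\gamma^2}{8}\Var h_{\eps}(0)} = 1$ for the specific normalization of the free-boundary GFF fixed in Section~\ref{subsec:pre-LCFT}.

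With this Girsanov representation in hand, the rest of the proof mirrors Lemma~\ref{lem:disint-alpha}. Since $\nu_{\wt h + \frac{2}{\gamma}\log(\ell/L)}(\R) = e^{\frac{\gamma}{2}\cdot\frac{2}{\gamma}\log(\ell/L)} L = \ell$, the measure $\cM_{1,1}^{\disk}(\alpha;\ell)$ is supported on quantum surfaces of boundary length $\ell$. For the disintegration $\LF_{\bbH}^{(\alpha,i),(\gamma,0)} = \int_0^\infty \LF_{\bbH}^{(\alpha,i),(\gamma,0)}(\ell)\,d\ell$, I would apply Fubini together with the change of variables $c = \frac{2}{\gamma}\log(\ell/L)$, $dc = \frac{2}{\gamma}\frac{d\ell}{\ell}$, which converts $e^{(\alpha+\gamma/2-Q)c}dc$ into $\frac{2}{\gamma}\ell^{\frac{2}{\gamma}(\alpha-Q)}L^{-\frac{2}{\gamma}(\alpha-Q)-1}d\ell$ via the identity $\frac{2}{\gamma}(\alpha + \frac{\gamma}{2} - Q) - 1 = \frac{2}{\gamma}(\alpha - Q)$. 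The disintegration of $\cM_{1,1}^{\disk}(\alpha)$ then follows immediately by pushing forward under $\phi \mapsto (\bbH,\phi,i,0)$.

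The only genuinely new step compared with Lemma~\ref{lem:disint-alpha} is verifying the prefactor $K_\alpha = 2^{-\alpha^2/2}$. I do not expect an obstacle beyond book-keeping: the key cancellations come from the specific choice $(z_0,s_0) = (i, 0)$, which kills the cross-interaction $e^{\frac{\gamma}{2}\alpha G_{\bbH}(0,i)}$ and removes any log-polar contribution from $|0|_+$ or $|i|_+$, while the normalization of $P_\bbH$ makes the boundary-variance constant at $0$ vanish.
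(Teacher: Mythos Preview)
Your proposal is correct and is precisely the argument the paper has in mind: the paper states this lemma without proof, explicitly saying it is the variant of Lemma~\ref{lem:disint-alpha}, and your write-up carries out that variant in the expected way. Your verification that the prefactor remains $2^{-\alpha^2/2}$ (via $|0|_+=1$, $G_\bbH(i,0)=0$, and the boundary semicircle variance at $0$ having vanishing constant term) is the only new bookkeeping, and it is done correctly.
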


To generalize Theorem~\ref{thm-bubble-zipper},  we also need to deform our measure on curves. 
Given $\eta\in \bub_\bbH(i,0)$, let $\psi_\eta:\bbH\to D_\eta(i)$ be the unique conformal map fixing $i$ and $0$. 
Let $\sm$ be the measure on $\bub_\bbH(i,0)$ in Theorem~\ref{thm-bubble-zipper}  and $\Delta(\alpha)=\frac\alpha2(Q-\frac\alpha2)$. Let 
$\sm_\alpha$ be the  measure on $\mathrm{Bubble}_\bbH(i, 0)$ obtained by reweighting $\sm$ as follows:  
\eqb\label{eq-m-alpha}
\frac{d\sm_\alpha}{d\sm}(\eta) = |\psi_\eta'(i)|^{2\Delta_\alpha - 2}.
\eqe

We now are ready to state the bubble zipper result for quantum disk with $\alpha$-bulk insertion. We consider only $\alpha > \frac\gamma2$; for such $\alpha$ we have  $|\LF_\bbH^{(\alpha, i)}(1)|< \infty$ by Lemma~\ref{lem-len-LF}.
\begin{theorem}\label{thm-FZZ-weld}
	
	There exists $C \in (0,\infty)$ such that the following holds. For $\alpha  > \frac\gamma2$,
	suppose $(\phi, \eta)$ is sampled from $\LF_\bbH^{(\alpha, i)}(1) \times \sm_\alpha$. 
	Then the law of $(D_\eta(0),\phi,i,0)$  and  $(D_\eta(\infty),\phi,0^-, 0^+)$ 
	viewed as a pair of marked quantum surfaces is given by  
	\eqb\label{eq-FZZ-weld}
	C\int_{0}^\infty \cM^\disk_{1,1}(\alpha; r)\times \cMtwo(\frac{\gamma^2}2; 1,r)\, dr. 
	\eqe
\end{theorem}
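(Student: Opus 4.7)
The plan is to derive Theorem~\ref{thm-FZZ-weld} from Theorem~\ref{thm-bubble-zipper} via a Girsanov-type reweighting that upgrades the bulk insertion strength from $\gamma$ to $\alpha$, combined with the curve reweighting $\sm \to \sm_\alpha$. As a starting point, Theorem~\ref{thm-bubble-zipper} combined with Proposition~\ref{prop-embed-QD11} (which identifies $\cM^\disk_{1,1}(\gamma)$ with a constant multiple of $\QD_{1,1}$) asserts that the image of $\LF_\bbH^{(\gamma, i)} \times \sm$ under the conformal welding decomposition $(\phi, \eta) \mapsto ((D_\eta(0), \phi, i, 0), (D_\eta(\infty), \phi, 0^-, 0^+))$ is a constant multiple of $\int_0^\infty \cM^\disk_{1,1}(\gamma; r) \times \cMtwo(\gamma^2/2; \cdot, r)\, dr$.

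Next I would use the Girsanov-type identity $\LF_\bbH^{(\alpha, i)} \propto \lim_{\eps \to 0} \eps^{(\alpha^2 - \gamma^2)/2} e^{(\alpha - \gamma) \phi_\eps(i)} \LF_\bbH^{(\gamma, i)}$, which follows from comparing the explicit descriptions in Lemma~\ref{lem:Girsanov}. Since the reweighting factor depends only on $\phi$ and not on $\eta$, applying it to $\LF_\bbH^{(\gamma, i)} \times \sm$ yields a constant multiple of $\LF_\bbH^{(\alpha, i)} \times \sm$. On the welded side, since $B_\eps(i) \subset D_\eta(0)$ for small $\eps$, the reweighting acts only on the inside surface. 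Let $\psi_\eta \colon \bbH \to D_\eta(0)$ be the conformal map fixing $i,0$ and let $\wt\phi = \phi \circ \psi_\eta + Q \log|\psi_\eta'|$ be the $\bbH$-pullback. Using $\wt\phi_{\wt\eps}(i) = \phi_{|\psi_\eta'(i)| \wt\eps}(i) + Q \log|\psi_\eta'(i)| + o(1)$, a direct computation yields the conformal covariance identity
\[ \eps^{(\alpha^2 - \gamma^2)/2} e^{(\alpha - \gamma) \phi_\eps(i)} \;=\; |\psi_\eta'(i)|^{2\Delta_\gamma - 2\Delta_\alpha} \, \wt\eps^{(\alpha^2 - \gamma^2)/2} e^{(\alpha - \gamma) \wt\phi_{\wt\eps}(i)} + o(1), \quad \wt\eps = \eps/|\psi_\eta'(i)|. \]
On the $\bbH$-embedded pullback, the $\wt\phi$-side vertex-operator reweighting converts $\LF_\bbH^{(\gamma, i), (\gamma, 0)}$ (and hence $\cM^\disk_{1,1}(\gamma; r)$) into a constant multiple of $\LF_\bbH^{(\alpha, i), (\gamma, 0)}$ (and hence $\cM^\disk_{1,1}(\alpha; r)$). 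Therefore the welded image of $\LF_\bbH^{(\alpha, i)} \times \sm$ is a constant multiple of $\int_0^\infty |\psi_\eta'(i)|^{2\Delta_\gamma - 2\Delta_\alpha} \cM^\disk_{1,1}(\alpha; r) \times \cMtwo(\gamma^2/2; \cdot, r)\, dr$.

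To absorb the $\eta$-dependent Jacobian, I would reweight $\sm$ to $\sm_\alpha$ using~\eqref{eq-m-alpha}, whose Radon--Nikodym derivative $|\psi_\eta'(i)|^{2\Delta_\alpha - 2}$ equals $|\psi_\eta'(i)|^{2\Delta_\alpha - 2\Delta_\gamma}$ via the crucial identity $\Delta_\gamma = \tfrac{\gamma}{2}(Q - \tfrac{\gamma}{2}) = 1$. This exactly cancels the Jacobian above, so the welded image of $\LF_\bbH^{(\alpha, i)} \times \sm_\alpha$ is a constant multiple of $\int_0^\infty \cM^\disk_{1,1}(\alpha; r) \times \cMtwo(\gamma^2/2; \cdot, r)\, dr$. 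Finally, conditioning on $\nu_\phi(\R) = 1$ on both sides — and noting that after welding $\nu_\phi(\R)$ coincides with the left boundary arc length of the outside surface — transforms the left-hand side into a constant multiple of $\LF_\bbH^{(\alpha, i)}(1) \times \sm_\alpha$ via Lemma~\ref{lem:disint-alpha}, and the right-hand side into a constant multiple of $\int_0^\infty \cM^\disk_{1,1}(\alpha; r) \times \cMtwo(\gamma^2/2; 1, r)\, dr$, as claimed.

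The main technical obstacle is to make the vertex-operator Girsanov argument rigorous in the coupled measure with the random curve $\eta$: in particular, one must justify the circle-average regularization, the conformal covariance identity above, and the interchange of limits uniformly in $\eta$. This can be handled by an $\eps$-approximation argument exploiting that under $\sm$ the curve $\eta$ almost surely stays a positive distance from $i$, so $\psi_\eta$ is smooth in a neighborhood of $i$; the resulting limits can then be identified using the explicit Girsanov-type identities in Lemmas~\ref{lem:Girsanov} and~\ref{lem-def-LF11}.
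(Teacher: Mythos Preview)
Your proposal is correct and takes essentially the same approach as the paper. The paper fixes the boundary length first (working with $\LF_\bbH^{(\gamma,i)}(1)$ from the outset) and then applies the Girsanov reweighting, whereas you reweight first and disintegrate at the end, but the core mechanism---the conformal covariance identity you wrote down, which in the paper's notation is $X_\eps(i)=(\phi,\wh\theta_\eps)+Q\log|\psi_\eta'(i)|$, combined with the observation that the resulting exponent $|\psi_\eta'(i)|^{2\Delta_\gamma-2\Delta_\alpha}$ is exactly cancelled by $d\sm_\alpha/d\sm$ since $\Delta_\gamma=1$---is identical. The ``main technical obstacle'' you flag is precisely what the paper isolates as Lemmas~\ref{lem-reweighting-LF} and~\ref{lem-reweighting-LF-loop}: they work with test functions depending only on $\phi|_{\bbH\setminus\psi_\eta(B_\eps(i))}$ and compute the relevant Girsanov shifts and variances explicitly, then send $\eps\to0$ at the end.
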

The proof of Theorem~\ref{thm-FZZ-weld} is similar in spirit to that of \cite[Proposition 4.5]{AHS-SLE-integrability}. 
We begin by explaining how a Liouville field with specified boundary length changes under a suitable reweighting.
	\begin{lemma}\label{lem-reweighting-LF}
	Let $\alpha > \frac\gamma2$. For any $\ell > 0$, $\eps \in (0,1)$ and for any nonnegative measurable function $f$ on $H^{-1}(\bbH)$ for which $\phi \mapsto f(\phi)$ only depends on $\phi|_{\bbH\backslash B_\eps(i)}$,
\begin{equation}\label{eq:LF-eps}
	\int f(\phi) \times \eps^{\frac12(\alpha^2-\gamma^2)} e^{(\alpha - \gamma)\phi_\eps(i)}d \LF_\bbH^{(\gamma, i)}(\ell) = \int f(\phi)d\LF_\bbH^{(\alpha, i)}(\ell).
\end{equation}
	 Moreover, the same holds when we replace $\LF_\bbH^{(\gamma, i)}$ and $\LF_\bbH^{(\alpha, i)}$ with $\LF_\bbH^{(\gamma, i), (\gamma, 0)}$ and $\LF_\bbH^{(\alpha, i), (\gamma, 0)}$.
\end{lemma}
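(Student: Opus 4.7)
The proof strategy is a direct application of the Cameron--Martin / Girsanov theorem to the free-boundary GFF $h \sim P_\bbH$, engineered to match the limiting identity $\LF_\bbH^{(\alpha, i)} = \lim_{\eps \to 0} \eps^{\alpha^2/2} e^{\alpha \phi_\eps(i)} \LF_\bbH$ of Lemma~\ref{lem:Girsanov}: the factor $\eps^{(\alpha^2-\gamma^2)/2} e^{(\alpha-\gamma)\phi_\eps(i)}$ is precisely what formally converts a $\gamma$-insertion at $i$ into an $\alpha$-insertion. First, I will expand both sides of~\eqref{eq:LF-eps} using Lemma~\ref{lem:disint-alpha}: setting $\wt h_\beta(z) := h(z) - 2Q\log|z|_+ + \beta G_\bbH(z, i)$, $L_\beta := \nu_{\wt h_\beta}(\R)$ and $c_\beta := \tfrac{2}{\gamma}\log(\ell/L_\beta)$ for $\beta \in \{\gamma,\alpha\}$, both $\LF_\bbH^{(\gamma,i)}(\ell)$ and $\LF_\bbH^{(\alpha,i)}(\ell)$ become explicit reweightings of the pushforward of $P_\bbH$ under $h \mapsto \wt h_\beta + c_\beta$, and the decomposition $\phi_\eps(i) = h_\eps(i) + D_\eps + c_\gamma$ holds under $\LF_\bbH^{(\gamma,i)}(\ell)$, where $D_\eps$ is the deterministic circle average of $-2Q\log|z|_+ + \gamma G_\bbH(z, i)$ on $\partial B_\eps(i)$.

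Next, I will apply Cameron--Martin: reweighting $P_\bbH$ by the Gaussian density $e^{(\alpha-\gamma) h_\eps(i) - (\alpha-\gamma)^2 \sigma_\eps^2/2}$, with $\sigma_\eps^2 := \Var_{P_\bbH}(h_\eps(i))$, produces a measure under which $h$ has the law of $h + (\alpha-\gamma)\,\E_{P_\bbH}[h(\cdot)\, h_\eps(i)]$ under $P_\bbH$. The key observation is that for $z \in \bbH \setminus B_\eps(i)$ one has $\E[h(z) h_\eps(i)] = G_\bbH(z, i)$ exactly, by harmonicity of $G_\bbH(\cdot, i)$ off $i$ together with the mean value property. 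Since $f$ is measurable with respect to $\phi|_{\bbH\setminus B_\eps(i)}$ and $L_\gamma$ depends only on $\wt h_\gamma|_\R$ with $\R \subset \bbH\setminus B_\eps(i)$, the shift $h \mapsto h + (\alpha-\gamma)G_\bbH(\cdot, i)$ takes $\wt h_\gamma \to \wt h_\alpha$, $L_\gamma \to L_\alpha$, $c_\gamma \to c_\alpha$, and $f(\wt h_\gamma + c_\gamma) \to f(\wt h_\alpha + c_\alpha)$ throughout the portion of the integrand that $f$ sees.

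Finally, the deterministic prefactors --- $\eps^{(\alpha^2-\gamma^2)/2}$, $e^{(\alpha-\gamma) D_\eps}$, $e^{(\alpha-\gamma)^2 \sigma_\eps^2 / 2}$, and the explicit $2^{-\gamma^2/2}$ and $\ell$-powers from the disintegration --- will collapse to the $2^{-\alpha^2/2}$ prefactor of $\LF_\bbH^{(\alpha,i)}(\ell)$. This reduces to the explicit evaluation of $\sigma_\eps^2$ and of $G_\bbH(\cdot, i)_\eps(i)$ from~\eqref{covariance} via the mean value property, and is essentially the same combinatorial identity as in the proof of Lemma~\ref{lem:Girsanov} specialized to $z_0 = i$. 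The second claim of the lemma follows by the identical argument using Lemma~\ref{lem:disint-alpha-11}: the extra $\gamma$-boundary insertion at $0 \in \R \subset \bbH \setminus B_\eps(i)$ is untouched by the Girsanov shift localized near $i$, and the disintegration densities transform in the same way. I expect this final bookkeeping --- verifying exact cancellation of all $\eps$-dependent prefactors on the two sides --- to be the main technical checkpoint.
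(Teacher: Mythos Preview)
Your approach is essentially the paper's: apply Girsanov to the exponential tilt $e^{(\alpha-\gamma)h_\eps(i)}$ and then match prefactors. The paper first proves the identity for the non-disintegrated measures $\LF_\bbH^{(\gamma,i)}$ and $\LF_\bbH^{(\alpha,i)}$ (integrating over the free constant $c$) and only at the end disintegrates over $\ell$, whereas you work directly with the fixed-$\ell$ description of Lemma~\ref{lem:disint-alpha}. Both routes are fine.

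However, your ``key observation'' is wrong as stated. You claim that $\E[h(z)h_\eps(i)] = G_\bbH(z,i)$ for $z \in \bbH \setminus B_\eps(i)$ ``by harmonicity of $G_\bbH(\cdot,i)$''. But $G_\bbH(z,w)$ contains the term $2\log|w|_+$, which is not harmonic across $|w|=1$; since $|i|=1$, the circle $\partial B_\eps(i)$ straddles this locus. The correct identity (which the paper computes explicitly) is
\[
\E[h(z)h_\eps(i)] = G_\bbH(z,i) + \delta, \qquad \delta := (2\log|\cdot|_+,\theta_\eps) \neq 0,
\]
and likewise $\Var h_\eps(i) = -\log(2\eps) + 2\delta$ and $(G_\bbH(\cdot,i),\theta_\eps) = -\log(2\eps)+\delta$. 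In your framework the Girsanov shift therefore sends $\wt h_\gamma \mapsto \wt h_\alpha + (\alpha-\gamma)\delta$ on $\bbH\setminus B_\eps(i)$, hence $L_\gamma \mapsto e^{\frac\gamma2(\alpha-\gamma)\delta}L_\alpha$ and $c_\gamma \mapsto c_\alpha - (\alpha-\gamma)\delta$. The point is that the $+\,(\alpha-\gamma)\delta$ in the field and the $-\,(\alpha-\gamma)\delta$ in the constant cancel inside $f(\wt h_\gamma+c_\gamma)$, and the residual $\delta$-contributions in the density (from $L_\gamma^{-\frac2\gamma(\alpha-Q)}$, from $D_\eps$, and from $\sigma_\eps^2$) cancel algebraically: $(\gamma-Q)+(\alpha-\gamma)-(\alpha-Q)=0$. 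So your strategy does go through, but you must carry $\delta$ through the computation; the shortcut you invoke is false. The paper absorbs exactly this $\delta$ via the change of variable $c' = c + (\alpha-\gamma)\delta$ in the $c$-integral, which is the cleaner way to see the cancellation.
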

\begin{proof}
	We only explain the proof of~\eqref{eq:LF-eps} since the same argument works for $\LF_\bbH^{(\gamma, i), (\gamma, 0)}$ and $\LF_\bbH^{(\alpha, i), (\gamma, 0)}$. 
	For a GFF $h$ sampled from $P_\bbH$, let $\wt h := h -2Q \log |\cdot|_+ + \gamma G_\bbH(\cdot, i)$. 
	Let $\theta_\eps$ be the uniform probability measure on $\partial B_\eps(i)$ and let $\delta := (2 \log |\cdot|_+, \theta_\eps)$, 
	where $(\cdot,\cdot)$ means pairings of generalized functions hence $(\cdot,\theta_\eps)$ means average over $\theta_\eps$.  
	Recall $G_\bbH (x,y)=\E[h(x)h(y)]$ from~\eqref{covariance}. We see that $(G_\bbH(\cdot, i), \theta_\eps) = -\log(2\eps) +\delta$. 
	Thus, $\wt h_\eps(i) = h_\eps(i) +(\gamma-Q)\delta - \gamma \log (2\eps)$. Let $\E =\E_\bbH$ be the expectation over $P_\bbH$.
	By the description of~$\LF_\bbH^{(\gamma, i)}$ from Lemma~\ref{lem:Girsanov},
	we have
	\begin{align}
	&	\int f(\phi) \times \eps^{\frac12(\alpha^2-\gamma^2)} e^{(\alpha - \gamma)\phi_\eps(i)}d \LF_\bbH^{(\gamma, i)}=2^{-\frac{\gamma^2}2} \eps^{\frac12(\alpha^2-\gamma^2)} \int_\R \E[e^{(\alpha - \gamma)(\wt h_\eps(i)+c)} f(\wt h + c)] e^{(\gamma - Q)c}\, dc\nonumber
	\\
	&=2^{-\frac{\alpha^2}2} (2\eps)^{\frac12(\alpha-\gamma)^2} e^{(\alpha - \gamma) (\gamma - Q) \delta} 
	 \int_\R {\E[e^{(\alpha - \gamma)h_\eps(i)} f(\wt h + c)]}  e^{(\alpha - Q)c}\, dc. \label{eq:h-int}
	\end{align}
	 Define $G_\bbH^\eps(z, i) := \E[h(z)h_\eps(i)] = (G_\bbH(z, \cdot), \theta_\eps)$. Then $G_\bbH^\eps(z, i)|_{\bbH\backslash B_\eps(i)} = G_\bbH(z, i)|_{\bbH\backslash B_\eps(i)} + \delta$.
	By the Girsanov Theorem and the fact that $f(\phi)$ only depends on $\phi|_{\bbH\setminus B_\eps(i)}$, we have 
\begin{align}
&\int_\R {\E[e^{(\alpha - \gamma)h_\eps(i)} f(\wt h + c)]}  e^{(\alpha - Q)c}\, dc
=\E[e^{(\alpha - \gamma)h_\eps(i)}]\int_\R \E[f(\wt h + (\alpha - \gamma)G_\bbH^\eps(\cdot, i) + c)] e^{(\alpha - Q)c}\, dc\nonumber\\
&= \E[e^{(\alpha - \gamma)h_\eps(i)}]\int_\R \E[f(\wt h + (\alpha - \gamma)G_\bbH(\cdot, i)+ (\alpha-\gamma)\delta+ c)] e^{(\alpha - Q)c}\, dc. \label{eq:Girsanov1}
\end{align} 
Since $\Var(h_\eps(i)) = \iint G_\bbH(z,w) \theta_\eps(dz) \theta_\eps(dw) = -\log(2\eps) +2\delta$, we have 
\begin{equation}\label{eq:varh}
\E[e^{(\alpha- \gamma)h_\eps(i)}] = (2\eps)^{-\frac12(\alpha-\gamma)^2} e^{(\alpha-\gamma)^2 \delta}.
\end{equation}
On the other hand, with the change of variable $c'=(\alpha-\gamma)\delta+c$, we get
\begin{align*}
&\int_\R \E[f(\wt h + (\alpha - \gamma)G_\bbH(\cdot, i)+ (\alpha-\gamma)\delta+ c)] e^{(\alpha - Q)c}\, dc\\
&=e^{-(\alpha-\gamma)(\alpha - Q)\delta}
\int_\R \E[f(\wt h + (\alpha - \gamma)G_\bbH(\cdot, i)+ c'] e^{(\alpha - Q)c'}\, dc' = e^{-(\alpha-\gamma)(\alpha - Q)\delta}  2^{\frac{\alpha^2}2}   \int f(\phi)d\LF_\bbH^{(\alpha, i)}.
\end{align*}
Combining this with~\eqref{eq:h-int},~\eqref{eq:Girsanov1} and~\eqref{eq:varh}, and since 
\[
2^{-\frac{\alpha^2}2} (2\eps)^{\frac12(\alpha-\gamma)^2} e^{(\alpha - \gamma) (\gamma - Q) \delta} \times (2\eps)^{-\frac12(\alpha-\gamma)^2} e^{(\alpha-\gamma)^2 \delta}
\times e^{-(\alpha-\gamma)(\alpha - Q)\delta}  2^{\frac{\alpha^2}2}=1,
\]
we have 
	\[
	\int f(\phi) \times \eps^{\frac12(\alpha^2-\gamma^2)} e^{(\alpha - \gamma)\phi_\eps(i)}d \LF_\bbH^{(\gamma, i)} = \int f(\phi)d\LF_\bbH^{(\alpha, i)}.
	\]
	Disintegrating over $\ell$ completes the proof. 
\end{proof}

We also need the following analog of Lemma~\ref{lem-reweighting-LF} which is proved in the same way. 
\begin{lemma}\label{lem-reweighting-LF-loop}
Let $\alpha > \frac\gamma2$, and let $\eta \in \mathrm{Bubble}_\bbH(i,0)$. Let $\psi_\eta: \bbH \to D_\eta(i)$ be the conformal map fixing $0$ and $i$, and let $\bbH_{\eta, \eps} := \bbH\backslash \psi_\eta(B_\eps(i))$. Let $\theta_\eps$ be the uniform probability measure on $\partial B_\eps(i)$ and $\wh \theta_\eps := (\psi_\eta)_* \theta_\eps$. For any $\ell > 0$, $\eps \in (0,1)$ and  for any nonnegative measurable function $f$ on $H^{-1}(\bbH)$ for which $\phi \mapsto f(\phi)$ only depends 
on  $\phi|_{\bbH_{\eta, \eps}}$, we have 
		\[
		\int f(\phi|_{\bbH_{\eta, \eps}}) \times \eps^{\frac12(\alpha^2-\gamma^2)} e^{(\alpha - \gamma)(\phi, \wh \theta_\eps)}d \LF_\bbH^{(\gamma, i)}(\ell) = \int f(\phi|_{\bbH_{\eta, \eps}}) \times  |\psi_\eta'(i)|^{\frac12(\gamma^2-\alpha^2)}d\LF_\bbH^{(\alpha, i)}(\ell).
		\]
\end{lemma}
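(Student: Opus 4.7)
The plan is to mirror the proof of Lemma~\ref{lem-reweighting-LF} verbatim, with $\theta_\eps$ replaced throughout by $\wh \theta_\eps$ and every appearance of $-\log(2\eps)$ replaced by $-\log(2\eps |\psi_\eta'(i)|)$. Write $\wh h_\eps := (h, \wh \theta_\eps)$ and $\wh \delta := (2 \log |\cdot|_+, \wh \theta_\eps)$; this $\wh\delta$ plays the role of the $\delta$ in the original proof. Since $f$ depends only on $\phi|_{\bbH_{\eta, \eps}}$, every Girsanov shift and every circle average below can be evaluated away from $\psi_\eta(B_\eps(i))$.

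The key analytic ingredient is to establish the three analogues of the identities used in the proof of Lemma~\ref{lem-reweighting-LF}:
\begin{align*}
(G_\bbH(\cdot, i), \wh \theta_\eps) &= -\log(2\eps|\psi_\eta'(i)|) + \wh \delta, \\
\Var(\wh h_\eps) &= -\log(2\eps|\psi_\eta'(i)|) + 2 \wh \delta, \\
\int G_\bbH(z, w) \, \wh \theta_\eps(dw) &= G_\bbH(z, i) + \wh \delta \quad \textrm{for every } z \in \bbH_{\eta, \eps}.
\end{align*}
Since $(\psi_\eta(u) - i)/(\psi_\eta'(i)(u-i))$ extends to a nonzero holomorphic function near $u = i$ with value $1$ at $i$, the function $u \mapsto -\log|\psi_\eta(u) - i| + \log|\psi_\eta'(i)| + \log|u-i|$ is harmonic on $B_\eps(i)$ for small $\eps$ and vanishes at $i$; averaging over $\theta_\eps$ yields the first identity. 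The second is the analogue on the diagonal: $-\log|\psi_\eta(u) - \psi_\eta(v)| + \log|\psi_\eta'(i)| + \log|u-v|$ extends to a jointly harmonic function near $(i,i)$ and vanishes there, while $-\log|\psi_\eta(u) - \overline{\psi_\eta(v)}|$ is jointly harmonic near $(i,i)$ and averages to $-\log 2$. The third identity uses only the mean value property applied in the $u$-variable to the three terms of $G_\bbH(z, \psi_\eta(u))$, each of which is harmonic on $B_\eps(i)$ precisely because $z \in \bbH_{\eta, \eps}$.

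With these identities in hand, the calculation in the proof of Lemma~\ref{lem-reweighting-LF} transfers line by line. Applying Girsanov to $e^{(\alpha - \gamma)\wh h_\eps}$ and performing the change of variable $c' = c + (\alpha - \gamma) \wh\delta$ in the integration over $c$, all $\wh\delta$ dependence cancels exactly as in the original argument, and the net effect is the accumulation of the prefactor
\[
|\psi_\eta'(i)|^{-\gamma(\alpha - \gamma)} \cdot |\psi_\eta'(i)|^{-\frac12(\alpha-\gamma)^2} = |\psi_\eta'(i)|^{\frac12(\gamma^2 - \alpha^2)},
\]
which is exactly the claim. Finally, as in Lemma~\ref{lem-reweighting-LF}, disintegrating over the boundary length $\ell = \nu_\phi(\R)$ via Lemma~\ref{lem:disint-alpha} transfers the identity from $\LF_\bbH^{(\gamma, i)}$ to $\LF_\bbH^{(\gamma, i)}(\ell)$; since the prefactor is deterministic for fixed $\eta$, it commutes with the disintegration. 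The main technical point is the verification of the three identities above with the correct $|\psi_\eta'(i)|$ factor; once those are in place, the bookkeeping is identical to the original proof.
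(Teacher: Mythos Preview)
Your proposal is correct and follows essentially the same route as the paper: establish the three identities for $(G_\bbH(\cdot,i),\wh\theta_\eps)$, $\Var((h,\wh\theta_\eps))$, and $G_\bbH^\eps(z,i)|_{\bbH_{\eta,\eps}}$ (the paper phrases the first via the conformal radius of $\psi_\eta(\partial B_\eps(i))$ seen from $i$, which is equivalent to your harmonicity argument), then rerun the computation of Lemma~\ref{lem-reweighting-LF} with $2\eps$ replaced by $2\eps|\psi_\eta'(i)|$. One small slip to fix: in your justification of the third identity, the term $2\log|\psi_\eta(u)|_+$ in $G_\bbH(z,\psi_\eta(u))$ is \emph{not} harmonic in $u$; its $\theta_\eps$-average equals $\wh\delta$ by the definition of $\wh\delta$, not by the mean value property, so the identity still holds.
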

\begin{proof}
	Let $\delta := (2\log|\cdot|_+, \wh \theta_\eps)$  and $G_\bbH^\eps(z, i) := (G_\bbH(z, \cdot) , \wh \theta_\eps)$. 
	Then for $z \in \bbH_{\eta, \eps}$, 
	\begin{align*}
	&G_\bbH^\eps(z, i) = (-\log|z - \psi_\eta(\cdot)| - \log |z + \psi_\eta(\cdot)|,  \theta_\eps) + 2 \log |z|_+ + \delta\\
	&= -\log |z-i| - \log |z+i| + 2\log|z|_+ + \delta	 = G_\bbH(z,i) + \delta.
	\end{align*}
	Indeed, since $-\log|z - \cdot| - \log |z + \cdot|$ is harmonic on $D$ and $\psi_\eta$ is conformal, the map $-\log|z - \psi_\eta(\cdot)| - \log |z + \psi_\eta(\cdot)|$ is harmonic on $B_\eps(i)$. 
	
	Let $\eta_\eps = \psi(\partial B_\eps(i))$, then $\wh \theta_\eps$ is the harmonic measure on $\eta_\eps$ viewed from $i$. It is well known that  $\int \log |z - i|\, \wh \theta_\eps(dz)$ is the conformal radius of $\eta_\eps$ viewed from $i$, and this conformal radius can alternatively be computed as $\eps |\psi_\eta'(i)|$. Thus $(G_\bbH(\cdot, i), \wh \theta_\eps) = -\log|2 \eps \psi_\eta'(i)| +\delta$. 
	
	Finally, $\Var((h, \wh \theta_\eps)) = \int G_\bbH^\eps(z,i) \, \wh \theta_\eps(dz) = -\log |2\eps \psi_\eta'(i)| + 2\delta$. 
	Now the exact same computation  in Lemma~\ref{lem-reweighting-LF} with $(2\eps)$ in~\eqref{eq:h-int} and~\eqref{eq:varh} replaced by 
	$(2\eps\psi'_\eta(i))$ gives Lemma~\ref{lem-reweighting-LF-loop}.  
\end{proof}
\begin{proof}[{Proof of Theorem~\ref{thm-FZZ-weld}}]
	With slight abuse of notation, Theorem~\ref{thm-bubble-zipper} gives 
	\eqb\label{eq-unweighted-weld}
	\LF_\bbH^{(\gamma, i)}(1) \times \sm = C \int_0^\infty \cM_{1,1}^\disk(\gamma; r) \times \cM_{0,2}^\disk(\frac{\gamma^2}2; 1,r) \, dr,
	\eqe
	where~\eqref{eq-unweighted-weld} should be interpreted as saying that when  a pair of quantum surfaces is sampled from the right hand side, conformally welded, then embedded by sending the bulk and boundary points to $i$ and $0$ in $\bbH$, the law of the resulting field and curve $(\phi, \eta)$ is the left hand side. In our argument we will treat the right hand side of~\eqref{eq-unweighted-weld} as a measure on pairs $(\phi, \eta)$. 
	
	We use the notation of Lemma~\ref{lem-reweighting-LF-loop}, so $\psi_\eta: \bbH \to D_\eta(i)$ is a conformal map. 	Define
	\[
	X = \phi \circ \psi_\eta + Q \log |\psi_\eta'|.
	\]
	For $\eps \in (0,1)$ and $f$ a nonnegative measurable function of $\phi|_{\bbH_{\eta, \eps}}$, and $g$ a nonnegative measurable function of $\eta$,  weighting~\eqref{eq-unweighted-weld} gives
	\begin{align}
	&\int f(\phi|_{\bbH_{\eta, \eps}}) g(\eta) \eps^{\frac12(\alpha^2-\gamma^2)} e^{(\alpha - \gamma)X_\eps(i)} \LF_\bbH^{(\gamma, i)}(1) \times \sm\label{eq-LHS} \\
	&= C\int_0^\infty \left( \int f(\phi|_{\bbH_{\eta, \eps}}) g(\eta) \eps^{\frac12(\alpha^2-\gamma^2)} e^{(\alpha - \gamma)X_\eps(i)} \cM_{1,1}^\disk(\gamma; r) \times \cM_{0,2}^\disk(\frac{\gamma^2}2; 1,r) \right) \, dr. \label{eq-RHS}
	\end{align}
	Since $\psi_\eta'$ is holomorphic, $\log |\psi_\eta'|$ is harmonic so $(\log|\psi_\eta'|, \theta_\eps) = \log |\psi_\eta'(i)|$. Thus
	\eqb \label{eq-circle-av-X-Y}
	X_\eps(i) = (X, \theta_\eps) = (\phi \circ \psi_\eta + Q \log |\psi_\eta'|, \theta_\eps) = (\phi, \wh\theta_\eps) + Q \log |\psi_\eta'(i)|.
	\eqe
	By~\eqref{eq-circle-av-X-Y} and Lemma~\ref{lem-reweighting-LF-loop}, the expression~\eqref{eq-LHS} equals
	\[\int f(\phi|_{\bbH_{\eta, \eps}}) g(\eta) |\psi_\eta'(i)|^{-\frac{\alpha^2}2 +Q\alpha - 2} \LF_\bbH^{(\alpha, i)}(1) \times \sm .\]
Recall that $\sm_\alpha=|\psi_\eta'(i)|^{-\frac{\alpha^2}2 +Q\alpha - 2}  \sm$.	By Lemma~\ref{lem-reweighting-LF} the integral~\eqref{eq-RHS} equals
	\[C\int_0^\infty \left(\int f(\phi|_{\bbH_{\eta, \eps}}) g(\eta) \cM_{1,1}^\disk(\alpha; r) \times \cM_{0,2}^\disk(\frac{\gamma^2}2; 1,r) \right) \, dr. \] 
	The result follows by equating the above two integrals and sending $\eps \to 0$. 
\end{proof}

A priori it is not clear whether $\sm_\alpha$ is finite. Using Theorem~\ref{thm-FZZ-weld} we have the following. 
\begin{lemma}
	For $\alpha\in (\frac\gamma2, Q + \frac2\gamma)$, the measure $\sm_\alpha$ is finite.
\end{lemma}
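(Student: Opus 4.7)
The plan is to apply Theorem~\ref{thm-FZZ-weld}, equate total masses on both sides of the welding identity, and verify that the resulting integral in $r$ is finite on the stated range of $\alpha$. Since the welding identity is an equality of (possibly infinite) measures on pairs of marked quantum surfaces, taking total masses gives
\[
|\LF_\bbH^{(\alpha,i)}(1)|\cdot |\sm_\alpha|=C\int_0^\infty |\cM^\disk_{1,1}(\alpha;r)|\cdot|\cMtwo(\tfrac{\gamma^2}{2};1,r)|\,dr.
\]
For the prefactor on the left, Lemma~\ref{lem-len-LF} evaluated at $\ell=1$ together with the explicit formula for $\ol U_0(\alpha)$ in Proposition~\ref{prop-remy-U} yields $|\LF_\bbH^{(\alpha,i)}(1)| = \tfrac{2}{\gamma}\,2^{-\alpha^2/2}\,\ol U_0(\alpha)\in(0,\infty)$ whenever $\alpha>\gamma/2$, so it suffices to show that the integral on the right is finite.

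Next I will plug in the explicit expressions for the two factors in the integrand. Lemma~\ref{lem:lengh} gives $|\cMtwo(\tfrac{\gamma^2}{2};1,r)| = C_1\,r^{4/\gamma^2-1}(1+r^{4/\gamma^2})^{-2}$. For the other factor, disintegrating as in Lemma~\ref{lem:disint-alpha-11} produces the clean power law
\[
|\cM^\disk_{1,1}(\alpha;r)| = 2^{-\alpha^2/2}\,\tfrac{2}{\gamma}\,\E\!\left[L^{\frac{2}{\gamma}(Q-\alpha)-1}\right] r^{\frac{2}{\gamma}(\alpha-Q)},
\]
where $L=\nu_{\wt h}(\R)$ for $\wt h = h - 2Q\log|\cdot|_+ + \alpha G_\bbH(\cdot,i) + \tfrac\gamma2 G_\bbH(\cdot,0)$ and $h\sim P_\bbH$.

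Combining these expressions and using $\tfrac{2Q}{\gamma} = 1+\tfrac{4}{\gamma^2}$, the integrand simplifies to a constant multiple of $r^{2\alpha/\gamma-2}/(1+r^{4/\gamma^2})^2$. Integrability at $r=0$ requires $\tfrac{2\alpha}{\gamma}>1$, i.e.\ $\alpha>\gamma/2$, and integrability at $r=\infty$ requires $\tfrac{2\alpha}{\gamma}-\tfrac{8}{\gamma^2}<1$, i.e.\ $\alpha<\tfrac\gamma2+\tfrac{4}{\gamma}=Q+\tfrac{2}{\gamma}$; both hold precisely on the claimed range.

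The remaining step, which I expect to be the main technical point, is to check that the GMC prefactor $\E[L^{\frac{2}{\gamma}(Q-\alpha)-1}]$ is finite throughout $\alpha\in(\gamma/2,Q+2/\gamma)$. On this range the exponent lies in $(-1,\tfrac{4}{\gamma^2}-1)$: negative moments are automatic for a GMC length, and positive moments stay strictly below the KPZ threshold $4/\gamma^2$ and the Seiberg thresholds associated with the bulk $\alpha$-insertion at $i$ and the boundary $\gamma/2$-insertion at $0$. This can be handled by the same local truncation and Kahane-convexity arguments as in the proof of Lemma~\ref{lem:U-bound} and~\cite{hrv-disk}. Once this moment bound is in hand, the integral is finite, so $|\sm_\alpha|<\infty$.
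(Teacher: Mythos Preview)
Your approach is correct and essentially the same as the paper's: take total masses across the welding identity of Theorem~\ref{thm-FZZ-weld} and check that the resulting $r$-integral converges precisely for $\alpha\in(\tfrac\gamma2,Q+\tfrac2\gamma)$. The paper integrates the outer boundary length over $(1,2)$ and uses $\LF_\bbH^{(\alpha,i)}[\{\nu_\phi(\R)\in(1,2)\}]>0$ rather than fixing $\ell=1$, but this is cosmetic.

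Two minor points. Your stated range $(-1,\tfrac4{\gamma^2}-1)$ for the exponent $\tfrac2\gamma(Q-\alpha)-1$ only covers $\alpha\in(\tfrac\gamma2,Q)$; on the full interval it ranges over $(-\tfrac4{\gamma^2}-1,\tfrac4{\gamma^2}-1)$, though this is harmless since negative GMC moments are always finite. More usefully, you can sidestep the GMC moment bound altogether: as used in the proof of Proposition~\ref{prop:unit-length}, one has $|\cM_{1,1}^\disk(\alpha;r)|=r\,|\cM_{1,0}^\disk(\alpha;r)|$ (adding a quantum-typical boundary point multiplies mass by boundary length), and the latter is finite for every $\alpha>\tfrac\gamma2$ by Lemma~\ref{lem-len-LF} and Proposition~\ref{prop-remy-U}. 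This makes your flagged ``main technical point'' immediate.
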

\begin{proof}
	Consider the event that $\nu_\phi (\R)\in [1,2] $ where $(\phi, \eta)$ is sampled $\LF_\bbH^{(\alpha, i)} \times \sm_\alpha$.
	Evaluating the size of this event using the two descriptions of the same measure of Theorem~\ref{thm-FZZ-weld}, since $|\cM_{1,1}^\disk(\alpha;r)| \propto r^{\frac2\gamma(\alpha-Q)}$ by Lemma~\ref{lem:disint-alpha-11} 
		and $\cMtwo(\frac{\gamma^2}2;\ell,r) \propto \frac{\ell^{4/\gamma^2-1}r^{4/\gamma^2-1}}{(\ell^{4/\gamma^2} + r^{4/\gamma^2})^2}$  by Lemma~\ref{lem:lengh}, we get for some constant $C \in (0, \infty)$
	\[\LF_\bbH^{(\alpha, i)}[\{ \nu_\phi(\R) \in (1,2)\}] |\sm_\alpha| = C  \int_0^\infty \int_1^2 r^{\frac2\gamma(\alpha - Q)} \times \frac{\ell^{4/\gamma^2-1} r^{4/\gamma^2-1}}{(\ell^{4/\gamma^2} + r^{4/\gamma^2})^2} \,d\ell \, dr.\]
	
	Since $\frac{\ell^{4/\gamma^2-1} r^{4/\gamma^2-1}}{(\ell^{4/\gamma^2} + r^{4/\gamma^2})^2} < \frac{2^{4/\gamma^2-1} r^{4/\gamma^2-1}}{(1+r^{4/\gamma^2})^2}$ for $\ell \in (1,2)$, we conclude that 
	\[\LF_\bbH^{(\alpha, i)}[\{ \nu_\phi(\R) \in (1,2)\}] |\sm_\alpha|  < C 2^{4/\gamma^2} \int_0^\infty \frac{r^{\frac2\gamma  \alpha - 2}}{(1+r^{4/\gamma^2})^2} \, dr < \infty, \]
	where finiteness 
	follows from $\alpha \in (\frac\gamma2, Q + \frac2\gamma)$. Since $\LF_\bbH^{(\alpha, i)}[\{ \nu_\phi(\R) \in (1,2)\}]>0$ we get $|\sm_\alpha|<\infty$. 
\end{proof}

The following proposition is a rephrasing of Theorem~\ref{thm-FZZ-weld} which is more convenient for our argument in Section~\ref{subsec:inverse-gamma}.

\begin{proposition}\label{prop:unit-length} 
	Fix $\alpha\in (\frac\gamma2, Q + \frac2\gamma)$ and sample $\phi$ from $\LF_\bbH^{(\alpha, i)}(1)^\#$ (so $(\bbH, \phi, i)$ has law $\cM^\disk_{1,0}(\alpha;1)^{\#}$). 
	Let $\eta$ be a sample of $\sm_\alpha^{\#}$ independent of $\phi$.
	Let $\cL$ be the quantum length of $\eta$ and $\phi_0=\phi-\frac{2}{\gamma}\log \cL$. Then 
	$(D_\eta(i),\phi_0,i,0)$  and  $(D_\eta(\infty),\phi,0^-, 0^+)$ viewed as marked quantum surfaces are independent.  The law of the former is 
	$\cM^\disk_{1,1}(\alpha; 1)^{\#}$. The law of the latter is the probability measure proportional to  $\int_0^\infty r^{\frac2\gamma(\alpha - Q)}\cMtwo(\frac{\gamma^2}2;1,r) dr$.
\end{proposition}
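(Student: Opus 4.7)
The plan is to deduce the proposition from Theorem~\ref{thm-FZZ-weld} via a scaling argument. First, both total masses $|\LF_\bbH^{(\alpha, i)}(1)|$ and $|\sm_\alpha|$ are finite for $\alpha \in (\frac{\gamma}{2}, Q + \frac{2}{\gamma})$, so normalizing the identity of Theorem~\ref{thm-FZZ-weld} shows that, when $(\phi, \eta) \sim \LF_\bbH^{(\alpha, i)}(1)^\# \times \sm_\alpha^\#$, the pair of marked quantum surfaces
\[(S_1, S_2) := ((D_\eta(i), \phi, i, 0),\ (D_\eta(\infty), \phi, 0^-, 0^+))\]
has joint law proportional to $\int_0^\infty \cM_{1,1}^\disk(\alpha; r) \times \cMtwo(\frac{\gamma^2}{2}; 1, r)\, dr$. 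By construction of the welding, the parameter $r$ is simultaneously the boundary length of the inner factor and the length of the matching arc of the outer factor, i.e.\ it equals the quantum length $\cL$ of $\eta$. Reading the displayed expression as a disintegration in $\cL$ then gives: the law of $\cL$ has density proportional to $|\cM_{1,1}^\disk(\alpha; r)|\cdot|\cMtwo(\frac{\gamma^2}{2}; 1, r)|$, and conditionally on $\cL = r$ the surfaces $S_1$ and $S_2$ are independent with laws $\cM_{1,1}^\disk(\alpha; r)^\#$ and $\cMtwo(\frac{\gamma^2}{2}; 1, r)^\#$.

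The main technical ingredient I will then invoke is the scaling property of $\cM_{1,1}^\disk(\alpha; \cdot)$, which is visible directly in Lemma~\ref{lem:disint-alpha-11}: sampling $\phi$ from $\LF_\bbH^{(\alpha, i), (\gamma, 0)}(r)^\#$ amounts to sampling $\wt h + \frac{2}{\gamma}\log \frac{r}{L}$ under the $r$-independent reweighting of $P_\bbH$ by $L^{-\frac{2}{\gamma}(\alpha - Q)-1}$, so subtracting $\frac{2}{\gamma}\log r$ yields a law that does not depend on $r$ and agrees with $\LF_\bbH^{(\alpha, i), (\gamma, 0)}(1)^\#$. In parallel, Lemma~\ref{lem:disint-alpha-11} gives $|\cM_{1,1}^\disk(\alpha; r)| \propto r^{\frac{2}{\gamma}(\alpha - Q)}$. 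Consequently, after the deterministic shift $\phi \mapsto \phi_0 = \phi - \frac{2}{\gamma}\log \cL$, the conditional law of the inner marked quantum surface $(D_\eta(i), \phi_0, i, 0)$ given $\cL$ is $\cM_{1,1}^\disk(\alpha; 1)^\#$, independently of the value of $\cL$.

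To conclude, I combine the two observations. Since the rescaled inner surface has a conditional law given $\cL$ that does not depend on $\cL$, it is independent of $\cL$; together with the conditional independence from $S_2$ given $\cL$, this upgrades to independence of the rescaled inner surface from the pair $(S_2, \cL)$, hence in particular from $S_2$. Integrating out $S_1$ in the disintegration identifies the marginal law of $S_2$ as the probability measure proportional to
\[\int_0^\infty |\cM_{1,1}^\disk(\alpha; r)|\cdot \cMtwo(\frac{\gamma^2}{2}; 1, r)\, dr \propto \int_0^\infty r^{\frac{2}{\gamma}(\alpha - Q)}\cMtwo(\frac{\gamma^2}{2}; 1, r)\, dr,\]
as required. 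I do not anticipate any substantive obstacle here: the genuinely nontrivial input is Theorem~\ref{thm-FZZ-weld}, and this proposition is the scaling observation plus a routine conditional-independence bookkeeping step.
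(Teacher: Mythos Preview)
Your proof is correct and follows essentially the same approach as the paper's: both deduce the proposition from Theorem~\ref{thm-FZZ-weld} by reading off the disintegration over the interface length, invoking the scaling property of $\cM_{1,1}^\disk(\alpha;\cdot)$ to normalize the inner surface, and using $|\cM_{1,1}^\disk(\alpha;r)| \propto r^{\frac2\gamma(\alpha-Q)}$ to identify the marginal law of the outer surface. The only cosmetic difference is that the paper phrases the scaling via $\cM_{1,0}^\disk(\alpha;r)^\#$ (noting it agrees with $\cM_{1,1}^\disk(\alpha;r)^\#$ once the boundary point is forgotten) and cites Lemma~\ref{lem-len-LF} for the mass, whereas you extract both directly from Lemma~\ref{lem:disint-alpha-11}.
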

\begin{proof}
From the definition of $\cM_{1,0}^\disk(\alpha; r)$, we see that if $(D, \psi, z)$ is sampled from $\cM_{1,0}^\disk(\alpha; r)^\#$ then $(D, \psi -\frac2\gamma \log r, z)$ has law $\cM_{1,0}^\disk(\alpha; 1)^\#$. Since $\cM_{1,0}^\disk(\alpha; 1)^\#$ and $\cM_{1,1}^\disk(\alpha; 1)^\#$ are the same if we ignore the boundary marked point for $\cM_{1,1}^\disk(\alpha; 1)^\#$,
by Theorem~\ref{thm-FZZ-weld}, the joint law of $(D_\eta(i),\phi_0,i,0)$  and  $(D_\eta(\infty),\phi,0^-, 0^+)$ is the probability measure proportional to 
\[ \cM_{1,1}^\disk(\alpha; 1)^\# \times \int_0^\infty  \left|\cM_{1,1}^\disk(\alpha; r)\right| \cMtwo(\frac{\gamma^2}2; 1, r) \, dr. \] Finally, since $\left|\cM_{1,1}^\disk(\alpha; r)\right| = r \left|\cM_{1,0}^\disk(\alpha; r) \right| \propto r^{\frac2\gamma(\alpha-Q)}$ by Lemma~\ref{lem-len-LF}, we are done. 
\end{proof}

\subsection{Identification of the inverse gamma distribution}\label{subsec:inverse-gamma}
By definition, $\cM_{1,0}^\disk(\alpha; 1)^\#$ and $\cM_{1,1}^\disk(\alpha; 1)^\#$  are the same if we ignore the boundary marked point. 
Therefore, Proposition~\ref{prop:unit-length} can be viewed as a recursive property for $\cM_{1,0}^\disk(\alpha; 1)^\#$.
The main goal of this subsection is to use this property  to  identify the law of its quantum area.
\begin{proposition}\label{prop:inverse-a}
	For $\alpha \in (\frac\gamma2, Q - \frac\gamma4)$, 	the law of the quantum area
	of a sample from $\cM_{1,0}^\disk(\alpha; 1)^\#$ is the inverse gamma distribution with shape $\frac2\gamma(Q-\alpha)$ and scale $\frac1{4\sin \frac{\pi \gamma^2}4}$.
\end{proposition}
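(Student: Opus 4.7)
The plan is to derive a distributional recursion for the quantum area via Proposition~\ref{prop:unit-length}, verify that the stated inverse gamma distribution satisfies this recursion by matching it with the law of a suitable Brownian-motion duration in the cone $\cC_\theta$, and conclude by uniqueness of the recursion's solution.

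First I would apply Proposition~\ref{prop:unit-length} directly. Writing $A = \mu_\phi(\bbH)$, $A_0 = \mu_\phi(D_\eta(i))$ and $A_\infty = \mu_\phi(D_\eta(\infty))$, so that $A = A_0 + A_\infty$, the shift $\phi_0 = \phi - \tfrac{2}{\gamma}\log\cL$ rescales quantum area by $\cL^{-2}$; hence $A_0 = \cL^2 \widetilde A_0$ where $\widetilde A_0 = \mu_{\phi_0}(D_\eta(i))$ is the total area of $(D_\eta(i), \phi_0, i, 0) \sim \cM_{1,1}^\disk(\alpha;1)^\#$. Forgetting the boundary marked point gives $\cM_{1,0}^\disk(\alpha;1)^\#$, so $\widetilde A_0 \eqD A$ and is independent of $(\cL, A_\infty)$. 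This yields the recursion
\[
A \eqD \cL^2 A' + A_\infty, \qquad A' \eqD A \text{ independent of } (\cL, A_\infty).
\]
Combining Proposition~\ref{prop:unit-length} with Proposition~\ref{lem-mot}, Lemma~\ref{lem:lengh} and Theorem~\ref{thm:MOT-var} (so that $\mathbbm a^2 = 2/\sin\theta$), and writing $\theta = \pi\gamma^2/4$ and $u = 1/(\mathbbm a \sin\theta)$, the joint law of $(\cL, A_\infty)$ is, up to normalization, proportional to $r^{(2/\gamma)(\alpha-Q)}\,\sm_{\cC_\theta}(1/u,\,(r/u)e^{i\theta})(d\omega)\,dr$, where $(\cL, A_\infty) = (r, \mathrm{duration}(\omega))$.

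Next I would construct a candidate solution via a Doob $h$-transform of $2$D Brownian motion in $\cC_\theta$. Set $q = \tfrac{2}{\gamma}(Q-\alpha) \in (\tfrac12, \tfrac{4}{\gamma^2})$, so $q\theta \in (0,\pi)$, and let $h(re^{i\phi}) = r^{-q}\sin(q\phi)$, a positive harmonic function on $\cC_\theta$ vanishing on the bottom edge. The $h$-transformed BM, started at $1/u$ on the bottom edge in a boundary-limit sense, conditioned to exit through the top edge, produces exactly the weighted joint distribution of (scaled exit position, duration) appearing above. Applying the strong Markov property at the first top-edge hit, together with Brownian scaling and the reflection symmetry $re^{i\phi}\leftrightarrow re^{i(\theta-\phi)}$ of $\cC_\theta$ (which relates a bottom-edge start to a top-edge start), the total lifetime $T$ of the iterated process, obtained by restarting a fresh copy of the $h$-transformed BM at each top-edge hit with the spatial rescaling implied by the recursion, satisfies the same recursion as $A$. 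The radial coordinate of the $h$-transformed BM is a Bessel process of dimension $\delta = 2-2q < 1$, and a careful tracking of the spatial scalings identifies $T$ in law with the first hitting time of $0$ for this radial Bessel process. The classical first-passage density then gives that $T$ is inverse gamma with shape $1-\delta/2 = q = \tfrac{2}{\gamma}(Q-\alpha)$ and scale $1/(4\sin(\pi\gamma^2/4))$, matching the stated parameters.

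Finally I would prove uniqueness of the recursion's solution. For $\alpha \in (\gamma/2, Q-\gamma/4)$, that is $q > 1/2$, one can verify that under the weighted law of $\cL$ above we have $\E[\log \cL]<0$ with finite variance; hence $\prod_{j<n}\cL_j^2 \to 0$ exponentially fast almost surely, and iterating the recursion gives the almost surely convergent series representation $A = \sum_{i\ge 1}\bigl(\prod_{j<i}\cL_j^2\bigr)A_{\infty,i}$ for any probability solution. Since both $A$ and $T$ solve the recursion, their laws coincide, which is the claim. The main obstacle will be the identification in the middle paragraph of the duration of the iterated $h$-transformed BM with the Bessel first-passage time, and in particular ensuring that the normalization gives precisely the scale parameter $1/(4\sin(\pi\gamma^2/4))$; this requires a careful analysis of the interplay between the spatial rescalings across the iterations, the $h$-transform, and the Lamperti/radial decomposition of the $2$D Brownian motion in $\cC_\theta$.
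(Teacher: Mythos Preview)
Your overall architecture---derive the recursion from Proposition~\ref{prop:unit-length}, exhibit an explicit solution, then prove uniqueness by iterating---matches the paper, and your first and third paragraphs are essentially correct and close to what the paper does. The gap is in the middle paragraph.

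The paper does not iterate inside $\cC_\theta$. It introduces a \emph{second, larger} cone $\cC_\phi$ with opening $\phi = \frac{\gamma\pi}{2(Q-\alpha)} = \pi/q$, and uses that the duration of Brownian motion in $\cC_\phi$ from $u$ conditioned to exit at the vertex $0$ is inverse gamma with shape $\pi/\phi = q$ and scale $u^2/2$ (Corollary~\ref{cor-cone-duration-law}). A single application of the strong Markov property at the first hit of the ray $e^{i\theta}\R_+$ (Lemma~\ref{lem-markov}) then yields both the recursion and its solution at once: the first piece lives in $\cC_\theta$ and, because $|\sm_{\cC_\phi}(re^{i\theta},0)| \propto r^{-\pi/\phi} = r^{-q}$, its (exit radius, duration) has exactly the joint law of your $(\cL, A_\infty)$; the second piece has duration equal in law to $(L/u)^2$ times an independent copy of the total. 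The hypothesis $\alpha < Q - \tfrac{\gamma}{4}$ is precisely $\phi < 2\pi$, so that $\cC_\phi$ is a genuine planar cone; your sketch never explains where this restriction enters.

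Your harmonic function $h(re^{i\psi}) = r^{-q}\sin(q\psi)$ is in fact the Martin kernel of $\cC_\phi$ at its vertex (note $h$ vanishes at $\psi = 0$ and at $\psi = \phi$ since $q\phi = \pi$), so your $h$-transformed process \emph{is} the conditioned Brownian motion in the big cone, and your observation that its radial part is Bessel of dimension $2-2q$ is correct and does give the inverse gamma law for its lifetime. What fails is the attempt to realise this inside $\cC_\theta$ alone via the ``reflection symmetry $\psi \leftrightarrow \theta - \psi$'': that reflection sends $h$ to $r^{-q}\sin(q(\theta-\psi))$, which is a \emph{different} harmonic function unless $q\theta = \pi$, so the process you restart after each top-edge hit is not a copy of the original $h$-transform, and your identification of the iterated lifetime with a single Bessel first-passage time is not justified as written. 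The clean fix is exactly the paper's move: pass to the big cone $\cC_\phi$, where one Markov split replaces your infinite iteration and the inverse-gamma identification (whether via Corollary~\ref{cor-cone-duration-law} or via your radial-Bessel argument) becomes immediate.
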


In the setting of Proposition~\ref{prop:unit-length}, we write $\cA_0=\mu_\phi(\bbH)$ and $\cA_1=\mu_{\phi_0}(D_\eta(i))$. Let $\cA=\mu_\phi(D_\eta(\infty))$ and recall that $\cL$ 
is  the quantum length of $\eta$. Then $\cA_0$ and $\cA_1$ agree in law, $\cA_1$ is independent of $(\cA, \cL)$, and
\begin{equation}\label{eq:area-sum}
\cA_0=\cA+ \cL^2\cA_1
\end{equation}
since $\cL^2\cA_1$ (resp. $\cA$) is the quantum area of the region inside (resp. outside) $\eta$.
Note that $(\cA,\cL)$ is determined by  $(D_\eta(\infty),\phi,0^-, 0^+)$, whose law is given by boundary reweighting of  $\cMtwo(\frac{\gamma^2}2)$  at the end of Proposition~\ref{prop:unit-length}. 
By Proposition~\ref{lem-mot}, we can describe $(\cA, \cL)$ in terms of Brownian motion in cones.

Recall from Section~\ref{subsec:special disks} that for the cone $\cC_\theta = \{ z : \arg z \in (0, \theta)\}$ with $\theta \in (0,2\pi)$, the measure $\sm_{\cC_\theta}(z)$ is the probability measure corresponding to Brownian motion started at $z$ and killed when it exits $\cC_\theta$.
For $x,y>0$ we define using a limiting procedure a measure $\sm_{\cC_\theta}(x, ye^{i\theta})$ corresponding to Brownian motion started at $x$ and restricted to the event that it exits $\cC_\theta$ at $ye^{i\theta}$. Using a similar limiting procedure, we can define $\sm_{\cC_\theta}(x, 0)$ for $x  > 0$ and $\sm_{\cC_\theta}(z, 0)$ for $z \in \cC_\theta$ as well; see  Appendix~\ref{sec:BM-cone} for more details.

As stated in Lemma~\ref{lem-markov}, essentially by the Markov property of Brownian motion,  for $0 < \theta < \phi < 2\pi$ there is a constant $c >0$ such that for $u>0$ we have
	\eqb \label{eq-markov}
	\sm_{\cC_\phi}(u, 0)^\# = c \int_0^\infty \sm_{\cC_\theta}(u, r e^{i\theta}) \times  \sm_{\cC_\phi}(re^{i\theta}, 0)\, dr.
	\eqe
	More precisely, if we sample $(Z^1,Z^2)$ from the right hand side of~\eqref{eq-markov}, the concatenation of $Z^1$ (a path from $u$ to $r e^{i\theta}$) and $Z^2$ (a path from $r e^{i\theta}$ to 0) yields a path $Z$ from $u$ to $0$ whose law is $\sm_{\cC_\phi}(u,0)^\#$. We refer to Figure~\ref{fig-cone-recursion} for an illustration. 
	
For $(Z^1, Z^2)$ sampled from~\eqref{eq-markov}, define the random variables 
\begin{equation}\label{eq-def-AL}
A = \text{duration of }Z^1, \qquad L = Z^1(A)/e^{i\theta}>0.
\end{equation}

\begin{lemma}\label{lem:AL}
	The law of $(\cA,\cL)$ is the same as $(A,\frac{L}{u})$ in~\eqref{eq-def-AL} with $\phi = \frac{\gamma \pi}{2(Q-\alpha)}, \theta = \frac{\pi \gamma^2}4$ and $u = \frac1{\sqrt{2\sin \theta}}$. 
\end{lemma}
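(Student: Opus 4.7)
The plan is to apply Proposition~\ref{prop:unit-length} together with the Brownian-motion description of Proposition~\ref{lem-mot} to express the joint law of $(\cA, \cL)$ as a reweighted Brownian measure in $\cC_\theta$, and then absorb the reweighting factor $r^{\frac2\gamma(\alpha-Q)}$ into the total mass of the second piece of the Markov decomposition~\eqref{eq-markov} of Brownian motion in a suitably chosen larger cone $\cC_\phi$.

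By Proposition~\ref{prop:unit-length}, the marked quantum surface $(D_\eta(\infty), \phi, 0^-, 0^+)$ has law proportional to $\int_0^\infty r^{\frac2\gamma(\alpha-Q)}\,\cMtwo(\frac{\gamma^2}2; 1, r)\,dr$, so the joint law of $(\cA, \cL)$ is obtained by sampling $\cL = r$ from the density proportional to $r^{\frac2\gamma(\alpha-Q)}\,\left|\cMtwo(\frac{\gamma^2}2;1,r)\right|$ and then sampling $\cA$ as the quantum area under $\cMtwo(\frac{\gamma^2}2;1,\cL)^\#$. Proposition~\ref{lem-mot} converts these into Brownian quantities in $\cC_\theta$: $\cA$ equals the duration of a path sampled from $\sm_{\cC_\theta}(1/u, (\cL/u)e^{i\theta})^\#$, and $\left|\cMtwo(\frac{\gamma^2}2;1,r)\right|$ is proportional to $\left|\sm_{\cC_\theta}(1/u, (r/u)e^{i\theta})\right|$. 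Up to the Brownian rescaling that aligns starting points $1/u$ and $u$, this identifies $(\cA, \cL)$ with the duration and $u$-rescaled exit modulus of a Brownian path in $\cC_\theta$ sampled from a measure proportional to $m^{\frac2\gamma(\alpha-Q)}\,\sm_{\cC_\theta}(u, me^{i\theta})\, dm$.

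To match this with~\eqref{eq-markov}, note that the joint law of $(A, L)$ arising from the Markov decomposition is (up to a constant) the measure $\left|\sm_{\cC_\phi}(me^{i\theta}, 0)\right|\,\sm_{\cC_\theta}(u, me^{i\theta})\, dm$. The lemma thus reduces to showing $\left|\sm_{\cC_\phi}(me^{i\theta}, 0)\right| \propto m^{\frac2\gamma(\alpha-Q)}$ as a function of $m$, with the proportionality constant depending only on $\theta$ and $\phi$. I would extract this power law via the conformal map $z \mapsto z^{\pi/\phi}$ sending $\cC_\phi$ to $\bbH$ combined with the Poisson kernel asymptotics near the boundary point $0$ (formalized by the limiting construction of $\sm_{\cC_\phi}(\cdot, 0)$ in Appendix~\ref{sec:BM-cone}); the outcome is $\left|\sm_{\cC_\phi}(me^{i\theta}, 0)\right| = c(\theta, \phi)\,m^{-\pi/\phi}$. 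Equating exponents $-\pi/\phi = \frac2\gamma(\alpha - Q)$ forces $\phi = \frac{\gamma\pi}{2(Q-\alpha)}$, exactly the value stated in the lemma, and the constraint $\alpha \in (\gamma/2, Q - \gamma/4)$ inherited from Proposition~\ref{prop:inverse-a} guarantees $\phi \in (\theta, 2\pi)$ so that~\eqref{eq-markov} applies.

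The main technical obstacle will be bookkeeping the Brownian scaling constants required to pass between starting points $1/u$ (from Proposition~\ref{lem-mot}) and $u$ (from~\eqref{eq-markov}); this rescaling, together with the constants from Propositions~\ref{prop:unit-length} and~\ref{lem-mot} and the density $c(\theta, \phi)\, m^{-\pi/\phi}$, must combine cleanly to produce the identification $\cL = L/u$ and $\cA = A$ claimed in the lemma. The scaling invariance $\sm_{\cC_\theta}(cx, cy e^{i\theta})$ (with time dilated by $c^2$ and exit modulus by $c$, as formalized in Appendix~\ref{sec:BM-cone}) is the tool for this reconciliation.
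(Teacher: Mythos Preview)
Your approach is the paper's: combine Proposition~\ref{prop:unit-length} with Proposition~\ref{lem-mot} to write the law of $(\cA,\cL)$ as a reweighted Brownian hitting measure in $\cC_\theta$, then absorb the weight $r^{\frac2\gamma(\alpha-Q)}$ via~\eqref{eq-markov} once you know $|\sm_{\cC_\phi}(re^{i\theta},0)|\propto r^{-\pi/\phi}$. Two corrections are needed.

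First, the power law $|\sm_{\cC_\phi}(re^{i\theta},0)|=c_{\phi,\theta}\,r^{-\pi/\phi}$ is exactly Corollary~\ref{cor-cone-duration-law}; there is no need to re-derive it via a conformal map to $\bbH$ and Poisson kernel asymptotics.

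Second, and more seriously, your plan to ``pass between starting points $1/u$ and $u$'' by a Brownian rescaling cannot work and is not needed. A spatial rescaling $Z_t\mapsto cZ_{t/c^2}$ multiplies durations by $c^2$, so it would destroy the identification $\cA=A$. Tracing through the proof of Proposition~\ref{lem-mot} (the shear $\Lambda$ sends the point $\ell/(\mathbbm a\sin\theta)$ on $\partial\cC_\theta$ to $\ell$ on $\partial\R_+^2$), the standard Brownian motion in $\cC_\theta$ corresponding to $\cMtwo(\frac{\gamma^2}2;1,r)^\#$ starts at $1/(\mathbbm a\sin\theta)$ and exits at $(r/(\mathbbm a\sin\theta))e^{i\theta}$, with duration equal to the quantum area. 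Taking this as the $u$ in~\eqref{eq-def-AL} gives directly $L=\cL/(\mathbbm a\sin\theta)^{-1}\cdot$ wait---more cleanly, $L=\cL\cdot u$ and $A=\cA$, hence $(\cA,\cL)=(A,L/u)$ with $u=1/(\mathbbm a\sin\theta)$. No rescaling enters.

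The genuine missing ingredient in your outline is Theorem~\ref{thm:MOT-var}. The steps above only yield $u=1/(\mathbbm a\sin\theta)$ in terms of the unknown mating-of-trees variance; it is precisely the evaluation $\mathbbm a^2=2/\sin\theta$ from Theorem~\ref{thm:MOT-var} that gives the explicit value $u=1/\sqrt{2\sin\theta}$ stated in the lemma. This is not a bookkeeping matter---it is the place where Section~\ref{sec:variance} feeds into the FZZ argument---and it should be invoked explicitly.
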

\begin{proof}
	By~\eqref{eq-markov}, since $\left|\sm_{\cC_\phi}(re^{i\theta}, 0) \right| \propto r^{-\frac\pi\phi} = r^{\frac2\gamma(\alpha - Q)}$ (Corollary~\ref{cor-cone-duration-law}), the marginal law of $Z^1$ is given by $\left(\int_0^\infty r^{\frac2\gamma(\alpha - Q)} \sm_{\cC_\theta}(u, re^{i\theta}) \, dr \right)^\#$. The claim with $u$ replaced by $\frac{1}{\mathbbm a \sin \theta}$ then follows from Proposition~\ref{prop:unit-length} and Lemma~\ref{lem-mot}. Finally Theorem~\ref{thm:MOT-var} gives $\frac{1}{\mathbbm a \sin \theta} = \frac1{\sqrt{2\sin\theta}}$.
\end{proof}

\begin{figure}[ht!]
	\begin{center}
		\includegraphics[scale=0.7]{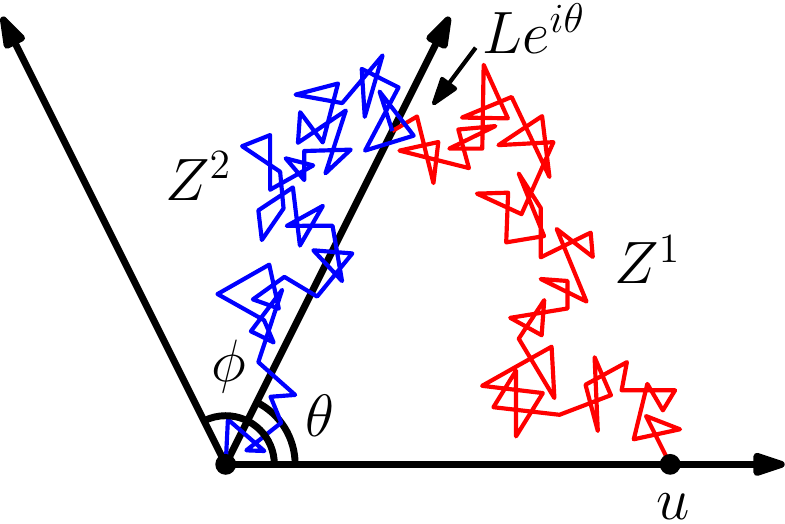}%
	\end{center}
	\caption{\label{fig-cone-recursion}  We split the Brownian path $Z$ in $\cC_\phi$ from $u$ to $0$ at the time it first hits the ray $e^{i\theta} \R_+$ for some $\theta\in (0,\phi)$ to get the subpaths $Z^1$ (red) and $Z^2$ (blue). We let $A$ be the duration of $Z^1$ and $L$ be such that $Le^{i\theta}$ is the endpoint of $Z^1$. 
	}
\end{figure}

We now give a  characterization of the inverse gamma distribution that  implies Proposition~\ref{prop:inverse-a}.
\begin{proposition}\label{prop-characterization}
	Let $0<\theta<\phi<2\pi$ and $u > 0$. Suppose $X$ is  a real-valued random variable independent of $(A,L)$ as sampled in~\eqref{eq-def-AL}. Then
	\eqb\label{eq-characterization}
	X \stackrel d= A + \frac{L^2}{u^2} X
	\eqe if and only if the law of $X$ is the inverse gamma distribution with shape $\frac\pi\phi$ and scale $\frac{u^2}2$  as described in Equation~\eqref{eq:inverse-gamma-parameter}.
\end{proposition}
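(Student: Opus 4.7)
The plan is to prove the two directions of the iff by realizing $(A,L)$ as the first-hitting data of the ray $e^{i\theta}\R_+$ for a 2D Brownian motion in $\cC_\phi$ started from $u\in\R_+$ and conditioned to exit at the apex $0$, in the sense of $\sm_{\cC_\phi}^\#(u,0)$ from Appendix~\ref{sec:BM-cone}.

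For the existence (``if'') direction, the apex-conditioning is a Doob $h$-transform by the Martin kernel at $0$, obtained by conformally pulling back $\Im(-1/w)=y/|w|^2$ from $\bbH$ through $\psi(w)=w^{\pi/\phi}$; this gives $h(z)=|z|^{-a}\sin(a\arg z)$ with $a:=\pi/\phi$, so the generator of the apex-conditioned BM satisfies $(L^h-s)u=0$ iff $(\tfrac12\Delta-s)(hu)=0$. Writing $\sin(a\theta)K_a(r\sqrt{2s})$ in polar coordinates, the angular Laplacian contributes $-a^2/r^2$, which together with the modified Bessel equation $r^2K_a''+rK_a'-(a^2+2sr^2)K_a=0$ shows that $\sin(a\theta)K_a(r\sqrt{2s})$ is annihilated by $\tfrac12\Delta-s$. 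Hence $f(z):=|z|^a K_a(|z|\sqrt{2s})$ satisfies $(L^h-s)f=0$, and $M_t:=f(Z_t)e^{-st}$ is a local martingale for the apex-conditioned BM. Optional stopping at the time $A$ (with $|Z_A|=L$), applied to the interior start $u+\eps i$ and passed to the limit $\eps\to 0$ using $\sm_{\cC_\phi}(u,0)=\lim_{\eps\to 0}\eps^{-1}\sm_{\cC_\phi}(u+\eps i,0)$, produces the identity
\[
u^a K_a(u\sqrt{2s}) \;=\; \E\!\left[e^{-sA}\,L^a K_a(L\sqrt{2s})\right].
\]
A direct computation with the integral representation $K_a(y)=\tfrac12\int_0^\infty t^{-a-1}e^{-(y/2)(t+1/t)}dt$ gives the Laplace transform of $\mathrm{IG}(\pi/\phi, u^2/2)$ as $\psi(s)=\tfrac{2^{1-a/2}s^{a/2}}{\Gamma(a)}u^a K_a(u\sqrt{2s})$, and multiplying the identity by the appropriate prefactor yields $\psi(s)=\E[e^{-sA}\psi(sL^2/u^2)]$, which is the Laplace-transform form of $X\stackrel{d}{=}A+(L^2/u^2)X$.

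For the uniqueness (``only if'') direction, I would iterate the equation with i.i.d.\ copies $(A_k,L_k)$ independent of $X$ to get
\[
X \;\stackrel{d}{=}\; T_n + \Big(\prod_{j=1}^n \tfrac{L_j^2}{u^2}\Big)X,\qquad T_n:=\sum_{k=1}^n A_k\prod_{j=1}^{k-1}\tfrac{L_j^2}{u^2}.
\]
Using Appendix~\ref{sec:BM-cone} to verify $\E[\log(L/u)]<0$ (so $\prod_{j=1}^n(L_j/u)^2\to 0$ a.s.\ by the strong law) and $\E[A]<\infty$ (so $T_n\nearrow T_\infty<\infty$ a.s.), dominated convergence in the Laplace transform yields $\E[e^{-sX}]=\E[e^{-sT_\infty}]$ for every non-negative solution $X$, forcing uniqueness.

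The most delicate step is the degenerate conditioning at the boundary start $u\in\R_+$, where $h(u)=0$: both the martingale identity and the optional-stopping argument must be carried out for the interior start $u+\eps i$ and then transported to $u$ through the $\eps^{-1}$-normalization, with a careful check that $f(u+\eps i)\to u^a K_a(u\sqrt{2s})$ commutes with the expectation. A secondary challenge is the explicit verification of the contraction $\E[\log(L/u)]<0$, which should follow from the path-measure representation $\sm_{\cC_\phi}(u,0)^\#=c\int_0^\infty\sm_{\cC_\theta}(u,re^{i\theta})\sm_{\cC_\phi}(re^{i\theta},0)\,dr$ of \eqref{eq-markov} together with the total-mass scaling $|\sm_{\cC_\phi}(re^{i\theta},0)|\propto r^{-\pi/\phi}$ from Corollary~\ref{cor-cone-duration-law}.
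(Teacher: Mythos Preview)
Your uniqueness argument matches the paper's: both iterate with i.i.d.\ copies of $(A,L)$ and show $\prod_j(L_j/u)^2\to0$, forcing $S_n\to X$ in distribution. The paper uses $\E[(L/u)^\eps]<1$ (Lemma~\ref{lem-moment-small}) and Markov's inequality to get convergence in probability; your SLLN variant via $\E[\log(L/u)]<0$ is equally valid (and follows from the same lemma by differentiating $\eps\mapsto\E[(L/u)^\eps]$ at $0$). One caution: your claim $\E[A]<\infty$ can fail when $\phi\ge\pi$, since then the total duration $T\sim \mathrm{IG}(\pi/\phi,u^2/2)$ has infinite mean; fortunately you do not need it, as $T_n$ is monotone and its a.s.\ limit is forced to be finite by the hypothesis that $X$ is real-valued.

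For existence your route is correct but considerably heavier than the paper's. The paper simply observes that by the path decomposition~\eqref{eq-markov} the total duration satisfies $T=A+(L^2/u^2)Y$, where $Y:=\frac{u^2}{L^2}(T-A)$ is the rescaled duration of $Z^2$; Brownian scaling together with Corollary~\ref{cor-cone-duration-law} then gives immediately that both $T$ and $Y$ are $\mathrm{IG}(\pi/\phi,u^2/2)$ with $Y$ independent of $(A,L)$, so the inverse gamma solves~\eqref{eq-characterization} in two lines (this is packaged as Lemma~\ref{lem-recursion-invgamma}). Your $h$-transform argument, with the bounded eigenfunction $f(z)=|z|^{\pi/\phi}K_{\pi/\phi}(|z|\sqrt{2s})$ and optional stopping, arrives at the same Laplace identity and is valid, but it requires the boundary-start limit $\eps\to0$ you correctly flag as delicate, plus the Bessel-function manipulations that the probabilistic decomposition sidesteps entirely. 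What your approach does buy is an explicit closed-form martingale identity for $(A,L)$ that is not visible in the paper's argument.
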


\begin{proof}[Proof of Proposition~\ref{prop:inverse-a} given Proposition~\ref{prop-characterization}]
	In~\eqref{eq:area-sum}, each of $\cA_0$ and $\cA_1$ agrees in law with the quantum area $X$ of a sample from $\cM_{1,0}^\disk(\alpha; 1)^\#$. Moreover, by Proposition~\ref{prop:unit-length} we have $\cA_1$ independent of $(\cA, \cL)$, so by Lemma~\ref{lem:AL} we have $X \stackrel d= A + \frac{L^2}{u^2} X$. Then Proposition~\ref{prop-characterization} gives the law of $X$. Here the constraint $\alpha<Q-\frac{\gamma}4$ comes from $\phi = \frac{\gamma \pi}{2(Q-\alpha)}\in(0,2\pi)$.
\end{proof}

To prove Proposition~\ref{prop-characterization}, we need some basic properties about Brownian motion in cones whose full proofs we defer to Appendix~\ref{sec:BM-cone}.
See Figure~\ref{fig-cone-recursion} for an illustration.
\begin{lemma}\label{lem-recursion-invgamma}
	Suppose $0 < \theta < \phi < 2\pi$ and $u>0$, and define $Z^1, Z^2$ by the right hand side of~\eqref{eq-markov}. Let $A,L$ be defined as in~\eqref{eq-def-AL}, let $T$ be the sum of the durations of $Z^1$ and $Z^2$, and let $Y = \frac{u^2}{L^2} (T-A)$.
	Then both the laws of $T$ and $Y$ are the  inverse gamma distribution with shape $\frac\pi\phi$ and scale $\frac{u^2}2$, and $Y$ is independent of $(A,L)$. 
	Moreover, for $\eps \in (0,\frac{\pi}{\phi})$, we have $\E[ (\frac L u)^\eps] < 1$. 
\end{lemma}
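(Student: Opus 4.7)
The plan is to reduce the entire lemma to one key fact about Brownian motion in cones: for any $z_0 \in \cC_\phi \cup (0,\infty)$, the duration of a path sampled from $\sm_{\cC_\phi}(z_0, 0)^\#$ has the $\IG(\pi/\phi, |z_0|^2/2)$ distribution, depending on $z_0$ only through $|z_0|$. Granting this, the strong Markov property yields the independence and distribution of $Y$, and the moment bound follows from a convexity comparison.

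To establish the key input, I use Doob's $h$-transform with respect to the Martin kernel $K_{\cC_\phi}(z, 0) = |z|^{-\pi/\phi} \sin(\pi \arg z /\phi)$, the (up to scalar) unique positive harmonic function on $\cC_\phi$ vanishing on the boundary rays and singular at the apex. Under this $h$-transform, Brownian motion in $\cC_\phi$ becomes a diffusion $\tilde B_t = R_t e^{i\Theta_t}$ which is almost surely absorbed at $0$ in finite time. A direct It\^o computation with $\log K_{\cC_\phi}(\cdot, 0) = -(\pi/\phi)\log|z| + \log \sin(\pi\arg z/\phi)$ gives
\[
dR_t = dW_t + \frac{1/2 - \pi/\phi}{R_t}\, dt,
\]
where $W$ is a standard one-dimensional Brownian motion built from the radial component of the driving noise. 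In particular $R$ is a Bessel process of dimension $\delta := 2 - 2\pi/\phi$, independent of $\Theta$. The hitting time of $0$ for a Bessel process of dimension $\delta < 2$ started at $r_0 > 0$ has the classical density proportional to $t^{-(2-\delta/2)} e^{-r_0^2/(2t)}$, namely the $\IG(1-\delta/2, r_0^2/2) = \IG(\pi/\phi, r_0^2/2)$ distribution, which is the key input with $r_0 = |z_0|$.

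Granting the key input, $T \sim \IG(\pi/\phi, u^2/2)$ follows by applying it with $z_0 = u + \eps i$ and taking $\eps \to 0$; the scale parameter $|u+\eps i|^2/2 \to u^2/2$. For the joint law of $(Y, A, L)$: by the strong Markov property for $\sm_{\cC_\phi}(u, 0)^\#$ applied at the first hitting time $A$ of the ray $e^{i\theta}\R_+$ (with hitting location $Le^{i\theta}$), the remaining path $Z^2$ is, conditional on $(A, L)$, a sample from $\sm_{\cC_\phi}(Le^{i\theta}, 0)^\#$. By the key input with $z_0 = Le^{i\theta}$, the conditional law of the duration $T - A$ of $Z^2$ given $(A, L)$ is $\IG(\pi/\phi, L^2/2)$. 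Therefore $Y = (u/L)^2 (T-A)$ has conditional distribution $\IG(\pi/\phi, u^2/2)$, which does not depend on $(A, L)$. Hence $Y$ is independent of $(A, L)$ with the required marginal.

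Finally, to prove $\E[(L/u)^\eps] < 1$ for $\eps \in (0, \pi/\phi)$, substitute the a.s.\ identity $T = A + (L/u)^2 Y$, using $A > 0$ a.s., $Y$ independent of $(A, L)$, and $Y \eqD T$. For $p := \eps/2 \in (0, \pi/(2\phi))$, $(A + (L/u)^2 Y)^p > ((L/u)^2 Y)^p$ holds almost surely since $A > 0$. Taking expectations and using that $\E[T^p] \in (0, \infty)$ (finite since $p < \pi/\phi$, positive since $T > 0$),
\[
\E[T^p] > \E[((L/u)^2 Y)^p] = \E[(L/u)^{2p}]\, \E[Y^p] = \E[(L/u)^\eps]\, \E[T^p],
\]
so $\E[(L/u)^\eps] < 1$. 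The main obstacle is the It\^o computation combined with the identification of the apex-conditioned duration with a Bessel hitting time; the careful verification, along with the boundary-limit argument for $T$, should be deferred to Appendix~\ref{sec:BM-cone}.
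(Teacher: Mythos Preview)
Your argument is correct, and its skeleton---identify the duration law of $\sm_{\cC_\phi}(z_0,0)^\#$ as $\IG(\pi/\phi,|z_0|^2/2)$, then use the Markov decomposition~\eqref{eq-markov} to get the conditional law of $T-A$ given $(A,L)$---matches the paper's. The two genuine differences are in how the inputs are supplied. For the duration law, the paper simply reads it off from the explicit heat kernel $|\sm_{\cC_\phi}(w,0;t)| = C\,t^{-1-\pi/\phi}|w|^{\pi/\phi}e^{-|w|^2/(2t)}\sin(\tfrac{\pi}{\phi}\arg w)$ (Lemma~\ref{lem-cone-duration-law}, due to Shimura), whereas you derive it via the Doob $h$-transform and the Bessel hitting-time formula; your route is more self-contained but heavier, and one small slip is the phrase ``independent of $\Theta$''---what you actually use (and what is true) is that the radial SDE is autonomous, not that $R$ and $\Theta$ are independent. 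For the moment bound, the paper computes the density of $L/u$ explicitly from Lemma~\ref{lem-poisson-kernel} and Corollary~\ref{cor-cone-duration-law}, obtaining $\E[(L/u)^\eps]=f(\pi\theta/\phi-\theta\eps)/f(\pi\theta/\phi)$ with $f(x)=x/\sin x$, and concludes by monotonicity of $f$ on $(0,\pi)$. Your argument---drop the strictly positive $A$ in $T=A+(L/u)^2Y$, use independence and $Y\stackrel d=T$ to cancel the finite factor $\E[T^{\eps/2}]$---is more elegant and density-free; it works whenever one has a recursion $T\stackrel d=A+(L/u)^2Y$ with $A>0$ a.s., $Y\perp(A,L)$, $Y\stackrel d=T$, and $\E[T^{\eps/2}]<\infty$, so it is also more robust than the explicit computation.
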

\begin{proof}
	By~\eqref{eq-markov}, the law of $T$ agrees with that of the duration of a sample from $\sm_{\cC_\phi}(u, 0)^\#$, which by  
	Corollary~\ref{cor-cone-duration-law} is the inverse gamma distribution with the desired parameters. By~\eqref{eq-markov} and Brownian scaling, the conditional law of $Y$ given $(A,L)$ agrees with the law of the duration of a sample from $\sm_{\cC_\phi}(ue^{i\theta},0)^\#$, and so by Corollary~\ref{cor-cone-duration-law} it is the inverse gamma with shape $\frac\pi\phi$ and scale $\frac{u^2}2$. Finally, the claim $\E[(\frac L u)^\eps] < 1$ is Lemma~\ref{lem-moment-small}.
\end{proof}
\begin{proof}[Proof of Proposition~\ref{prop-characterization}]
	With notation as in Lemma~\ref{lem-recursion-invgamma}, we have $T = A + \frac{L^2}{u^2} Y$. Since each of $T, Y$ are distributed as inverse gamma with shape $\frac\pi\phi$ and scale $\frac{u^2}2$, and $Y$ is independent of $(A,L)$, we obtain the ``if'' direction of the proposition.
	
	Now we check the converse. Let $(A_1, L_1), \dots, (A_n, L_n)$ be independent copies of $(A,L)$ in~\eqref{eq-def-AL}. Suppose $X$ satisfies~\eqref{eq-characterization}. For any positive integer $n$,  iterating~\eqref{eq-characterization} $n$ times yields that $X$ has the same distribution as
	\[A_1 + \frac{L_1^2}{u^2} \left(A_2 + \frac{L_2^2}{u^2} \left(\dots \left(A_n + \frac{L_n^2}{u^2} X \right) \dots\right)  \right) =: S_n + \left( \prod_{i=1}^n\frac{L_i^2}{u^2}\right) X ,\]
where $S_n$ is a function of $\{(A_i, L_i)\}_{1\leq i \leq n}$. Since Lemma~\ref{lem-recursion-invgamma} gives $\E[\left(\frac Lu \right)^\eps] < 1$ for some $\eps >0$, by Markov's inequality we have $\prod_{i=1}^n \frac{L_i^2}{u^2} \to 0$ in probability as $n \to \infty$, and hence $S_n \to X$ in distribution as $n \to \infty$. Thus any solution to~\eqref{eq-characterization} is the distributional limit of $S_n$, which is unique.
\end{proof}

 \subsection{From the inverse gamma distribution to the FZZ formula}\label{subsec:final}
 Recall $ \left \langle  e^{\alpha \phi(i)}  \right \rangle$ from Definition~\ref{def:U}. 
 By Definitions~\ref{def:U} and~\ref{def:Malpha}, for $\alpha\in (\frac{2}{\gamma},Q)$, $\mu\ge 0$ and $\mu_B>0$
 \begin{align*}
 \left \langle  e^{\alpha \phi(i)}  \right \rangle &=  \LF_\bbH^{(\alpha, i)}[e^{-\mu \mu_\phi(\bbH) - \mu_B \nu_\phi(\R)} -1]\\
 &= \int_0^\infty \cM_{1,0}^\disk(\alpha; \ell)[e^{-\mu A-\mu_B \ell}-1] \, d\ell \\
 &= \int_0^\infty | \cM_{1,0}^\disk(\alpha; \ell)| \cM_{1,0}^\disk(\alpha; \ell)^{\#}[e^{-\mu A-\mu_B \ell}-1] \, d\ell.
 \end{align*} 
 Here $A$  represents the quantum  area  of  a quantum surface sampled from  $\cM_{1,0}^\disk(\alpha; \ell)$.  
 Recall $\ol U_0(\alpha)$ from  Proposition~\ref{prop-remy-U}.  By Lemma~\ref{lem:disint-alpha}, we have 	
 \begin{equation}\label{eq:FZZ-final1}
 \left \langle  e^{\alpha \phi(i)}  \right \rangle=C  \int_0^\infty \ell^{\frac2\gamma(\alpha-Q)-1} \cM_{1,0}^\disk(\alpha; 1)^{\#}[e^{-\mu \ell^2 A   -\mu_B \ell}-1] \, d\ell \quad \textrm{ for }\alpha\in (\frac2{\gamma}, Q),
 \end{equation}
 where $C =  \frac2\gamma 2^{-\frac{\alpha^2}2} \ol U_0(\alpha) $. By~\eqref{eq:FZZ-final1} and Proposition~\ref{prop:inverse-a}, 
 we can prove Theorem~\ref{thm:FZZ-physics} for $\alpha \in (\frac{2}{\gamma}, Q-\frac{\gamma}{4})$.
 \begin{proposition}\label{prop:inverse-gamma-a}
  $U(\alpha) =U_{\mathrm{FZZ}}(\alpha)$ for $\alpha \in (\frac{2}{\gamma}, Q-\frac{\gamma}{4})$.
 \end{proposition}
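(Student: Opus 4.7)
The strategy is to substitute the inverse gamma density supplied by Proposition~\ref{prop:inverse-a} into the integral representation \eqref{eq:FZZ-final1} and evaluate the resulting double integral explicitly. Write $\beta = \frac{2}{\gamma}(Q-\alpha)$ and $b = \frac{1}{4\sin(\pi\gamma^2/4)}$; for $\alpha\in(\frac{2}{\gamma},Q-\frac{\gamma}{4})$ this places $\beta\in(\frac{1}{2},1)$, so Proposition~\ref{prop:inverse-a} lets one write $A = b/Y$ with $Y\sim \mathrm{Gamma}(\beta,1)$. Since $e^{-\mu\ell^2 A - \mu_B\ell}-1\le 0$ and $\left\langle e^{\alpha\phi(i)}\right\rangle$ is finite by Lemma~\ref{lem:U-bound}, Tonelli applies freely throughout the computation.

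In \eqref{eq:FZZ-final1}, swap the $A$- and $\ell$-integrals and substitute $r=\ell\sqrt{\mu A}$: the inner $\ell$-integral becomes $(\mu A)^{\beta/2} J_\beta(\mu_B/\sqrt{\mu A})$ where $J_\beta(t):=\int_0^\infty r^{-\beta-1}(e^{-r^2-tr}-1)\,dr$. With $A = b/Y$ and $\tau_0 = \mu_B/\sqrt{\mu b}$, integrating against the Gamma density for $Y$ reduces $\left\langle e^{\alpha\phi(i)}\right\rangle$ to $\frac{C(\mu b)^{\beta/2}}{\Gamma(\beta)}\int_0^\infty y^{\beta/2-1}e^{-y}J_\beta(\tau_0\sqrt{y})\,dy$ with $C = \frac{2}{\gamma}2^{-\alpha^2/2}\ol U_0(\alpha)$. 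Inside $J_\beta(\tau_0\sqrt{y})$ substitute $r = u\sqrt{y}$, swap the $y$- and $u$-integrals (Tonelli again by sign), and apply Frullani's identity $\int_0^\infty y^{-1}(e^{-ay}-e^{-by})\,dy = \log(b/a)$ with $a = 1+u^2+\tau_0 u$, $b = 1$, to eliminate the $y$-variable. The result is
\[ \left\langle e^{\alpha\phi(i)}\right\rangle \;=\; -\frac{C(\mu b)^{\beta/2}}{\Gamma(\beta)}\int_0^\infty u^{-\beta-1}\log(1+u^2+\tau_0 u)\,du. \]

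To compute this single-variable integral, set $\tau_0 = 2\cos\theta_0$ and differentiate in $\theta_0$. Using the classical Mellin transform $\int_0^\infty u^{a-1}/(1+u^2+2u\cos\theta_0)\,du = \pi\sin((1-a)\theta_0)/(\sin(\pi a)\sin\theta_0)$ with $a = 1-\beta$, the derivative equals $-2\pi\sin(\beta\theta_0)/\sin(\pi\beta)$. Integrating from $\theta_0 = 0$, where a direct Beta-function computation gives $\int_0^\infty u^{-\beta-1}\log(1+u)^2\,du = 2\pi/(\beta\sin(\pi\beta))$, yields $\int_0^\infty u^{-\beta-1}\log(1+u^2+\tau_0 u)\,du = 2\pi\cos(\beta\theta_0)/(\beta\sin(\pi\beta))$. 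Combining this with the reflection identity $\Gamma(-\beta) = -\pi/(\beta\Gamma(\beta)\sin(\pi\beta))$ produces
\[ \left\langle e^{\alpha\phi(i)}\right\rangle \;=\; 2C(\mu b)^{\beta/2}\Gamma(-\beta)\cos(\beta\theta_0). \]

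The final step is algebraic matching: by \eqref{eq:def_s}, the relation $\cos\theta_0 = \tau_0/2 = \mu_B\sqrt{\sin(\pi\gamma^2/4)/\mu}$ gives $\theta_0 = \pi\gamma s/2$, hence $\beta\theta_0 = (Q-\alpha)\pi s$ and $\cos(\beta\theta_0) = \cos((\alpha-Q)\pi s)$. Moreover $-\beta = \frac{2\alpha}{\gamma}-\frac{4}{\gamma^2}-1$, which matches the remaining Gamma factor in $U_{\mathrm{FZZ}}(\alpha)$. Plugging in the explicit formula for $\ol U_0(\alpha)$ from Proposition~\ref{prop-remy-U} and simplifying via the Euler reflection $\Gamma(\gamma^2/4)\Gamma(1-\gamma^2/4) = \pi/\sin(\pi\gamma^2/4)$ reproduces the prefactor of $U_{\mathrm{FZZ}}(\alpha)$ exactly. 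The principal obstacle is not conceptual but computational book-keeping: the proof is a chain of three substitutions, two applications of Tonelli, a Frullani integral, a classical Mellin transform of a logarithm, and several Gamma identities, and the main risk is an error in tracking the constants through this chain.
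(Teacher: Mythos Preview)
Your computation is correct and takes a genuinely different route from the paper. The paper first restricts to $\frac{\mu_B^2}{\mu}\sin\frac{\pi\gamma^2}{4}<1$, integrates by parts in $\ell$, and then invokes two special-function lemmas (Lemmas~\ref{lem:special1} and~\ref{lem:special2}) to expand the resulting integrals as power series in $\lambda=\mu_B/\sqrt{\mu}$, recognising the sum as the generalised Chebyshev function $\cos(a\arccos x)$. You bypass the series entirely: writing the inverse-gamma area as $b/Y$ with $Y$ Gamma-distributed, two changes of variable and Frullani's identity collapse the whole double integral to the single Mellin-type integral $\int_0^\infty u^{-\beta-1}\log(1+u^2+\tau_0 u)\,du$, which you evaluate by differentiating the classical Poisson-kernel Mellin transform and integrating back. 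This is cleaner and avoids the power-series bookkeeping; the paper's approach has the minor advantage of making the Chebyshev structure explicit from the outset.

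There is one omission you should address. Your substitution $\tau_0=2\cos\theta_0$ and the Mellin formula you quote are valid only for real $\theta_0\in[0,\pi)$, i.e.\ $\tau_0\le 2$, which is exactly the condition $\frac{\mu_B^2}{\mu}\sin\frac{\pi\gamma^2}{4}\le 1$; for larger $\mu_B$ the parameter $s$ in~\eqref{eq:def_s} is purely imaginary and the derivation as written does not apply. The paper encounters the same restriction and devotes a paragraph to closing it by analytic continuation in $\mu_B$ (both $U(\alpha)$ and $U_{\mathrm{FZZ}}(\alpha)$ extend analytically to $\{\Re\mu_B>0\}$). You can do the same, or simply note that $\int_0^\infty u^{-\beta-1}\log(1+u^2+\tau_0 u)\,du$ is real-analytic in $\tau_0$ for $\tau_0>-2$ and equals $\frac{2\pi}{\beta\sin(\pi\beta)}\cos\!\bigl(\beta\arccos(\tau_0/2)\bigr)$ on $(-2,2]$, hence for all $\tau_0>-2$ by the identity theorem. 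Either closure is one line, but the proof is incomplete without it.
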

In order to prove Proposition \ref{prop:inverse-gamma-a}, we first  record two simple calculus facts which will allow us to relate the inverse gamma distribution to the FZZ formula.

\begin{lemma}\label{lem:special1}
	Let $x \in (-1,1) $ and $a \in \mathbb{R} \setminus \mathbb{Z} $. The following expansion holds:
	\begin{align}
	\cos (a \arccos x) = \frac{a}{2} \sin(\pi a) \sum_{n=0}^{+\infty} \frac{(-2)^{n-1}}{\pi n!} \Gamma(\frac{n +a}{2}) \Gamma(\frac{n -a}{2}) x^n.
	\end{align}
\end{lemma}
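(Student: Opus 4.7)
The plan is to verify both sides are real-analytic solutions of the same second-order linear ODE on $(-1,1)$ with matching values and first derivatives at $x=0$, and then invoke uniqueness. A direct computation shows that $y(x) = \cos(a\arccos x)$ satisfies the Chebyshev equation
\[
(1-x^2) y''(x) - x y'(x) + a^2 y(x) = 0,
\]
with initial data $y(0) = \cos(\pi a/2)$ and $y'(0) = a \sin(\pi a/2)$. For any formal power series $y(x) = \sum_{n \geq 0} c_n x^n$, the ODE is equivalent to the three-term recursion
\[
(n+2)(n+1)\, c_{n+2} = (n-a)(n+a)\, c_n \qquad (n \geq 0),
\]
so the solution is completely determined by $c_0$ and $c_1$.

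Next I would define the candidate coefficients $c_n := \tfrac{a \sin(\pi a)}{2\pi\, n!}(-2)^{n-1} \Gamma(\tfrac{n+a}{2}) \Gamma(\tfrac{n-a}{2})$ and check they satisfy the recursion. Using $\Gamma(z+1) = z\Gamma(z)$,
\[
\frac{c_{n+2}}{c_n} = \frac{(-2)^{2}\, n!}{(n+2)!} \cdot \frac{(n+a)}{2} \cdot \frac{(n-a)}{2} = \frac{(n-a)(n+a)}{(n+1)(n+2)},
\]
matching the ODE recursion. For the initial values I would use the reflection formula $\Gamma(z)\Gamma(1-z) = \pi/\sin(\pi z)$. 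Concretely, $\Gamma(a/2)\Gamma(-a/2) = -\tfrac{2}{a}\Gamma(a/2)\Gamma(1-a/2) = -\tfrac{2\pi}{a \sin(\pi a/2)}$, so
\[
c_0 = -\frac{a \sin(\pi a)}{4\pi} \cdot \frac{-2\pi}{a \sin(\pi a/2)} = \frac{\sin(\pi a)}{2 \sin(\pi a/2)} = \cos(\pi a/2),
\]
by the double-angle formula. Similarly $\Gamma(\tfrac{1+a}{2})\Gamma(\tfrac{1-a}{2}) = \pi/\cos(\pi a/2)$ gives $c_1 = a \sin(\pi a/2)$, matching $y'(0)$.

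Finally I would verify convergence of the candidate series on $(-1,1)$: Stirling's formula applied to the product $\Gamma(\tfrac{n+a}{2})\Gamma(\tfrac{n-a}{2})/n!$ yields $c_n = O(n^{-3/2})$, so the series has radius of convergence one. Both sides are therefore real-analytic solutions of the same linear ODE on $(-1,1)$ with equal values and first derivatives at $0$, so they agree by uniqueness of the Cauchy problem.

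The only delicate step is the bookkeeping with $\Gamma$ reflection identities needed to pin down $c_0$ and $c_1$; the condition $a \notin \mathbb{Z}$ is exactly what is required for $\sin(\pi a) \neq 0$ and for the various $\Gamma$ values at half-integers shifted by $a/2$ to be finite, so the coefficients are well defined. Everything else is routine.
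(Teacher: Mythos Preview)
Your proof is correct. The approach differs from the paper's: the paper simply writes down the Taylor expansion of $T_a(x)=\cos(a\arccos x)$ in the form
\[
T_a(x)=a\sum_{n\ge0}\frac{(2x)^n}{2\,n!}\cos\!\Big(\tfrac{\pi}{2}(a-n)\Big)\frac{\Gamma(\tfrac{a+n}{2})}{\Gamma(\tfrac{a-n}{2}+1)},
\]
and then applies the reflection formula $\Gamma(z)\Gamma(1-z)=\pi/\sin(\pi z)$ together with $\cos(\tfrac{\pi}{2}(a-n))\sin(\pi\tfrac{a-n}{2}+\pi)=-\tfrac12\sin(\pi a)$ to rewrite the coefficients in the stated symmetric $\Gamma(\tfrac{n+a}{2})\Gamma(\tfrac{n-a}{2})$ form. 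In other words, the paper takes the Taylor coefficients as known and massages them with Gamma identities.

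Your route via the Chebyshev ODE $(1-x^2)y''-xy'+a^2y=0$ is more self-contained: you never need the closed form of the Taylor coefficients of $T_a$, only the two-step recursion they must obey and the elementary values $y(0)$, $y'(0)$. This trades a somewhat opaque appeal to a known expansion for a transparent verification, at the cost of checking the recursion and two initial values by hand. Both arguments ultimately rest on the same reflection-formula computations for $c_0$ and $c_1$. Your convergence claim $c_n=O(n^{-3/2})$ is correct (and follows already from the recursion $c_{n+2}/c_n=1-3/n+O(n^{-2})$), though any argument giving radius of convergence~$1$ would suffice.
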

\begin{proof}
	The function $T_a(x) = \cos (a \arccos x)$ is a generalization of Chebyshev polynomials, which reduces to the usual Chebyshev polynomials when $a$ is a positive integer. For $x \in (-1, 1)$,   Taylor expansion gives:
	\begin{align*}
	T_a(x) &=  a \sum_{n=0}^{+\infty} \frac{(2x)^n}{2(n !)} \cos(\frac{\pi}{2}(a-n)) \frac{\Gamma(\frac{a+n}{2})}{\Gamma(\frac{a-n}{2}+1)} \\
	& = a\sum_{n=0}^{+\infty} \frac{(2x)^n}{2 \pi n !} \cos(\frac{\pi}{2}(a-n)) \Gamma(\frac{a+n}{2}) \Gamma( - \frac{a-n}{2}) \sin(\pi \frac{a-n}{2} + \pi) \\
	& =  \frac{a}{2} \sin(\pi a) \sum_{n=0}^{+\infty} \frac{(-2)^{n-1}}{\pi n!} \Gamma(\frac{n +a}{2}) \Gamma(\frac{n -a}{2}) x^n. \qedhere
	\end{align*}

\end{proof}

\begin{lemma}\label{lem:special2}
	Let $a,b,c,\lambda \in \mathbb{R}$ satisfying $a>0$, $b > -1$, $c < \frac{b}{2} - \frac{1}{2}$, $0 < \lambda < 2 a$. Then the following identity holds:
	\begin{align}
	\int_0^{\infty} dy \; y^b  e^{- \lambda y}  \int_0^{\infty} dt \; t^c \exp \left( - y^2 t - \frac{a^2}{t} \right) = \frac{1}{2} \sum_{n=0}^{+\infty} \frac{(-\lambda)^n}{ n!} a^{-n + 2c - b + 1} \Gamma(\frac{n}{2} + \frac{b}{2} - c - \frac{1}{2})  \Gamma(\frac{n}{2} + \frac{b}{2} + \frac{1}{2}).
	\end{align}
\end{lemma}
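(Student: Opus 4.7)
The plan is to expand $e^{-\lambda y}=\sum_{n\ge 0}\frac{(-\lambda y)^{n}}{n!}$ and integrate term by term, recognising the resulting building block as a classical Mellin transform of a modified Bessel function $K_\nu$.

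First I would reduce the inner $t$-integral to $K_\nu$. Substituting $u=y^{2}t$ gives
\[
\int_{0}^{\infty} t^{c} e^{-y^{2}t-a^{2}/t}\,dt
= y^{-2c-2}\int_{0}^{\infty} u^{c}\exp\!\left(-u-\frac{(ay)^{2}}{u}\right)du,
\]
and the standard integral representation
\(K_\nu(z)=\tfrac12(z/2)^{\nu}\int_{0}^{\infty} u^{-\nu-1}e^{-u-z^{2}/(4u)}\,du\)
with $\nu=-c-1$ and $z=2ay$ (together with $K_{-\nu}=K_\nu$) yields
\[
\int_{0}^{\infty} t^{c} e^{-y^{2}t-a^{2}/t}\,dt
= 2\,a^{c+1}\, y^{-c-1}\,K_{c+1}(2ay).
\]

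Next I would plug this into the outer $y$-integral and expand $e^{-\lambda y}$ as a power series, so that each term reduces to a Mellin-type integral of $K_{c+1}$. Using $\int_{0}^{\infty} y^{s-1}K_{\nu}(y)\,dy=2^{s-2}\Gamma(\tfrac{s+\nu}{2})\Gamma(\tfrac{s-\nu}{2})$ with the rescaling $y\mapsto 2ay$, and inserting $s=b+n-c$, $\nu=c+1$, I get
\[
\int_{0}^{\infty} y^{b+n-c-1}K_{c+1}(2ay)\,dy
= \tfrac14\, a^{c-b-n}\,\Gamma\!\left(\tfrac{n}{2}+\tfrac{b+1}{2}\right)\Gamma\!\left(\tfrac{n}{2}+\tfrac{b-2c-1}{2}\right).
\]
Multiplying by $\frac{(-\lambda)^{n}}{n!}\cdot 2a^{c+1}$ and summing produces exactly the right-hand side of the claim.

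The main thing to be careful about is the swap of $\sum_{n}$ with the double integral. The hypothesis $b>-1$ handles integrability at $y=0$ (using $K_{c+1}(2ay)\sim c\, y^{-|c+1|}$ if $c+1\ne 0$ or a logarithm otherwise), and the asymptotic $K_{c+1}(2ay)\sim \sqrt{\pi/(4ay)}\,e^{-2ay}$ as $y\to\infty$ shows each term is finite; the constraint $c<\tfrac{b}{2}-\tfrac12$ corresponds precisely to positivity of the Gamma argument $\tfrac{b-2c-1}{2}$ at $n=0$. Absolute summability, and hence the validity of Fubini's theorem, follows from $\lambda<2a$: the factor $e^{-\lambda y}$ is dominated by $e^{-2ay}$ up to a polynomial correction, so $\sum_n \frac{\lambda^n}{n!}\int_0^\infty y^{b+n}\!\int_0^\infty t^c e^{-y^2t-a^2/t}\,dt\,dy = \int_0^\infty y^b e^{\lambda y}\!\int_0^\infty t^c e^{-y^2t-a^2/t}\,dt\,dy<\infty$. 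I expect this convergence bookkeeping to be the only technical point; the algebra in the intermediate identities is routine.
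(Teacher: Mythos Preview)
Your proof is correct. The algebra checks out: the inner $t$-integral indeed yields $2a^{c+1}y^{-c-1}K_{c+1}(2ay)$, and the Mellin transform of $K_{c+1}$ produces the claimed product of Gamma functions; your justification of Fubini via $\lambda<2a$ and the asymptotics of $K_{c+1}$ at $0$ and $\infty$ is fine (though note that near $y=0$ both hypotheses $b>-1$ and $c<\tfrac{b-1}{2}$ are needed, depending on the sign of $c+1$, which you effectively acknowledge).

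The paper takes a more elementary route that avoids Bessel functions entirely. After expanding $e^{-\lambda y}$, it performs the joint substitution $u=y^{2}t$, $v=a^{2}/t$ (so $y=\sqrt{uv}/a$, $t=a^{2}/v$, $dy\,dt=\tfrac{a}{2v\sqrt{uv}}\,du\,dv$), which decouples the $(y,t)$-integral directly into a product of two independent Gamma integrals in $u$ and $v$. This is slightly slicker in that it never names $K_\nu$, whereas your approach trades that for two standard identities (the integral representation and the Mellin transform of $K_\nu$). Your detour has the advantage of explaining \emph{why} such a clean answer emerges: the inner integral is a Bessel-$K$ function, whose Mellin transform factorises as a product of Gammas. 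Either argument is perfectly adequate here.
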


\begin{proof}
	To compute the double integral we expand $e^{- \lambda y}$ and then perform the change of variable $u = y^2 t$, $v = \frac{a^2}{t}$. Hence $y = \frac{\sqrt{uv}}{a}$, $t = \frac{a^2}{v}$, $dy dt  = \frac{a}{2 v \sqrt{uv}} du dv$. Therefore:
	\begin{align*}
	& \int_0^{\infty} dy \; y^b  e^{- \lambda y}  \int_0^{\infty} dt \; t^c \exp \left( - y^2 t - \frac{a^2}{t} \right) \\
	& = \sum_{n =0}^{+\infty} \frac{(-\lambda)^n}{n!} \int_0^{\infty} dy \; y^{b+n}    \int_0^{\infty} dt \; t^c \exp \left( - y^2 t - \frac{a^2}{t} \right) \\
	& = \sum_{n =0}^{+\infty}  \frac{(-\lambda)^n}{n!} \int_0^{\infty} \int_0^{\infty} du  dv  \frac{a}{2 v \sqrt{uv}} a^{2c} v^{-c} a^{-b -n} (uv)^{\frac{b}{2} + \frac{n}{2}} e^{-u -v}\\
	& = \frac{1}{2} \sum_{n =0}^{+\infty}  \frac{(-\lambda)^n}{n!} a^{-n + 2c - b + 1} \int_0^{\infty} \int_0^{\infty} du  dv \; u^{\frac{b}{2} + \frac{n}{2} - \frac{1}{2}} v^{\frac{b}{2} + \frac{n}{2} - c -\frac{3}{2}} e^{-u -v}\\
	& = \frac{1}{2} \sum_{n=0}^{+\infty} \frac{(-\lambda)^n}{ n!} a^{-n + 2c - b + 1} \Gamma(\frac{n}{2} + \frac{b}{2} - c - \frac{1}{2})  \Gamma(\frac{n}{2} + \frac{b}{2} + \frac{1}{2}).
	\end{align*}
	The conditions on the parameters are such to make the above integrals and series converge. For $n=0$ in the sum, the integrals over $u$ and $v$ converge respectively if $ b > -1$ and $ c < \frac{b}{2} - \frac{1}{2}$. The condition $\lambda >0$ is obvious. The condition $ \lambda < 2 a$ is required for the series over $n$ to converge, as can be checked on the last line.
\end{proof}

 \begin{proof}[Proof of Proposition~\ref{prop:inverse-gamma-a}]
 To start off we assume that the parameters $\mu, \mu_B >0$ are chosen such that 
 \begin{equation}\label{eq:cond_mu_mu_B}
 \frac{\mu^2_B}{\mu} \sin \frac{\pi \gamma^2}{4} \in (0,1).
 \end{equation}
  This constraint will then be lifted by an analyticity argument in the variable $\mu_B$. Applying integration by parts to the integration over $\ell$ in~\eqref{eq:FZZ-final1},   we get
 	\begin{align}
 	 \left \langle  e^{\alpha \phi(i)}  \right \rangle
 	&= \frac{\gamma C}{2(\alpha-Q)} \int_0^\infty \ell^{\frac2\gamma(\alpha -Q)} \cM_{1,0}^\disk(\alpha; 1)^{\#}[( 2 \mu \ell A + \mu_B ) e^{-\mu \ell^2 A-\mu_B \ell}] \, d\ell \nonumber\\
 	& = C' \int_0^{\infty} y^{\frac{2}{\gamma}(\alpha-Q)} \cM_{1,0}^\disk(\alpha; 1)^{\#} \left[(\lambda + 2 y A) e^{-\lambda y - y^2 A} \right] dy,
 	\end{align}
 	where $y = \sqrt \mu \ell$, $C' = - \frac{\gamma C}{2(Q- \alpha)} \mu^{\frac{Q -\alpha}{\gamma}}$ and  $\lambda = \frac{\mu_B}{\mu^{1/2}} $.
 	Now we use Proposition~\ref{prop:inverse-a} which requires to assume $\alpha < Q -\frac{\gamma}{4}$ and we apply twice Lemma \ref{lem:special2} with $(a,b,c,\lambda)$ equal to 
 	$$  \left( \frac{1}{ 2  \sqrt{\sin \frac{\pi \gamma^2}{4}} }, \frac{2}{\gamma}(\alpha - Q), \frac{2}{\gamma}(\alpha -Q) -1, \lambda \right) \quad \text{and} \quad  \left( \frac{1}{ 2 \sqrt{ \sin \frac{\pi \gamma^2}{4}} }, \frac{2}{\gamma}(\alpha - Q) + 1, \frac{2}{\gamma}(\alpha -Q), \lambda \right).$$
	
	The constraints on $(a,b,c,\lambda)$ required by Lemma \ref{lem:special2} are satisfied provided that $\alpha \in (\frac{2}{\gamma}, Q)$ and that equation \eqref{eq:cond_mu_mu_B} holds.
 Hence one arrives at the following power series expansions in the parameter $\lambda$: 
\begin{align}\label{sum1}
&C' \int_0^{\infty} y^{\frac{2}{\gamma}(\alpha-Q)} \cM_1^\disk(\alpha; 1)^{\#} \left[ \lambda e^{-\lambda y - y^2 A} \right] dy \\
& \quad = -\frac{C''}{2} \sum_{n=0}^{\infty} \frac{(-\lambda)^{n+1}}{n!} \left( 2 \sqrt{ \sin \frac{\pi \gamma^2}{4}} \right)^{n + \frac{2}{\gamma}(Q -\alpha) + 1} \Gamma( \frac{n}{2} + \frac{1}{\gamma}(\alpha - Q) + \frac{1}{2}) \Gamma( \frac{n}{2} + \frac{1}{\gamma}( Q -\alpha) + \frac{1}{2}), \nonumber
\end{align}
and 
\begin{align}\label{sum2}
&C' \int_0^{\infty} y^{\frac{2}{\gamma}(\alpha-Q)} \cM_1^\disk(\alpha; 1)^{\#} \left[ 2 y A e^{-\lambda y - y^2 A} \right] dy \\
& \quad = C'' \sum_{n=0}^{\infty} \frac{(- \lambda)^n }{n!} \left( 2 \sqrt{ \sin \frac{\pi \gamma^2}{4} } \right)^{n + \frac{2}{\gamma}(Q -\alpha) } \Gamma( \frac{n}{2} + \frac{1}{\gamma}(\alpha - Q) + 1) \Gamma( \frac{n}{2} + \frac{1}{\gamma}( Q -\alpha) ), \nonumber
\end{align}
where $C'' = \frac{ (4 \sin \frac{\pi \gamma^2}{4} )^{\frac{2}{\gamma}(\alpha -Q)} }{\Gamma( \frac{2}{\gamma}(Q -\alpha) )} C'$.
By shifting the sum in \eqref{sum1} to start at $n=1$ and applying in \eqref{sum2} the following identity on the Gamma function
$$ \Gamma( \frac{n}{2} + \frac{1}{\gamma}(\alpha - Q) + 1) = \left(  \frac{n}{2} + \frac{1}{\gamma}(\alpha - Q)  \right)  \Gamma( \frac{n}{2} + \frac{1}{\gamma}(\alpha - Q) ), $$
we obtain
 	\begin{align*}
 	 \left \langle  e^{\alpha \phi(i)}  \right \rangle &= -\frac{C''}{2} \sum_{n=0}^{\infty} \frac{(-\lambda)^{n+1}}{n!} \left( 2 \sqrt{ \sin \frac{\pi \gamma^2}{4}} \right)^{n + \frac{2}{\gamma}(Q -\alpha) + 1} \Gamma( \frac{n}{2} + \frac{1}{\gamma}( Q -\alpha) + \frac{1}{2}) \Gamma( \frac{n}{2} + \frac{1}{\gamma}(\alpha - Q) + \frac{1}{2})  \\
 	& +C'' \sum_{n=0}^{\infty} \frac{(- \lambda)^n }{n!} \left( 2 \sqrt{ \sin \frac{\pi \gamma^2}{4} } \right)^{n + \frac{2}{\gamma}(Q -\alpha) } \Gamma( \frac{n}{2} + \frac{1}{\gamma}( Q -\alpha) ) \Gamma( \frac{n}{2} + \frac{1}{\gamma}(\alpha - Q) + 1) \\
 	& = C'' \frac{\alpha -Q}{\gamma}  \sum_{n=0}^{\infty} \frac{(- \lambda )^n }{n!}  \left( 2 \sqrt{ \sin \frac{\pi \gamma^2}{4} } \right)^{n + \frac{2}{\gamma}(Q -\alpha) } \Gamma( \frac{n}{2} + \frac{1}{\gamma}(\alpha - Q))  \Gamma( \frac{n}{2} + \frac{1}{\gamma}( Q -\alpha) ),
 	\end{align*}
 To then obtain the claimed formula $U_{\mathrm{FZZ}}(\alpha)$, one simply needs to apply Lemma \ref{lem:special1} with $a = \frac{2}{\gamma}(\alpha -Q)$ and $x=  \lambda \sqrt{\sin \frac{\pi \gamma^2}{4}}$. Thanks again to \eqref{eq:cond_mu_mu_B} and the fact that $\alpha \in (\frac{2}{\gamma}, Q)$, the conditions to apply the lemma are satisfied. This gives
 	\begin{align*}
 	 \left \langle  e^{\alpha \phi(i)}  \right \rangle 
 	& = 2 C'' \frac{\alpha -Q}{\gamma} \frac{\pi \gamma}{(\alpha - Q)} \frac{1}{\sin(\frac{2 \pi}{\gamma}(Q -\alpha))} \left( 2 \sqrt{ \sin \frac{\pi \gamma^2}{4} } \right)^{ \frac{2}{\gamma}(Q -\alpha) } \cos \left(\frac{2}{\gamma}(\alpha - Q) \arccos \left(\lambda \sqrt{\sin \frac{\pi \gamma^2}{4}}\right) \right)\\
 	& = 2  C'' \Gamma(\frac{2}{\gamma}(Q -\alpha)) \Gamma(1 - \frac{2}{\gamma}(Q -\alpha)) \left( 2 \sqrt{ \sin \frac{\pi \gamma^2}{4} } \right)^{ \frac{2}{\gamma}(Q -\alpha) } \cos ((\alpha - Q) \pi s),
 	\end{align*}
 where the definition of $s$ was given in \eqref{eq:def_s}; see also  \eqref{recall_s}. We compute:
 	\begin{align*}
 	&2C'' \Gamma(\frac{2}{\gamma}(Q -\alpha)) \Gamma(1 - \frac{2}{\gamma}(Q -\alpha)) \left( 2 \sqrt{ \sin \frac{\pi \gamma^2}{4} } \right)^{ \frac{2}{\gamma}(Q -\alpha) } \\
 	&= \frac{4}{\gamma} ( \sin \frac{\pi \gamma^2}{4} )^{\frac{1}{\gamma}(\alpha -Q)}     \mu^{\frac{Q -\alpha}{\gamma}}   2^{-\frac{\alpha^2}2} \left( 2^{-\frac\gamma2\alpha }   \right)^{\frac2\gamma(Q-\alpha)}   \left( \frac{\pi}{\Gamma(1-\frac{\gamma^2}4)} \right)^{\frac2\gamma(Q-\alpha)} \Gamma( \frac\gamma2(\alpha-\frac\gamma2))  \Gamma( \frac{2\alpha}{\gamma} - \frac{4}{\gamma^2} -1) \\
 	&= \frac{4}{\gamma}     2^{-\frac{\alpha^2}2} \left( \frac{ \pi \mu}{ 2^{\gamma \alpha }}  \frac{\Gamma(\frac{\gamma^2}{4})}{\Gamma(1 -\frac{\gamma^2}{4})} \right)^{\frac{Q - \alpha}{\gamma} } \Gamma( \frac{\gamma \alpha}{2} - \frac{\gamma^2}{4} )  \Gamma( \frac{2\alpha}{\gamma} - \frac{4}{\gamma^2} -1)=U_{\mathrm{FZZ}}(\alpha)/\cos ((\alpha - Q) \pi s).
 	\end{align*}
 	 Putting everything together we conclude that $\left \langle  e^{\alpha \phi(i)}  \right \rangle = U(\alpha) =U_{\mathrm{FZZ}}(\alpha)$ provided that the condition \eqref{eq:cond_mu_mu_B} holds. 
	 
	 We will now extend the result by analyticity in $\mu_B$ to the full range of $\mu_B \in(0, +\infty)$. Notice $U_{\mathrm{FZZ}}(\alpha)$ contains the factor $\cos( (\alpha - Q) \pi s)$ where $s$ and $\mu_B$ are related by:
	\begin{align}\label{recall_s}
	\mu_B = \mu_B(s) = \sqrt{\frac{\mu}{ \sin \frac{\pi \gamma^2}{4}}} \cos \frac{\pi \gamma s}{2}.
	\end{align}
We want to find an open domain $D \subset \mathbb{C}$ of $s$ where the map $s \mapsto \mu_B(s)$ will be  biholomorphic onto its image. Since $\frac{d}{ds}_{\vert s=0} \cos \frac{\pi \gamma s}{2} =0  $, we cannot include $0$ in $D$. Since $\cos \frac{\pi \gamma s}{2} = 1 - \frac{\pi^2 \gamma^2}{8} s^2 +o(s^2)$ and the second order $s^2$ is non-zero, we can include the sector $S_{\epsilon, \delta} := \{ r e^{i \theta} \vert \theta \in (-\epsilon, \frac{\pi}{2} + \epsilon), 0 < r< \delta \} $ for two small parameters $\epsilon, \delta >0$. Indeed in $S_{\epsilon, \delta}$ the angle $\theta$ is contained in an interval of length strictly less than $\pi$. We also want $D$ to contain an open neighborhood in $\mathbb{C}$ of $i(\frac{\delta}{2}, +\infty)$ where we wish to extend the equality $U(\alpha) = U_{\mathrm{FZZ}}(\alpha) $ and an open neighborhood of $(\frac{\delta}{2}, \frac{1}{2\gamma})$ were we have already established it. From all these considerations we choose:
\begin{align*}
D := S_{\epsilon, \delta} \cup  \left \{ x + i y \vert x \in (\frac{\delta}{2}, \frac{1}{2 \gamma}), y \in  (-\frac{\delta}{4}, \frac{\delta}{4})  \right \} \cup \left \{ x + i y \vert  x \in (-\frac{\delta}{4}, \frac{\delta}{4}), y \in (\frac{\delta}{2}, + \infty)  \right \}.
\end{align*}
For $ \epsilon, \delta$ small, the map $s \mapsto \mu_B(s)$ is then a biholomorphic map from $D$ onto its image, an open domain we call $D'$.
Now the exact formula $U_{\mathrm{FZZ}}(\alpha) $ is clearly an analytic function of $\mu_B$ on $D'$. We must argue the same is true for the probabilistic definition $U(\alpha)= \left \langle  e^{\alpha \phi(i)}  \right \rangle$. For all $\mu_B \in D'$, one clearly has $\Re(\mu_B)>0$ which by Lemma \ref{lem:U-bound} gives finiteness of $\left| \left \langle  e^{\alpha \phi(i)}  \right \rangle \right|$.
Moreover, $\LF_\bbH^{(\alpha, i)}\left[  \left|\nu_\phi(\R)e^{-\mu \mu_\phi(\bbH) -\mu_B \nu_\phi(\R)}\right|\right] $ is also finite in the same region. 
This implies that $\LF_\bbH^{(\alpha, i)}[ - \mu_B \nu_\phi(\R)e^{-\mu \mu_\phi(\bbH) - \mu_B \nu_\phi(\R)}]$ is analytic in $\mu_B$ when $\Re (\mu_B)>0$.
Taking anti-derivative in $\mu_B$ and use Fubini theorem to interchange the integral in $\mu_B$ and $\LF_\bbH^{(\alpha, i)}$, 
we see that $\left \langle  e^{\alpha \phi(i)}  \right \rangle=\LF_\bbH^{(\alpha, i)}[e^{-\mu \mu_\phi(\bbH) - \mu_B \nu_\phi(\R)}-1]$ is analytic in $\mu_B$ when $\Re (\mu_B)>0$. 
Since the equality of analytic functions $U(\alpha) = U_{\mathrm{FZZ}}(\alpha)$ in the variable $\mu_B$ holds on the interval given by \eqref{eq:cond_mu_mu_B} which is contained in the open set $D'$, it holds on all of $D'$. In particular it holds for $ \frac{\mu_B}{\sqrt{\mu}} \sqrt{\sin \frac{\pi \gamma^2}{4} } \in (1,+\infty)$. By continuity we recover the equality at the special value $ \frac{\mu_B}{\sqrt{\mu}} \sqrt{\sin \frac{\pi \gamma^2}{4} } =1$. This completes the proof.
  \end{proof}

\subsection{Analyticity in $\alpha$ and the proof of Theorem~\ref{thm:FZZ-physics}}\label{subsec:analytic}
In this section we prove that $U(\alpha)$ is complex analytic in $\alpha$ around   $(\frac2{\gamma}, Q)$. The proof utilizes the method given first in \cite{DOZZ_proof} and in \cite{RZ_boundary}, adapted to the case where there are both area and boundary GMC measures in the correlation function. Together with Proposition~\ref{prop:inverse-gamma-a} this will conclude the proof of Theorem \ref{thm:FZZ-physics}. We will then prove Corollary~\ref{cor_truncation} and Theorem~\ref{thm:inverse-gamma} that extend the range of $\alpha$ from  $(\frac2{\gamma}, Q)$ to  $(\frac{\gamma}2, Q)$.
\begin{proposition}\label{prop:analytic}
 	For any compact set $K \subset (\frac{2}{\gamma}, Q)$, the function $\alpha \rightarrow U(\alpha)$ is complex analytic on a complex neighborhood of the set $K$.
\end{proposition}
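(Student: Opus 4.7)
The plan is to extend $U(\alpha)$ analytically via the Girsanov formula of Lemma~\ref{lem:Girsanov} and then verify the hypotheses of Morera's theorem on a complex neighborhood of $K$. Setting $\wt\phi = h - 2Q\log|\cdot|_+ + \alpha G_\bbH(\cdot, i)$ for $h \sim P_\bbH$, Definition~\ref{def:U} gives
\[U(\alpha) = 2^{-\alpha^2/2}\int_\R e^{(\alpha-Q)c}\,\E\!\left[e^{-\mu e^{\gamma c} A(\alpha) - \mu_B e^{\gamma c/2} L(\alpha)} - 1\right] dc,\]
with $A(\alpha) := \mu_{\wt\phi}(\bbH)$ and $L(\alpha) := \nu_{\wt\phi}(\R)$. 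For each fixed $h$, both $A(\alpha)$ and $L(\alpha)$ are entire in $\alpha$, since the $\alpha$-dependence enters only through the entire kernel $e^{\gamma\alpha G_\bbH(\cdot,i)}$ (resp.\ $e^{\frac\gamma2\alpha G_\bbH(\cdot,i)}$) integrated against the real base measures $\mu_h$, $\nu_h$. Moreover $|A(\alpha)|\leq A(\Re\alpha)$ and $|L(\alpha)|\leq L(\Re\alpha)$ pointwise in $h$, because $|e^{\gamma\alpha G_\bbH(z,i)}|=e^{\gamma\Re\alpha\,G_\bbH(z,i)}$.

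I would then choose a small open complex neighborhood $K_\delta$ of $K$ with $\{\Re\alpha:\alpha\in K_\delta\}\Subset(\tfrac2\gamma,Q)$. The integrand $\alpha\mapsto 2^{-\alpha^2/2}e^{(\alpha-Q)c}(e^{-\mu e^{\gamma c}A(\alpha)-\mu_B e^{\gamma c/2}L(\alpha)}-1)$ is entire in $\alpha$ for every fixed $(h,c)$, so once an $(h,c,\alpha)$-uniform integrable dominator is established on $K_\delta$, Fubini lets me interchange $\oint_\Gamma d\alpha$ with $\int_\R dc$ and $\E$ for any closed contour $\Gamma\subset K_\delta$; the inner contour integral vanishes for each $(h,c)$, and Morera's theorem yields analyticity on $K_\delta$. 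To produce the dominator, I would adapt the integration-by-parts in $c$ used in the proof of Lemma~\ref{lem:U-bound}: after dividing by $(\alpha-Q)$ (which is nonzero on $K_\delta$), the integrand becomes $\big(\mu\gamma e^{\gamma c}A(\alpha)+\mu_B\tfrac\gamma2 e^{\gamma c/2}L(\alpha)\big)e^{-\mu e^{\gamma c}A(\alpha)-\mu_B e^{\gamma c/2}L(\alpha)}\,e^{(\alpha-Q)c}$, which should be controlled by GMC moments of $A(\Re\alpha)$ and $L(\Re\alpha)$ of order $\tfrac1\gamma(Q-\Re\alpha)$ and $\tfrac2\gamma(Q-\Re\alpha)$ respectively, as in \cite[Corollary~6.11]{hrv-disk} and \cite[Lemma~3.10]{dkrv-lqg-sphere}.

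The main obstacle will lie in producing this dominator: for complex $\alpha$ the factor $|e^{-\mu e^{\gamma c}A(\alpha)}|=e^{-\mu e^{\gamma c}\Re A(\alpha)}$ is not a priori bounded by $1$, since $\Re A(\alpha) = \int \cos(\gamma\Im\alpha\,G_\bbH(z,i))\,e^{\gamma\Re\alpha\,G_\bbH(z,i)}|z|_+^{-2Q\gamma}\mu_h(d^2z)$ can be negative when the cosine oscillates near the singularity $z=i$ of $G_\bbH(\cdot,i)$. To address this, I would split $A(\alpha)=A_{\mathrm{far}}(\alpha)+A_{\mathrm{near}}(\alpha)$ according to a small ball $B_\eps(i)$: on the far part $G_\bbH$ is uniformly bounded, so for $|\Im\alpha|$ small enough the cosine stays close to $1$ and $\Re A_{\mathrm{far}}(\alpha)\geq\tfrac12 A_{\mathrm{far}}(\Re\alpha)\geq 0$; on the near part one uses $|A_{\mathrm{near}}(\alpha)|\leq A_{\mathrm{near}}(\Re\alpha)$ together with GMC moment control near the insertion point, taking $\eps$ sufficiently small so that the $c$-integral of the resulting expression remains finite in expectation. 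An analogous decomposition of $L(\alpha)$ about the boundary point $0$ handles the boundary GMC contribution. This is the same type of technical analysis carried out in \cite{DOZZ_proof, RZ_boundary} for the analyticity of DOZZ and boundary LCFT correlations, and I would import those techniques with suitable modifications.
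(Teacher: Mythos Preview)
Your identification of the obstacle is correct, but the near/far split does not close the gap. From $\Re A_{\mathrm{far}}(\alpha)\ge\tfrac12 A_{\mathrm{far}}(\Re\alpha)$ and $\Re A_{\mathrm{near}}(\alpha)\ge -A_{\mathrm{near}}(\Re\alpha)$ one gets $|e^{-\mu e^{\gamma c}A(\alpha)}|\le e^{-\frac\mu2 e^{\gamma c}A_{\mathrm{far}}(\Re\alpha)}\cdot e^{\mu e^{\gamma c}A_{\mathrm{near}}(\Re\alpha)}$. After inserting this into the integrated-by-parts expression, the $c$-integral of your candidate dominator converges only on the event $\{A_{\mathrm{far}}(\Re\alpha)>2A_{\mathrm{near}}(\Re\alpha)\}$; for every fixed $\eps>0$ the complementary event has positive $P_\bbH$-probability, and even on the good event the result behaves like a negative power of $A_{\mathrm{far}}-2A_{\mathrm{near}}$, whose expectation is not controlled. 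Ultimately any such route demands an exponential moment of a GMC mass near the singularity, and GMC admits only finitely many polynomial moments. Shrinking $\eps$ does not help: Lebesgue domination requires a single integrable majorant, not a family whose bad sets shrink.

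The technique of \cite{DOZZ_proof,RZ_boundary} that the paper adopts is \emph{not} the split you describe: one does not complexify the GMC at all. Instead one regularizes the insertion by its radius-$e^{-r}$ circle average $h_r$ and defines
\[
U_r(\alpha)=\int_\R e^{(\alpha-Q)c}\,\E\!\left[e^{\alpha h_r-\frac{\alpha^2}2\E[h_r^2]}\Big(e^{-\mu e^{\gamma c}\mu_h(\bbH_r)-\mu_B e^{\frac\gamma2 c}\nu_h(\R)}-1\Big)\right]dc,
\]
with $\bbH_r$ the complement of the small ball around the insertion. Here the GMC masses are real and $\alpha$-independent, so their exponential is bounded by $1$; all the complex $\alpha$-dependence sits in the Wick-ordered Gaussian factor, whose modulus costs only $e^{\frac12(\Im\alpha)^2\E[h_r^2]}\asymp e^{\frac12(\Im\alpha)^2 r}$. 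Each $U_r$ is then manifestly analytic, and applying Girsanov in $\Re\alpha$ together with the multifractal bound $\E[\mu_{\tilde h}(\bbH_{r+1}\setminus\bbH_r)^{(Q-\Re\alpha)/\gamma}]$ yields $|U_{r+1}(\alpha)-U_r(\alpha)|\lesssim e^{r(\frac12(\Im\alpha)^2-\frac12(Q-\Re\alpha)^2)}$, summable on a thin enough strip around $K$. The key idea you are missing is to keep the complex parameter in a Gaussian factor and leave the chaos real.
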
  
\begin{proof} To start we use the following identity coming from Lemma \ref{lem:U-bound}
 \begin{equation}
 \left \langle  e^{\alpha \phi(2 i)}  \right \rangle = \frac{U(\alpha)}{\left|\Im 2i \right|^{2 \Delta_{\alpha}}},
 \end{equation}
 to place the bulk insertion at the location $2i$, and will prove that $ \left \langle  e^{\alpha \phi(2 i)}  \right \rangle$ is analytic in $\alpha$. 
Fix a $r_0$ such that $e^{-r_0} <\frac12$.  Let $h$ be the free boundary GFF on $\mathbb{H}$, which is normalized to have zero average over the upper half unit circle. 
Thanks to the Markov property of the field, see for instance \cite[Theorem 5.5]{berestycki-lqg-notes}, we can write the decomposition  $h = h_1 + h_2 $, where $h_1$ is a Dirichlet GFF on $ B(2i, e^{-r_0})$ and $h_2$ is a field which is independent of $h_1$ and smooth in a neighborhood of $2i$. Notice we placed the insertion at $2i$ so $ B(2i, e^{-r_0})$ does not intersect the upper half unit circle. 
 Consider now the one-dimensional process $h_{1,r}(2i)$ obtained by taking the mean of $h_1(2i)$ over the circles of radius $e^{-r}$ centered at $2i$, assuming $r \geq r_0$. The main fact we will use about this field is that $h_{1,r+t}(2i)-h_{1,r}(2i)$ is a Brownian motion independent of $(h(z))_{z \in \mathbb{H}_r }$, where $\mathbb{H}_r := \mathbb{H} \backslash B(2i, e^{-r})$. Record also that $\E[h_{1,r}(2i)^2] =r -r_0$  and the notation $h_r := h_{1,r} + h_2 $. From these facts we can deduce that:
\begin{align*}
&\mathbb{E} \left[ e^{\alpha h_{1,r+1}(2i) - \frac{\alpha^2}{2} \mathbb{E}[h_{1,r+1}(2i)^2] } F\left( (h(z))_{z \in \mathbb{H}_r } \right) \right] \\ 
&= e^{ \frac{\alpha^2}{2} \mathbb{E}[h_{1,r}(2i)^2] - \frac{\alpha^2}{2} \mathbb{E}[h_{1,r+1}(2i)^2] } \mathbb{E} \left[ e^{\alpha h_{1,r+1}(2i) - \alpha h_{1,r}(2i)} \right] \mathbb{E} \left[ e^{\alpha h_{1,r}(2i) - \frac{\alpha^2}{2} \mathbb{E}[h_{1,r}(2i)^2] } F\left( (h(z))_{z \in \mathbb{H}_r }\right) \right]\\
& = \mathbb{E} \left[ e^{\alpha h_{1,r}(2i) - \frac{\alpha^2}{2} \mathbb{E}[h_{1,r}(2i)^2] } F\left( (h(z))_{z \in \mathbb{H}_r } \right) \right]\\
\Rightarrow \: & \mathbb{E} \left[ e^{\alpha h_{r}(2i) - \frac{\alpha^2}{2} \mathbb{E}[h_{r}(2i)^2] } F\left( (h(z))_{z \in \mathbb{H}_r } \right) \right] = \mathbb{E} \left[ e^{\alpha h_{r+1}(2i) - \frac{\alpha^2}{2} \mathbb{E}[h_{r+1}(2i)^2] } F\left( (h(z))_{z \in \mathbb{H}_r } \right) \right].
\end{align*}
Now we can obtain $ U(\alpha) $ from the following limit
$U(\alpha) = 2^{-\frac{\alpha^2}{2}} \lim_{r \to \infty} U_r(\alpha)$,
where we have introduced:
\begin{align*}
&U_r(\alpha) = \int_{\mathbb{R}} dc e^{(\alpha - Q) c} \mathbb{E} \left[ e^{\alpha h_{r}(2i) - \frac{\alpha^2}{2} \mathbb{E}[h_{r}(2i)^2] }   \left( \exp \left( -  e^{\gamma c}  \mu_{h}(\mathbb{H}_r ) - e^{\frac{\gamma}{2} c}  \nu_{h}(\mathbb{R})  \right) -1 \right) \right].
\end{align*}
When $\alpha$ is a complex number, we write $\alpha = a + i b$. We want to prove there exists a complex neighborhood $V$ containing the set $K$ such that for any compact set $K'$ contained in $ V$, $U_r(\alpha)$ converges uniformly as $r \rightarrow + \infty$ over $K'$. 
Setting $\tilde h(z) := h(z) + a G_{\mathbb{H}}(z, 2 i)$, we have  the following:
\begin{align*}
&|U_{r+1}(\alpha) - U_{r}(\alpha)|\\ \nonumber
&= \int_{\mathbb{R}} dc  \left| e^{(\alpha - Q) c} \mathbb{E} \left[ e^{\alpha h_{r+1}(2i) - \frac{\alpha^2}{2} \mathbb{E}[h_{r+1}(2i)^2] } \exp \left( - e^{\frac{\gamma}{2} c} \nu_{h}(\mathbb{R}) \right)   \left( \exp \left( -  e^{\gamma c}  \mu_{h}(\mathbb{H}_{r+1} )     \right) - \exp \left( -  e^{\gamma c}  \mu_{h}(\mathbb{H}_r )  \right)  \right) \right] \right|\\ \nonumber
&\le  C \,e^{\frac{r+1}{2} b^2} \int_{\mathbb{R}} dc e^{(a - Q) c} \mathbb{E} \left[ \exp \left( - e^{\frac{\gamma}{2} c} \nu_{ \tilde h}(\mathbb{R}) \right)   \left| \exp \left( -  e^{\gamma c}  \mu_{\tilde h}(\mathbb{H}_{r+1} )     \right) - \exp \left( -  e^{\gamma c}  \mu_{\tilde h}(\mathbb{H}_r ) \right)  \right|   \right]\\
&\le  C \,e^{\frac{r+1}{2} b^2} \int_{\mathbb{R}} dc e^{(a - Q) c} \mathbb{E} \left[ \exp \left( -  e^{\gamma c}  \mu_{\tilde h}(\mathbb{H}_{r+1} )     \right) - \exp \left( -  e^{\gamma c}  \mu_{\tilde h}(\mathbb{H}_r ) \right)  \right]\\
&= C' e^{\frac{r+1}2b^2} \E\left[ \mu_{\wt h}(\mathbb{H}_{r+1})^{\frac{Q-a}\gamma} - \mu_{\wt h}(\mathbb{H}_{r})^{\frac{Q-a}\gamma}  \right].
\end{align*}
 From the second line to the third, we have applied the Girsanov theorem to the real part of $\alpha h_{r+1}(2i)$, before moving the absolute value inside the expression. The last equality follows from changing the integration variable from $c$ to $y=e^{\gamma c}\mu_{\wt h}(\bbH_i)$ for $i = r, r+1$. Thus,
\[|U_{r+1}(\alpha) - U_{r}(\alpha)| \leq   C' e^{\frac{r+1}2b^2} \E\left[ \mu_{\wt h}(\bbH_{r+1} \backslash \bbH_r)^{\frac{Q-a}\gamma} \right] \le C'' e^{\frac{r+1}2b^2} e^{ra(Q-a)}\E\left[ \mu_{ h}(\bbH_{r+1} \backslash \bbH_r)^{\frac{Q-a}\gamma} \right]. \]
The first inequality follows from $(a+b)^s \leq a^s + b^s$ for $s \in (0,1)$, and the second since $\wt h$ and $h$ differ by roughly $ar$ on $\bbH_{r+1} \backslash \bbH_r$. 

By the multifractal scaling of GMC, see e.g.~\cite[Section 3.6]{berestycki-lqg-notes} or \cite[Section 4]{rhodes-vargas-review}, we have
\[ \E[\mu_h(\bbH_{r+1} \backslash \bbH_r)^{\frac{Q-a}\gamma}] \asymp e^{(-\gamma Q q + \frac{\gamma^2q^2}2)r}, \qquad q = \frac{Q-a}\gamma.\]
Combining with the previous inequality, we deduce that $|U_{r+1}(\alpha) - U_r(\alpha)| \lesssim e^{r(\frac{b^2}2 -\frac12(Q-a)^2)}$. Choosing the open set $V$ in such a way that $\frac{b^2}{2}  < \frac12(Q-a)^2$ always holds, all the inequalities we have done before hold true and hence we have shown that $U_r(\alpha)$ converges locally uniformly. Since $U_r(\alpha) $ is complex analytic in $\alpha$, this proves the analyticity result of $U(\alpha)$.
\end{proof}

\begin{proof}[Proof of Theorem \ref{thm:FZZ-physics} ]
	From the exact formula,  $U_{\mathrm{FZZ}}(\alpha)$ is a meromorphic function of $\alpha$ on $\C$. By uniqueness of the meromorphic continuation, Propositions~\ref{prop:inverse-gamma-a}
	and~\ref{prop:analytic}, we have $U(\alpha) = U_{\mathrm{FZZ}}(\alpha)$ for all  $\alpha \in  (\frac{2}{\gamma}, Q)$.
\end{proof}

 \begin{proof}[Proof of  Theorem~\ref{thm:inverse-gamma}] Proposition~\ref{prop:inverse-a} proves the theorem for $\alpha \in (\frac\gamma2, Q-\frac\gamma4)$. To complete the proof we  consider $\alpha \in [Q-\frac\gamma4, Q)$. Let $L$ be sampled from the power law $\frac2\gamma 2^{-\frac{\alpha^2}2} \ol U_0(\alpha)\ell^{\frac2\gamma(\alpha-Q)-1} \, d\ell$ where $\ol U_0$ is as in Lemma~\ref{lem-len-LF}, and let $A =L^2 X $ where $X$ is sampled from an independent inverse gamma distribution with shape $\frac2\gamma(Q-\alpha)$ and scale $\frac{1}{4 \sin \frac{\pi \gamma^2}4}$. Let $\Pi$ be the joint law of $(A,L)$. Proposition~\ref{prop:inverse-gamma-a} proves that $\Pi[e^{-\mu A - \mu_B L} - 1] = U_{\mathrm{FZZ}}(\alpha)$ for $\alpha \in (\frac2\gamma, Q-\frac\gamma4)$, but the argument works equally well for our range $\alpha \in [Q - \frac\gamma4,Q)$. 
 Thus $\LF_\bbH^{(\alpha, i)}[e^{-\mu \mu_\phi(\bbH) - \mu_B \nu_\phi(\R)}-1]=\Pi[e^{-\mu A- \mu_B L}-1]$ by Theorem~\ref{thm:FZZ-physics}, so 
 $\LF_\bbH^{(\alpha, i)}[\nu_\phi(\R)e^{-\mu \mu_\phi(\bbH) - \mu_B \nu_\phi(\R)}]=\Pi[Le^{-\mu A- \mu_B L}]$. Therefore by standard arguments of characterization of a law by the Laplace transform, the joint law of $(\mu_\phi(\bbH), \nu_\phi(\R))$ under $\LF_\bbH^{(\alpha, i)}$ is the same as that of 
 $(A,L)$ under $\Pi$, which concludes the proof.
 \end{proof}

Since we have now established Theorem~\ref{thm:inverse-gamma} for the whole range $\alpha \in (\frac\gamma2, Q)$, we have the following extension of the probabilistic definition of $U(\alpha)$ and of Theorem~\ref{thm:FZZ-physics}, for which we omit the proof.
\begin{corollary}\label{cor_truncation}
	Let $\alpha \in ( \frac{\gamma}{2}, Q)$. For $k \in \mathbb{N}$ and $\alpha \in (Q - \frac{\gamma(k+1)}{2}, Q - \frac{\gamma k}{2})$, we define $U(\alpha)$ by:
	\begin{equation}\label{definition_by_truncation}
	U(\alpha) = \frac2\gamma 2^{-\frac{\alpha^2}2} \ol U_0(\alpha)  \int_0^\infty \ell^{\frac2\gamma(\alpha-Q)-1} \E \left[e^{-\mu \ell^2 A   - \mu_B \ell} - \sum_{i=0}^k c_i(A) \ell^i \right] \, d\ell.
	\end{equation}
	Here the expectation is with respect to the law of $A$ which is distributed according to the inverse gamma distribution with shape $\frac2\gamma(Q-\alpha)$ and scale $\frac1{4\sin^2 \frac{\pi \gamma^2}4}$. $\ol U_0(\alpha)$ is given by the explicit formula \eqref{eq:U0-explicit}. Lastly the constants $c_i(A)$ are specified by the following expansion in powers of $\ell$:
	\begin{align}
	e^{-\mu \ell^2 A   -\mu_B \ell} = \sum_{i=0}^k c_i(A) \ell^i + O(\ell^{i+1}).
	\end{align}
	Then this definition of $U(\alpha)$ is the meromorphic extension of \eqref{eq:def_U_alpha} on $(Q - \frac{\gamma(k+1)}{2}, Q - \frac{\gamma k}{2})$ and it obeys $U(\alpha) = U_{\mathrm{FZZ}}(\alpha)$.
\end{corollary}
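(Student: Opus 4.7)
The plan is to interpret the right-hand side of \eqref{definition_by_truncation}, which I will denote $\wt U_k(\alpha)$, as an analytic continuation of the original $U(\alpha)$ obtained by Taylor subtractions in a Mellin-type integral, and then to match it with $U_{\mathrm{FZZ}}(\alpha)$ by combining Theorem~\ref{thm:FZZ-physics} with the fact that $U_{\mathrm{FZZ}}$ is explicitly meromorphic in $\alpha$ on $\C$. Throughout, set $s := \tfrac{2}{\gamma}(\alpha - Q) \in (-k-1,-k)$, $f(\ell) := \E[e^{-\mu \ell^2 A - \mu_B \ell}]$, and $P_k(\ell) := \sum_{i=0}^k \E[c_i(A)]\,\ell^i$.

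First I would verify that $\wt U_k(\alpha)$ is well-defined and analytic on a complex neighborhood of the claimed interval. Each $c_i(A)$ is a polynomial in $A$ of degree $\lfloor i/2\rfloor$; since $A$ is inverse gamma of shape $\tfrac{2}{\gamma}(Q-\alpha) \in (k,k+1)$, all moments $\E[A^j]$ with $j \le k$ exist, so every $\E[c_i(A)]$ with $i \le k$ is finite. Taylor's theorem with this moment control gives $f(\ell) - P_k(\ell) = O(\ell^{k+1})$ as $\ell \to 0$, hence $\ell^{s-1}(f-P_k) = O(\ell^{s+k})$ with $s+k > -1$. At $\ell \to \infty$, $\E[e^{-\mu \ell^2 A}]$ decays super-polynomially (as the Laplace transform of an inverse gamma) while $P_k$ grows like $\ell^k$, so $\ell^{s-1}(f-P_k) = O(\ell^{s+k-1})$ with $s+k-1 < -1$. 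Both bounds extend uniformly to a complex neighborhood of $(Q-\tfrac{\gamma(k+1)}{2}, Q-\tfrac{\gamma k}{2})$, yielding analyticity of $\wt U_k$ there.

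Next I would show that all the $\wt U_k$ are restrictions of a single meromorphic function. Split $\int_0^\infty = \int_0^1 + \int_1^\infty$, so that
\[
\wt U_k(\alpha) = \tfrac{2}{\gamma}\,2^{-\alpha^2/2}\,\ol U_0(\alpha)\,\bigl(J^{(0)}_k(s) + J^{(\infty)}_k(s)\bigr),
\]
with $J^{(0)}_k$ analytic on $\{\Re s > -k-1\}$ and $J^{(\infty)}_k$ analytic on $\{\Re s < -k\}$. Using the elementary identity $\int_0^1 \ell^{s+i-1}\,d\ell = \tfrac{1}{s+i}$ (for $\Re s > -i$) and the Hadamard finite part $\int_1^\infty \ell^{s+i-1}\,d\ell = -\tfrac{1}{s+i}$ (for $\Re s < -i$), both of which extend meromorphically to $\C \setminus \{-i\}$, the replacement $P_k \mapsto P_{k+1}$ shifts $J^{(0)}_k$ by $-\tfrac{\E[c_{k+1}(A)]}{s+k+1}$ and $J^{(\infty)}_k$ by $+\tfrac{\E[c_{k+1}(A)]}{s+k+1}$. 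These pole contributions cancel in the sum, so $J^{(0)}_k + J^{(\infty)}_k$ is, as a meromorphic function of $s$, independent of $k$. Combined with the analyticity of the prefactor in $\alpha$, this exhibits a single meromorphic function $\wt U(\alpha)$ on $\C$ whose restriction to each strip $(Q-\tfrac{\gamma(k+1)}{2}, Q-\tfrac{\gamma k}{2})$ coincides with $\wt U_k(\alpha)$.

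To conclude, take $k=0$: for $\alpha \in (\tfrac{2}{\gamma},Q)$, Lemma~\ref{lem:disint-alpha} together with Theorem~\ref{thm:inverse-gamma} identifies $\wt U_0(\alpha)$ with Definition~\ref{def:U}'s $U(\alpha)$, which by Theorem~\ref{thm:FZZ-physics} equals $U_{\mathrm{FZZ}}(\alpha)$. Since $U_{\mathrm{FZZ}}$ is meromorphic in $\alpha$ on all of $\C$, uniqueness of meromorphic continuation forces $\wt U \equiv U_{\mathrm{FZZ}}$, so $\wt U_k(\alpha) = U_{\mathrm{FZZ}}(\alpha)$ throughout $(Q-\tfrac{\gamma(k+1)}{2}, Q-\tfrac{\gamma k}{2})$. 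The main obstacle is the careful bookkeeping of the pole residues in the Mellin splitting and verifying their cancellation between the near-$0$ and near-$\infty$ halves, but this reduces to a finite induction in $k$.
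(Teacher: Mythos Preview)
The paper explicitly omits the proof of this corollary, so there is no argument to compare against; your Hadamard-regularization/Mellin-continuation approach is the natural one and is essentially correct. Two points are worth tightening.

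First, the pole-cancellation step needs one more sentence. The two shift identities
\[
J_{k+1}^{(0)}=J_k^{(0)}-\tfrac{\E[c_{k+1}(A)]}{s+k+1}\ \text{on }\{\Re s>-k-1\},\qquad
J_{k+1}^{(\infty)}=J_k^{(\infty)}+\tfrac{\E[c_{k+1}(A)]}{s+k+1}\ \text{on }\{\Re s<-k-1\}
\]
hold on disjoint half-planes, so you cannot literally add them. The correct statement is that each of $G_k:=J_k^{(0)}+J_k^{(\infty)}$ and $G_{k+1}$ extends meromorphically to the common wider strip $\{-k-2<\Re s<-k\}$ via one of these identities, and both extensions equal $J_{k+1}^{(0)}+J_k^{(\infty)}+\tfrac{\E[c_{k+1}(A)]}{s+k+1}$, hence agree. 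This is what you mean, but it should be said.

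Second, the integrand depends on $\alpha$ not only through $s$ but also through the law of $A$ (its shape is $\tfrac2\gamma(Q-\alpha)$). Thus $f(\ell)$ and each $\E[c_i(A)]$ are functions of $\alpha$, and you need them analytic on the strip in question and $\E[c_{k+1}(A)]$ regular at $s=-k-1$. This holds because $\E[A^j]=b^j\Gamma(-s-j)/\Gamma(-s)$ has poles only at $s\in\{-j,-j+1,\dots\}$ and $j\le\lfloor(k+1)/2\rfloor<k+1$; it is routine but should be mentioned. Relatedly, ``a single meromorphic function on $\C$'' is an overstatement since the inverse gamma requires positive shape; meromorphy on a neighborhood of $(\tfrac\gamma2,Q)$ is all that is used or needed.
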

 
Finally, we provide a commonly used form of the FZZ formula; see \cite[(2.44)]{FZZ}.
\begin{proposition}\label{prop-FZZ-laplace}
	For $\alpha \in (\frac\gamma2, Q)$ and $\ell>0$,  writing $A$ for the random area of a sample from $\cM_1^\disk(\alpha; \ell)$, we have 
	\[\cM_1^\disk(\alpha; \ell)[e^{-\mu A}] = \frac2\gamma 2^{-\frac{\alpha^2}2} \ol U_0(\alpha) \ell^{-1} \frac2{\Gamma(\frac2\gamma(Q-\alpha))} \left(\frac12 \sqrt{\frac{\mu}{\sin(\pi\gamma^2/4)}}  \right)^{\frac2\gamma(Q-\alpha)}K_{\frac2\gamma(Q-\alpha)} \left(\ell\sqrt{\frac{\mu}{\sin(\pi\gamma^2/4)}}  \right). \]
	Here, $K_\nu(x)$ is the modified Bessel function of the second kind; see e.g.\ \cite[Section 10.25]{NIST:DLMF}.
\end{proposition}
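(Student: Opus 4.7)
The plan is to factor $\cM_{1,0}^\disk(\alpha;\ell)[e^{-\mu A}]$ as the total mass $|\cM_{1,0}^\disk(\alpha;\ell)|$ times the Laplace transform $\E[e^{-\mu\ell^2 X}]$, where $X$ denotes the area of a sample from $\cM_{1,0}^\disk(\alpha;1)^\#$, and then invoke Theorem~\ref{thm:inverse-gamma} together with the classical Laplace transform of the inverse gamma law, which is expressible via the modified Bessel function $K_\nu$ through its standard integral representation.

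First, by Lemma~\ref{lem-len-LF} (equivalently Lemma~\ref{lem:disint-alpha}), the total mass is given explicitly by
\[
|\cM_{1,0}^\disk(\alpha;\ell)| = \tfrac2\gamma\, 2^{-\alpha^2/2}\, \ol U_0(\alpha)\, \ell^{\frac2\gamma(\alpha-Q)-1}.
\]
Next, the scaling relation for the Liouville field, namely that adding a constant $c=\tfrac2\gamma\log\ell$ multiplies the boundary length by $\ell$ and the quantum area by $\ell^2$, combined with the explicit reweighting description of $\LF_\bbH^{(\alpha,i)}(\ell)^\#$ in Lemma~\ref{lem:disint-alpha}, shows that the area under $\cM_{1,0}^\disk(\alpha;\ell)^\#$ has the same law as $\ell^2 X$. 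By Theorem~\ref{thm:inverse-gamma}, $X$ follows the inverse gamma distribution with shape $p=\tfrac2\gamma(Q-\alpha)$ and scale $b=\tfrac1{4\sin(\pi\gamma^2/4)}$.

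It then remains to compute $\E[e^{-sX}]$ for $s=\mu\ell^2$ and $X\sim\mathrm{InvGamma}(p,b)$. Performing the substitution $x=\sqrt{b/s}\,y^{-1}$ in the density $\tfrac{b^p}{\Gamma(p)}x^{-p-1}e^{-b/x}$ and recognizing the integral representation
\[
K_\nu(z)=\tfrac12\int_0^\infty t^{\nu-1}e^{-z(t+1/t)/2}\,dt
\]
yields the standard identity
\[
\E[e^{-sX}]=\tfrac{2(sb)^{p/2}}{\Gamma(p)}\,K_p(2\sqrt{sb}).
\]
With $sb=\tfrac{\mu\ell^2}{4\sin(\pi\gamma^2/4)}$, so that $2\sqrt{sb}=\ell\sqrt{\mu/\sin(\pi\gamma^2/4)}$ and $(sb)^{p/2}=\bigl(\tfrac{\ell}{2}\sqrt{\mu/\sin(\pi\gamma^2/4)}\bigr)^p$, multiplying by $|\cM_{1,0}^\disk(\alpha;\ell)|$ and using $\ell^{\frac2\gamma(\alpha-Q)-1}\cdot\ell^{p}=\ell^{-1}$ reproduces the claimed formula.

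There is no genuine obstacle here: the proposition is a direct consequence of Theorem~\ref{thm:inverse-gamma}, the explicit disintegration from Lemma~\ref{lem-len-LF}, and the well-known Laplace transform of the inverse gamma distribution. The only point requiring care is the bookkeeping of the $\ell$-powers and the constants involving $\sin(\pi\gamma^2/4)$.
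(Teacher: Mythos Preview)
Your proposal is correct and follows essentially the same route as the paper: factor the expectation as total mass (from Lemma~\ref{lem-len-LF}) times the Laplace transform under the normalized measure, use the scaling $A\stackrel d=\ell^2 X$, invoke Theorem~\ref{thm:inverse-gamma} for the inverse gamma law of $X$, and identify the resulting integral with $K_\nu$. The only cosmetic difference is that the paper uses the integral representation $K_\nu(z)=\tfrac12(\tfrac z2)^\nu\int_0^\infty e^{-s-z^2/4s}s^{-\nu-1}\,ds$ rather than the one you wrote, but these are equivalent.
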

\begin{proof}
	By Theorem~\ref{thm:inverse-gamma}, writing $\beta = \frac1{4\sin \frac{\pi \gamma^2}4}$ and considering $\ell = 1$, we have
	\begin{align*}
	\cM_1^\disk(\alpha; 1)^\#[e^{-\mu A}] &= \frac{\beta^{\frac2\gamma(Q-\alpha)}}{\Gamma(\frac2\gamma(Q-\alpha))}\int_0^\infty t^{-\frac2\gamma(Q-\alpha) - 1} \exp^{-\frac\beta t - \mu t} \, dt \\
	 &= \frac2{\Gamma(\frac2\gamma(Q-\alpha))} (\mu \beta)^{\frac1\gamma (Q-\alpha)}K_{\frac2\gamma(Q-\alpha)} (\sqrt{4 \mu \beta}),
	\end{align*}
	where the last equality follows from a change of variables $s = \mu t$ and the integral identity $K_\nu(z) = \frac12 (\frac12z)^\nu \int_0^\infty \exp(-s - \frac{z^2}{4s}) \frac{ds}{s^{\nu + 1}}$ \cite[Eq.~10.32.10]{NIST:DLMF} with the choice $z = \sqrt{4 \mu \beta}$ and $\nu = \frac2\gamma(Q-\alpha)$. Since for general $\ell$ we have $\cM_1^\disk(\alpha; \ell)^\#[e^{-\mu A}] = \cM_1^\disk(\alpha; 1)^\#[e^{-\mu  \ell^2 A}]$,
	\[\cM_1^\disk(\alpha; \ell)^\#[e^{-\mu A}] =  \frac2{\Gamma(\frac2\gamma(Q-\alpha))} \ell^{\frac2\gamma(Q-\alpha)} \left(\frac12 \sqrt{\frac{\mu}{\sin (\pi \gamma^2/4)}}\right)^{\frac2\gamma (Q-\alpha)}K_{\frac2\gamma(Q-\alpha)} \left(\ell\sqrt{\frac{\mu}{\sin(\pi\gamma^2/4)}}  \right). \]
	By Lemma~\ref{lem-len-LF} we have $|\cM_1^\disk(\alpha; \ell)| = \frac2\gamma 2^{-\frac{\alpha^2}2} \ol U_0(\alpha) \ell^{\frac2\gamma(\alpha-Q)-1}$, so the result follows. 
\end{proof}
  
\section{Proof of the SLE bubble zipper with a  $\gamma$-bulk insertion}\label{sec:bubble}
In this section we prove Theorem~\ref{thm-bubble-zipper}. For technical convenience, we will consider loops on $\bbH$ passing through $\infty$ instead of $0$. More precisely, 
let $\psi(z)=-z^{-1}$ and $\bub_\bbH(i,\infty)=\{\eta: \psi(\eta)\in \bub_\bbH(i,0)  \}$. 
We will find a  measure on $\bub_\bbH(i,\infty)$ with the desired property and then use $\psi$ to pull it back to get $\sm$  in Theorem~\ref{thm-bubble-zipper}.
The proof is a limiting argument  based on Lemma~\ref{lem:hx-embed}  and a three-disk variant of the conformal welding result in Theorem~\ref{thm:disk-welding}.
We set up the framework of the proof in Section~\ref{subsec:bubble-sketch} and carry out the details in Sections~\ref{subsec:decouple} and~\ref{subsec:delta0}.

\subsection{A conformal welding of three quantum disks}\label{subsec:bubble-sketch}
Set $\kappa=\gamma^2$. Let $\eta_1$ be an $\SLE_\kappa(\frac{\gamma^2}2-2, \frac{\gamma^2}2)$ curve on $(\bbH,0,\infty)$. Let $H^+_{\eta_1}$ the be right component of $\bbH\setminus \eta_1$. Conditioning on $\eta_1$, let $\eta_2$ be an $\SLE_\kappa(0;\frac{\gamma^2}2-2)$ on $(\bbH_{\eta_1}^+,0,\infty)$. 
We  denote $\cP(\bbH,0,\infty)$ as the law of $(\eta_1,\eta_2)$. For a general simply connected domain $D$ with two boundary points $(a,b)$, we write 
$\cP(D,a,b)$ as the conformal image of $\cP(\bbH,0,\infty)$.
As a straightforward extension of Theorem~\ref{thm:disk-welding}, we have the following conformal welding result.
\begin{theorem}\label{thm:3disk-welding} 
	Let $\ell, \ell'>0$ and  $(\bbH,\phi,0,\infty)$ be an embedding of a sample from $\cM^{\disk}_{0,2}(2+\gamma^2; \ell, \ell')$.
	Let $(\eta_1, \eta_2)$ be sampled from $\cP(\bbH, 0, \infty)$ independent of $\phi$.
	Let $\bbH^{1}_\eta$, $\bbH^{12}_\eta$ and  $\bbH^2_\eta$ be the left, middle, and right components of $\bbH\setminus (\eta_1\cup \eta_2)$, respectively. 
	The joint law of $(\bbH^{1}_\eta,\phi, 0,\infty)$, $(\bbH^{12}_\eta,\phi, 0,\infty)$, and  $(\bbH^2_\eta,\phi, 0,\infty)$ 
	viewed as marked quantum surfaces equals
	\begin{equation}\label{eq:welding}
		C\iint_0^\infty \cMtwo(\frac{\gamma^2}2; \ell;p) \times \cMtwo(2; p;q) \times \cMtwo (\frac{\gamma^2}2; q;\ell')\,  dp \, dq, \quad \textrm{ for some }C\in (0,\infty).
	\end{equation}
\end{theorem}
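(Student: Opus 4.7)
The plan is to deduce Theorem~\ref{thm:3disk-welding} from two successive applications of the two-disk welding Theorem~\ref{thm:disk-welding}, peeling off $\eta_1$ first and then $\eta_2$. The key observation is that $(\eta_1,\eta_2)\sim \cP(\bbH,0,\infty)$ is constructed sequentially: one first samples $\eta_1\sim \SLE_\kappa(\tfrac{\gamma^2}{2}-2;\tfrac{\gamma^2}{2})$ on $(\bbH,0,\infty)$, and then, conditionally on $\eta_1$, samples $\eta_2\sim \SLE_\kappa(0;\tfrac{\gamma^2}{2}-2)$ in the right component of $\bbH\setminus\eta_1$ independently of $\phi$.

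For the first cut, set $W_-=\tfrac{\gamma^2}{2}$ and $W_+=\tfrac{\gamma^2}{2}+2$, so that $W_-+W_+=2+\gamma^2$ and the corresponding $\rho_\pm=W_\pm-2$ match the parameters of $\eta_1$. Applying Theorem~\ref{thm:disk-welding} to the weight-$(2+\gamma^2)$ disk cut along $\eta_1$, and then disintegrating over the two extreme boundary lengths (the leftmost, $\ell$, and the rightmost, $\ell'$), the joint law of the pair (left of $\eta_1$, right of $\eta_1$) viewed as elements of $\mathfrak D_{0,2}\times\mathfrak D_{0,2}$ equals
\[
C_1\int_0^\infty \cMtwo(\tfrac{\gamma^2}{2};\ell,p)\times \cMtwo(\tfrac{\gamma^2}{2}+2;p,\ell')\,dp.
\]
For the second cut, set $W'_-=2$ and $W'_+=\tfrac{\gamma^2}{2}$, so that $W'_-+W'_+=\tfrac{\gamma^2}{2}+2$ and the corresponding $\rho'_\pm$ match the parameters of $\eta_2$. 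Then Theorem~\ref{thm:disk-welding}, applied to a sample from $\cMtwo(\tfrac{\gamma^2}{2}+2;p,\ell')$ cut by an independent $\eta_2$, gives that the joint law of (middle component, right component) equals
\[
C_2\int_0^\infty \cMtwo(2;p,q)\times \cMtwo(\tfrac{\gamma^2}{2};q,\ell')\,dq.
\]
Substituting this into the previous display and setting $C=C_1C_2$ yields~\eqref{eq:welding}.

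The main technical point to justify is the use of Theorem~\ref{thm:disk-welding} in the second step, which requires $\eta_2$ to be independent of the field embedding the right quantum surface. In our setup $\eta_2$ is sampled conditionally on $\eta_1$ in a domain depending on $\eta_1$, not on an abstract reference embedding. This is resolved by Sheffield's conformal removability of $\SLE_\kappa$ curves for $\kappa\in(0,4)$: the pair (left quantum surface, right quantum surface, $\eta_1$) carries the same information as the welded object $(\bbH,\phi,\eta_1,0,\infty)$, so conditionally on the right component as a marked quantum surface, sampling $\eta_2$ in the original coordinates is the same as sampling an independent $\SLE_\kappa$ in any reference embedding. A secondary bookkeeping point is that the disintegrations $\cMtwo(W;\ell,\ell')$ must be taken in the continuous sense of \cite{ahs-disk-welding}, so that the identity holds pointwise in $(\ell,\ell')$ rather than only almost everywhere, which also ensures that the constant $C$ does not depend on $(\ell,\ell')$.
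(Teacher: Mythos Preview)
Your argument is correct. The paper's own proof is a one-line citation to \cite[Theorem~2.3]{ahs-disk-welding}, the general $n$-disk welding statement, specialized to $W_1=W_3=\tfrac{\gamma^2}{2}$ and $W_2=2$; that theorem in \cite{ahs-disk-welding} is itself proved by iterating the two-disk result (Theorem~\ref{thm:disk-welding} here), which is precisely what you do. So your proposal is not a different route but rather an unpacking of the cited proof for this special case, with the independence of $\eta_2$ from the re-embedded right field justified exactly as in \cite{ahs-disk-welding} via conformal invariance of $\SLE$ and the measurability of the welding (conformal removability).
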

\begin{proof}
	This is \cite[Theorem 2.3]{ahs-disk-welding} when   $W_1=W_3=\frac{\gamma^2}{2}$ and $W_2=2$.
\end{proof}

Fix $\delta\in (0,\frac12)$. Sample $(\phi, \mathbf x)$ from $\LF_\bbH^{(\gamma, i)} \times dx$ and sample $(\eta_1, \eta_2)$ from $\cP(\bbH, \mathbf x, \infty)$, and restrict to the event that $\nu_\phi(\mathbf x,\infty)\in (\delta,\frac12)$, $\nu_\phi(\R)\in (1,2)$ and $i$ is in between $\eta_1$ and $\eta_2$. Here and later, we write $\nu_\phi(a,b)$ to denote the $\nu_\phi$-length of the interval $(a,b)$.
Let $M_\delta$ be the law of $(\phi, \mathbf x, \eta_1, \eta_2)$ (restricted to the aforementioned event). See Figure~\ref{fig:Mdelta}.

\begin{figure}[ht!]
	\begin{center}
		\includegraphics[scale=0.65]{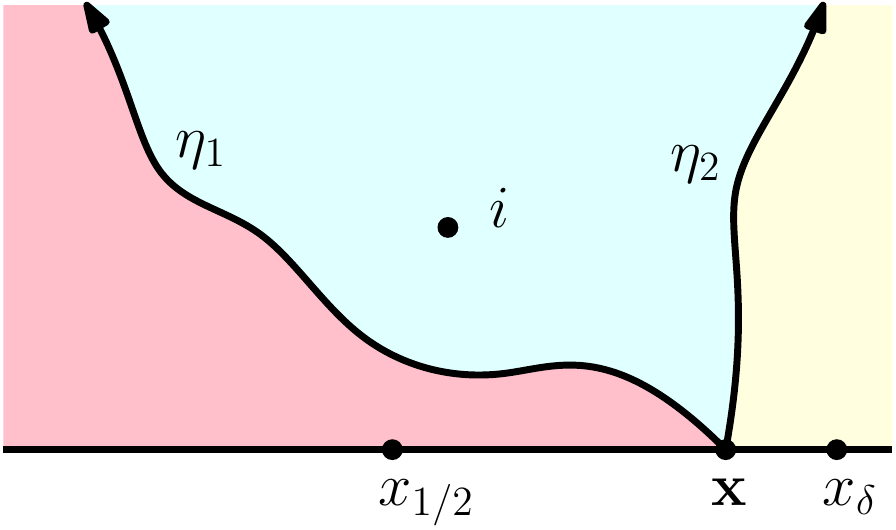}%
	\end{center}
	\caption{\label{fig:Mdelta} Illustration of $(\phi,\mathbf x,\eta_1,\eta_2)$ under $M_\delta$.
	 Sample $(\phi, \mathbf x)$ from $\LF_\bbH^{(\gamma, i)} \times dx$ and sample $(\eta_1, \eta_2)$ from $\cP(\bbH, \mathbf x, \infty)$. 	
	 To represent geometrically the condition $\nu_\phi(\mathbf x,\infty)\in (\delta,\frac12)$, consider the points $x_{1/2}$ and $x_\delta$ such that $\nu_\phi(x_{1/2},\infty)=1/2$ and 	$\nu_\phi(x_{\delta},\infty)=\delta$. Then $M_\delta$ is obtained by restricting to the event
	 $\mathbf x\in (x_{1/2},x_\delta)$, $i$ is in the middle of $\eta_1$ and $\eta_2$ and $\nu_\phi(\R)\in (1,2)$. 
	 Lemma \ref{lem:three-disks} describes the law of the three shaded quantum surfaces.
			}
\end{figure}

\begin{lemma}\label{lem:three-disks}
	There exists $C>0$ such that  for each $\delta \in (0, \frac12)$,  if $(\phi, \mathbf x, \eta_1, \eta_2)$ is sampled from $M_\delta$ then the law of the three  marked  quantum surfaces of $(\bbH, \phi, \eta_1, \eta_2, i, \mathbf x, \infty)$ bounded by $\eta_1$, $\eta_2$ and $\bdy\bbH$ is given by 
	\begin{equation}\label{eq:welding-special}
		C\int_{\delta}^{1/2} \int_{1-b}^{2-b} \int_0^\infty \int_0^\infty  \cMtwo(\frac{\gamma^2}2; a,p) \times  \cM^\disk_{1,2}(2;p,q) \times  \cMtwo(\frac{\gamma^2}{2}; q, b) \, dq\, dp \, da\, db.
	\end{equation}
\end{lemma}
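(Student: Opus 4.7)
The plan is to reduce Lemma~\ref{lem:three-disks} to the three-disk conformal welding statement of Theorem~\ref{thm:3disk-welding}, treating the $\gamma$-log singularity at $i$ as a quantum-area-typical bulk marked point.

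First I would invoke Lemma~\ref{lem:hx-embed}: the law of the marked quantum surface $(\bbH,\phi,i,\infty,\mathbf x)$ with $(\phi,\mathbf x)\sim\LF_\bbH^{(\gamma,i)}\times dx$ is $\tfrac{2Q^2}{\gamma}\cM^\disk_{1,2}(2+\gamma^2)$, and by Definition~\ref{def:disk-bulk} this equals the measure obtained by first sampling an embedding of $\cM^\disk_{0,2}(2+\gamma^2)$ (with boundary marked points $\infty,\mathbf x$) and then sampling a bulk point from $\mu_\phi$. Hence, up to a positive constant, $M_\delta$ can be generated by: sample $(\bbH,\phi,\mathbf x,\infty)$ from $\cM^\disk_{0,2}(2+\gamma^2)$ restricted to $\nu_\phi(\mathbf x,\infty)\in(\delta,\tfrac12)$ and $\nu_\phi(\R)\in(1,2)$; sample $(\eta_1,\eta_2)\sim\cP(\bbH,\mathbf x,\infty)$ independently of $\phi$; sample a bulk point from $\mu_\phi$; and restrict to the event that this bulk point lies in the middle component of $\bbH\setminus(\eta_1\cup\eta_2)$. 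This last event is equivalent to the condition ``$i$ between $\eta_1$ and $\eta_2$'' in the definition of $M_\delta$.

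Next, I would disintegrate $\cM^\disk_{0,2}(2+\gamma^2)$ over the two boundary arc lengths, writing $a$ for the length of $(-\infty,\mathbf x)$ and $b=\nu_\phi(\mathbf x,\infty)$ for the length of $(\mathbf x,\infty)$. Theorem~\ref{thm:3disk-welding} then says that, for each fixed $a,b$, the joint law of the three pieces $\bbH^1_\eta,\bbH^{12}_\eta,\bbH^2_\eta$ (as marked quantum surfaces with boundary marked points $\mathbf x,\infty$) is a constant multiple of
\[
\iint_0^\infty \cMtwo(\tfrac{\gamma^2}{2};a,p)\times\cMtwo(2;p,q)\times\cMtwo(\tfrac{\gamma^2}{2};q,b)\,dp\,dq.
\]
Adding a bulk point sampled from $\mu_\phi$ on $\bbH$ is the same as summing, over $j\in\{1,12,2\}$, a bulk point sampled from $\mu_\phi|_{\bbH^j_\eta}$. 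Restricting to the event that this bulk point falls in $\bbH^{12}_\eta$ affects only the middle factor, and by Definition~\ref{def:disk-bulk} upgrades $\cMtwo(2;p,q)$ to $\cM^\disk_{1,2}(2;p,q)$; the left and right factors are untouched. The $M_\delta$-constraints $b\in(\delta,\tfrac12)$ and $a+b\in(1,2)$ then cut the $(a,b)$-integration range to $b\in(\delta,1/2)$, $a\in(1-b,2-b)$, which is exactly~\eqref{eq:welding-special}.

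The main technical point requiring care is the compatibility between the two natural embeddings in play. The measure $\cM^\disk_{1,2}(2+\gamma^2)$ lives on equivalence classes, but Lemma~\ref{lem:hx-embed} uses the embedding that sends the bulk point to $i$ and one boundary point to $\infty$ (leaving $\mathbf x$ random), whereas Theorem~\ref{thm:3disk-welding} is stated in an embedding of $\cM^\disk_{0,2}(2+\gamma^2)$ with both boundary points pinned. The two embeddings differ by a conformal automorphism of $\bbH$ determined by the location of the bulk point, but because $\cP(D,a,b)$ is defined by conformal pushforward, the independence of $(\eta_1,\eta_2)$ from the field and the conclusion of the welding theorem descend to equivalence classes, so no law-theoretic information is lost in passing between them. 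Once this bookkeeping is settled, the formula~\eqref{eq:welding-special} drops out of the chain of identities above.
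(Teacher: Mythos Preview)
Your proposal is correct and follows essentially the same approach as the paper: invoke Lemma~\ref{lem:hx-embed} to identify $(\bbH,\phi,i,\mathbf x,\infty)$ with $\cM^\disk_{1,2}(2+\gamma^2)$, use Definition~\ref{def:disk-bulk} together with Theorem~\ref{thm:3disk-welding} so that the bulk-point restriction upgrades the middle factor to $\cM^\disk_{1,2}(2;p,q)$, and then impose the boundary-length constraints to obtain the integration range in~\eqref{eq:welding-special}. Your explicit remark on the compatibility of embeddings via the conformal invariance of $\cP(D,a,b)$ is a sound elaboration of a point the paper leaves implicit.
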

\begin{proof}
	Lemma~\ref{lem:hx-embed} states that if we sample $(\phi, \mathbf x)$ from $\LF_\bbH^{(\gamma, i)} \times dx$ then the law of the quantum surface $(\bbH, \phi, i, \mathbf x, \infty)$ is $C \cM_{1, 2}^\disk(2+\gamma^2)$ for some constant $C$. If we further sample $(\eta_1, \eta_2)$ from $\cP(\bbH, \mathbf x, \infty)$ and restrict to the event that $i$ lies between $\eta_1$ and $\eta_2$, then 
	by Definition~\ref{def:disk-bulk} and Theorem~\ref{thm:3disk-welding}, the quantum surface $(\bbH, \phi, \eta_1,\eta_2, i, \mathbf x, \infty)$ has law 
	$$C\iiiint_0^\infty \cMtwo(\frac{\gamma^2}2; a ,p) \times  \cM^\disk_{1,2}(2; p,q) \times  \cMtwo(\frac{\gamma^2}{2}; q, b) \, dq\, dp \, da\, db$$ for some $C>0$. Further restricting to the event $\nu_\phi(\mathbf x,\infty)\in (\delta,\frac12)$ and $\nu_\phi(\R)\in (1,2)$ we conclude the proof.
\end{proof}

In Section~\ref{subsec:decouple},  we will show that the probability measure $M^{\#}_\delta$  that is proportional to $M_\delta$
concentrates on the event that $\nu_h(\mathbf x,\infty)$ is of order $\delta$ and 
$\log\mathbf x$ is of order $\log\delta^{-1}$. This effectively collapses  the right boundary of $(\bbH,\phi,\mathbf x,\infty)$. 
Building on this, we will prove the following proposition. Recall that for two probability measures $P,Q$ on the same measure space, the total variation distance between $P$ and $Q$ is $\sup_A |P(A) - Q(A)|$ where the supremum is taken over all measurable sets.
\begin{prop}\label{prop-apprx-zipper}
	For each $\delta\in (0,\frac12)$, let $\sm_\delta$ be the marginal law of  $(\mathbf x,\eta_1,\eta_2)$ under  $M^{\#}_\delta$. 
	Let $M$ be   $\LF_\bbH^{(\gamma, i)}$ restricted  to $\{ \nu_\phi(\R) \in (1, 2)\}$. 
	Then the total variational distance of   $M_{\delta}^\#$ and  $M^\# \times \sm_\delta$ tends to 0 as $\delta\to 0$. 
\end{prop}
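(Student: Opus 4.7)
The two measures $M_\delta^\#$ and $M^\#\times\sm_\delta$ share their marginal $\sm_\delta$ on $(\mathbf x,\eta_1,\eta_2)$. A direct computation of the Radon--Nikodym derivative yields
\[
\frac{dM_\delta^\#}{d(M^\#\times\sm_\delta)}(\phi,\mathbf x,\eta_1,\eta_2)=\frac{\mathbf 1_{A_\delta(\phi,\mathbf x)}}{f(\mathbf x,\delta)},\qquad A_\delta(\phi,\mathbf x):=\{\nu_\phi(\mathbf x,\infty)\in(\delta,1/2)\},
\]
where $f(\mathbf x,\delta):=M^\#[A_\delta(\cdot,\mathbf x)]$. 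Since $1/f\geq 1$ on $A_\delta$ and vanishes on $A_\delta^c$, integrating $|\mathbf 1_{A_\delta}/f-1|$ against $M^\#\times\sm_\delta$ gives the clean identity
\[
d_{TV}\bigl(M_\delta^\#,M^\#\times\sm_\delta\bigr)=1-\mathbb E_{\sm_\delta}[f(\mathbf x,\delta)].
\]
Thus the task reduces to showing $\mathbb E_{\sm_\delta}[f(\mathbf x,\delta)]\to 1$ as $\delta\to 0$, or equivalently (since the $\mathbf x$-marginal of $\sm_\delta$ has density proportional to $f(\mathbf x,\delta)P_C(\mathbf x)$, with $P_C(\mathbf x):=\cP(\bbH,\mathbf x,\infty)[i\text{ between }\eta_1,\eta_2]$) that
\(\int f(\mathbf x,\delta)^2 P_C(\mathbf x)\,d\mathbf x\) is asymptotic to \(\int f(\mathbf x,\delta)P_C(\mathbf x)\,d\mathbf x\).

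To control $f(\mathbf x,\delta)$, one studies the law of $\log\nu_\phi(\mathbf x,\infty)$ under $M^\#$ as $\mathbf x\to\infty$. Writing $\phi=h-2Q\log|\cdot|_++\gamma G_\bbH(\cdot,i)+\mathbf c$ as in Lemma \ref{lem:Girsanov} and applying the scaling covariance and Girsanov transform of the free-boundary GFF together with standard boundary-GMC moment bounds, one obtains a decomposition in which $\log\nu_\phi(\mathbf x,\infty)$ is, up to tight fluctuations, an affine function of $\log\mathbf x$ shifted by the circle-average of $h$ on the upper semicircle of radius $\mathbf x$ (which behaves like a time-changed Brownian motion in $\log\mathbf x$). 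The event $A_\delta(\mathbf x)$ then constrains $\log\nu_\phi(\mathbf x,\infty)$ to a window of additive width $\log(1/(2\delta))\to\infty$, and consequently for $\mathbf x$ in a $\delta$-dependent range (growing to cover many orders of magnitude as $\delta\to 0$) the window contains many standard deviations of the Gaussian fluctuation, so that $f(\mathbf x,\delta)\to 1$.

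To conclude, one combines this asymptotic for $f$ with polynomial decay estimates on $P_C(\mathbf x)$ derived from Schramm-type hitting formulas for $\SLE_\kappa(\rho_\pm)$ (applied after conformally mapping $\mathbf x\to 0$, so that $i$ sits at argument $\pi-1/\mathbf x+O(\mathbf x^{-3})$). A change of variables $t=\log\mathbf x$ and a dominated-convergence argument then show that the contribution to the integrals $\int f P_C$ and $\int f^2 P_C$ coming from the ``bulk'' region (where $f\approx 1$) dominates that of the ``transition'' region (where $f$ is bounded away from $1$), giving $\mathbb E_{\sm_\delta}[f(\mathbf x,\delta)]\to 1$. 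The principal technical obstacle is precisely the matching of the boundary-GMC tail asymptotics for $\nu_\phi(\mathbf x,\infty)$ with the polynomial SLE-surrounding decay $P_C(\mathbf x)$: both involve $\gamma$-dependent power laws, and one must verify that the resulting concentration estimate is uniform in the SLE parameters $\rho_\pm$ used to define $\cP(\bbH,\mathbf x,\infty)$.
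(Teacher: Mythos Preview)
Your opening reduction is correct and clean: the two measures do share the $\sm_\delta$-marginal, the Radon--Nikodym derivative is $\mathbf 1_{A_\delta}/f(\mathbf x,\delta)$ as you write, and the identity $d_{TV}=1-\mathbb E_{\sm_\delta}[f(\mathbf x,\delta)]=1-\int f^2 P_C/\int fP_C$ is a nice formulation. This is a genuinely different organization from the paper's proof. The paper instead first controls the \emph{$\phi$-marginal}: it writes $\tfrac{dM_\delta^\#}{dM^\#}(\phi)=|M|\bigl(F(x_\delta)-F(x_{1/2})\bigr)/|M_\delta|$ with $F(x)=\int_0^x P_C(y)\,dy$, proves this converges to $1$ $M^\#$-a.s., and only then upgrades to the joint law via the conditional structure of $(\mathbf x,\eta_1,\eta_2)$ given $\phi$.

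The substantive gap in your plan is the input $P_C(\mathbf x)\sim c/\mathbf x$. Your ratio $\int f^2P_C/\int fP_C\to 1$ hinges on knowing the decay exponent of $P_C$: if $P_C$ decayed faster than $1/\mathbf x$ the mass would concentrate near the lower transition region where $f$ is bounded away from $1$, and your argument would fail. You appeal to ``Schramm-type hitting formulas for $\SLE_\kappa(\rho_\pm)$,'' but such formulas are not standard for force-point SLE, let alone for the two-curve event $\{i$ between $\eta_1$ and $\eta_2\}$ with $\eta_2$ sampled conditionally on $\eta_1$. The paper makes a point of avoiding any SLE computation here: it obtains $|M_\delta|\sim C\log\delta^{-1}$ directly from the conformal welding description (Lemma~\ref{lem:three-disks} and the explicit length laws of Lemma~\ref{lem:lengh}), and then \emph{derives} $F(y)\sim c\log y$ (and hence $P_C(y)\sim c/y$) from this together with the GMC asymptotic $\log x_\delta/\log\delta^{-1}\to 2/(\gamma Q)$, via a sandwich argument (Lemmas~\ref{lem-f-upper} and~\ref{lem-f-lower}). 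In other words, the paper gets the SLE asymptotic \emph{for free} from the LQG side, precisely the step you identify as the principal obstacle and leave open.

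A minor point: your final sentence about uniformity in $\rho_\pm$ is confused. The parameters $\rho_- = \frac{\gamma^2}2-2$ and $\rho_+ = \frac{\gamma^2}2$ are fixed by the construction of $\cP(\bbH,\mathbf x,\infty)$; there is no family of SLE parameters over which anything needs to be uniform.
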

	
In Section~\ref{subsec:delta0}, we prove that by sending $\delta\to 0$ in the integral~\eqref{eq:welding-special}, we get the conformal welding of $\QD_{1,1} $ and $\cMtwo(\frac{\gamma^2}2)$ as in Theorem~\ref{thm-bubble-zipper}. 
Moreover, $\sm_\delta$ from Proposition~\ref{prop-apprx-zipper} has a weak limit supported on loops rooted at $\infty$, 
whose pushforward under $z\mapsto -z^{-1}$ gives the measure $\sm$ in Theorem~\ref{thm-bubble-zipper}. 

Our approximation procedure (where we conformally weld three quantum disks and have one vanish in the $\delta \to 0$ limit) may seem more complicated than necessary, but it is advantageous for the following reasons. Firstly, in our setup the law of the field of $M_\delta$ is absolutely continuous with respect to $M$, so the statement and proof of Proposition~\ref{prop-apprx-zipper} can avoid modifying the field in some way. Secondly, $M_\delta$ arises from a setup where the interfaces and field are independent (i.e.\ $(\phi, \mathbf x) \sim \LF_\bbH^{(\gamma, i)} \times dx$ then $(\eta_1, \eta_2) \sim \cP(\bbH, \mathbf x, \infty)$). Finally, this approach allows us to avoid SLE estimates entirely.

\subsection{An approximate bubble zipper: proof of Proposition~\ref{prop-apprx-zipper}}\label{subsec:decouple}
Suppose we are in the setting of Lemma~\ref{lem:three-disks} and Proposition~\ref{prop-apprx-zipper}. We first give 
a simple description of the Radon-Nykodim derivative of $\frac{dM_{\delta}}{dM}$.
For $x\in \R$, let $p(x)$ be the conditional  probability that $i$ is in between $\eta_1$ and $\eta_2$ given $\mathbf x=x$. 
Then $p(x)$ is an even function determined by the SLE measure $\cP(\bbH, x,\infty)$.
Define the function $f: \R \to \R$ by 
\begin{equation}\label{def:f}
	f(x) = \left\{
	\begin{array}{ll}
		\int_0^x p(y) \, dy  & \mbox{if } x \geq 0 \\
		- \int_x^0 p(y) \, dy & \mbox{if } x < 0.
	\end{array}
	\right.
\end{equation}
For each $\ell \in (0, \nu_\phi(\R))$, let 
\begin{equation}\label{eq:xell}
	x_\ell=\inf\{x\in \R: \nu_\phi(x, \infty) = \ell\}.
\end{equation}
\begin{lemma}\label{cor-RN}
	For a non-negative measurable function $F$ on $H^{-1}(\bbH)$, we have
	\[
	M_\delta[F(\phi)]	=\int[ f(x_\delta) - f(x_{1/2})] F(\phi) \,  dM.
	\]
\end{lemma}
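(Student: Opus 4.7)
The plan is to prove this via a direct Fubini-type computation. By construction, when $(\phi, \mathbf x, \eta_1, \eta_2)$ is sampled from the joint measure $\LF_\bbH^{(\gamma, i)}(d\phi) \times dx \times \cP(\bbH, x, \infty)$, the conditional law of $(\eta_1, \eta_2)$ given $(\phi, \mathbf x)$ depends only on $\mathbf x$. Therefore, for the event $E = \{i \text{ is between } \eta_1 \text{ and }\eta_2\}$ we have $\mathbb{P}(E \mid \phi, \mathbf x) = p(\mathbf x)$. Integrating out $(\eta_1, \eta_2)$ first yields
\[
M_\delta[F(\phi)] = \int F(\phi)\, 1_{\nu_\phi(\R) \in (1,2)} \int_{\R} 1_{\nu_\phi(x, \infty) \in (\delta, 1/2)}\, p(x)\, dx\, \LF_\bbH^{(\gamma, i)}(d\phi).
\]

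The next step is to rewrite the inner integral over $x$. Since $x \mapsto \nu_\phi(x, \infty)$ is non-increasing and continuous off a $\nu_\phi$-null set, with $\nu_\phi(x_\ell, \infty) = \ell$ by the definition in \eqref{eq:xell}, on the event $\{\nu_\phi(\R) \in (1, 2)\}$ both $x_{1/2}$ and $x_\delta$ are well-defined, and
\[
\{x \in \R : \nu_\phi(x, \infty) \in (\delta, 1/2)\} = (x_{1/2}, x_\delta)
\]
up to a Lebesgue-null set. Thus the inner integral equals $\int_{x_{1/2}}^{x_\delta} p(x)\, dx$.

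Finally, since $f$ defined in \eqref{def:f} is precisely the antiderivative of $p$ normalized so that $f(0) = 0$, for any $a < b$ we have $\int_a^b p(y)\, dy = f(b) - f(a)$, which gives $\int_{x_{1/2}}^{x_\delta} p(x)\, dx = f(x_\delta) - f(x_{1/2})$. Substituting back and recalling that $M$ is $\LF_\bbH^{(\gamma, i)}$ restricted to $\{\nu_\phi(\R) \in (1, 2)\}$ completes the identity. No substantive obstacle is expected here; the only points to verify are the measurability of $(x_{1/2}, x_\delta)$ as functions of $\phi$ and the applicability of Fubini, both of which are routine given that $\phi$ lies in $H^{-1}(\bbH)$ and $\nu_\phi$ is defined as a limit of smooth approximations.
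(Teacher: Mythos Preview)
Your proof is correct and follows essentially the same approach as the paper: integrate out the curves $(\eta_1,\eta_2)$ first to obtain the factor $p(\mathbf x)$, identify the region $\{\nu_\phi(x,\infty)\in(\delta,\tfrac12)\}$ with $(x_{1/2},x_\delta)$, and then use that $f$ is an antiderivative of $p$. The paper's own proof is the same computation stated slightly more tersely.
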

\begin{proof}
		By definition, if we sample $(\phi, \mathbf x)$ from $M \times dx$ and then sample $(\eta_1, \eta_2)$ from $\cP(\bbH, \mathbf x, \infty)$, and restrict to the event that $\mathbf x \in (x_{1/2}, x_\delta)$ and $i$ is in between $\eta_1$ and $\eta_2$, then the law of $(\phi, \mathbf x, \eta_1, \eta_2)$ is $M_\delta$. 
		Conditioning on $(\phi, \mathbf x)$, the conditional probability that $i$ lies between $\eta_1$ and $\eta_2$ is  $\int_{x_{1/2}}^{x_\delta} p(x)\, dx = f(x_{\delta}) - f(x_{1/2})$.
		This gives  the result.  	
\end{proof}

The hardest step in proving Proposition~\ref{prop-apprx-zipper} is to show that the $M_\delta^\#$-law of $\phi$ converges to that of $M^\#$, i.e., to show that a.s.\  $\frac{dM_\delta^\#}{dM^\#}(\phi) = |M|(f(x_\delta) - f(x_{1/2}))/|M_\delta| \to 1$ as $\delta \to 0$. 
In the argument below Lemma~\ref{lem-total-mass} will give that   $|M_\delta|$ diverges as $\log \delta^{-1}$.  Lemma~\ref{lem-as-tail} will give that $x_\delta = \delta^{-\frac2{\gamma Q} + o(1)}$ a.s. Lemmas~\ref{lem-f-upper} and~\ref{lem-f-lower} will give the asymptotic growth of $f(x_\delta)$. Putting them together we will get the desired limit.

Let  $A=\nu_h(-\infty,\mathbf x)$ and $B=\nu_h(\mathbf x,\infty)$. Let $P$ and $Q$  be the quantum lengths of $\eta_1$ and $\eta_2$ with respect to $\phi$. 
\begin{lemma}\label{lem-total-mass}
	There exists $C\in (0,\infty)$ such that \(\lim_{\delta\to 0}	\frac{|M_{\delta}| }{\log \delta^{-1}}= C\).
	Moreover, the $M_\delta^\#$-law of $(A,P)$  converges in total variational distance  to the probability measure on $[1,2]\times (0,\infty)$ whose density  is proportional to
	\[
	\frac{a^{4/\gamma^2-1}}{(a^{4/\gamma^2} + p^{4/\gamma^2})^2}\,  dadp
	\]
	and  $\lim_{\delta\to 0}M_\delta^\#[B>\eps]=\lim_{\delta\to 0}M_\delta^\#[Q>\eps]=0$ for each $\eps>0$.
\end{lemma}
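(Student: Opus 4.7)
My plan is to apply Lemma~\ref{lem:three-disks} to express $M_\delta$ as an explicit fourfold integral over $(a, b, p, q) = (A, B, P, Q)$ built from the disk masses $|\cMtwo(\gamma^2/2; \cdot, \cdot)|$ and $|\cM^\disk_{1,2}(2; \cdot, \cdot)|$, subject to the constraints $b \in (\delta, 1/2)$ and $a + b \in (1, 2)$. Lemma~\ref{lem:lengh} already provides $|\cMtwo(\gamma^2/2; \ell, r)|$ explicitly, so I first need the analogous formula for $|\cM^\disk_{1,2}(2; p, q)|$. By Definition~\ref{def:disk-bulk} this equals $\cMtwo(2; p, q)[\mu_\phi(\bbH)]$, and since $\cMtwo(2) = \QD_{0,2}$ the area distribution under $\cMtwo(2; p, q)^\#$ agrees (ignoring marked points) with that under $\QD(p+q)^\#$. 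Theorem~\ref{thm: intro area disk} together with Brownian scaling gives $\QD(\ell)^\#[A] \propto \ell^2$, so combining with $|\cMtwo(2; p, q)| \propto (p+q)^{-4/\gamma^2 - 1}$ yields $|\cM^\disk_{1,2}(2; p, q)| = c_0 (p+q)^{1 - 4/\gamma^2}$ for an explicit constant $c_0 > 0$.

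The core asymptotic as $\delta \to 0$ will come from the substitution $q = b\tilde q$, under which
\[
|\cMtwo(\gamma^2/2; q, b)|\, dq \;=\; \frac{1}{b} \cdot \frac{\tilde q^{4/\gamma^2 - 1}}{(\tilde q^{4/\gamma^2} + 1)^2}\, d\tilde q.
\]
The factor $b^{-1}$ produces $\int_\delta^{1/2} b^{-1}\, db = \log \delta^{-1} + O(1)$, while all remaining $b$-dependent factors converge: $|\cM^\disk_{1,2}(2; p, b\tilde q)| \to c_0\, p^{1 - 4/\gamma^2}$ and $\mathbf 1_{a + b \in (1,2)} \to \mathbf 1_{a \in (1,2)}$. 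A dominated convergence argument (with dominating bound $|\cM^\disk_{1,2}(2; p, b\tilde q)| \leq c_0 p^{1 - 4/\gamma^2}$ uniformly in $b, \tilde q$ since $4/\gamma^2 > 1$, combined with the polynomial decay of the other factors in $a, p, \tilde q$) then gives $|M_\delta|/\log \delta^{-1} \to C$ for some explicit $C \in (0,\infty)$ and yields the renormalized $(a, p)$-density proportional to
\[
|\cMtwo(\gamma^2/2; a, p)| \cdot p^{1 - 4/\gamma^2}\, \mathbf 1_{a \in (1, 2)} \;=\; \mathbf 1_{a \in (1, 2)}\, \frac{a^{4/\gamma^2 - 1}}{(a^{4/\gamma^2} + p^{4/\gamma^2})^2},
\]
matching the claimed target. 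Pointwise convergence of probability densities together with mass conservation then upgrades to total variation convergence via Scheff\'e's lemma.

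For the concentration of $B$ and $Q$ at zero: restricting the $b$-integration to $\{b > \eps\}$ removes the logarithmic divergence since $\int_\eps^{1/2} b^{-1}\, db = O(1)$, while restricting to $\{q > \eps\}$ similarly yields $O(1)$ mass because the $b$-integral over $(0, 1/2)$ converges at $b = 0$ thanks to the exponent $4/\gamma^2 - 1 > 0$ (using $\gamma^2 < 4$) and the resulting integrand in $q$ decays like $q^{-8/\gamma^2}$ at infinity. Dividing by $|M_\delta| \asymp \log \delta^{-1}$ gives $M_\delta^\#[B > \eps] \to 0$ and $M_\delta^\#[Q > \eps] \to 0$. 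The main technical step will be the uniform-integrability/dominated-convergence control needed to pass to the limit inside the multiple integrals; given the explicit polynomial decay of every factor this should be routine, and is the main obstacle to streamline.
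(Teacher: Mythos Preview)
Your proposal is correct and follows essentially the same approach as the paper: both start from Lemma~\ref{lem:three-disks}, use the explicit boundary-length densities from Lemma~\ref{lem:lengh} together with $|\cM^\disk_{1,2}(2;p,q)| \propto (p+q)^{1-4/\gamma^2}$, extract the $b^{-1}$ singularity via the substitution $q = b\tilde q$ (the paper additionally substitutes $p = ar$, which is cosmetic), and conclude via dominated/monotone convergence. Your explicit invocation of Scheff\'e's lemma for the total variation convergence is a nice touch that the paper leaves implicit.
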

\begin{proof}
	Our proof relies on~\eqref{eq:welding-special}, which gives a description of the joint distribution of $(A,P,Q,B)$ under $M_\delta$ in terms of 
	$\cM_{0,2}^\disk(\frac{\gamma^2}2)$ and  $\cM_{1,2}^\disk(2)$, whose  quantum length distributions  are given in  Lemma~\ref{lem:lengh}.
	Indeed, write $g = \frac{4}{\gamma^2} > 1$. By Lemma~\ref{lem:lengh} and the definition of $\cM^\disk_{1,2} (2; \ell,r)$,	there exist constants $C_1,C_2$ such that
	\eqb\label{eq-MOT-lengths}
	|\cMtwo(\frac{\gamma^2}2;\ell, r)| = C_1 \frac{(\ell r)^{g-1}}{(\ell^{g} + r^{g})^2} \quad \textrm{and} \quad  |\cM^\disk_{1,2} (2; \ell,r)| = C_2 (\ell + r)^{-g+1}.
	\eqe
	By~\eqref{eq:welding-special}, the  $M_\delta^{\#}$-law of  $(A,P,Q,B)$ is the  probability measure supported on the set 
	$S_\delta=\{ (a,p,q,b)\in  (0,\infty)^4: b\in (\delta,1/2) , a+b\in (1,2)  \}$ whose density function is proportional to 
	\begin{equation}\label{eq:density-apqb}
		m(a,p,q,b) =\frac{(ap)^{g-1}}{(a^g+p^g)^2} \cdot (p+q)^{-g+1} \cdot \frac{(bq)^{g-1}}{(b^g+q^g)^2}
	\end{equation}	
	and 
	\begin{equation}\label{eq:Mdelta}
		|M_\delta|= C_1^2 C_2\int_{S_\delta} m(a,p,q,b) \, da\,dp\,dq\,db.
	\end{equation}	
	Using the change of variable $p = ar$ and $q = bs$, we see that 
	\begin{equation}\label{eq:ratio}
		\iint_0^\infty m(a,p,q,b)
		\,dp\,dq = \frac{1}{a^gb} \iint_0^\infty \frac{r^{g-1}}{(1+r^g)^2} \cdot \frac1{( r + \frac ba s)^{g-1}} \cdot \frac{s^{g-1}}{(1+s^g)^2} \,dr \, ds.
	\end{equation}
	By the monotone convergence theorem, the last integral converges to 
	\[
	C_3=\iint_0^\infty \frac{s^{g-1}}{(1+r^g)^2(1+s^g)^2}   \,drds < \infty \textrm{ as }  b/a\to  0.
	\]
	Therefore:
	\eqb \label{eq-rs}
	\iint_0^\infty m(a,p,q,b)
	\,dp\,dq = \frac{C_3}{a^gb}(1+o_{b/a}(1)).
	\eqe
	
	Integrating~\eqref{eq-rs} over $b\in (\delta,1/2) , a+b\in (1,2)$  and using~\eqref{eq:Mdelta},{\tiny } we get 
	\(\lim_{\delta\to 0}	\frac{|M_{\delta}| }{\log \delta^{-1}}=  C\) with $C := \int_1^2 \frac{C_1^2C_2 C_3}{a^{g}}\, da$. 
	Since $\sup_{\delta\in (0,1/2)}M_\delta[B>\eps]<\infty$ for each $\eps>0$, $\lim_{\delta\to 0}M_\delta^\#[B>\eps]=0$.
	By~\eqref{eq:ratio}, the $M_\delta^\#$-law of $\frac{Q}{B}$ is converging  to the probability measure on $(0,\infty)$ that is proportional to $\frac{s^{g-1}\, ds}{(1+s^g)^2}$. 
	In particular,  $\lim_{\delta\to 0}M_\delta^\#[Q>\eps]=0$. Similarly, the $M_\delta^\#$-law of $(A,\frac{P}{A})$ converges to the probability measure on $[1,2]\times (0,\infty)$ that is proportional to  $\frac{da}{a^g} \times \frac{dr}{(1+r^g)^2}$. This gives the desired limiting joint distribution of $(A,P)$.
\end{proof}

We first gather some basic facts on the quantum boundary length under the GFF measure $P_\bbH$.
	\begin{lemma}\label{lem-GFF-facts}
		Sample $h$ from $P_\bbH$. For each $p>0$ we have $P_\bbH[\nu_h(0,1) < \frac1s] = O(s^{-p})$ as $s \to \infty$. Moreover, there exists $C\in (0,\infty)$
		such that \(P_\bbH[\nu_h(0,y)>t] \le C t^{-\frac4{\gamma^2}} y \)  for all $t > 0 $  and $y \in(0,1)$.
		Finally,  \ $\lim_{y \to 0^+} \frac{\left|\log y\right|}{\left| \log \nu_h (0,y)\right|} = \frac2{\gamma Q}$ a.s.
	\end{lemma}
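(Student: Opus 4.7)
The plan is to base all three assertions on a single distributional identity obtained via Brownian scaling. Pushing the change of variables $x = yu$ through the definition of the boundary GMC, and noting that the covariance $G_\bbH(yu_1, yu_2) = -2\log|u_1 - u_2| - 2\log y$ on $(0,1)$ differs from $G_\bbH(u_1, u_2) = -2\log|u_1 - u_2|$ there by the constant $-2\log y$, one obtains
\[
\nu_h(0, y) \;\eqD\; y^{1 + \gamma^2/4}\, e^{\gamma N/2}\, M, \qquad y\in(0,1),
\]
where $M \eqD \nu_h(0, 1)$ and $N \sim \mathcal N(0, -2\log y)$ are independent auxiliary random variables. All three assertions will follow by analyzing the right-hand side.

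For the first assertion I would use that all negative moments of $\nu_h(0,1)$ are finite, a standard consequence of Kahane's convexity inequality combined with the Gaussian tail of $\inf_{x\in(0,1)} h_\eps(x)$; see e.g.\ \cite{rhodes-vargas-review}. Markov's inequality then gives $P_\bbH[\nu_h(0,1) < 1/s] \le s^{-p}\,\E[\nu_h(0,1)^{-p}] = O(s^{-p})$ for every $p > 0$.

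For the second assertion, condition on $N$ in the identity above. Splitting on $\{s e^{-\gamma N/2} \ge 1\}$ and using the critical-exponent tail $P[M > r] \le C r^{-4/\gamma^2}$ (e.g.\ from \cite{rv-tail}) together with the Gaussian moment $\E[e^{(2/\gamma)N}] = y^{-4/\gamma^2}$ yields
\[
P[e^{\gamma N/2}\, M > s] \;\le\; C s^{-4/\gamma^2}\, y^{-4/\gamma^2} + P[e^{\gamma N/2} > s],
\]
and the last term is also bounded by $C s^{-4/\gamma^2} y^{-4/\gamma^2}$ via Markov at exponent $4/\gamma^2$. Substituting $s = t y^{-1 - \gamma^2/4}$ and using the identity $(1 + \gamma^2/4)(4/\gamma^2) - 4/\gamma^2 = 1$ gives the desired bound $P_\bbH[\nu_h(0, y) > t] \le C y t^{-4/\gamma^2}$.

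For the third assertion, apply the identity at the dyadic points $y_k = 2^{-k}$ to write $\log \nu_h(0, y_k) - (1+\gamma^2/4)\log y_k \eqD (\gamma/2) N_k + \log M_k$ with $N_k \sim \mathcal N(0, 2k\log 2)$ independent of $M_k \eqD M$. Combining the Gaussian tail of $N_k$, the polynomial tail of $M$, and the negative moment bound from Part~1 makes $\sum_k P_\bbH[|\log\nu_h(0, y_k) - (1+\gamma^2/4)\log y_k| > \delta k] < \infty$ for each $\delta > 0$; Borel--Cantelli then yields $\log\nu_h(0, y_k)/\log y_k \to 1 + \gamma^2/4$ almost surely, and monotonicity of $y \mapsto \nu_h(0, y)$ extends this to continuous $y \to 0^+$. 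The reciprocal $1/(1+\gamma^2/4) = 4/(4+\gamma^2) = 2/(\gamma Q)$ is the claimed limit. The main delicate point will be invoking the GMC tail $P[M > r] \le C r^{-4/\gamma^2}$ with the critical exponent itself (rather than some $p < 4/\gamma^2$), which is a known result but must be cited carefully; otherwise one would only recover $t^{-p}$ for $p < 4/\gamma^2$ in Part~2 from Chebyshev alone.
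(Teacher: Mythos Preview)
Your proposal is correct and follows essentially the same approach as the paper. Both arguments rest on the scaling identity $\nu_h(0,y)\eqD y^{\gamma Q/2}e^{\frac\gamma2\sqrt{2|\log y|}G}\nu_h(0,1)$ with $G$ standard Gaussian independent of the second factor (you write it as $y^{1+\gamma^2/4}e^{\gamma N/2}M$, which is the same thing since $\gamma Q/2=1+\gamma^2/4$); the paper derives this via the radial Markov decomposition of the GFF, you via covariance comparison on the boundary, and both are standard. The tail inputs (finite negative moments; critical upper tail $\P[M>r]\le Cr^{-4/\gamma^2}$, which the paper cites from Wong's sharp tail result rather than \cite{rv-tail}) and the Borel--Cantelli step are the same in spirit; your dyadic sequence $y_k=2^{-k}$ with a fixed $\delta$ is in fact a bit cleaner than the paper's choice $y_n=e^{-(\log n)^4}$ with shrinking $\eps_n$.
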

	\begin{proof}
		We first show that if $G$ is a standard Gaussian independent of $h$ and $y \in (0,1)$, then
		\eqb \label{eq-similarity}
		\nu_h(0,y) \stackrel d= e^{\frac\gamma2(-Q \left| \log y \right|+ \sqrt{2\left|\log y\right|}G)} \nu_h(0,1). 
		\eqe
		Let $c$ be the average of $h$ on $\partial B_y(0) \cap \bbH$, so the marginal law of $c$ is $N(0,2\left| \log y \right|)$. The conditional law of $\wt h = h - c$ given $c$ is the free-boundary GFF on $\bbH$ normalized to have average zero on $\partial B_y(0) \cap \bbH$. Let $\psi(z) =yz$ for $z\in \bbH$. Then $\wt h \circ \psi$ has law $P_\bbH$. Therefore \[\nu_h(0,y) = e^{\frac\gamma2c} \nu_{\tilde h}(0,y) = e^{\frac\gamma2c}\nu_{\tilde h \circ \psi + Q \log |\psi'|}(0,1) \stackrel d=  e^{\frac\gamma2c + \frac\gamma2 Q  \log y} \nu_{h} (0,1).\]
		This proves~\eqref{eq-similarity}. 
		
		Now we address the tail bounds in the lemma. \cite[Theorem 2.12]{rhodes-vargas-review} gives the bound $P_\bbH[\nu_h(0,1) < \frac1s] = O(s^{-p})$ as $s \to \infty$. \cite[Theorem 1.1]{wong-tail}  gives
		$P_\bbH[\nu_h(0,1) > s] \leq C s^{-\frac4{\gamma^2}}$ for all $s>0$, so using~\eqref{eq-similarity} and $s = t e^{\frac\gamma2(Q \left|\log y\right|- \sqrt{2 \left|\log y\right|} G)}$ gives
		\[
		P_\bbH[\nu_h(0,y)>t] \leq C \E[(t e^{\frac\gamma2(Q \left|\log y\right|- \sqrt{2 \left|\log y\right|} G)})^{-\frac4{\gamma^2}}] 
		= Ct^{-\frac4{\gamma^2}} y.
		\]

		For the last assertion, let $y_n = e^{-(\log n)^4}$ and $\eps_n = \frac1{\log n}$ so that $y_n $ and $\eps_n \to 0$ as $n \to \infty$. Let $E_n=\{\nu_h(0,y_n) \in (y_n^{\frac{\gamma Q}2 + 2\eps_n}, y_n^{\frac{\gamma Q}2 - 2\eps_n}) \}$. By the tail bounds,  
		we have 
		\[P_\bbH[E_n^c] \leq \P[e^{\frac\gamma2\sqrt{2\left|\log y_n\right|} G} \not \in (y_n^{\eps_n}, y_n^{-\eps_n})] + P_\bbH[\nu_h(0,1) \not \in (y_n^{\eps_n}, y_n^{-\eps_n})] \leq 2e^{-O(\eps_n^2 \left|\log y_n\right|)} + 
		 O( y_n^{\frac4{\gamma^2}\eps_n}).\]
		Here, the first term is bounded from above by the standard Gaussian tail bound.  
		We conclude that $\sum_n P_\bbH[E_n^c] < \infty$, so the Borel-Cantelli lemma yields that a.s.\ $E_n$ occurs for all sufficiently large $n$. Since $\lim_{n\to\infty} \frac{\left|\log y_{n+1}\right|}{\left|\log y_n \right|} = 1$, this gives that $\lim_{y \to \infty} \frac{\left|\log y\right|}{\left| \log \nu_h (0,y)\right|} = \frac2{\gamma Q}$ a.s.
	\end{proof}

We now give an asymptotic estimate for $\log x_\delta$.
\begin{lemma}\label{lem-as-tail}
	$M$-almost everywhere,  \( \lim_{\delta\to0} \frac{\log x_\delta}{\log \delta^{-1}} = \frac2{\gamma Q}\).
\end{lemma}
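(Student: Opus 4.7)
The plan is to reduce the statement near $x = \infty$ to a statement near $y = 0$ via the conformal involution $\psi(z) = -1/z$ of $\bbH$, and then apply the last assertion of Lemma~\ref{lem-GFF-facts}. Set $\tilde \phi = \phi \circ \psi + Q \log |\psi'|$. Since $\psi(i) = i$ and $|\Im i| = 1$, Lemma~\ref{lem-change-coord} gives that $\tilde \phi$ has the same law as $\phi$ under $\LF_\bbH^{(\gamma, i)}$. Because $\nu_{\tilde \phi}(\R) = \nu_\phi(\R)$ by conformal covariance of quantum length, the restriction to $\{\nu_\phi(\R) \in (1,2)\}$ is preserved, so under $M$ too we have $\tilde \phi \stackrel{d}{=}\phi$. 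Moreover
\[
\nu_\phi([x, \infty)) = \nu_{\tilde \phi}(\psi([x, \infty))) = \nu_{\tilde \phi}((-1/x, 0)) \qquad \text{for all } x > 0.
\]
Setting $y = 1/x$, the claim is then equivalent to showing that $M$-a.e.,
\[
\lim_{y \to 0^+} \frac{|\log y|}{|\log \nu_{\tilde \phi}((-y, 0))|} = \frac{2}{\gamma Q}.
\]

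Next, I would write $\tilde \phi$ explicitly using Lemma~\ref{lem:Girsanov}: $\tilde \phi = \tilde h + \gamma G_\bbH(\cdot, i) - 2Q \log |\cdot|_+ + \tilde c$ with $\tilde h \sim P_\bbH$ and $\tilde c$ a random real number. A direct computation using $|z|_+ = 1$ on $(-1,1)$ gives $G_\bbH(z, i) = -\log(z^2 + 1)$, which is continuous on $(-1,1)$ and vanishes at $z = 0$. It follows that
\[
\nu_{\tilde \phi}((-y, 0)) = e^{\gamma \tilde c/2} \int_{-y}^0 e^{\gamma^2 G_\bbH(z, i)/2} \nu_{\tilde h}(dz) = (1 + o(1)) e^{\gamma \tilde c/2} \nu_{\tilde h}((-y, 0))
\]
as $y \to 0^+$, where the $o(1)$ comes from the continuity of $G_\bbH(\cdot, i)$ at $0$. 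Taking logarithms and using that $|\log y| \to \infty$ and $|\log \nu_{\tilde h}((-y, 0))| \to \infty$ (the latter by Lemma~\ref{lem-GFF-facts}), the constant $\gamma \tilde c/2$ and the multiplicative $1 + o(1)$ do not affect the limit, so it suffices to compute the analogous limit for $\nu_{\tilde h}((-y, 0))$.

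Finally, by the reflection symmetry $h(z) \stackrel{d}{=} h(-\bar z)$ of $P_\bbH$, the last assertion of Lemma~\ref{lem-GFF-facts} applied to the reflected field yields, $P_\bbH$-a.s.,
\[
\lim_{y \to 0^+} \frac{|\log y|}{|\log \nu_{\tilde h}((-y, 0))|} = \frac{2}{\gamma Q}.
\]
To transfer this to an $M$-a.e.\ statement, note that under $M$ the field $\tilde \phi$ arises as the pushforward of a measure absolutely continuous with respect to $P_\bbH \times [e^{(\gamma - Q)c} \, dc]$ (restricted to the finite, positive-mass event $\{\nu_\phi(\R) \in (1,2)\}$), so the $P_\bbH$-a.s.\ statement for $\tilde h$ transfers to an $M$-a.e.\ statement for $\tilde \phi$. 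Combining everything and substituting $y = 1/x$ gives the lemma, since $\nu_\phi([x_\delta, \infty)) = \delta$ then forces $\log x_\delta \sim \frac{2}{\gamma Q} |\log \delta|$.

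The main obstacle is essentially organizational: one must correctly apply the conformal change-of-coordinates to the Liouville field with a bulk insertion (keeping track of the fact that $\psi$ fixes $i$ so the insertion is unaffected and the $-2Q\log|\cdot|_+$ term becomes regular near $0$), and justify carefully that the perturbation of the GMC measure $\nu_{\tilde h}$ by the bounded continuous deterministic factor $e^{\gamma^2 G_\bbH(\cdot, i)/2}$ produces a $1 + o(1)$ multiplicative error as $y \to 0^+$. All the genuine analytic content is already contained in Lemma~\ref{lem-GFF-facts}.
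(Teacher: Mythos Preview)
Your proposal is correct and follows essentially the same approach as the paper: reduce the question at $\infty$ to one near $0$ via the inversion $z\mapsto -1/z$ (which preserves $\LF_\bbH^{(\gamma,i)}$ by Lemma~\ref{lem-change-coord}), use the decomposition of the Liouville field from Lemma~\ref{lem:Girsanov} to see that the bounded deterministic perturbation $\gamma G_\bbH(\cdot,i)$ and the additive constant do not affect the logarithmic asymptotics, and apply the last assertion of Lemma~\ref{lem-GFF-facts}. The only cosmetic difference is that the paper first establishes~\eqref{eq-inverted-as} for $\nu_\phi(0,y)$ and then inverts, whereas you invert first and then analyze $\nu_{\tilde\phi}((-y,0))$; both routes ultimately invoke a reflection symmetry (you state it for $P_\bbH$, the paper uses it implicitly for $\LF_\bbH^{(\gamma,i)}$).
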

\begin{proof}
	Recall from Definition~\ref{def:LFalpha} that $\phi$ sampled from $\LF_\bbH^{(\gamma, i)}$ is given by $\phi = h -2Q \log |z|_+ + \gamma G_\bbH(\cdot,i) + \mathbf c$ where $(h, \mathbf c)$ is sampled from $2^{-\gamma^2/2} P_\bbH\times [e^{(\gamma - Q)c}dc]$.
	Thus $\frac{\nu_\phi(0,y)}{e^{\frac\gamma2 \mathbf c}\nu_h(0,y)} \in (e^{-\frac{\gamma^2}2 G_\bbH(i,1)},1)$ for all $y \in (0,1)$, and so $\LF_\bbH^{(\gamma, i)}$-almost everywhere $\lim_{y \to 0^+} \frac{\log \nu_\phi(0,y)}{\log \nu_h(0,y)} = 1$.
	Therefore by the last assertion of Lemma~\ref{lem-GFF-facts},   we have
	\eqb\label{eq-inverted-as}
\lim_{y \to 0^+} \frac{\left| \log y \right|}{\left| \log \nu_\phi(0,y)\right|} = \frac2{\gamma Q} \quad \LF_\bbH^{(\gamma, i)}\text{-almost everywhere}.
\eqe
 By Lemma~\ref{lem-change-coord} applied to the inversion map $z \mapsto -z^{-1}$, this yields  that $\lim_{y \to \infty} \frac{\log y}{-\log \nu_{\phi}(y, \infty)} = \frac2{\gamma Q}$. Setting $y = x_\delta$ so that $\nu_\phi(y, \infty) = \delta$, we get
$\LF_\bbH^{(\gamma, i)}$-almost everywhere   \( \lim_{\delta\to0} \frac{\log x_\delta}{\log \delta^{-1}} = \frac2{\gamma Q}\). 
Hence the same limit holds for $M$ by restricting to $\nu_\phi(\R)\in (1,2)$.	
\end{proof}

The following tail bound is  quite loose, but suffices for our purposes. 
\begin{lemma}\label{prop-length-tail-bound}
	For $\zeta \in (0,1)$,  there exists $C_\zeta \in (0,\infty)$ such that
	\begin{equation}\label{eq:x-tail}
		M[ \{x_t\ge y  \}] < C_\zeta (t^{4/\gamma^2} y)^{-(1-\zeta)} \quad \text{for all } t > 0 \text{ and } y >1.
	\end{equation}
\end{lemma}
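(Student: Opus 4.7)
The plan is to reduce the tail bound for $x_t$ to a tail bound for the $\phi$-quantum mass in the small interval $(-\eta, 0)$ with $\eta = 1/y$, and then estimate the latter by a Girsanov--H\"older argument. Since $x\mapsto \nu_\phi(x,\infty)$ is nonincreasing, $x_t\geq y$ iff $\nu_\phi(y,\infty)\geq t$. The M\"obius map $\psi(z)=-1/z$ fixes $i$, so by Lemma~\ref{lem-change-coord} the map $\phi\mapsto \psi\bullet_\gamma \phi$ preserves $\LF_\bbH^{(\gamma,i)}$ and hence $M$; combined with $\nu_{\psi\bullet_\gamma \phi}(B) = \nu_\phi(\psi^{-1}(B))$ and $\psi^{-1}((y,\infty)) = (-1/y,0)$, this yields $M[\{x_t\geq y\}] = M[\{\nu_\phi(-\eta, 0)\geq t\}]$ with $\eta := 1/y \in (0,1)$.

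Next, I will write $\phi = \wt\phi + \mathbf c$ via Lemma~\ref{lem:Girsanov}, with $\wt\phi = h - 2Q\log|\cdot|_+ + \gamma G_\bbH(\cdot, i)$, $h \sim P_\bbH$ and $\mathbf c\sim 2^{-\gamma^2/2}e^{(\gamma - Q)c}\,dc$, and set $L_\eta = \nu_{\wt\phi}(-\eta, 0)$ and $L = \nu_{\wt\phi}(\R)$. Applying Markov's inequality with exponent $q>0$ on the event $\{\nu_\phi(-\eta,0) \geq t\}$ and carrying out the change of variable $v = e^{\gamma c/2}L$ to integrate out $\mathbf c$ against the restriction $\nu_\phi(\R) \in (1,2)$ will produce an inequality of the form
\[
M[\{\nu_\phi(-\eta,0)\geq t\}] \leq C_q\, t^{-q}\, \E_{P_\bbH}\!\left[L_\eta^q L^{4/\gamma^2-1-q}\right],\qquad C_q = \tfrac{2}{\gamma}\,2^{-\gamma^2/2}\int_1^2 v^{q-4/\gamma^2}\,dv,
\]
valid for every $q\in \R$. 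Since the additive correction in $\wt\phi$ reduces to the bounded function $-\gamma\log(x^2+1)$ on $(-\eta,0)$ for $\eta\in(0,1)$, we have $L_\eta \leq C\nu_h(-\eta,0)$, and a layer-cake integration of the tail bound of Lemma~\ref{lem-GFF-facts} will give $\E[L_\eta^r] \leq C_r \eta^{r\gamma^2/4}$ for every $r \in (0, 4/\gamma^2)$.

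Finally, for $\zeta \in (0, \gamma^2/4)$, I will choose $q = (4/\gamma^2)(1-\zeta)$ and apply H\"older with conjugate exponents $p = 1/(1-\zeta/2)$ and $p' = 2/\zeta$:
\[
\E\!\left[L_\eta^q L^{4/\gamma^2-1-q}\right] \leq \E\!\left[L_\eta^{q/(1-\zeta/2)}\right]^{1-\zeta/2}\E\!\left[L^{-2/\zeta + 8/\gamma^2}\right]^{\zeta/2}.
\]
The exponent $q/(1-\zeta/2)$ lies in $(0, 4/\gamma^2)$, so the first factor is $\leq C_\zeta \eta^{1-\zeta}$ by the preceding moment bound. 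The second factor is a negative moment of $L$: using $L \geq \nu_{\wt\phi}([-1,1])$ together with standard lower-tail estimates for 1D GMC on a bounded interval with bounded continuous log-correction (see e.g.\ \cite{rhodes-vargas-review}), one obtains $\E[L^{-s}] < \infty$ for every $s > 0$. Combining yields the desired bound $M[\{x_t \geq y\}] \leq C_\zeta (t^{4/\gamma^2}y)^{-(1-\zeta)}$ for $\zeta \in (0, \gamma^2/4)$. The remaining range $\zeta \in [\gamma^2/4, 1)$ follows from the case $\zeta = \gamma^2/8$ by monotonicity of $(t^{4/\gamma^2}y)^{-(1-\zeta)}$ in $\zeta$ when $t^{4/\gamma^2}y \geq 1$, combined with the trivial bound $M[\cdot] \leq |M| < \infty$ when $t^{4/\gamma^2}y < 1$. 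The main technical obstacle is the decoupling of $L_\eta$ from $L$ in the moment expression while retaining the sharp scaling $\eta^{1-\zeta}$; this is what forces the H\"older exponents to approach the critical values, and so the argument relies crucially on the finiteness of arbitrarily large negative moments of the 1D boundary GMC $L$.
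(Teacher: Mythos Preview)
Your proof is correct. Both approaches start with the same inversion step and the same decomposition $\phi=\wt\phi+\mathbf c$, and both ultimately feed on the two tail estimates of Lemma~\ref{lem-GFF-facts}; the difference is in how the $\mathbf c$-integration is handled. The paper keeps the integral over $c$ and splits it at a threshold $r$: for $c<r$ it invokes the upper-tail bound $P_\bbH[\nu_h(0,y)>t]\lesssim t^{-4/\gamma^2}y$, for $c>r$ it invokes the lower-tail bound $P_\bbH[\nu_h(0,1)<1/s]=O(s^{-p})$, and then optimizes over $r$ (with $p$ large) to produce the exponent $1-\zeta$. You instead apply Markov's inequality with exponent $q=(4/\gamma^2)(1-\zeta)$, integrate out $\mathbf c$ exactly via the change of variable $v=e^{\gamma c/2}L$, and are left with the mixed moment $\E[L_\eta^q L^{4/\gamma^2-1-q}]$, which you decouple by H\"older. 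Your scheme is slightly more systematic and reveals transparently that the sharp scaling $\eta^{1-\zeta}$ is exactly the layer-cake consequence of the upper-tail bound, while the paper's splitting hides this inside the optimization. The cost is that your H\"older step only works directly for $\zeta<\gamma^2/4$ (since otherwise the $L$-factor becomes a positive moment that may approach the critical order), and you need the closing monotonicity/trivial-bound argument to cover the remaining range; the paper's optimization handles all $\zeta\in(0,1)$ uniformly. As a minor point, your appeal to ``standard lower-tail estimates for 1D GMC'' is already packaged in the paper as the first assertion of Lemma~\ref{lem-GFF-facts}, so you may cite that directly.
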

\begin{proof}
	By the definition of $x_t$, \eqref{eq:x-tail} is equivalent to $M[\{ \nu_\phi(y, \infty) > t\}] < C_\zeta (t^{4/\gamma^2} y)^{-(1-\zeta)}$ for all $t>0$ and $y>1$. Using 	Lemma~\ref{lem-change-coord} with the inversion map $z \mapsto -z^{-1}$, this is equivalent to 
	\eqb\label{eq-tail-zero}
	M[ \{ \nu_\phi(0,y) > t\}] < C_\zeta (t^{-4/\gamma^2} y)^{(1-\zeta)} \quad \text{for all } t > 0 \text{ and } y \in(0,1).
	\eqe
	
	Recall that $M$ is $\LF_\bbH^{(\gamma, i)}$ restricted to $\{\nu_\phi(\R) \in (1,2)\}$. By Definition~\ref{def:LFalpha}, $\phi$ sampled from $\LF_\bbH^{(\gamma, i)}$ is $\phi(z) = h(z) - 2Q \log |z|_+ + \gamma G_\bbH(z,i) + \mathbf c$ where $(h, \mathbf c)$ is sampled from $2^{-\gamma^2/2}P_\bbH\times [e^{(\gamma - Q)c}dc]$. Since  $- 2Q \log |z|_+ + \gamma G_\bbH(z,i) \in (-\log2, 0)$ for $z \in [-1,1]$, we have $2^{-\gamma^2/2} e^{\frac\gamma2\mathbf c}\nu_h(I) < \nu_\phi(I) < e^{\frac\gamma2\mathbf c} \nu_h(I)$ for any $I \subset [-1,1]$. Since $M$-a.s.\ we have $\nu_\phi(-1,0) < 2$, for any $r \in \R$ we have
	\alb
	2^{\gamma^2/2}M[ \{ \nu_\phi(0,y) > t\}] &\leq \int_\R P_\bbH \left[e^{\frac\gamma2 c} \nu_h(0,y)>t \text{ and } 2^{-\gamma^2/2}e^{\frac\gamma2c}\nu_h(-1,0) < 2 \right] e^{(\gamma-Q)c} \, dc \\
	&\leq \int_{-\infty}^r P_\bbH[e^{\frac\gamma2c}\nu_h(0,y)>t] e^{(\gamma-Q)c}\,dc + \int_r^\infty P_\bbH[e^{\frac\gamma2c}\nu_h(-1,0)<2^{\gamma^2/2+1}] e^{(\gamma-Q)c}\,dc.
	\ale
	The first term can be bounded using the second assertion in Lemma~\ref{lem-GFF-facts}
	\[ \int_{-\infty}^r P_\bbH[e^{\frac\gamma2c}\nu_h(0,y)>t] e^{(\gamma-Q)c}\,dc \le  C \int_{-\infty}^r t^{-\frac4{\gamma^2}} y e^{\frac2\gamma c} \cdot e^{(\gamma - Q)c} \, dc = \frac2\gamma t^{-\frac4{\gamma^2}} y e^{\frac\gamma2 r}, \] where $C\in (0,\infty)$ is a constant that can change from line to line.
	For the second term, by Lemma~\ref{lem-GFF-facts} for any $p>0$ we have: 
	\[\int_r^\infty P_\bbH[e^{\frac\gamma2c}\nu_h(-1,0)<2^{\gamma^2/2+1}] e^{(\gamma-Q)c}\,dc \le C \int_r^\infty  2^{(\gamma^2/2 + 1)p} e^{-\frac\gamma2pc} e^{(\gamma - Q)c} \, dc \leq C  e^{(\gamma - Q - \frac\gamma2p)r}. \]
	Combining the two  bounds gives:
	\[M[ \{ \nu_\phi(0,y) > t\}] \le C t^{-\frac4{\gamma^2}} y e^{\frac\gamma2 r} + C e^{(\gamma - Q - \frac\gamma2p)r}. \]
	Choosing $r = \frac{\frac{4}{\gamma^2} \log t + \left|\log y\right| }{\frac2\gamma + \frac\gamma2p}$ gives~\eqref{eq-tail-zero} with $\zeta = \frac{\frac\gamma2}{\frac2\gamma + \frac\gamma2p}$. Varying $p\in (0,\infty)$, we get all $\zeta\in (0,1)$.
\end{proof}

The next lemma allows us to  control  $f(x_{1/2})$.
\begin{lemma}\label{lem-O1}
	We have $M[|f(x_{1/2})|] < \infty$. 
\end{lemma}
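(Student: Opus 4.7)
The crux is to show that $f$ is a uniformly bounded function on $\R$. Once $\|f\|_\infty < \infty$ is established, the lemma follows immediately: by Lemma~\ref{lem-len-LF} applied with $\alpha = \gamma$, the density of $\nu_\phi(\R)$ under $\LF_\bbH^{(\gamma,i)}$ is an integrable power of $\ell$ on $[1,2]$, so $|M| = \LF_\bbH^{(\gamma,i)}[\{\nu_\phi(\R) \in (1,2)\}] < \infty$, and hence $M[|f(x_{1/2})|] \le \|f\|_\infty \, |M| < \infty$.

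Since $p$ is even and $0 \le p \le 1$, the bound $|f(x)| \le \int_0^{|x|} p(y)\,dy$ reduces the task to showing $F := \int_0^\infty p(y)\,dy < \infty$. This is a pure SLE statement. By translation invariance of $\cP(\bbH,y,\infty)$ one has $p(y) = A(i-y)$, where $A(z) := \cP(\bbH,0,\infty)[z \text{ lies between } \eta_1 \text{ and } \eta_2]$. The scale invariance of $\SLE_{\gamma^2}(\rho_-;\rho_+)$ under $z \mapsto \lambda z$ ($\lambda > 0$) implies that $A$ depends only on $\arg z$; writing $A(z) = g(\arg z)$ and substituting $y = -\cot\theta$ gives
\[
F = \int_{\pi/2}^{\pi} g(\theta)\,\csc^2\theta\, d\theta.
\]
It therefore suffices to show that $g(\theta)\csc^2\theta$ is integrable near $\theta = \pi$.

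To verify the integrability, I would use the inclusion $\{z \text{ between } \eta_1, \eta_2\} \subset \{\eta_1 \text{ passes to the left of } z\}$, so $g(\theta)$ is bounded by the probability that $\eta_1$ passes to the left of a point $z$ with $\arg z = \theta$ near $\pi$. For chordal $\SLE_{\gamma^2}(\rho_-;\rho_+)$ with $\gamma^2 \in (0,4)$ and $\rho_\pm \ge \gamma^2/2 - 2$, Schramm-type boundary formulas show this probability decays as $(\pi-\theta)^\alpha$ with exponent $\alpha > 1$ (for standard chordal SLE this is the classical Schramm exponent $4/\gamma^2 > 1$, and the SLE$_\kappa(\rho)$ version is obtained from the hypergeometric solution of the corresponding boundary ODE). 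Since $\csc^2\theta \sim (\pi-\theta)^{-2}$ as $\theta \to \pi^-$, the bound $\alpha > 1$ makes the integrand integrable, yielding $F < \infty$.

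\textbf{Main obstacle.} The principal difficulty is to rigorously quote or derive the Schramm-type boundary decay estimate with the sharp exponent $\alpha > 1$ for the specific processes $\SLE_{\gamma^2}(\gamma^2/2-2;\gamma^2/2)$ and $\SLE_{\gamma^2}(0;\gamma^2/2-2)$ appearing in $\cP(\bbH,0,\infty)$. The standard Schramm formula handles the $\rho = 0$ case cleanly; adapting it via the martingale approach of Schramm/Dub\'edat to incorporate the force points should produce an exponent strictly greater than $1$ throughout our parameter range, but this tracking of exponents is the step requiring care.
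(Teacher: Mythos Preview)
Your plan has a fatal flaw: the function $f$ is \emph{not} bounded, so the integral $F=\int_0^\infty p(y)\,dy$ diverges. This can be read off directly from the surrounding results. By Lemma~\ref{cor-RN} one has $|M_\delta|=M[f(x_\delta)-f(x_{1/2})]$; if $\|f\|_\infty<\infty$ then $|M_\delta|\le 2\|f\|_\infty\,|M|$ uniformly in $\delta$, contradicting Lemma~\ref{lem-total-mass}, which shows $|M_\delta|\sim C\log\delta^{-1}\to\infty$. In fact the paper later records (see~\eqref{eq-total-mass}) that $\lim_{y\to\infty} f(y)/\log y$ is a strictly positive constant, so $p(y)\sim c/y$ and your exponent $\alpha$ equals exactly $1$, not strictly greater than $1$. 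Your ``main obstacle'' is precisely where the argument breaks: with the left force point $\rho_-=\kappa/2-2$ the Schramm-type decay of the left-passage probability for $\eta_1$ is altered from the chordal value $8/\kappa-1$, and the combined event $\{i\text{ between }\eta_1,\eta_2\}$ has probability of order $(\pi-\theta)$ rather than $(\pi-\theta)^{1+\eps}$.

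The paper's proof proceeds differently. It first extracts from Lemma~\ref{lem-total-mass} the logarithmic upper bound $f(y)\le C\log y$ for $y>2$ (inequality~\eqref{eq-f-weak-log}), obtained by lower-bounding $|M_\delta|$ by $f(\delta^{-1/(\gamma Q)})$ times a fixed positive mass. It then couples this growth bound with the tail estimate $M[\{x_{1/2}>y\}]\lesssim y^{-(1-\zeta)}$ from Lemma~\ref{prop-length-tail-bound} and a dyadic summation to conclude $M[\mathbf 1_{x_{1/2}\ge 0}|f(x_{1/2})|]<\infty$; the contribution from $x_{1/2}<0$ is handled by the symmetry of $f$ and a comparison of tails. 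In short, the lemma is proved not by bounding $f$ but by balancing its logarithmic growth against the polynomial tail of $x_{1/2}$.
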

\begin{proof}
	We first show there is a constant $C>0$ such that 
	\eqb\label{eq-f-weak-log}
	f(y) < C \log y \quad \text{ for all } y > 2. 
	\eqe
	Write $p = M[\{x_{1/2} < 0 \}]$. By Lemma~\ref{lem-as-tail}, for $\delta$ small enough, $M[\{x_\delta <\delta^{-\frac1{\gamma Q}} \}]  < \frac p2$.
	Let $F_\delta = \{x_\frac12 < 0 \text{ and } x_\delta > \delta^{-\frac1{\gamma Q}} \}$ so that $M[F_\delta]\ge \frac{p}2$. By Lemma~\ref{cor-RN} we have \( |M_{\delta}| = M[f(x_\delta) - f(x_{1/2})] \geq M[1_{F_\delta}(f(x_\delta) - f(x_{1/2}))] \).
	Since $f$ is increasing and $f(0) =0$,	we have 
	\[ M[1_{F_\delta}(f(x_\delta) - f(x_{1/2}))] \ge  f(\delta^{-\frac1{\gamma Q}})M[F_\delta] \ge  \frac p2 f(\delta^{-\frac1{\gamma Q}}).  \]

	By Lemma~\ref{lem-total-mass}, $|M_{\delta}|   =(1+o_\delta(1)) \mathfrak C \log \delta^{-1}$. Therefore there exists $C\in (0,\infty)$ such that 
	$f(\delta^{-\frac1{\gamma Q}}) \le C  \log \delta^{-1}$ for all $\delta\in (0,1/2)$.  Setting $y = \delta^{-\frac1{\gamma Q}}$ and possibly choosing $C$ larger
	we obtain~\eqref{eq-f-weak-log}. 
	
	Now we bound $M[|f(x_{1/2})|]$. 
	By~\eqref{def:f} $f$ is an increasing odd function, so 
	\begin{equation}\label{eq:monotone}
		M[1_{x_{1/2}>1} f(x_{1/2})]= \sum_{n=1}^\infty M[1_{2^{n}\geq x_{1/2} >2^{n-1} } f(x_{1/2})] \leq  \sum_{n=1}^\infty f(2^n) M[\{x_{1/2} >2^{n-1} \} ].
	\end{equation}
	By~\eqref{eq-f-weak-log} and the tail bound of $x_{1/2}$ in Lemma~\ref{prop-length-tail-bound}, for a possibly  larger $C$ we have
	\[
	M[1_{x_{1/2}>1} f(x_{1/2})]\le \sum_{n=1}^\infty C \log 2^n M[\{x_{1/2} >2^{n-1} \} ]\le C^2\sum_{n=1}^\infty n 2^{-n/2}
	<\infty.
	\]
	Since $0<f(t)< f(2)$ for $t \in (0,2)$, we have $M[1_{1\ge x_{1/2}\ge 0} f(x_{1/2})] \le f(2)|M|< \infty$
	hence $M[1_{x_{1/2}\ge 0} |f(x_{1/2})|]= M[1_{x_{1/2}\ge 0} f(x_{1/2})] < \infty$. As $M$-almost everywhere $\nu_\phi(\R)\ge 1$, we conclude
	\[
	M[x_{1/2}<-y]=M[ \nu_{\phi} (-y,\infty)\le \frac12 ] \le  M[ \nu_{\phi} (-\infty,-y)\ge \frac12 ]=  	M[x_{1/2}>y] \textrm{ for } y>0.
	\]  
	Since $f$ is an odd function, we have that $M[1_{x_{1/2}<0} |f(x_{1/2})|] \le M[1_{x_{1/2}\ge 0} |f(x_{1/2})|] < \infty$.
\end{proof}

Now, we give the asymptotic upper bound on $f$. 

\begin{lemma}\label{lem-f-upper}
	For any $s>0$ we have $\limsup_{\delta \to 0} |M| f(\delta^{-\frac2{\gamma Q} + s})/|M_\delta| \leq 1$.
\end{lemma}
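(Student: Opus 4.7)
My plan is to prove Lemma~\ref{lem-f-upper} by combining the exact expression $|M_\delta| = M[f(x_\delta) - f(x_{1/2})]$ from Lemma~\ref{cor-RN} with the almost-sure asymptotics for $x_\delta$ from Lemma~\ref{lem-as-tail}, and then control an error term using the integrability of $f(x_{1/2})$ from Lemma~\ref{lem-O1}.

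Concretely, write $y_\delta := \delta^{-2/(\gamma Q)+s}$. Since $s>0$ and $x_\ell$ is nonincreasing in $\ell$, we have $x_\delta \geq x_{1/2}$ for $\delta<1/2$, and since $f$ is increasing this gives $f(x_\delta)-f(x_{1/2})\geq 0$. Restricting to the event $E_\delta := \{x_\delta > y_\delta\}$ and again using that $f$ is increasing:
\begin{equation*}
|M_\delta| = M[f(x_\delta)-f(x_{1/2})] \;\geq\; M[\mathbf 1_{E_\delta}(f(x_\delta)-f(x_{1/2}))] \;\geq\; f(y_\delta)\,M[E_\delta] - M[|f(x_{1/2})|].
\end{equation*}
By Lemma~\ref{lem-as-tail}, $M$-almost everywhere $\log x_\delta/\log\delta^{-1}\to 2/(\gamma Q)$, so $\mathbf 1_{E_\delta}\to 1$ pointwise $M$-a.e.; since $|M|<\infty$ (as $M$ is obtained from $\LF_\bbH^{(\gamma,i)}$ by the finite-mass restriction $\nu_\phi(\R)\in(1,2)$ via Lemma~\ref{lem-len-LF}), dominated convergence yields $M[E_\delta]\to|M|$. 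By Lemma~\ref{lem-O1}, $M[|f(x_{1/2})|]=:K<\infty$. Hence
\begin{equation*}
|M_\delta| \;\geq\; f(y_\delta)\bigl(|M|-o_\delta(1)\bigr) - K.
\end{equation*}

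To conclude, I split on the behaviour of $f(y_\delta)$ along arbitrary subsequences $\delta_n\to 0$. If $f(y_{\delta_n})\to\infty$, dividing gives
\begin{equation*}
\frac{|M|f(y_{\delta_n})}{|M_{\delta_n}|} \;\leq\; \frac{|M|}{|M|-o(1)-K/f(y_{\delta_n})} \;\longrightarrow\; 1.
\end{equation*}
If instead $f(y_{\delta_n})$ stays bounded, then since $|M_\delta|\to\infty$ by Lemma~\ref{lem-total-mass}, the ratio tends to $0$. Either way the limsup along any subsequence is $\leq 1$, proving the lemma.

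The argument is essentially a soft combination of the preceding three lemmas, so I do not expect a genuine obstacle; the only slightly delicate point is that $f(x_\delta)$ is not obviously nonnegative on its own (since $x_\delta$ could in principle be negative for moderate $\delta$), which is why I subtract $f(x_{1/2})$ first to exploit monotonicity of $f$, and absorb the resulting error using Lemma~\ref{lem-O1} rather than trying to handle $\mathbf 1_{E_\delta^c}f(x_\delta)$ directly.
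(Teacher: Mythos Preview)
Your proof is correct and follows essentially the same route as the paper's: restrict to $E_\delta=\{x_\delta>y_\delta\}$, use monotonicity of $f$ and Lemma~\ref{cor-RN} to bound $|M_\delta|\geq f(y_\delta)M[E_\delta]-M[|f(x_{1/2})|]$, and then invoke Lemmas~\ref{lem-as-tail} and~\ref{lem-O1}. Your explicit case split on whether $f(y_{\delta_n})\to\infty$ or stays bounded (using $|M_\delta|\to\infty$ from Lemma~\ref{lem-total-mass}) makes the final division step cleaner than the paper, which leaves it implicit.
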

\begin{proof}
	By Lemma~\ref{cor-RN}, we have 
	\begin{align*}
		|M_\delta| &= M[f(x_\delta) - f(x_{1/2})] \geq M [1_{ \{ x_\delta > \delta^{-\frac2{\gamma Q}+s} \} } (f(x_\delta)-f(x_{1/2}))]\\
		&\geq f(\delta^{-\frac2{\gamma Q}+s})(|M|- M [\{x_\delta \le  \delta^{-\frac2{\gamma Q}+s}\}]) - M[|f(x_{1/2})|].
	\end{align*}
	Since $\lim_{\delta\to 0}M [\{x_\delta \le  \delta^{-\frac2{\gamma Q}+s}\}] =0$ by Lemma~\ref{lem-as-tail}, $|M_\delta| \to \infty$ by Lemma~\ref{lem-total-mass} and $M[|f(x_{1/2})|]<\infty$ by Lemma~\ref{lem-O1} we are done.
\end{proof}
We next obtain the matching asymptotic lower bound on $f$.
\begin{lemma}\label{lem-f-lower}
	For any $s>0$ we have $\liminf_{\delta \to 0} |M| f(\delta^{-\frac2{\gamma Q} - s})/|M_\delta| \geq 1$.
\end{lemma}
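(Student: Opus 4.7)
The plan mirrors the structure of the upper bound in Lemma~\ref{lem-f-upper}: apply Lemma~\ref{cor-RN} to write $|M_\delta| = M[f(x_\delta)] - M[f(x_{1/2})]$, split the first term at the threshold $y^* := \delta^{-\frac{2}{\gamma Q} - s}$, and exploit monotonicity of $f$ to pull out an $f(y^*)|M|$ factor. Since $M[|f(x_{1/2})|] < \infty$ by Lemma~\ref{lem-O1} and $|M_\delta| \sim C \log\delta^{-1} \to \infty$ by Lemma~\ref{lem-total-mass}, the inequality
\[
|M_\delta| \le f(y^*)\, |M| + M\bigl[1_{x_\delta > y^*}\, f(x_\delta)\bigr] + O(1),
\]
obtained by bounding $M[1_{x_\delta \le y^*} f(x_\delta)] \le f(y^*)|M|$, reduces the problem to showing $M[1_{x_\delta > y^*} f(x_\delta)] = o(\log \delta^{-1})$.

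To estimate the error term, recall from the proof of Lemma~\ref{lem-O1} that $f(y) \le C \log y$ for $y > 2$; since $y^* \to \infty$, for $\delta$ small enough this gives $M[1_{x_\delta > y^*} f(x_\delta)] \le C \, M[1_{x_\delta > y^*}(\log x_\delta)_+]$. Set $Y_\delta := (\log x_\delta)_+/\log\delta^{-1}$ and $A_\delta := \{x_\delta > y^*\}$. Cauchy--Schwarz gives
\[
\frac{M[1_{A_\delta}(\log x_\delta)_+]}{\log\delta^{-1}} = M[1_{A_\delta} Y_\delta] \le \bigl(M[Y_\delta^2]\cdot M[A_\delta]\bigr)^{1/2}.
\]
The factor $M[A_\delta]$ tends to $0$: by Lemma~\ref{lem-as-tail}, $M$-a.e.\ $\log x_\delta/\log\delta^{-1} \to 2/(\gamma Q) < 2/(\gamma Q)+s$, so $1_{A_\delta} \to 0$ $M$-a.e., and dominated convergence applies since $M$ is a finite measure. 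The factor $M[Y_\delta^2]$ is bounded uniformly in $\delta$: applying Lemma~\ref{prop-length-tail-bound} with $\zeta = 1/2$ yields $M[Y_\delta > t] = M[x_\delta > \delta^{-t}] \le C_{1/2}\,\delta^{(t-4/\gamma^2)/2}$; using this for $t > 4/\gamma^2$ and the trivial bound $M[Y_\delta > t] \le |M|$ for $t \le 4/\gamma^2$, the layer-cake formula gives $M[Y_\delta^2] \le |M|(4/\gamma^2)^2 + O(1/\log\delta^{-1})$, which is uniformly bounded as $\delta \to 0$.

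Combining these ingredients yields $M[1_{A_\delta} f(x_\delta)]/\log\delta^{-1} \to 0$, and plugging into the first display gives $\liminf_{\delta\to 0} |M|f(y^*)/|M_\delta| \ge 1$. The main obstacle is that Lemma~\ref{prop-length-tail-bound} by itself is too weak to bound $M[1_{A_\delta}\log x_\delta]$ directly for arbitrary $s > 0$: since $4/\gamma^2 - 2/(\gamma Q) = 16/(\gamma^2(\gamma^2+4)) > 0$, a naive integration of the tail bound with $y = y^*$ only succeeds when $s$ exceeds this strictly positive threshold. The resolution is that the tail bound is used only to get uniform $L^2$ control on $Y_\delta$, while the vanishing of $M[A_\delta]$ is furnished by the almost-sure convergence in Lemma~\ref{lem-as-tail}; neither tool suffices alone, but Cauchy--Schwarz marries them to deliver the required decay for every $s > 0$.
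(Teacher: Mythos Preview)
Your proof is correct and follows essentially the same strategy as the paper: both reduce to showing $M[1_{\{x_\delta > y^*\}}\log x_\delta] = o(\log\delta^{-1})$ after the same decomposition via Lemma~\ref{cor-RN}, and both combine the tail bound of Lemma~\ref{prop-length-tail-bound} with the almost-sure convergence of Lemma~\ref{lem-as-tail} to obtain this estimate.

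The only technical difference lies in how these two inputs are combined. The paper writes the layer-cake integral for $M[1_{E_\delta}\log x_\delta]$ and splits it at the threshold $w_0 = \tfrac8{\gamma^2}\log\delta^{-1}$: the portion $w\le w_0$ contributes at most $w_0\,M[E_\delta] = o(\log\delta^{-1})$ by Lemma~\ref{lem-as-tail}, while for $w>w_0$ one drops the indicator $1_{E_\delta}$ and integrates the tail bound directly, yielding $O(\delta^{(1-\zeta)4/\gamma^2})$. You instead apply Cauchy--Schwarz to separate into $M[A_\delta]^{1/2}$ (handled by Lemma~\ref{lem-as-tail}) and $M[Y_\delta^2]^{1/2}$ (handled by the tail bound). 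Both arguments are short and correct; the paper's split is perhaps slightly more direct, while your Cauchy--Schwarz route makes explicit, through your closing remark, why neither lemma alone would suffice.
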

\begin{proof}
	Let $E_\delta  = \{x_\delta > \delta^{-\frac2{\gamma Q} - s}\}$. By the monotonicity of $f$ we may assume $\frac2{\gamma Q} +s < \frac8{\gamma^2}$, and by~\eqref{eq-f-weak-log} and monotonicity we have
	\begin{align*}
		|M_\delta| &= M[f(x_\delta) - f(x_{1/2})] \leq  M [1_{ E_\delta } f(x_\delta)]+M [1_{ E_\delta^c } f(x_\delta)]+
		M	 [|f(x_{1/2})|]\\
		&\leq C M[ 1_{ E_\delta }\log (x_\delta)]+   f(\delta^{-\frac2{\gamma Q}-s}) (|M|-M [E_\delta] )+M [|f(x_{1/2})|].
	\end{align*}
	We claim that $M[1_{E_\delta} \log x_\delta] = o(\log \delta^{-1})$. To see this, we write
		\[ M[1_{E_\delta} \log x_\delta] = \int_0^\infty M[E_\delta \cap \{\log x_\delta > w\}]\, dw  \leq \frac8{\gamma^2}\log \delta^{-1} M[E_\delta] + \int_{\frac8{\gamma^2}\log \delta^{-1}}^\infty M[\{\log x_\delta > w\}]\, dw. \]
	By Lemma~\ref{lem-as-tail} the first term is $o(\log \delta^{-1})$, and for the second term, let $\zeta\in (0,1)$ and use 
	Lemma~\ref{prop-length-tail-bound}:
	\[ \int_{\frac8{\gamma^2}\log \delta^{-1}}^\infty M[\{\log x_\delta > w\}]\, dw\le \int_{\frac8{\gamma^2}\log \delta^{-1}}^\infty C_\zeta (\delta^{\frac4{\gamma^2}} e^{w})^{-1+\zeta}\, dw =(1-\zeta)^{-1}C_\zeta\delta^{\frac{(1-\zeta)4}{\gamma^2}}.
	\]
	Thus $M[1_{E_\delta} \log x_\delta] = o(\log \delta^{-1})$. Moreover, $\lim_{\delta \to 0} M[E_\delta] = 0$ by Lemma~\ref{lem-as-tail}, $|M_\delta| \gtrsim \log \delta^{-1}$ by Lemma~\ref{lem-total-mass} and $M[|f(x_{1/2})|]<\infty$ by Lemma~\ref{lem-O1}. This gives the desired result. 
\end{proof}

Now we are ready to conclude the proof of Proposition~\ref{prop-apprx-zipper}. We start by the marginal law of $\phi$.
\begin{lemma}\label{lem:phi-tv}
	The $M^{\#}_\delta$-law of $\phi$  converges to  $M^{\#}$  in total variational distance as $\delta\to 0$.
\end{lemma}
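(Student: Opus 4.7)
The plan is to compute the Radon-Nikodym derivative $\frac{dM^{\#}_\delta}{dM^{\#}}(\phi)$ explicitly using Lemma~\ref{cor-RN}, show that it converges to $1$ pointwise $M^{\#}$-almost everywhere as $\delta \to 0$, and then invoke Scheff\'e's lemma to upgrade this pointwise convergence to $L^{1}(M^{\#})$ convergence, which is precisely convergence in total variation. All of the hard analytic work has already been packaged into Lemmas~\ref{lem-total-mass}--\ref{lem-f-lower}, so the present lemma is essentially an assembly argument.

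First, I would use Lemma~\ref{cor-RN} to write
\[\frac{dM^{\#}_\delta}{dM^{\#}}(\phi) \;=\; \frac{|M|}{|M_\delta|}\bigl(f(x_\delta) - f(x_{1/2})\bigr).\]
The subtracted term is negligible in the limit: by Lemma~\ref{lem-total-mass} we have $|M_\delta| \to \infty$, while $|f(x_{1/2})|$ is $M$-integrable (hence $M$-a.e.\ finite) by Lemma~\ref{lem-O1}. Thus $\frac{|M| f(x_{1/2})}{|M_\delta|} \to 0$ pointwise $M^\#$-a.e.

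For the main term $\frac{|M|f(x_\delta)}{|M_\delta|}$ I would use a sandwich. Lemma~\ref{lem-as-tail} implies that $M$-a.e., for any fixed $s>0$ we have $\delta^{-\frac{2}{\gamma Q}+s} \leq x_\delta \leq \delta^{-\frac{2}{\gamma Q}-s}$ for all sufficiently small $\delta$. Since $f$ is non-decreasing,
\[\frac{|M|\,f(\delta^{-\frac{2}{\gamma Q}+s})}{|M_\delta|} \;\leq\; \frac{|M|\,f(x_\delta)}{|M_\delta|} \;\leq\; \frac{|M|\,f(\delta^{-\frac{2}{\gamma Q}-s})}{|M_\delta|}\]
for all small enough $\delta$. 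Lemma~\ref{lem-f-upper} forces the $\limsup$ of the left deterministic quantity to be at most $1$, and Lemma~\ref{lem-f-lower} forces the $\liminf$ of the right deterministic quantity to be at least $1$; sending $s \to 0$ after $\delta \to 0$ then pins $\frac{|M|f(x_\delta)}{|M_\delta|}$ to $1$ pointwise $M^\#$-a.e. Combining this with the previous paragraph yields $\frac{dM^{\#}_\delta}{dM^{\#}}(\phi) \to 1$ pointwise $M^{\#}$-almost everywhere.

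Finally, since each $\frac{dM^{\#}_\delta}{dM^{\#}}$ is a probability density with respect to $M^{\#}$ and the pointwise limit $1$ is also a probability density (trivially), Scheff\'e's lemma gives convergence in $L^{1}(M^{\#})$, which is exactly the statement that the total variation distance between $M^{\#}_\delta$ and $M^{\#}$ tends to $0$. The only real subtlety I anticipate is the $s \to 0$ closing step for the $f(x_\delta)$ term: one must carefully check that the matching $\limsup \leq 1$ and $\liminf \geq 1$ produced by Lemmas~\ref{lem-f-upper}--\ref{lem-f-lower} can indeed be coupled through the almost-sure growth rate $\log x_\delta / \log \delta^{-1} \to 2/(\gamma Q)$ from Lemma~\ref{lem-as-tail} to sandwich the random ratio between two sequences that both converge to $1$; everything else is routine.
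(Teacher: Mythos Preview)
Your approach is exactly the paper's: compute the Radon--Nikodym derivative via Lemma~\ref{cor-RN}, show it converges to $1$ $M^\#$-a.e., then invoke Scheff\'e. The treatment of the $f(x_{1/2})$ term is fine.

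There is, however, a genuine gap in the sandwich step, which you partially anticipate. You have $a_s(\delta)\le \frac{|M|f(x_\delta)}{|M_\delta|}\le b_s(\delta)$ eventually, together with $\limsup_{\delta\to0} a_s\le 1$ (Lemma~\ref{lem-f-upper}) and $\liminf_{\delta\to0} b_s\ge 1$ (Lemma~\ref{lem-f-lower}). But these inequalities point the wrong way: knowing that the \emph{lower} bound has $\limsup\le 1$ and the \emph{upper} bound has $\liminf\ge 1$ does not pin the middle to $1$, no matter how you send $s\to 0$. What is actually needed---as you correctly flag at the end---is that both deterministic endpoints converge to $1$.

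This does follow, but requires one more idea. Using $|M_\delta|\sim \mathfrak C\log\delta^{-1}$ from Lemma~\ref{lem-total-mass}, rewrite $\delta^{-\frac2{\gamma Q}-s}=(\delta')^{-\frac2{\gamma Q}+s'}$ with $\delta'=\delta^{(\frac2{\gamma Q}+s)/(\frac2{\gamma Q}-s')}$; then $|M_{\delta'}|/|M_\delta|\to \frac{2/(\gamma Q)+s}{2/(\gamma Q)-s'}$, so Lemma~\ref{lem-f-upper} applied at scale $\delta'$ yields $\limsup_{\delta\to0} b_s\le \frac{2/(\gamma Q)+s}{2/(\gamma Q)-s'}$, and symmetrically Lemma~\ref{lem-f-lower} gives $\liminf_{\delta\to0} a_s\ge \frac{2/(\gamma Q)-s}{2/(\gamma Q)+s'}$. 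Sending $s,s'\to0$ now closes the sandwich. Equivalently, the three lemmas combine to give $\lim_{y\to\infty} f(y)/\log y=\gamma Q\mathfrak C/(2|M|)$ (as the paper remarks after the proof of Proposition~\ref{prop-apprx-zipper}), and then the a.e.\ convergence is immediate from Lemma~\ref{lem-as-tail}. The paper also skips this step, simply citing the three lemmas.
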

\begin{proof}
	Let $R_\delta(\phi)= \frac{dM_{\delta}^\#}{dM^\#} (\phi)$ be the Radon derivative of the marginal of $\phi$ under  $M^{\#}_\delta$ with respect to $M$. 
	By Lemma~\ref{cor-RN}, we have
	\[
	R_\delta(\phi)=   \frac{|M|}{ |M_\delta|} (f(x_\delta) - f(x_{1/2})) \textrm{ almost surely in }M^\#.
	\]
	By Lemmas~\ref{lem-as-tail},~\ref{lem-f-upper} and~\ref{lem-f-lower},  we see that  $\lim_{\delta\to 0}\frac{|M|}{ |M_\delta|} f(x_\delta) =1$ a.s.\ in $M^{\#}$. Since $ \lim_{\delta\to 0}|M_\delta|=\infty$ and $ -\infty<f(x_{1/2})<\infty$ a.s.\ in $M^\#$, we see that $\lim_{\delta\to 0} R_\delta=1$ a.s.\ in $M^\#$.  Since $M^\#[R_\delta(\phi)]=1 $ for each $\delta$, we have $\lim_{\delta\to 0} M^\#[|R_\delta(\phi)-1|]=0$. Therefore, with the supremum taken over measurable sets $A$, $\sup_A |M_\delta^\#[A] - M^\#[A]| = \sup_A \left| M^\#[1_A (R_\delta(\phi) - 1)] \right| \leq M^\#[|R_\delta(\phi)-1|] \to 0$, which concludes the proof.
\end{proof}
We now deal with the joint law of $(\phi,\mathbf x)$.
\begin{lemma}\label{lem:x-tv}
	Let $\hat m_\delta=1_{\{ 0<x<\delta^{-\frac2{\gamma Q}} \} } p(x) \, dx$.
	The $M_\delta^\#$-law of $(\phi,\mathbf x)$ is $o_\delta(1)$-close in total variational distance to $M^\#\times \hat m^\#_\delta$.
\end{lemma}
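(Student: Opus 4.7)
The plan is to introduce an intermediate coupling $\widetilde M_\delta^\#$ in which $\phi$ has marginal $M^\#$ and the conditional law of $\mathbf x$ given $\phi$ agrees with its $M_\delta^\#$-conditional law. Integrating out $(\eta_1,\eta_2)$ under $M_\delta$ identifies this conditional law as the probability measure $\mu_\phi$ on $\R$ with density proportional to $p(x)\1_{(x_{1/2},\,x_\delta)}(x)$, normalized by $\alpha=\alpha(\phi):=f(x_\delta)-f(x_{1/2})$. Since $M_\delta^\#$ and $\widetilde M_\delta^\#$ share the same $\mathbf x$-conditional distribution given $\phi$, Lemma~\ref{lem:phi-tv} implies that the total variation distance between them vanishes, reducing the lemma to showing $\int d_{\mathrm{TV}}(\mu_\phi,\hat m_\delta^\#)\, M^\#(d\phi)\to 0$.

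Setting $A_\delta:=\delta^{-2/(\gamma Q)}$, $\beta:=f(A_\delta)$, $I:=(x_{1/2},x_\delta)$, and $J:=(0,A_\delta)$, a routine splitting of $\int|\tfrac{p}{\alpha}\1_I-\tfrac{p}{\beta}\1_J|$ over $I\cap J$, $I\setminus J$, and $J\setminus I$ gives
\[
2\,d_{\mathrm{TV}}(\mu_\phi,\hat m_\delta^\#)\;\leq\;\Bigl(1-\tfrac{\min(\alpha,\beta)}{\max(\alpha,\beta)}\Bigr)+\frac{|f(x_{1/2})|+|f(x_\delta)-f(A_\delta)|}{\min(\alpha,\beta)}.
\]
The only nontrivial input is that the $p$-mass of $I\triangle J$ is at most $|f(x_{1/2})|+|f(x_\delta)-f(A_\delta)|$ in every configuration of the four points $x_{1/2},0,A_\delta,x_\delta$, which follows from a short case analysis. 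It then remains to verify that the right-hand side vanishes $M^\#$-almost surely, after which bounded convergence (using $d_{\mathrm{TV}}\leq 1$) concludes.

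For the almost-sure vanishing I will combine three ingredients already proven. Lemma~\ref{lem-O1} gives $|f(x_{1/2})|<\infty$ $M^\#$-a.s.; Lemma~\ref{lem-as-tail} gives $\log x_\delta/\log\delta^{-1}\to 2/(\gamma Q)$ $M^\#$-a.s., so that for any fixed $s>0$ the point $x_\delta$ eventually lies in $(\delta^{-2/(\gamma Q)-s},\delta^{-2/(\gamma Q)+s})$; and Lemmas~\ref{lem-f-upper} and~\ref{lem-f-lower} sandwich the deterministic quantities $f(\delta^{-2/(\gamma Q)\pm s})$ between multiples of $|M_\delta|/|M|$. Monotonicity of $f$ then forces both $|M|f(A_\delta)/|M_\delta|\to 1$ (deterministically) and $|M|f(x_\delta)/|M_\delta|\to 1$ ($M^\#$-a.s.). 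Since Lemma~\ref{lem-total-mass} provides $|M_\delta|\to\infty$, we deduce $\beta\to\infty$, $\alpha/\beta\to 1$, and $(|f(x_{1/2})|+|f(x_\delta)-f(A_\delta)|)/\beta\to 0$ $M^\#$-a.s., which yields the desired bound.

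The main obstacle is precisely the passage from the deterministic estimates of Lemmas~\ref{lem-f-upper} and~\ref{lem-f-lower}, which only control $f$ at the fixed arguments $\delta^{-2/(\gamma Q)\pm s}$, to the pointwise $M^\#$-almost sure convergence $f(x_\delta)/f(A_\delta)\to 1$ at the random argument $x_\delta$; no explicit asymptotic for $f$ itself is available. The bridge is the $M^\#$-a.s.\ logarithmic localization of $x_\delta$ around $A_\delta$ provided by Lemma~\ref{lem-as-tail}, which, combined with the monotonicity of $f$ and the deterministic sandwich, converts the deterministic bounds into the required almost-sure comparison. Once this is in place the rest of the argument is essentially algebraic.
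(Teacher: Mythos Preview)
Your proof is correct and follows essentially the same approach as the paper: both reduce via Lemma~\ref{lem:phi-tv} to showing that the conditional law $\mu_\phi$ of $\mathbf x$ given $\phi$ is close to $\hat m_\delta^\#$, and both use the combination of Lemmas~\ref{lem-total-mass}, \ref{lem-as-tail}, \ref{lem-O1}, \ref{lem-f-upper}, \ref{lem-f-lower} to conclude (the paper summarizes the latter three as yielding $\lim_{y\to\infty}(\log y)^{-1}f(y)=c>0$). Your explicit total-variation inequality and the bounded-convergence step make the argument more transparent than the paper's terse version, but the substance is the same.
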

\begin{proof}
	The conditional law of $\mathbf x$ given $\phi$ is $ 1_{x_{1/2}<x<{x_\delta}} (f(x_{\delta})-f(x_{1/2}))^{-1} p(x) dx$. Let $d_\delta(\phi)$ be the total variational distance between this distribution and $\hat m^\#_\delta$. Let $E_\delta = \{ |\frac{x_\delta}{\log \delta^{-1}} - \frac2{\gamma Q} | < 1\}$. By Lemmas~\ref{lem-as-tail} and~\ref{lem:phi-tv}, $\lim_{\delta \to 0}M_\delta[E_\delta^c]=0$. Moreover, by Lemmas~\ref{lem-total-mass},~\ref{lem-f-upper} and~\ref{lem-f-lower}, $\lim_{y\to\infty}(\log y)^{-1}f(y)$ is a positive constant, and hence 
	$\lim_{\delta \to 0} M_\delta[1_{E_\delta} d_\delta(\phi)] = 0$. We conclude that $\lim_{\delta \to 0} M_\delta[d_\delta(\phi)] = 0$ as needed. 
\end{proof}
\begin{proof}[Proof of Proposition~\ref{prop-apprx-zipper}]
	Since the conditional law of $(\eta_1,\eta_2)$ under $M_\delta^\#$ given $(\phi,\mathbf x)$ only depends on $\mathbf x$, Lemma~\ref{lem:x-tv} implies  Proposition~\ref{prop-apprx-zipper}.
\end{proof}

	\begin{remark}
		The above argument implies that there exists a constant  $\mathfrak C\in (0,\infty)$ such that 
		\begin{equation}\label{eq-total-mass}
			\lim_{\delta\to 0}	\frac{|M_{\delta}| }{\log \delta^{-1}}= \mathfrak C \quad \textrm{and}\quad
			\lim_{y \to \infty} \frac{f(y)}{ \log y} = \frac{\gamma Q\mathfrak C}{2|M|}.
		\end{equation}
		Moreover, the constant $\mathfrak C$ can be made explicit by keeping track of the various constants in Lemma~\ref{lem-total-mass} (see \cite[Proposition 5.2]{AHS-SLE-integrability} for these constants). We find it interesting that though the function $f$ is defined in terms of SLE, its asymptotics are derived using properties of the Liouville field $\phi$. 
	\end{remark}

\subsection{Passing to the limit: proof of Theorem~\ref{thm-bubble-zipper} }\label{subsec:delta0}

The arguments in this section are technical but standard and can be skipped on a first reading. 
Sample a pair of quantum surfaces $(\cD_1,\cD_2)$ from $\int_1^2 \int_0^\infty \cM_{0,2}^\disk(\frac{\gamma^2}2; a, p) \times  \QD_{1,1}(p) \, dp \, d a$. 
Let $\cD_1\oplus \cD_2$ be the curve-decorated quantum surface obtained by conformally welding the right boundary of $\cD_1$ and the total boundary of $\cD_2$. 
The surface $\cD_1\oplus \cD_2$ naturally carries a marked interior point and a marked boundary point. See Figure~\ref{fig:disk-pinch}. 
Let $(\D, \phi_\D, \eta_\D, 0, i)$ be the unique embedding of  $\cD_1\oplus \cD_2$ in $(\D,0,i)$.
We denote the  law  of $(\phi_\D, \eta_\D)$ by  $M_\D$.  	
Let $f: \bbH\to \D$ be the conformal map with $f(i) = 0$ and $f(\infty) = i$.
We will prove Theorem~\ref{thm-bubble-zipper} by showing the following.
\begin{proposition}\label{prop:MD}
Under the probability measure $M_\D^\#$ proportional to $M_\D$, 
$\phi_\D$ and $\eta_\D$  are independent, and moreover, the law of  $\phi_\D\circ f+ Q\log |f'|$   is  proportional to $\int_1^2 \LF_\bbH^{(\gamma, i)} (\ell) d\ell$. 
\end{proposition}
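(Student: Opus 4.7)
The plan is to realize $M_\D^\#$ as the $\delta\to 0$ limit of a natural pushforward $\Psi_*M_\delta^\#$, and then to deduce the factorization and the marginal law of $\phi_\D$ from Proposition~\ref{prop-apprx-zipper}. Given $(\phi,\mathbf x,\eta_1,\eta_2)\sim M_\delta$, let $\mathcal S\subset\bbH$ be the union of the left and middle components of $\bbH\setminus(\eta_1\cup\eta_2)$, so that $\mathcal S$ is a disk domain with bulk point $i$, boundary points $\mathbf x$ and $\infty$, and interior curve $\eta_1$. Let $g_\delta:\mathcal S\to\D$ be the unique conformal map with $g_\delta(i)=0$ and $g_\delta(\mathbf x)=i$, and set $\Psi(\phi,\mathbf x,\eta_1,\eta_2):=(g_\delta\bullet_\gamma(\phi|_{\mathcal S}),\,g_\delta(\eta_1))$.

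The first step will be to show that $\Psi_*M_\delta^\#\to M_\D^\#$ in total variation as $\delta\to 0$. By Lemma~\ref{lem:three-disks}, the $M_\delta$-law of the three welded surfaces cut out by $(\eta_1,\eta_2)$ is, up to a constant, the integral of $\cMtwo(\gamma^2/2;a,p)\times\cM_{1,2}^\disk(2;p,q)\times\cMtwo(\gamma^2/2;q,b)$ over the relevant parameter region. Lemma~\ref{lem-total-mass} shows that under $M_\delta^\#$ both $q$ and $b$ tend to $0$ while $a\in[1,2]$ and the law of $p$ is nondegenerate, so the rightmost surface degenerates completely. The technical crux will be to verify that $\cM_{1,2}^\disk(2;p,q)$ with $q\to 0$ converges, up to a universal constant, to $\QD_{1,1}(p)$: the two boundary marked points of the middle surface collapse, yielding a weight-$2$ disk with one bulk and one boundary marked point. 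This identifies the limit of $\Psi_*M_\delta^\#$ with the welding of $\cMtwo(\gamma^2/2;a,p)$ (for $a\in[1,2]$) and $\QD_{1,1}(p)$ embedded in $(\D,0,i)$, which is by definition a normalization of $M_\D$.

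The second step will use Proposition~\ref{prop-apprx-zipper}: since $\|M_\delta^\#-M^\#\times\sm_\delta^\#\|_{\mathrm{TV}}\to 0$, applying $\Psi$ yields $\|\Psi_*M_\delta^\#-\Psi_*(M^\#\times\sm_\delta^\#)\|_{\mathrm{TV}}\to 0$. Under $M^\#\times\sm_\delta^\#$ the field $\phi$ is independent of the curves, and $\phi_\D^\delta=g_\delta\bullet_\gamma(\phi|_{\mathcal S})$ where $g_\delta$ depends only on $(\mathbf x,\eta_1,\eta_2)$. The arguments underlying Lemma~\ref{lem-as-tail} give that $\mathbf x\to\infty$ in $\sm_\delta^\#$-probability, so $\mathcal S\to\bbH$ and $g_\delta\to f$, hence in the limit $\phi_\D^\delta\to f\bullet_\gamma\phi$ with $\phi\sim M^\#$. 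A direct coordinate computation gives $(f\bullet_\gamma\phi)\circ f+Q\log|f'|=\phi$, which by Lemma~\ref{lem:disint-alpha} is distributed (up to normalization) as $\int_1^2\LF_\bbH^{(\gamma,i)}(\ell)\,d\ell$; independence of $\phi_\D$ from $\eta_\D$ is preserved in the limit because $f$ is deterministic. Combining the two descriptions of the limit of $\Psi_*M_\delta^\#$ will then complete the proof.

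The hard part will be rigorously justifying the pinching identification $\cM_{1,2}^\disk(2;p,q)\to c\cdot\QD_{1,1}(p)$ as $q\to 0$, which requires controlling the degeneracy of the length-$q$ boundary arc uniformly in the other parameters. If this turns out to be delicate, an alternative would be to characterize $M_\D^\#$ through the joint distribution of the boundary lengths and quantum areas of the two welded pieces via Lemmas~\ref{lem:lengh} and~\ref{lem-mot}, and match this directly with the limit of $\Psi_*M_\delta^\#$ using Theorem~\ref{thm:disk-welding}.
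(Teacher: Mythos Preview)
Your overall strategy—pass to the $\delta\to0$ limit of $M_\delta^\#$, use the pinching of the middle disk, and invoke Proposition~\ref{prop-apprx-zipper}—matches the paper's, and the pinching $\cM_{1,2}^\disk(2;p,q)\to c\cdot\QD_{1,1}(p)$ that you flag as the crux is exactly the paper's Lemma~\ref{lem-couple-disk-bulk}.

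The genuine gap is in Step~2. By discarding $\cD_{3,\delta}$ and re-embedding via the curve-dependent map $g_\delta:\mathcal S\to\D$, you make the approximating field $g_\delta\bullet_\gamma(\phi|_{\mathcal S})$ depend on $(\mathbf x,\eta_2)$ as well as on $\phi$; to decouple in the limit you need $g_\delta\to f$, hence $\mathcal S\to\bbH$. Your justification ``$\mathbf x\to\infty$, so $\mathcal S\to\bbH$'' does not work: $\mathcal S$ depends on $\eta_2$, and nothing you cite controls where $\eta_2$ sits. Lemma~\ref{lem-total-mass} only says the \emph{quantum} boundary lengths $Q_\delta,B_\delta\to0$ under $M_\delta^\#$; converting this into Euclidean shrinking of the third region under the curve-only marginal $\sm_\delta^\#$ is precisely the SLE-type estimate the paper says its three-disk setup is designed to avoid.

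The paper sidesteps the issue by embedding \emph{all three} disks in $(\D,0,i)$ via the \emph{fixed} map $f$, so the approximating field $\phi^\delta=f\bullet_\gamma\phi$ is a deterministic function of $\phi$ alone; independence and the field law then follow directly from Proposition~\ref{prop-apprx-zipper}. The comparison with $M_\D^\#$ is handled separately by a coupling (Lemma~\ref{prop-couple-bulk-welded}) in which the two embedded configurations agree on regions $U_\delta,\wt U_\delta\subset\D$ related by a near-identity conformal map. The shrinking of the complements is checked first on the $M_\D^\#$ side—where they form a deterministic nested family with intersection $\{i\}$—and then transferred to the $M_\delta^\#$ side by conformal invariance of harmonic measure. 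That indirect argument is what yields the Euclidean degeneration of the third disk without any direct curve estimate, and it is the ingredient your Step~2 is missing.
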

Recall $M_\delta$ from Lemma~\ref{lem:three-disks} and Proposition~\ref{prop-apprx-zipper}. 
Let  $(\phi, \mathbf x, \eta_1, \eta_2)$ be sampled from  $M_\delta^\#$. 
Let $\cD_{1,\delta}$, $\cD_{2,\delta} $, $\cD_{3,\delta}$ be the left, right, and middle marked quantum surface in the sample space of $M_\delta$, 
so that $(\bbH, \phi, \eta_1, \eta_2, i, \mathbf x )$  is the embedding of the conformally welded surface $\cD_{1,\delta}\oplus \cD_{2,\delta}  \oplus \cD_{3,\delta}$.
Let $\phi^\delta= \phi \circ f^{-1} +Q\log |(f^{-1})'|$ and $\eta^\delta= f \circ \eta_1$, so that $(\D, \phi^\delta, 0, i)$ is the embedding of $\cD_{1,\delta}\oplus \cD_{2,\delta}  \oplus \cD_{3,\delta}$ on $(\D, 0, i)$ by forgetting the interfaces and $\eta^\delta$ is the interface between $\cD_{1,\delta}$ and $\cD_{2,\delta}$.  See Figure~\ref{fig:disk-pinch}.
The following lemma and Proposition~\ref{prop-apprx-zipper} immediately give Proposition~\ref{prop:MD}.

\begin{lemma}\label{prop-couple-bulk-welded}
There exists a coupling between 	$M_\delta^\#$ and $M_\D^\#$ such that the following holds. 
We can find random simply-connected domains $U_\delta, \wt U_\delta \subset \D$ and a conformal map $g_\delta: \wt U_\delta \to U_\delta$ such that  
with probability $1-o_\delta(1)$,
\begin{itemize}
\item $\phi_\D (z)= \phi^\delta \circ g_\delta(z) + Q \log |g'_\delta(z)|  \textrm{ for }z\in \wt U_\delta$,
\item $\mathrm{diam}(\D\backslash U_\delta) = o_\delta(1)$, $\mathrm{diam}(\D \backslash \wt U_\delta) =o_\delta(1)$, 
\item \(	\sup_{z \in K} |g_\delta(z) - z| = o_\delta(1) 	\textrm{ for any compact } K \subset \D.\)
\end{itemize}
\end{lemma}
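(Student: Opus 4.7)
The plan is to couple $M_\delta^\#$ and $M_\D^\#$ at the level of quantum surfaces, identifying $\cD_{1,\delta}$ with $\cD_1$ and the welded pair $\cD_{3,\delta}\oplus\cD_{2,\delta}$ with $\cD_2$, and then to promote this to a coupling of the embeddings on $\D$.

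First I would use Lemma~\ref{lem:three-disks} to write the $M_\delta^\#$-law of the triple $(\cD_{1,\delta},\cD_{3,\delta},\cD_{2,\delta})$ as a product of conditionally independent quantum surfaces indexed by $(a,p,q,b)$, and the $M_\D^\#$-law of $(\cD_1,\cD_2)$ as a product indexed by $(a,p)$.  The two $(a,p)$-marginals agree in the $\delta\to 0$ limit: by Lemma~\ref{lem-total-mass} the $M_\delta^\#$-density is proportional to $a^{g-1}/(a^g+p^g)^2\cdot \1_{a\in[1,2]}$ with $g=4/\gamma^2$, and a short computation using Lemmas~\ref{lem:lengh} and~\ref{lem-disk-perim-law} shows the $M_\D^\#$-density has the same form.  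Given matched $(a,p)$, set $\cD_{1,\delta}=\cD_1$ (both sampled from $\cMtwo(\gamma^2/2;a,p)^\#$), and then couple $\cD_{3,\delta}\oplus\cD_{2,\delta}$ to $\cD_2\sim\QD_{1,1}(p)^\#$ by a total-variation bound.  This bound uses two ingredients: (i) as $q\to 0$ the measure $\cM_{1,2}^\disk(2;p,q)$ converges to $\QD_{1,1}(p)$ with the two corner marks collapsing, a consequence of Proposition~\ref{prop-QD2-field} together with the continuity of $\cMtwo(2;\cdot,\cdot)$ established in \cite{ahs-disk-welding}; and (ii) the bubble $\cD_{2,\delta}$ has boundary length $q+b$ and quantum area both $o_\delta(1)$ in probability, the latter via the Brownian-cone description of Proposition~\ref{lem-mot} together with standard moment estimates.

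To promote this coupling to the embedding level, let $U_\delta\subset\D$ be the image under the $M_\delta^\#$-embedding of $\bbH\setminus\cD_{2,\delta}$, and let $\wt U_\delta\subset\D$ be the analogous set under $M_\D^\#$ obtained by excising a tiny neighborhood of the boundary marked point of $\cD_2$ (identified with the boundary corner of $\cD_{3,\delta}\oplus\cD_{2,\delta}$ under the coupling).  On the high-probability event that the quantum-surface coupling succeeds, the intertwining identity $\phi_\D=\phi^\delta\circ g_\delta+Q\log|g_\delta'|$ together with the normalization $g_\delta(0)=0$ uniquely determines a conformal map $g_\delta:\wt U_\delta\to U_\delta$.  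The diameter estimates $\mathrm{diam}(\D\setminus U_\delta)=o_\delta(1)$ and $\mathrm{diam}(\D\setminus\wt U_\delta)=o_\delta(1)$ follow from the vanishing quantum area and boundary length of $\cD_{2,\delta}$ combined with a standard LQG comparison between quantum and Euclidean size, using also that the excised regions are simply connected and meet $\bdy\D$ only at a single boundary prime end.  The locally uniform estimate $\sup_{z\in K}|g_\delta(z)-z|=o_\delta(1)$ for compact $K\subset\D$ then follows from Carath\'eodory kernel convergence of simply connected domains: both $U_\delta$ and $\wt U_\delta$ converge to $\D$ as kernels with respect to $0$, and each $g_\delta$ fixes $0$.

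The main obstacle is the total-variation step of the first part, namely establishing that $\cD_{3,\delta}\oplus\cD_{2,\delta}$ converges to $\QD_{1,1}(p)^\#$ in total variation rather than merely weakly.  This requires quantitative control on the conditional joint law of $(q,b)$ under $M_\delta^\#$ (which concentrates on $q,b\to 0$ at comparable scales) together with a $q$-continuity statement for $\cM_{1,2}^\disk(2;p,q)$ near $q=0$ that accounts for the merger of its two corners.  Converting the resulting surface-level coupling into the stated embedding-level control then rests on a quantitative quantum-to-Euclidean diameter comparison, in the same spirit as the tail estimates in Lemma~\ref{prop-length-tail-bound}.
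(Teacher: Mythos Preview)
Your high-level plan (match the $(a,p)$-marginals, identify $\cD_{1,\delta}=\cD_1$, then couple the remaining pieces and read off a conformal map) is the same as the paper's, but two of your key steps do not go through as written and the paper resolves them differently.

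\textbf{The total-variation coupling.} You propose to couple the welded pair $\cD_{3,\delta}\oplus\cD_{2,\delta}$ to $\cD_2\sim\QD_{1,1}(p)^\#$ in total variation. This cannot succeed: the welded pair has total boundary length $p+b$ with $b>\delta>0$, while $\cD_2$ has boundary length exactly $p$, so the two quantum surfaces are singular to each other for every $\delta>0$. Your ingredient (i), that $\cM_{1,2}^\disk(2;p,q)\to\QD_{1,1}(p)$ as $q\to0$, is at best a weak-type statement and does not upgrade to TV. The paper's resolution is to not attempt a coupling of the full surfaces at all. Instead it observes (via Proposition~\ref{prop-QD2-field}) that the middle disk $\cD_{2,\delta}$ has the law of $\QD_{1,1}(p+q)^\#$ with an extra boundary mark, and then invokes Lemma~\ref{lem-couple-disk-bulk}: after deleting a fixed small Euclidean piece $B_\eps$ near the boundary mark, the truncated surfaces $\cD_2^\eps$ and $\cD_{2,\delta}^\eps$ \emph{do} agree in TV as $q\to0$. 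The domains $U_\delta,\wt U_\delta$ are then defined as the embedded images of $\cD_1\oplus\cD_2^\eps$ and $\cD_{1,\delta}\oplus\cD_{2,\delta}^\eps$, which on the coupling event are literally the same marked quantum surface; this is what produces the intertwining conformal map $g_\delta$. Your choice of $U_\delta$ (everything except the right disk) and $\wt U_\delta$ (excising a neighbourhood of the boundary point) are not the same quantum surface, so no such $g_\delta$ is guaranteed to exist.

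\textbf{The diameter estimates.} You propose to deduce $\mathrm{diam}(\D\setminus U_\delta)=o_\delta(1)$ from the vanishing quantum area and length of the excised piece via ``a standard LQG comparison between quantum and Euclidean size''. No such comparison is available here: a region can have arbitrarily small quantum area yet large Euclidean diameter. The paper's argument is purely complex-analytic and avoids this entirely. First, $\D\setminus\wt U_\delta$ shrinks to $\{i\}$ \emph{deterministically} under $M_\D^\#$, because $\wt U_\delta$ is obtained by deleting the image of a Euclidean ball of radius $\eps\to0$ in a fixed embedding. Then, since $g_\delta:\wt U_\delta\to U_\delta$ is conformal with $g_\delta(0)=0$, the harmonic measure of $\D\setminus U_\delta$ from $0$ equals that of $\D\setminus\wt U_\delta$ from $0$, hence also tends to $0$; this forces $\mathrm{diam}(\D\setminus U_\delta)\to0$. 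The Carath\'eodory argument for $g_\delta\to\mathrm{id}$ then goes through as you say.
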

 
To prove Lemma~\ref{prop-couple-bulk-welded}, we use the following basic coupling result on quantum disks.
Let $\eps >0$. For a simply-connected quantum surface $\cD$ decorated with one bulk and one boundary point, let $\cD^\eps$ be the quantum surface obtained by embedding $\cD$ as $(\bbH, \phi, i, -1)$ and setting $\cD^\eps := (\bbH_\eps, \phi, i, -1, -1-2\eps)$ where $\bbH_\eps= \bbH \backslash B_\eps(-1-\eps)$ with $B_\eps(-1-\eps)=\{ z\in \C: |z+1+\eps|\le \eps \}$.
\begin{lemma}\label{lem-couple-disk-bulk}
	For $\eps>1$ and $\ell > 0$, when $\cD$ and $\wt \cD$ are sampled from $\QD_{1,1}(\ell)^\#$ and $\QD_{1,1}(\wt \ell)^\#$ respectively, the law of $\wt \cD^\eps$ converges to that of $\cD^\eps$ in total variation distance as $\wt \ell \to \ell$. 
\end{lemma}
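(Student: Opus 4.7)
By Proposition~\ref{prop-embed-QD11} together with Lemma~\ref{lem:disint-alpha-11} at $\alpha = \gamma$ (after conformally relocating the boundary insertion from $0$ to $-1$), the plan is to embed $\cD \sim \QD_{1,1}(\ell)^\#$ as $(\bbH, \phi, i, -1)$ with $\phi \sim \LF_\bbH^{(\gamma, i), (\gamma, -1)}(\ell)^\#$. Writing $\phi = \wt h + c_\ell$ where $c_\ell = \frac{2}{\gamma}\log(\ell/L)$, $L = \nu_{\wt h}(\R)$, and $\wt h$ is sampled from the $\ell$-independent probability measure obtained by reweighting $P_\bbH$ by $L^{4/\gamma^2 - 2}$ and adding the deterministic background $-2Q\log|\cdot|_+ + \gamma G_\bbH(\cdot, i) + \frac{\gamma}{2} G_\bbH(\cdot, -1)$, the natural coupling of $\cD$ with $\wt\cD \sim \QD_{1,1}(\wt\ell)^\#$ through a common $\wt h$ produces $\wt\phi = \phi + \delta$ with $\delta = \frac{2}{\gamma}\log(\wt\ell/\ell)$, which tends to $0$ as $\wt \ell \to \ell$. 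Because the three marked points $(i, -1, -1-2\eps)$ of $\cD^\eps$ remove all conformal gauge freedom on $\bbH_\eps$, the lemma reduces to showing that the law of $\phi|_{\bbH_\eps}$ under $\LF_\bbH^{(\gamma, i), (\gamma, -1)}(\ell)^\#$ is total-variation continuous under the constant shift $\phi \mapsto \phi + \delta$.

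To establish this TV continuity I would construct a coupling of $\cD$ and $\wt\cD$ that modifies the field only inside the excised semi-disk $B := B_\eps(-1-\eps) \cap \bbH$, preserving $\phi|_{\bbH_\eps}$ verbatim while absorbing the boundary-length discrepancy $\wt\ell - \ell$ into the interior arc $(-1-2\eps, -1)$. Conditioning on $\phi|_{\bbH_\eps}$ and $\nu_\phi(\R)=\ell$, the Markov property of the free-boundary GFF renders the conditional law of $\phi|_B$ an explicit perturbed Gaussian measure with prescribed trace on $\partial B \cap \bbH$, further conditioned on $\nu_\phi(-1-2\eps, -1) = \ell - \nu_\phi(\bbH_\eps \cap \R)$. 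Using that the unconditional (given boundary trace) law of the interior boundary length is a GMC functional whose density on $(0, \infty)$ is continuous and strictly positive by Lemma~\ref{lem-GFF-facts}, one resamples $\phi|_B$ from its conditional law with target length $\wt\ell - \nu_\phi(\bbH_\eps \cap \R)$ instead of $\ell - \nu_\phi(\bbH_\eps \cap \R)$. The resulting field $\wt\phi$ agrees with $\phi$ on $\bbH_\eps$, and is distributed according to $\LF_\bbH^{(\gamma, i), (\gamma, -1)}(\wt\ell)^\#$ up to a Radon--Nikodym factor equal to the ratio of the two target-length densities, which tends to $1$ as $\wt\ell \to \ell$.

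The main obstacle is the quantitative control of this resampling: establishing continuity of the conditional density of $\nu_\phi(-1-2\eps, -1)$ in the target length \emph{uniformly enough} in the boundary trace on $\partial B \cap \bbH$ to integrate, and converting the resulting pointwise density ratio into an $L^1$ bound on the Radon--Nikodym factor of the coupling measure. This will rely on moment bounds and density regularity for boundary GMC on short intervals in the spirit of Lemma~\ref{lem-GFF-facts}, together with the Markov decomposition of the free-boundary GFF across $\partial B \cap \bbH$, and one must take care that the singular contribution of the $\frac\gamma2 G_\bbH(\cdot, -1)$ insertion near $-1 \in \bdy B$ is absorbed into an explicit Girsanov tilt that does not affect these estimates. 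An alternative, more analytic route would be to decompose $\phi|_{\bbH_\eps}$ into a scalar mode (via a fixed continuous linear functional $\pi$ with $\pi(1)=1$) plus a residual in $\ker\pi$ and verify TV continuity of the scalar marginal under translation directly from the continuous density of $c_\ell = \frac{2}{\gamma}\log(\ell/L)$; the two approaches involve essentially the same analytic content, but the interior-resampling coupling is geometrically transparent and avoids delicate arguments about unbounded zero modes on $\bbH_\eps$.
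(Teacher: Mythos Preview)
Your strategy is the direct LCFT route that the paper explicitly mentions as viable but does not execute. The paper's actual proof is two sentences: it points to \cite[Proposition~2.23]{ahs-disk-welding}, which already establishes the analogous total-variation continuity for the two-pointed disk $\cM_{0,2}^\disk(W)$ (hence for $\QD_{0,2}$), and asserts that the result transfers to $\QD_{1,1}$ via Definition~\ref{def-QD}, i.e., by reweighting by quantum area and replacing a boundary marked point by a bulk one sampled from the area measure. The GMC regularity underlying the statement is thus quoted rather than reproved.

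Your first paragraph correctly identifies the embedding via $\LF_\bbH^{(\gamma,i),(\gamma,-1)}(\ell)^\#$ and the additive coupling, but note that the ``reduction'' to TV continuity of $\phi|_{\bbH_\eps}$ under the shift $\phi\mapsto\phi+\delta$ is tautologically equivalent to the original statement, since that shift is exactly how $\LF(\ell)^\#$ and $\LF(\wt\ell)^\#$ are related; the real content is all in your second paragraph. The resampling-in-$B$ coupling there is a sound plan. Two remarks on the obstacles you flag: once pointwise convergence of the Radon--Nikodym factor to $1$ is in hand, Scheff\'e's lemma removes the need for the uniform $L^1$ control you worry about, since the factor integrates to $1$ for every $\wt\ell$; on the other hand, the existence and continuity of the conditional density of the boundary GMC length on $(-1-2\eps,-1)$ given the trace on $\partial B\cap\bbH$ is a genuine input that Lemma~\ref{lem-GFF-facts} (tail bounds only) does not supply, and the $\frac\gamma2$-log singularity sits exactly at $-1\in\partial B\cap\partial\bbH_\eps$, so the Markov decomposition across $\partial B\cap\bbH$ needs care there. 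In summary, your route is valid but essentially recapitulates the content packaged in the cited external proposition, whereas the paper simply imports it.
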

\begin{proof}
This can be proved   directly via the explicit   Liouville field description $\QD_{1,1}(\ell)$ in Proposition~\ref{prop-embed-QD11}. However, the corresponding statement for  $\QD_{0,2}(\ell)^\#$ and $\QD_{0,2}(\wt \ell)^\#$ is already proved in~\cite[Proposition~2.23]{ahs-disk-welding}. 
(In fact, Proposition~2.23 there proved the general result for $\cM_{0,2}^\disk(W)$.)  
By Definition~\ref{def-QD}, 
we can transfer the result for  $\QD_{0,2}(\ell)^\#$ into the desired result for $\QD_{1,1}(\ell)^\#$.
\end{proof}

\begin{figure}[ht!]
	\begin{center}
		\includegraphics[scale=0.65]{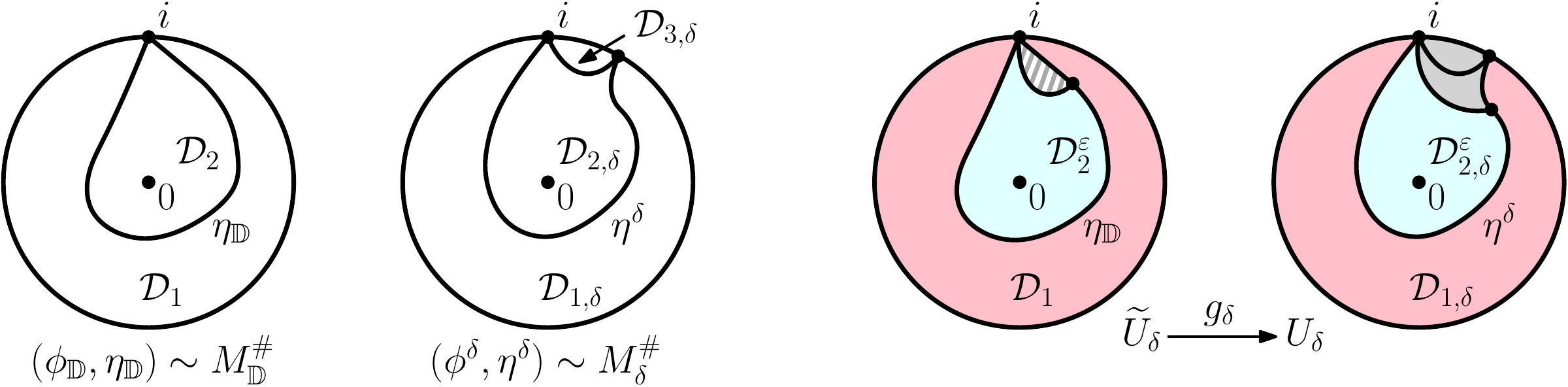}%
	\end{center}
	\caption{\label{fig:disk-pinch} \textbf{Left:} $(\phi_\D, \eta_\D)$ is obtained by embedding $\cD_1 \oplus \cD_2$ into $(\D, 0, i)$, and $(\phi^\delta, \eta^\delta)$ is obtained by embedding $\cD_{1,\delta} \oplus \cD_{2,\delta} \oplus \cD_{3,\delta}$ into $(\D, 0, i)$. \textbf{Right:} We can couple $M_\D^\#$ and $M_\delta^\#$ so the pink and blue quantum surfaces agree with high probability. The domains $\wt U_\delta$ and $U_\delta$ are the unions of the pink and blue regions.
	}
\end{figure}

\begin{proof}[{Proof of Lemma~\ref{prop-couple-bulk-welded}}]
	Recall the marked quantum surfaces $\cD_1,\cD_2$ in the definition of $(\phi_\D,\eta_\D)$ and $M_\D^\#$.
		Let $\wt A$ and $\wt P$ be the left  and right boundary lengths of $\cD_1$, respectively.
		The law of $(\wt A,\wt P)$ is the probability measure on $[1,2]\times (0,\infty)$ proportional to  
		\begin{equation}\label{eq:AP}
	 |\cMtwo(\frac{\gamma^2}2;  a,  p)||\QD_{1,1}( p)| \propto a^{\frac{4}{\gamma^2}-1}/( a^\frac{4}{\gamma^2}+ p^\frac{4}{\gamma^2})^2,
		\end{equation}
			where we have used~\eqref{eq-MOT-lengths} for the expression of $|\cMtwo(\frac{\gamma^2}2;  a,  p)|$. 
		Conditioning on $(\wt A,\wt P)$, the conditional law of $(\cD_1, \cD_2)$ is $\cMtwo(\frac{\gamma^2}2; \wt A, \wt P)^\# \times \QD_{1,1}(\wt P)^\#$.
	
Similarly, recall that the law of $\cD_{1,\delta}$, $\cD_{2,\delta} $, $\cD_{3,\delta}$  is given in \eqref{eq:welding-special}.   
Let $A_\delta$ and $P_\delta$ be the left  and right boundary lengths of $\cD_{1,\delta}$, respectively.
Let $Q_\delta $ and $B_\delta$ be the left  and right boundary lengths of $\cD_{3,\delta}$, respectively. Then, by Lemma~\ref{lem-total-mass}, the $M_\delta^\#$-law of $(A_\delta,P_\delta)$  weakly converge to the the $M_\D^\#$-law of $(\wt A,\wt P)$, and $\lim_{\delta\to 0}M_\delta^\#[Q_\delta>\eps]=\lim_{\delta\to 0}M_\delta^\#[B_\delta>\eps]=0$ for all $\eps>0$.

By Lemma~\ref{lem-total-mass} we can couple  $M_\delta^\#$ and $\wt M^\#$ such that $(\wt A,  \wt P) = (A_\delta, P_\delta)$ with probability $1-o_\delta(1)$. 
By Proposition~\ref{prop-QD2-field}, a sample from $\cM_{1, 2}^\disk(2; p,q)^\#$  can be obtained by sampling a quantum surface from $\QD_{1,1}(p+q)^\#$, then marking a second boundary point at quantum length $p$ counterclockwise from the marked boundary point. Since $(\wt A,\wt P) = ( A_\delta, P_\delta)$ with probability $1-o_\delta(1)$ and $Q_\delta \to 0$ in probability as $\delta \to 0$, by   Lemma~\ref{lem-couple-disk-bulk}, we can extend our coupling such that 
$\lim_{\delta\to 0}\P[ (\cD_1, \cD_2^\eps) = (\cD_{1,\delta},  \cD_{2,\delta}^\eps)]=1$ for any fixed $\eps > 0$; consequently the same holds when $\eps$ is e.g.\ a piecewise constant function of $\delta$ which decays to 0 sufficiently slowly as $\delta \to 0$.

Let $U_\delta$ (resp. $\wt U_\delta$) be the  domain parametrizing the conformal welding of $\cD_1$ and $\cD_2^\eps$ (resp. $\wt\cD_1$ and $\wt\cD_2^\eps$) in the aforementioned embedding of $\cD_1\oplus \cD_2$ (resp.  $\cD_{1,\delta}\oplus \cD_{2,\delta}\oplus \cD_{3,\delta}$) in $(\D, 0, i)$.  
More precisely, $U_\delta$ is the interior of the union of the closures of  the domains corresponding to $\cD_1$ and $\cD_2^\eps$ in our embedding, and the analogous statement holds for $\wt U_\delta$.
By definition, the marked quantum surfaces $(U_\delta, \phi^\delta , 0, i)$ and $(\wt U_\delta, \phi_\D, 0, i^-)$ agree with probability  $1-o_\delta(1)$, where $i^-$ be the boundary point immediately to the left of $i$.  
On this event, by the agreement of quantum surfaces,  there exists a unique conformal map $g_\delta: \wt U_\delta \rta U_\delta$ such that $\phi_\D= \phi^\delta \circ g_\delta+ Q \log |g'_\delta|$, $g_\delta(0)=0$ and $g_\delta(i^-) = i$.

Note that the simply connected  open sets $\wt U_\delta$ are determined by $(\phi_\D, \eta_\D)$ and 
$$ M^{\#}_\D\textrm{-almost surely }  (\D \cup \bdy \D) \setminus \wt U_\delta \textrm{ is decreasing as } \delta \to 0 \textrm{ and their intersection  equals } \{i\}.$$ 
Therefore  $M^{\#}_\D$-almost surely $\lim_{\delta \to 0}\diam (\D \setminus \wt U_\delta)=0$ .  
Hence  in our coupling of $M_\delta^\#$ and $M_\D^\#$,  $\mathrm{diam}(\D \backslash \wt U_\delta) = o_\delta(1)$ with probability $1-o_\delta(1)$.
As a basic deterministic fact in complex analysis,  
$\diam (\D \setminus \wt U_\delta)=0$ if and only if the harmonic measure of $\D \backslash \wt U_\delta$ viewed from $0$ in $\wt U_\delta$  tend to $0$ as $\delta\rta 0$.  
Therefore, in our coupling the harmonic measure of $\D \backslash \wt U_\delta$ viewed from $0$ in $\wt U_\delta$ is $o_\delta(1)$ with probability $1-o_\delta(1)$.
By conformal invariance, the same holds for the harmonic measure of $\D\backslash U_\delta$ viewed from $0$ in $U_\delta$.
So $\diam(\D \backslash U_\delta) = o_\delta(1)$ with probability $1-o_\delta(1)$. This gives the second condition for the coupling.

Finally, since $g_\delta(0) = 0, g_\delta(i_-) = i$, and  $\diam(\D \backslash U_\delta), \diam(\D \backslash \wt U_\delta) \to 0$ with probability as $1-o_\delta(1)$, 
standard conformal distortion estimates yield the third condition.
\end{proof}

\begin{proof}[Proof of Proposition~\ref{prop:MD}]
Sending $\delta\to 0$ in Lemma~\ref{lem-couple-disk-bulk} and Proposition~\ref{prop-apprx-zipper}, 
we see that the law of $\phi_\D\circ f+ Q \log| f'|$ under $M_\D^\#$ is the same as that of $\phi$ under $M^\#$, which is  the probability measure proportional to $\int_1^2 \LF_\bbH^{(\gamma, i)} (\ell) \, d\ell$. 
Moreover, the independence of $\phi_\D $ and $\eta_\D$ follows from the asymptotic independence of $\phi$ and $\eta_1$ under $M_\delta^\#$ established in Proposition~\ref{prop-apprx-zipper}.
\end{proof}

\begin{proof}[{Proof of Theorem~\ref{thm-bubble-zipper}}] 
Let $\psi(z)=-z^{-1}$ and $\bub_\bbH(i,\infty)=\{\eta: \psi(\eta)\in \bub_\bbH(i,0)  \}$.  By Proposition~\ref{prop:MD}, 
the law of $(\phi_\D\circ f+ Q \log| f'|, f^{-1}\circ\eta_\D) $ can be written as the probability measure proportional to $\int_1^2 \LF_\bbH^{(\gamma, i)}(\ell) \, d\ell \times \sm^\infty$
where $\sm^\infty$ is the probability measure on $\bub_\bbH(i,\infty)$ describing the marginal law of $f^{-1}\circ\eta_\D$. Therefore for some $C\in (0,\infty)$ we have 
		\eqb\label{eq-weld-infty}
		\int_1^2 \LF_\bbH^{(\gamma, i)}(\ell) \, d\ell \times \sm^\infty = 
		C\int_1^2 \int_{0}^\infty \QD_{1,1}(r)\times \cMtwo(\frac{\gamma^2}2;\ell,r)\, dr\, d\ell,
		\eqe
	in the sense that when a field and curve are sampled from the left hand side of~\eqref{eq-weld-infty}, the law of the two quantum surfaces cut out by the curve is the right hand side of~\eqref{eq-weld-infty}.

	A scaling argument shows that~\eqref{eq-weld-infty} holds when the integration interval $[1,2]$ is replaced by $[e^{\frac\gamma2 c}, 2e^{\frac\gamma2 c}]$ for any $c \in \R$. Indeed, from Lemma~\ref{lem:Girsanov} and the change of coordinates $\mathbf c' = \mathbf c + c$ it is immediate that when $\phi$ is sampled from $\LF_\bbH^{(\gamma, i)}(\ell)$ then the law of $\phi + c$ is $e^{(Q-\frac\gamma2)c}\LF_\bbH^{(\gamma, i)}(e^{\frac\gamma2 c}\ell)$. Similarly, if $(\bbH, \phi, i, 0)$ has law $\QD_{1,1}(r)$ then $(\bbH, \phi + c, i, 0)$ has law $e^{(Q-\gamma)c}\QD_{1,1}(e^{\frac\gamma2 c}r)$, and if $(\bbH, \phi, 0, 1)$ has law  $\cM_{0,2}^\disk(\frac{\gamma^2}2; \ell, r)$ then $(\bbH, \phi+c, 0, 1)$ has law $e^{\gamma c}\cM_{0,2}^\disk(\frac{\gamma^2}2; e^{\frac\gamma2 c}\ell, e^{\frac\gamma2 c}r)$. Therefore, adding $c$ to the fields in~\eqref{eq-weld-infty} and changing variables $(\ell', r') = (e^{\frac\gamma2 c} \ell, e^{\frac\gamma2 c}r)$ gives~\eqref{eq-weld-infty} with $[1,2]$ replaced with $[e^{\frac\gamma2 c}, 2e^{\frac\gamma2 c}]$. Summing over the intervals $[2^n, 2^{n+1}]$ for integer $n$ yields~\eqref{eq-weld-infty} with $[1,2]$ replaced by $(0,\infty)$. 
		
Finally, since $\psi:z \mapsto -z^{-1}$ satisfies $|\psi'(i)|=1$,  by Lemma~\ref{lem-change-coord} $\LF_\bbH^{(\gamma, i)}$ is mapped to itself by the coordinate  change of $\psi$.
Recall the definition of  $\cMtwo(\frac{\gamma^2}2;\cdot,r)$ from \eqref{eq:gamma^22}.
 Reparametrizing~\eqref{eq-weld-infty}  via $\psi$ yields Theorem~\ref{thm-bubble-zipper}.
\end{proof}

\appendix

\section{Brownian motion in cones}\label{sec:BM-cone}
For $\phi \in (0,2\pi)$ let $\cC_\phi = \{ z \in \C : \arg z \in (0, \phi)\}$ be the cone of angle $\phi$. Following \cite[Section 3]{lawler-werner-soup} we will define various measures corresponding to Brownian motion in $\cC_\phi$ conditioned on certain probability zero events via limiting procedures. These constructions are in the same spirit as \cite[Section 3]{lawler-werner-soup}, but we also consider Brownian path measures in a cone with an endpoint at its vertex. For more details on the validity of these constructions see \cite[Section 3]{lawler-werner-soup}.

Let $\cK$ be the collection of  continuous planar curves $Z$ defined on a finite time interval $[0,T_Z]$, where $T_Z$ is the \emph{duration} of the curve. Endow $\cK$ with the metric $d_\cK(Z_1, Z_2) = \inf_\theta \{ \sup_{0< t < T_{Z_1}} |t -\theta(t)| + |Z_1(t) - Z_2(\theta(t))| \}$ with the infimum taken over increasing homeomorphisms $\theta: [0,T_{Z_1}] \to [0, T_{Z_2}]$. Our Brownian path measures will be non-probability measures on $\cK$ equipped with the Borel $\sigma$-algebra associated to $d_\cK$.

Let $\sm(z,w;t)$ denote the measure on $\cK$ such that $|\sm(z,w;t)| = \frac{1}{2\pi t} e^{-|z-w|^2/2t}$ and $\sm(z,w;t)^\#$ is the Brownian bridge from $z$ to $w$ with duration $t$. Let $\sm_{\cC_\phi}(z,w;t)$ be the restriction of $\sm(z,w;t)$ to paths staying in $\cC_\phi$, and let $\sm_{\cC_\phi}(z,w) = \int_0^\infty \sm_{\cC_\phi}(z,w;t)\, dt$.

\begin{lemma}\label{lem-cone-duration-law}
	For each $w \in \cC_\phi$ and $t > 0$, pick any $\psi \in (0, \phi)$ and define the measure $\sm_{\cC_\phi}(w, 0;t) := \lim_{s \to 0} \frac{s^{-\frac\pi\phi}}{\sin (\frac{\pi\psi}\phi)} \sm_{\cC_\phi}(w, se^{i\psi};t)$. This limit exists and does not depend on the choice of $\psi$. Moreover there is a constant $C = C(\phi)$ such that for all $w, t$,
	\eqb \label{eq-cone-duration-law}
	|\sm_{\cC_\phi}(w,0;t)| = C t^{-1-\frac\pi{\phi}} |w|^{\frac\pi\phi} e^{-\frac{|w|^2}{2t}} \sin (\frac\pi\phi \arg w).
	\eqe
\end{lemma}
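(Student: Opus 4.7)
The plan is to obtain both the existence of the limit and the explicit formula by writing down the killed transition density on $\cC_\phi$ via a spectral expansion in polar coordinates, then extracting its leading asymptotic as the endpoint approaches the vertex. Since $|\sm_{\cC_\phi}(z,w;t)|$ equals the transition density $p_{\cC_\phi}(z,w;t)$ of Brownian motion in $\cC_\phi$ killed on $\partial\cC_\phi$, separation of variables in polar coordinates (with Dirichlet boundary conditions in the angular variable, giving the eigenfunctions $\sin(k\pi\theta/\phi)$) yields the standard identity
\begin{equation*}
p_{\cC_\phi}(z,w;t) = \frac{1}{\phi t}\, e^{-(|z|^2+|w|^2)/(2t)}\sum_{k=1}^\infty I_{k\pi/\phi}\!\left(\frac{|z||w|}{t}\right) \sin\!\left(\tfrac{k\pi\arg z}{\phi}\right)\sin\!\left(\tfrac{k\pi\arg w}{\phi}\right),
\end{equation*}
where $I_\nu$ is the modified Bessel function of the first kind. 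This representation should be recalled (or re-derived via Brownian scaling combined with the standard fact that the radial part is a Bessel process).

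Next, I would use the small-argument asymptotic $I_\nu(x)\sim (x/2)^\nu/\Gamma(\nu+1)$ as $x\to 0^+$. Setting $w=se^{i\psi}$ and sending $s\to 0$, the $k=1$ term dominates and gives
\begin{equation*}
p_{\cC_\phi}(w, se^{i\psi}; t) = s^{\pi/\phi}\sin\!\left(\tfrac{\pi\psi}{\phi}\right) C\, t^{-1-\pi/\phi}|w|^{\pi/\phi} e^{-|w|^2/(2t)}\sin\!\left(\tfrac{\pi\arg w}{\phi}\right)\bigl(1+O(s^{2\pi/\phi})\bigr),
\end{equation*}
with $C = \bigl(\phi\, 2^{\pi/\phi}\,\Gamma(\pi/\phi+1)\bigr)^{-1}$. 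Multiplying by $s^{-\pi/\phi}/\sin(\pi\psi/\phi)$ and sending $s\to 0$ immediately yields both the existence of the limiting total mass, its independence of $\psi$, and the closed form \eqref{eq-cone-duration-law}.

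For the statement that the measures themselves converge (not only their total masses), I would carry out a Doob $h$-transform argument with $h(z)=\Im(z^{\pi/\phi})=|z|^{\pi/\phi}\sin(\pi\arg z/\phi)$, the minimal positive harmonic function on $\cC_\phi$ vanishing on $\partial\cC_\phi$. The normalized bridge $\sm_{\cC_\phi}(w,se^{i\psi};t)^{\#}$ can be decomposed into an initial segment (Brownian bridge to a fixed intermediate time) and a terminal segment (Brownian motion conditioned to hit $se^{i\psi}$ at time $t$ without leaving $\cC_\phi$). The asymptotics above combined with the Markov property identify the weak limit as the $h$-transformed bridge from $w$ conditioned to exit at $0$, and this limit has the claimed total mass. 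Convergence of finite-dimensional distributions follows from the same asymptotic applied pointwise, and tightness in $\cK$ follows from standard Brownian bridge estimates.

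The main obstacle is the upgrade from convergence of masses to convergence of the measures on $\cK$: one needs uniformity of the error term $O(s^{2\pi/\phi})$ in the spectral expansion over initial points lying in compact subsets of $\cC_\phi$ bounded away from $0$, so that the Markov-property decomposition yields genuine weak convergence rather than only pointwise convergence of densities. I expect this to be dispatched by a standard comparison with the explicit $h$-transformed process, together with a uniform lower bound on $h$ along typical bridge paths.
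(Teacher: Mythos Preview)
Your approach is correct and yields a self-contained direct proof, whereas the paper's argument simply cites \cite[Theorem~2]{shimura-cone} (which constructs the analogous limit for Brownian motion in $\cC_\phi$ started at $0$ and ending at $w$) and then invokes time-reversal symmetry of the Brownian bridge to swap the endpoints. The spectral expansion of the killed heat kernel together with the small-argument Bessel asymptotic is essentially how Shimura establishes the result in the first place, so you are re-deriving the cited theorem rather than importing it; your $h$-transform sketch for measure-level convergence is likewise the standard device in that literature. The paper's route is shorter because it offloads the work; yours has the advantage of being self-contained and of making the constant $C(\phi)=(\phi\,2^{\pi/\phi}\,\Gamma(\pi/\phi+1))^{-1}$ explicit. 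One minor correction: the relative error in your expansion is $O(s^{\min(\pi/\phi,\,2)})$ rather than $O(s^{2\pi/\phi})$, since the $k=2$ term contributes relative error $O(s^{\pi/\phi})$ and both the correction to $I_{\pi/\phi}$ and the factor $e^{-s^2/(2t)}$ contribute $O(s^2)$; this does not affect the argument.
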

\begin{proof}
	This result follows from~\cite[Theorem 2]{shimura-cone} except that  theorem is about the Brownian motion in $\cC_\phi$ which starts  at 0 and ends at $w$. 
		Our statement follows from the time reversal symmetry of Brownian path.
\end{proof}

Define $\sm_{\cC_\phi}(w,0) := \int_0^\infty \sm_{\cC_\phi}(w,0;t) \, dt$ and, for $u > 0$, $\sm_{\cC_\phi}(u, 0) := \lim_{\eps \to 0} \eps^{-1} \sm_{\cC_\phi}(u + i \eps, 0)$. 
\begin{corollary}\label{cor-cone-duration-law}
	Suppose $\phi \in (0, 2\pi), \theta \in (0, \phi)$ and $r > 0$. There is a constant $c_{\phi, \theta} \in (0, \infty)$ such that 
	$|\sm_{\cC_\phi}(re^{i\theta},0)|=c_{\phi, \theta} \cdot r^{-\frac\pi\phi}$. Moreover, the law of the duration of a sample from $\sm_{\cC_\phi}(re^{i\theta},0)^\#$ is the inverse gamma distribution with shape $\frac\pi\phi$ and scale $\frac{r^2}2$ (recall~\eqref{eq:inverse-gamma-parameter}). 
	Similarly, for $u>0$, 	$|\sm_{\cC_\phi}(u,0)|=c_\phi u^{-\frac\pi\phi - 1}$. Moreover, the law of the duration of a sample from $\sm_{\cC_\phi}(u,0)^\#$ is the inverse gamma distribution with shape $\frac\pi\phi$ and scale $\frac{u^2}2$.
\end{corollary}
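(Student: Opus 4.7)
The plan is to reduce Corollary~\ref{cor-cone-duration-law} to a direct calculation using the explicit formula~\eqref{eq-cone-duration-law} from Lemma~\ref{lem-cone-duration-law}. The entire corollary follows by (i) integrating this formula in $t$ to get the total mass and recognizing the resulting density as inverse gamma, and (ii) handling the boundary case $w = u \in (0,\infty)$ via a first-order expansion at $\theta = 0$.

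For the case $w = re^{i\theta}$, I would substitute into~\eqref{eq-cone-duration-law} to get
\[
|\sm_{\cC_\phi}(re^{i\theta}, 0; t)| \;=\; C\, r^{\pi/\phi}\sin(\pi\theta/\phi)\, t^{-1-\pi/\phi}\, e^{-r^2/(2t)},
\]
then integrate over $t \in (0,\infty)$. The substitution $s = r^2/(2t)$ turns the $t$-integral into $2^{\pi/\phi}\, r^{-2\pi/\phi}\, \Gamma(\pi/\phi)$, yielding $|\sm_{\cC_\phi}(re^{i\theta}, 0)| = c_{\phi,\theta}\, r^{-\pi/\phi}$ with the explicit constant $c_{\phi,\theta} = C\, 2^{\pi/\phi}\, \Gamma(\pi/\phi)\sin(\pi\theta/\phi) \in (0,\infty)$. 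Dividing $|\sm_{\cC_\phi}(re^{i\theta}, 0; t)|$ by this total mass gives the density of the duration under $\sm_{\cC_\phi}(re^{i\theta}, 0)^\#$:
\[
\frac{(r^2/2)^{\pi/\phi}}{\Gamma(\pi/\phi)}\, t^{-1-\pi/\phi}\, e^{-(r^2/2)/t},
\]
which is exactly the inverse gamma density of~\eqref{eq:inverse-gamma-parameter} with shape $\pi/\phi$ and scale $r^2/2$.

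For the second statement, I would write $u + i\eps = r_\eps e^{i\theta_\eps}$ with $r_\eps = \sqrt{u^2 + \eps^2} \to u$ and $\theta_\eps = \arctan(\eps/u) \sim \eps/u$ as $\eps \to 0^+$, so that $\sin(\pi\theta_\eps/\phi) \sim (\pi/\phi)(\eps/u)$. Applying the formula established above with $(r,\theta) = (r_\eps, \theta_\eps)$,
\[
\eps^{-1}\, |\sm_{\cC_\phi}(u+i\eps, 0)| \;=\; \eps^{-1} C\, 2^{\pi/\phi} \Gamma(\pi/\phi)\sin(\pi\theta_\eps/\phi)\, r_\eps^{-\pi/\phi} \;\xrightarrow[\eps \to 0^+]{}\; c_\phi\, u^{-\pi/\phi - 1},
\]
where $c_\phi := C\, 2^{\pi/\phi} \Gamma(\pi/\phi)(\pi/\phi)$. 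Since the normalized duration distribution under $\sm_{\cC_\phi}(u+i\eps, 0)^\#$ was already identified as inverse gamma with shape $\pi/\phi$ and scale $|u+i\eps|^2/2$, which is independent of the normalization constant and depends continuously on $\eps$, passing to the limit gives the inverse gamma distribution with shape $\pi/\phi$ and scale $u^2/2$ under $\sm_{\cC_\phi}(u, 0)^\#$.

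No step should present genuine difficulty: all the analytic content is already contained in Lemma~\ref{lem-cone-duration-law}, and the remainder is a one-line change of variables recognizing a Gamma integral, together with a first-order Taylor expansion of $\sin$ at $0$ to handle the boundary insertion $u \in (0,\infty)$.
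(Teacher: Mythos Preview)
Your proposal is correct and is precisely the computation the paper has in mind: the paper's own proof consists of the single sentence ``These claims are immediate from Lemma~\ref{lem-cone-duration-law},'' and you have simply written out the Gamma-integral and the first-order expansion at the boundary that make this immediacy explicit. There is nothing to add.
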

\begin{proof}
	These claims are immediate from Lemma~\ref{lem-cone-duration-law}. 
\end{proof}

Suppose $\theta \in (0, \phi)$, we describe a path decomposition for $\sm_{\cC_\phi}(u,0)^\#$ where we split the path at the first time it hits the ray $e^{i\theta} \R_+$.
For $z \in \cC_\theta$, let $\sm_{\cC_\theta}(z)$ denote the probability measure corresponding to Brownian motion started at $z$ and killed when it exits $\cC_\theta$. For $y >0$, let $E_{y, \eps}$ be the event that Brownian motion exits $\cC_\theta$ on the boundary interval $(ye^{i\theta}, (y+\eps)e^{i\phi})$, and let $\sm_{\cC_\theta}(z,ye^{i\theta}) = \lim_{\eps \to 0} \eps^{-1} \sm_{\cC_\theta}(z)|_{E_{y, \eps}}$. For $x > 0$ define $\sm_{\cC_\theta}(x,ye^{i\theta}) = \lim_{\eps \to 0} \eps^{-1}\sm_{\cC_\theta}(x + \eps i, ye^{i\theta})$. 

\begin{lemma}\label{lem-markov}
	For $0 < \theta < \phi < 2\pi$ and $u>0$,  we have
	\[ \sm_{\cC_\phi}(u, 0) = \int_0^\infty \sm_{\cC_\theta}(u, r e^{i\theta}) \times  \sm_{\cC_\phi}(re^{i\theta}, 0)\, dr\]
	in the sense that when a sample from the right hand side is concatenated to obtain a path from $u$ to $0$, the law of this concatenated path is the left hand side. 
\end{lemma}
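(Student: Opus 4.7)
The identity is the strong Markov property applied at the first hitting time $\tau$ of the interior ray $e^{i\theta}\R_+$, combined with a careful passage through the limits used to define $\sm_{\cC_\phi}(u,0)$ and $\sm_{\cC_\theta}(u, re^{i\theta})$ from their interior counterparts. I would first work at strictly interior starting and ending points. For small $\eps, s > 0$, consider the bridge measure $\sm_{\cC_\phi}(u+i\eps, se^{i\theta})$: every path in its support ends on $e^{i\theta}\R_+$, so its first hitting time $\tau$ of that ray is a.s.\ finite with $B_\tau = Re^{i\theta}$ for some $R > 0$. Disintegrating over $R$, and noting that the pre-$\tau$ portion stays in $\cC_\theta$ and first exits $\cC_\theta$ at $Re^{i\theta}$, the strong Markov property gives the interior decomposition
$$
\sm_{\cC_\phi}(u+i\eps, se^{i\theta}) = \int_0^\infty \sm_{\cC_\theta}(u+i\eps, Re^{i\theta}) \times \sm_{\cC_\phi}(Re^{i\theta}, se^{i\theta})\, dR.
$$

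Next I would take $s \to 0$. Multiplying both sides by $s^{-\pi/\phi}/\sin(\pi\theta/\phi)$ and invoking Lemma~\ref{lem-cone-duration-law} on the left yields $\sm_{\cC_\phi}(u+i\eps, 0)$; on the right, the same rescaling affects only the second factor, which converges pointwise in $R$ to $\sm_{\cC_\phi}(Re^{i\theta}, 0)$. Assuming the limit can be exchanged with the $R$-integral, this produces the $\eps$-regularized form of Lemma~\ref{lem-markov}. Dividing by $\eps$ and sending $\eps \to 0$, the defining relations $\sm_{\cC_\phi}(u, 0) = \lim_\eps \eps^{-1}\sm_{\cC_\phi}(u+i\eps, 0)$ and $\sm_{\cC_\theta}(u, Re^{i\theta}) = \lim_\eps \eps^{-1}\sm_{\cC_\theta}(u+i\eps, Re^{i\theta})$ handle the left side and the integrand on the right side pointwise in $R$, and a second exchange of limit with integral gives the claim.

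The main obstacle is justifying both exchanges of limit and integral. For this I would map $\cC_\theta$ to $\bbH$ by $z \mapsto z^{\pi/\theta}$ and read off the first-hit density from the half-plane Poisson kernel, obtaining
$$
\bigl| \sm_{\cC_\theta}(u+i\eps, Re^{i\theta}) \bigr| = \frac{\pi\, u^{\pi/\theta-1}}{\theta^2}\cdot\frac{\eps\, R^{\pi/\theta - 1}}{(u^{\pi/\theta}+R^{\pi/\theta})^2}\,(1 + o_\eps(1))
$$
uniformly in $R$ on compacts, with a matching global upper bound equal to $\eps$ times an $R$-integrable function. Combined with $|\sm_{\cC_\phi}(Re^{i\theta}, 0)| = C R^{-\pi/\phi}$ from Corollary~\ref{cor-cone-duration-law} and an analogous uniform-in-$s$ upper bound on $s^{-\pi/\phi}|\sm_{\cC_\phi}(Re^{i\theta}, se^{i\theta})|$ derived from Lemma~\ref{lem-cone-duration-law}, and using $\theta < \phi$ so that $\pi/\theta > \pi/\phi$ (which ensures integrability of the product both near $R=0$ and near $R=\infty$), the integrand is dominated by an $R$-integrable function uniformly in $\eps, s$ small. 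Dominated convergence then closes the argument.
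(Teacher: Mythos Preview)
Your approach is essentially the same as the paper's: both use the strong Markov property at the first hit of the ray $e^{i\theta}\R_+$ for a bridge from an interior start $u+i\eps$ to a small interior endpoint near $0$, then pass to the limit. The paper chooses the endpoint $\delta e^{i\psi}$ with $\psi=\frac{\theta+\phi}{2}$ rather than your $se^{i\theta}$, and simply writes ``multiplying both sides by $\frac{\delta^{-\pi/\phi}}{\sin(\pi\psi/\phi)}\eps^{-1}$ and sending $\delta,\eps\to 0$ yields the assertion'' without spelling out the dominated convergence argument that you sketch.
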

\begin{proof}
	Let $\psi = \frac{\phi + \theta}2$. 
	For $\delta, \eps > 0$, by the strong Markov property of Brownian motion we have
	\[\sm_{\cC_\phi}(u + \eps i, \delta e^{i\psi}) = \int_0^\infty \sm_{\cC_\theta}(u+\eps i , re^{i\theta}) \times \sm_{\cC_\phi}(re^{i\theta}, \delta e^{i\psi}) \, dr.  \]
	Multiplying both sides by $\frac{\delta^{-\frac\pi\phi}}{\sin(\frac{\pi \psi}\phi)} \eps^{-1}$ and sending $\delta, \eps \to 0$ yields the assertion. 
\end{proof}

\begin{lemma}\label{lem-poisson-kernel}
	For $0 < \theta < 2\pi$,  there exists a constant $C$ such that 
	\[|\sm_{\cC_\theta}(u, re^{i\theta})| = C\frac{(ur)^{\frac{\pi}{\theta}-1}}{(u^{\frac\pi\theta} + r^{\frac\pi\theta})^2} \quad \text{for }u,r>0.\]
\end{lemma}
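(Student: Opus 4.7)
The plan is to invoke conformal invariance of Brownian motion to reduce the computation to the Poisson kernel of the upper half plane. Specifically, I would use the conformal map $\psi(z) = z^{\pi/\theta}$ from $\cC_\theta$ to $\bbH$, which sends the rays $\R_+$ and $e^{i\theta}\R_+$ to $\R_+$ and $\R_-$ respectively, and in particular $u \mapsto u^{\pi/\theta}$ and $re^{i\theta} \mapsto -r^{\pi/\theta}$.

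First I would reinterpret the mass $|\sm_{\cC_\theta}(u, re^{i\theta})|$ as a boundary-to-boundary Poisson kernel. Unwinding the definition given in the text, for an interior point $z \in \cC_\theta$ the mass $|\sm_{\cC_\theta}(z, re^{i\theta})|$ is precisely the density (with respect to arclength on the ray $e^{i\theta}\R_+$) of the exit distribution of Brownian motion from $\cC_\theta$ started at $z$, i.e.\ the Poisson kernel $H_{\cC_\theta}(z, re^{i\theta})$. The subsequent limit $\lim_{\eps\to 0}\eps^{-1}|\sm_{\cC_\theta}(u+\eps i, re^{i\theta})|$ is then the standard definition of the boundary Poisson kernel $H_{\cC_\theta}(u, re^{i\theta})$ at the pair of boundary points $(u, re^{i\theta})$.

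Second, I would apply the standard conformal transformation rule for the boundary Poisson kernel,
\[
H_{\cC_\theta}(u, re^{i\theta}) \;=\; |\psi'(u)| \cdot |\psi'(re^{i\theta})| \cdot H_{\bbH}\bigl(\psi(u),\, \psi(re^{i\theta})\bigr),
\]
which follows from the interior covariance $H_{\cC_\theta}(z, w) = |\psi'(w)| H_{\bbH}(\psi(z), \psi(w))$ (an immediate consequence of conformal invariance of Brownian motion up to time change) after taking the $\eps \to 0$ limit that peels off $u$ from the interior. The boundary Poisson kernel of $\bbH$ is $H_\bbH(a, b) = \tfrac{1}{\pi(a-b)^2}$, which one reads off from the interior kernel $H_\bbH(x+iy, s) = \tfrac{y}{\pi((x-s)^2+y^2)}$. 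Since $|\psi'(z)| = (\pi/\theta)|z|^{\pi/\theta - 1}$, substituting $a = u^{\pi/\theta}$ and $b = -r^{\pi/\theta}$ and using $|a-b| = u^{\pi/\theta} + r^{\pi/\theta}$ yields the claimed formula with $C = \pi/\theta^2$.

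The only nontrivial point is verifying the boundary transformation rule, which requires a moment of care to ensure the $\eps \to 0$ limit and the two derivative factors assemble correctly. This can be made self-contained by writing $\psi(u + \eps i) = \psi(u) + \eps i \psi'(u) + O(\eps^2)$ and observing that, since $\psi$ sends the inward-pointing normal at $u \in \R_+$ to the inward-pointing normal at $u^{\pi/\theta} \in \R_+$, the imaginary part of $\eps i \psi'(u)$ equals $\eps |\psi'(u)|$; plugging into the interior Poisson kernel $H_\bbH(x+iy,s)$ extracts precisely the factor $|\psi'(u)|$ in the limit, and an analogous argument (or a symmetric interpretation) produces the factor $|\psi'(re^{i\theta})|$.
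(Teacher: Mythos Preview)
Your argument is correct. You compute the boundary Poisson kernel of the cone directly by conformally mapping $\cC_\theta$ to $\bbH$ via $z \mapsto z^{\pi/\theta}$ and invoking the standard covariance rule $H_D(u,w) = |\psi'(u)|\,|\psi'(w)|\,H_{\bbH}(\psi(u),\psi(w))$, together with the explicit half-plane excursion kernel $H_\bbH(a,b) = \tfrac{1}{\pi(a-b)^2}$. The check that the inward normal at $u\in\R_+$ is $i$, that $\psi'(u)$ is real and positive so $\psi'(u)i$ points inward in $\bbH$, and that arclength on the ray $e^{i\theta}\R_+$ scales by $|\psi'(re^{i\theta})|$, is exactly what is needed to justify the two derivative factors.

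The paper's proof is different: it does not compute anything, but instead observes that the statement is equivalent to \cite[Lemma~C.2]{ahs-disk-welding} after applying the linear shear $\Lambda$ of Proposition~\ref{lem-mot}, which carries $\cC_\theta$ to the quadrant $\R_+^2$ and standard Brownian motion to the correlated Brownian motion appearing in the mating-of-trees framework. Your route is more elementary and entirely self-contained, and even pins down the constant as $C = \pi/\theta^2$; the paper's route has the advantage of tying the formula directly to the mating-of-trees normalization already used elsewhere in the argument, at the cost of an external citation.
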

\begin{proof}
This is equivalent to  \cite[Lemma C.2]{ahs-disk-welding} after the shear transform $\Lambda$ in Proposition~\ref{lem-mot}.
\end{proof}

\begin{lemma}\label{lem-moment-small}
	Suppose $0< \theta < \phi < 2\pi$. Let $L$ be sampled as in~\eqref{eq-def-AL}. Then for $\eps\in (0, \frac{\pi}{\phi})$ we have $\E[ (\frac L u)^\eps] < 1$. 
\end{lemma}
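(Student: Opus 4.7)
The plan is to compute the law of $L$ explicitly, reduce the claim $\E[(L/u)^\eps] < 1$ to a monotonicity statement for a single real variable function, and then verify that monotonicity by elementary calculus.

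First, I would use Lemma~\ref{lem-markov} to identify the density of $L$ under $\sm_{\cC_\phi}(u,0)^\#$. Since the joint law of $(Z^1, Z^2)$ is the normalization of $\sm_{\cC_\theta}(u, re^{i\theta}) \times \sm_{\cC_\phi}(re^{i\theta}, 0)\,dr$, combining Lemma~\ref{lem-poisson-kernel} and Corollary~\ref{cor-cone-duration-law} gives (with $a := \pi/\theta$, $b := \pi/\phi$)
\[
p_L(r)\,dr \;\propto\; \frac{r^{a - 1 - b}}{(u^a + r^a)^2}\,dr.
\]
After the change of variables $s = r/u$, the factor $u$ cancels cleanly and
\[
\E\!\left[\left(\tfrac{L}{u}\right)^\eps\right]
= \frac{\int_0^\infty s^\eps \frac{s^{a-1-b}}{(1+s^a)^2}\,ds}{\int_0^\infty \frac{s^{a-1-b}}{(1+s^a)^2}\,ds}.
\]

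Next, I would convert these to beta integrals via $v = s^a$, $ds = a^{-1}v^{1/a - 1}\,dv$, which turns each integral into $a^{-1}\int_0^\infty v^{\alpha - 1}(1+v)^{-2}\,dv = a^{-1} B(\alpha, 2-\alpha)$. For the numerator $\alpha = 1 + (\theta/\pi)\eps - \theta/\phi$ and for the denominator $\alpha = 1 - \theta/\phi$; the hypothesis $\eps \in (0, \pi/\phi)$ together with $\theta < \phi$ ensures $\alpha \in (0,1) \subset (0,2)$ in both cases, so the beta integrals converge. Using the identity $B(\alpha, 2-\alpha) = (1-\alpha)\pi/\sin(\pi\alpha)$, the claim $\E[(L/u)^\eps] < 1$ reduces, after setting $\beta := \pi\theta/\phi$ and $\delta := \theta\eps$, to the inequality
\[
\frac{\beta - \delta}{\sin(\beta - \delta)} \;<\; \frac{\beta}{\sin \beta}, \qquad 0 < \delta < \beta < \pi.
\]

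Finally, this reduces to showing $g(\beta) := \beta/\sin\beta$ is strictly increasing on $(0,\pi)$, which follows from $g'(\beta) = (\sin\beta - \beta\cos\beta)/\sin^2\beta$ together with the observation that $h(\beta) := \sin\beta - \beta\cos\beta$ satisfies $h(0)=0$ and $h'(\beta) = \beta\sin\beta > 0$ on $(0,\pi)$. There is no real obstacle in the argument; the only point requiring care is the verification that the exponent parameters in the beta function lie in the admissible range, which is exactly where the hypothesis $\eps < \pi/\phi$ (rather than some weaker bound) gets used.
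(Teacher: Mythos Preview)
Your proof is correct and follows essentially the same approach as the paper: identify the density of $L/u$ via Lemma~\ref{lem-markov}, Lemma~\ref{lem-poisson-kernel}, and Corollary~\ref{cor-cone-duration-law}, evaluate the resulting ratio of integrals (the paper states the formula $\int_0^\infty \frac{x^p}{(1+x^{\pi/\theta})^2}\,dx = \frac{\theta}{\pi}\cdot\frac{\pi-\theta(p+1)}{\sin(\pi-\theta(p+1))}$ directly while you derive it via the beta function), and conclude from the strict monotonicity of $x/\sin x$ on $(0,\pi)$. The final expression and the range check for the parameters coincide with the paper's.
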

\begin{proof}
	By~\eqref{eq-markov}, Corollary~\ref{cor-cone-duration-law} and Lemma~\ref{lem-poisson-kernel} the law of $\frac{L}u$ is proportional to \[1_{x>0} |\sm_{\cC_\theta}(u, ux e^{i\theta})| |\sm_{\cC_\phi}(uxe^{i\theta}, 0)| dx \; \propto\;  1_{x>0} x^{\pi/\theta - 1} (1+x^{\pi/\theta})^{-2} \cdot x^{-\frac\pi\phi}\, dx.\]
	Set $f(x) := \frac{x}{\sin x}$. 	When $p \in (-1, \frac{2\pi}\theta - 1)$, we have
	\[\int_0^\infty \frac{x^p}{(1+x^{\pi/\theta})^2}\, dx = \frac\theta\pi \frac{\pi - \theta(p+1)}{\sin(\pi - \theta(p+1))} = \frac\theta\pi  f(\pi - \theta(p+1)). \]
  Therefore
	\[\E\left[\left(\frac{L}{u}\right)^\eps\right] = \int_0^\infty \frac{x^{\frac\pi\theta -1- \frac\pi\phi + \eps}}{(1+x^{\pi/\theta})^2}\, dx \bigg/ \int_0^\infty \frac{x^{\frac\pi\theta -1 - \frac\pi\phi}}{(1+x^{\pi/\theta})^2}\, dx = \frac{f(\frac{\pi\theta}\phi - \theta \eps)}{f(\frac{\pi\theta}\phi )}. \]
	Since $f(x)$ is increasing on $(0, \pi)$ and $\frac{\pi \theta}\phi \in (0, \pi)$, we obtain the lemma.
\end{proof}

\bibliographystyle{hmralphaabbrv}
\bibliography{cibib}

\def\cprime{$'$}
\begin{thebibliography}{GKRV20}

\bibitem[AB19]{AleshkinMLG}
K.~Aleshkin and V.~Belavin.
\newblock Open minimal strings and open gelfand-dickey hierarchies.
\newblock {\em Journal of High Energy Physics}, 2019(2):43, 2019.

\bibitem[AG21]{ag-disk}
M.~{Ang} and E.~{Gwynne}.
\newblock {Liouville quantum gravity surfaces with boundary as matings of
  trees}.
\newblock {\em Annales de l'Institut Henri Poincar{\'e}, Probabilit{\'e}s et
  Statistiques}, 57(1):1 -- 53, 2021.

\bibitem[AHS17]{ahs-sphere}
J.~Aru, Y.~Huang, and X.~Sun.
\newblock Two perspectives of the 2{D} unit area quantum sphere and their
  equivalence.
\newblock {\em Comm. Math. Phys.}, 356(1):261--283, 2017, \arxiv{1512.06190}.
  \MR{3694028}

\bibitem[AHS20]{ahs-disk-welding}
M.~Ang, N.~Holden, and X.~Sun.
\newblock Conformal welding of quantum disks.
\newblock {\em arXiv preprint arXiv:2009.08389}, 2020.

\bibitem[AHS21]{AHS-SLE-integrability}
M.~Ang, N.~Holden, and X.~Sun.
\newblock Integrability of {SLE} via conformal welding of random surfaces.
\newblock {\em ArXiv e-prints}, Apr 2021, \arxiv{2104.09477}.

\bibitem[AS21]{AS-CLE}
M.~Ang and X.~Sun.
\newblock Integrability of the conformal loop ensemble.
\newblock {\em arXiv e-print}, 2021.

\bibitem[Ber17]{Ber_GMC}
N.~Berestycki.
\newblock {An elementary approach to Gaussian multiplicative chaos}.
\newblock {\em Electronic Communications in Probability}, 22(none):1 -- 12,
  2017.

\bibitem[BP]{berestycki-lqg-notes}
N.~{Berestycki} and E.~{Powell}.
\newblock {I}ntroduction to the {G}aussian {F}ree {F}ield and {L}iouville
  {Q}uantum {G}ravity.
\newblock {A}vailable at
  \url{https://homepage.univie.ac.at/nathanael.berestycki/articles.html}.

\bibitem[BPZ84]{BPZ1984}
A.~Belavin, A.~Polyakov, and A.~Zamolodchikov.
\newblock Infinite conformal symmetry in two-dimensional quantum field theory.
\newblock {\em Nuclear Physics B}, 241(2):333--380, 1984.

\bibitem[Cer21]{cercle-quantum-disk}
B.~Cercl\'e.
\newblock Unit boundary length quantum disk: a study of two different
  perspectives and their equivalence.
\newblock {\em ESAIM: Probability and Statistics}, 25:433--459, 2021.

\bibitem[DE14]{deitmar-echterhoff-harmonic-analysis}
A.~Deitmar and S.~Echterhoff.
\newblock {\em Principles of harmonic analysis}.
\newblock Universitext. Springer, Cham, second edition, 2014. \MR{3289059}

\bibitem[DF84]{DF1984}
V.~Dotsenko and V.~Fateev.
\newblock Conformal algebra and multipoint correlation functions in 2d
  statistical models.
\newblock {\em Nuclear Physics B}, 240(3):312--348, 1984.

\bibitem[DKRV16]{dkrv-lqg-sphere}
F.~David, A.~Kupiainen, R.~Rhodes, and V.~Vargas.
\newblock Liouville quantum gravity on the {R}iemann sphere.
\newblock {\em Comm. Math. Phys.}, 342(3):869--907, 2016, \arxiv{1410.7318}.
  \MR{3465434}

\bibitem[{\relax DLMF}]{NIST:DLMF}
{\it NIST Digital Library of Mathematical Functions}.
\newblock http://dlmf.nist.gov/, Release 1.1.0 of 2020-12-15.
\newblock F.~W.~J. Olver, A.~B. {Olde Daalhuis}, D.~W. Lozier, B.~I. Schneider,
  R.~F. Boisvert, C.~W. Clark, B.~R. Miller, B.~V. Saunders, H.~S. Cohl, and
  M.~A. McClain, eds.

\bibitem[DMS21]{wedges}
B.~{Duplantier}, J.~{Miller}, and S.~{Sheffield}.
\newblock {Liouville quantum gravity as a mating of trees}.
\newblock {\em Ast{\'e}risque}, 427, 2021.

\bibitem[DO94]{DOZZ1}
H.~Dorn and H.~J. Otto.
\newblock {Two and three point functions in Liouville theory}.
\newblock {\em Nucl. Phys. B}, 429:375--388, 1994, hep-th/9403141.

\bibitem[DRV16]{LQG_tori}
F.~David, R.~Rhodes, and V.~Vargas.
\newblock Liouville quantum gravity on complex tori.
\newblock {\em Journal of Mathematical Physics}, 57(2):022302, 2016,
  https://doi.org/10.1063/1.4938107.

\bibitem[Dub05]{dubedat-rho}
J.~Dub\'{e}dat.
\newblock {${\rm SLE}(\kappa,\rho)$} martingales and duality.
\newblock {\em Ann. Probab.}, 33(1):223--243, 2005. \MR{2118865}

\bibitem[Dub09]{dubedat-coupling}
J.~Dub{\'e}dat.
\newblock S{LE} and the free field: partition functions and couplings.
\newblock {\em J. Amer. Math. Soc.}, 22(4):995--1054, 2009, \arxiv{0712.3018}.
  \MR{2525778 (2011d:60242)}

\bibitem[Far08]{faraut-lie-groups}
J.~Faraut.
\newblock {\em Analysis on {L}ie groups}, volume 110 of {\em Cambridge Studies
  in Advanced Mathematics}.
\newblock Cambridge University Press, Cambridge, 2008.
\newblock An introduction. \MR{2426516}

\bibitem[FW08]{Selberg_int}
P.~Forrester and O.~Warnaar.
\newblock The importance of the {S}elberg integral.
\newblock {\em Bulletin of the American Mathematical Society}, 45:489?534, 10
  2008.

\bibitem[FZZ00]{FZZ}
V.~Fateev, A.~B. Zamolodchikov, and A.~B. Zamolodchikov.
\newblock {Boundary {L}iouville field theory. 1. {B}oundary state and boundary
  two point function}.
\newblock 1 2000, hep-th/0001012.

\bibitem[GHS19]{ghs-mating-survey}
E.~{Gwynne}, N.~{Holden}, and X.~{Sun}.
\newblock {Mating of trees for random planar maps and Liouville quantum
  gravity: a survey}.
\newblock {\em ArXiv e-prints}, Oct 2019, \arxiv{1910.04713}.

\bibitem[GKRV20]{GKRV_bootstrap}
C.~Guillarmou, A.~Kupiainen, R.~Rhodes, and V.~Vargas.
\newblock Conformal bootstrap in {L}iouville theory, 2020, 2005.11530.

\bibitem[GRV19]{LQG_higher_genus}
C.~Guillarmou, R.~Rhodes, and V.~Vargas.
\newblock Polyakov's formulation of 2d bosonic string theory.
\newblock {\em Publications math{\'e}matiques de l'IH{\'E}S}, 130(1):111--185,
  2019.

\bibitem[Hik03]{HikidaLCFT}
Y.~Hikida.
\newblock {Liouville field theory on a unoriented surface}.
\newblock {\em JHEP}, 05:002, 2003, hep-th/0210305.

\bibitem[Hos01]{Hosomichi_2001}
K.~Hosomichi.
\newblock Bulk-boundary propagator in {L}iouville theory on a disc.
\newblock {\em Journal of High Energy Physics}, 2001(11):044--044, nov 2001.

\bibitem[HRV18]{hrv-disk}
Y.~Huang, R.~Rhodes, and V.~Vargas.
\newblock Liouville quantum gravity on the unit disk.
\newblock {\em Ann. Inst. Henri Poincar\'{e} Probab. Stat.}, 54(3):1694--1730,
  2018, \arxiv{1502.04343}. \MR{3825895}

\bibitem[IJS16]{JLS-CLE}
Y.~Ikhlef, J.~L. Jacobsen, and H.~Saleur.
\newblock {Three-Point Functions in c\ensuremath{\leq}1 Liouville Theory and
  Conformal Loop Ensembles}.
\newblock {\em Phys. Rev. Lett.}, 116(13):130601, 2016, 1509.03538.

\bibitem[KRV20]{DOZZ_proof}
A.~Kupiainen, R.~Rhodes, and V.~Vargas.
\newblock Integrability of {L}iouville theory: proof of the {DOZZ} formula.
\newblock {\em Annals of Mathematics}, 191(1):81--166, 2020.

\bibitem[LSW03]{lsw-restriction}
G.~Lawler, O.~Schramm, and W.~Werner.
\newblock Conformal restriction: the chordal case.
\newblock {\em J. Amer. Math. Soc.}, 16(4):917--955 (electronic), 2003,
  \arxiv{math/0209343v2}. \MR{1992830 (2004g:60130)}

\bibitem[LW04]{lawler-werner-soup}
G.~F. Lawler and W.~Werner.
\newblock The {B}rownian loop soup.
\newblock {\em Probab. Theory Related Fields}, 128(4):565--588, 2004,
  \arxiv{math/0304419}. \MR{2045953 (2005f:60176)}

\bibitem[Mar03]{Martinec_2003}
E.~J. Martinec.
\newblock {The Annular report on noncritical string theory}.
\newblock 5 2003, hep-th/0305148.

\bibitem[MS16]{ig1}
J.~Miller and S.~Sheffield.
\newblock Imaginary geometry {I}: interacting {SLE}s.
\newblock {\em Probab. Theory Related Fields}, 164(3-4):553--705, 2016,
  \arxiv{1201.1496}. \MR{3477777}

\bibitem[MS19]{sphere-constructions}
J.~Miller and S.~Sheffield.
\newblock Liouville quantum gravity spheres as matings of finite-diameter
  trees.
\newblock {\em Ann. Inst. Henri Poincar\'{e} Probab. Stat.}, 55(3):1712--1750,
  2019. \MR{4010949}

\bibitem[Nak04]{Nakayama_review}
Y.~Nakayama.
\newblock Liouville field theory: A decade after the revolution.
\newblock {\em International Journal of Modern Physics A},
  19(17n18):2771--2930, 2004, https://doi.org/10.1142/S0217751X04019500.

\bibitem[Pol81]{Polyakov1981}
A.~Polyakov.
\newblock Quantum geometry of bosonic strings.
\newblock {\em Physics Letters B}, 103(3):207--210, 1981.

\bibitem[PT02]{PT_boundary_3pt}
B.~Ponsot and J.~Teschner.
\newblock Boundary {L}iouville field theory: boundary three-point function.
\newblock {\em Nuclear Physics B}, 622(1):309--327, 2002.

\bibitem[Rem18]{Remy_annulus}
G.~Remy.
\newblock {Liouville quantum gravity on the annulus}.
\newblock {\em J. Math. Phys.}, 59(8):082303, 2018, 1711.06547.

\bibitem[Rem20]{remy-fb-formula}
G.~Remy.
\newblock The {F}yodorov-{B}ouchaud formula and {L}iouville conformal field
  theory.
\newblock {\em Duke Math. J.}, 169(1):177--211, 2020. \MR{4047550}

\bibitem[RV14]{rhodes-vargas-review}
R.~Rhodes and V.~Vargas.
\newblock Gaussian multiplicative chaos and applications: {A} review.
\newblock {\em Probab. Surv.}, 11:315--392, 2014, \arxiv{1305.6221}.
  \MR{3274356}

\bibitem[RV19]{rv-tail}
R.~Rhodes and V.~Vargas.
\newblock The tail expansion of {G}aussian multiplicative chaos and the
  {L}iouville reflection coefficient.
\newblock {\em Ann. Probab.}, 47(5):3082--3107, 2019. \MR{4021245}

\bibitem[RZ20]{RZ_interval}
G.~Remy and T.~Zhu.
\newblock {The distribution of Gaussian multiplicative chaos on the unit
  interval}.
\newblock {\em The Annals of Probability}, 48(2):872 -- 915, 2020.

\bibitem[RZ22]{RZ_boundary}
G.~Remy and T.~Zhu.
\newblock Integrability of boundary {L}iouville conformal field theory.
\newblock {\em Communications in Mathematical Physics}, 395(1):179--268, 2022.

\bibitem[Sch00]{schramm0}
O.~Schramm.
\newblock Scaling limits of loop-erased random walks and uniform spanning
  trees.
\newblock {\em Israel J. Math.}, 118:221--288, 2000, \arxiv{math/9904022}.
  \MR{1776084 (2001m:60227)}

\bibitem[She07]{shef-gff}
S.~Sheffield.
\newblock Gaussian free fields for mathematicians.
\newblock {\em Probab. Theory Related Fields}, 139(3-4):521--541, 2007,
  \arxiv{math/0312099}. \MR{2322706 (2008d:60120)}

\bibitem[She16]{shef-zipper}
S.~Sheffield.
\newblock Conformal weldings of random surfaces: {SLE} and the quantum gravity
  zipper.
\newblock {\em Ann. Probab.}, 44(5):3474--3545, 2016, \arxiv{1012.4797}.
  \MR{3551203}

\bibitem[Shi85]{shimura-cone}
M.~Shimura.
\newblock Excursions in a cone for two-dimensional {B}rownian motion.
\newblock {\em J. Math. Kyoto Univ.}, 25(3):433--443, 1985. \MR{807490
  (87a:60095)}

\bibitem[SSW09]{ssw-radii}
O.~Schramm, S.~Sheffield, and D.~B. Wilson.
\newblock Conformal radii for conformal loop ensembles.
\newblock {\em Comm. Math. Phys.}, 288(1):43--53, 2009, \arxiv{math/0611687}.
  \MR{2491617 (2010g:60218)}

\bibitem[Tes00]{Teschner_brane}
J.~Teschner.
\newblock {Remarks on Liouville theory with boundary}.
\newblock {\em PoS}, tmr2000:041, 2000, hep-th/0009138.

\bibitem[Won20]{wong-tail}
M.~D. Wong.
\newblock Universal tail profile of {G}aussian multiplicative chaos.
\newblock {\em Probab. Theory Related Fields}, 177(3-4):711--746, 2020.
  \MR{4126930}

\bibitem[ZZ96]{DOZZ2}
A.~B. Zamolodchikov and A.~B. Zamolodchikov.
\newblock {Structure constants and conformal bootstrap in Liouville field
  theory}.
\newblock {\em Nucl. Phys. B}, 477:577--605, 1996, hep-th/9506136.

\bibitem[ZZ01]{ZZ_brane}
A.~B. Zamolodchikov and A.~B. Zamolodchikov.
\newblock {Liouville field theory on a pseudosphere}.
\newblock 1 2001, hep-th/0101152.

\end{thebibliography}

\end{document}